\DeclareMathSymbol{\invques}{\mathord}{operators}{`>}
\DeclareRobustCommand{\tmquestiondown}{%
  \ifmmode\invques\else\textquestiondown\fi
}
\numberwithin{equation}{section}
\newcommand{\mylabel}[2]{#2\def\@currentlabel{#2}\label{#1}}
\newtheorem{theorem}{Theorem}[section]
\newtheorem{lemma}[theorem]{Lemma}
\newtheorem{conj}[theorem]{Conjecture}
\newtheorem{proposition}[theorem]{Proposition}
\newtheorem{corollary}[theorem]{Corollary}
\newtheorem{defn}[theorem]{Definition}
\newtheorem{remark}[theorem]{Remark}
\newtheorem{convention}[theorem]{Convention}
\newcommand{\Thn}{T_{h,n}}
\newcommand{\lb}{[[}
\newcommand{\rb}{]]}
\newcommand{\pr}{\mathrm{pr}}
\newcommand{\Tr}{\operatorname{Tr}}
\newcommand{\Gal}{\operatorname{Gal}}
\newcommand{\Fil}{\operatorname{Fil}}
\newcommand{\CC}{\mathbb{C}}
\newcommand{\NN}{\mathbb{N}}
\newcommand{\QQ}{\mathbb{Q}}
\newcommand{\Qp}{\mathbb{Q}_p}
\newcommand{\Zp}{\mathbb{Z}_p}
\newcommand{\ZZ}{\mathbb{Z}}
\newcommand{\fin}{\f}
\newcommand{\sing}{\mathrm{sing}}
\newcommand{\Fitt}{\mathrm{Fitt}}
\newcommand{\Tfn}{T_{f,n}}
\newcommand{\Mlogg}{M_{\log,h}}
\newcommand{\Kml}{K_{m,\ell}}
\renewcommand{\AA}{\mathbb{A}}
\newcommand{\fF}{\mathfrak{F}}
\newcommand{\ord}{\mathrm{ord}}
\newcommand{\fp}{\mathfrak{p}}
\newcommand{\vp}{\varphi}
\newcommand{\cL}{\mathcal{L}}
\newcommand{\cH}{\mathcal{H}}
\newcommand{\cO}{\mathcal{O}}
\newcommand{\HIw}{H^1_{\mathrm{Iw}}}
\newcommand{\GL}{\mathrm{GL}}
\newcommand{\AQp}{\AA_{\Qp}^+}
\newcommand{\col}{\mathrm{Col}}
\newcommand{\image}{\mathrm{Im}}
\newcommand{\cyc}{\textup{cyc}}
\newcommand{\fL}{\mathfrak{L}}
\newcommand{\Hom}{\mathrm{Hom}}
\newcommand{\Sel}{\mathrm{Sel}}
\newcommand{\Afn}{A_{f,n}}
\newcommand{\LL}{\Lambda}
\newcommand{\f}{\textup{\bf f}}
\newcommand{\lra}{\longrightarrow}
\newcommand{\res}{\textup{res}}
\newcommand{\cF}{\mathcal{F}}
\newcommand{\Dcris}{\mathbb{D}_{\rm cris}}
\newcommand{\Mlog}{M_{h,\log}}
\newcommand{\fM}{\m}
\newcommand{\Tgn}{T_{g,n}}
\newcommand{\cor}{\mathrm{cor}}
\newcommand{\zz}{\mathfrak{z}}
\definecolor{Green}{rgb}{0.0, 0.5, 0.0}
\newcommand{\p}{\mathfrak{p}}
\newcommand{\m}{\mathfrak{m}}
\newcommand{\sF}{\mathscr{F}}
\newcommand{\sL}{\mathscr{L}}
\newcommand{\bd}{\mathbb{d}}
\newcommand{\cE}{\mathcal{E}}
\newcommand{\sW}{\mathscr{W}}
\newcommand{\cA}{\mathcal{A}}
\newcommand{\cT}{\mathcal{T}}
\newcommand{\cV}{\mathcal{V}}
\newcommand{\bz}{\mathbf{z}}
\newcommand{\Ahn}{A_{h,n}}
\newcommand{\bGamma}{{\mathpalette\makebbGamma\relax}}
\newcommand{\makebbGamma}[2]{%
  \raisebox{\depth}{\scalebox{1}[-1]{$\mathsurround=0pt#1\mathbb{L}$}}%
}
 \definecolor{pAlgae}{RGB}{87,115,135}
\definecolor{airforceblue}{rgb}{0.36, 0.54, 0.66}
	\definecolor{bondiblue}{rgb}{0.0, 0.58, 0.71}
\definecolor{britishracinggreen}{rgb}{0.0, 0.26, 0.15}
\definecolor{camouflagegreen}{rgb}{0.47, 0.53, 0.42}
\definecolor{darkcyan}{rgb}{0.0, 0.55, 0.55}
\begin{document}
\author{Ashay Burungale}
\address{Ashay Burungale\newline  Department of Mathematics\\
The University of Texas at Austin\\2515 Speedway\\ Austin\\ TX 78712\\ USA }
\email{ashayk@utexas.edu}

\author{K\^az\i m B\"uy\"ukboduk}
\address{K\^az\i m B\"uy\"ukboduk\newline UCD School of Mathematics and Statistics\\ University College Dublin\\ Belfield\\Dublin 4\\Ireland}
\email{kazim.buyukboduk@ucd.ie}

\author{Antonio Lei}
\address{Antonio Lei\newline Department of Mathematics and Statistics\\University of Ottawa\\
150 Louis-Pasteur Pvt\\
Ottawa, ON\\
Canada K1N 6N5}
\email{antonio.lei@uottawa.ca}

\title{Anticyclotomic Iwasawa theory of abelian varieties of $\mathrm{GL}_2$-type at non-ordinary primes}

\subjclass[2020]{11R23 (primary); 11G05, 11R20 (secondary)}
\keywords{Anticyclotomic Iwasawa theory, elliptic curves, abelian varieties of $\GL_2$-type, supersingular primes.}
\begin{abstract}
Let $p\ge 5$ be a prime number, $E/\mathbb{Q}$ an elliptic curve with good supersingular reduction at $p$ and $K$ an imaginary quadratic field such that the root number of $E$ over $K$ is $+1$. When $p$ is split in $K$, Darmon and Iovita formulated the plus and minus Iwasawa main conjectures for $E$ over the anticyclotomic $\mathbb{Z}_p$-extension of $K$, and proved one-sided inclusion: an upper bound for plus and minus Selmer groups in terms of the associated $p$-adic $L$-functions. We generalize their results to two new settings: 
\begin{itemize}

\item[1.] Under the assumption that $p$ is split in $K$ but without assuming $a_p(E)=0$, we study Sprung-type Iwasawa main conjectures for abelian varieties of $\mathrm{GL}_2$-type, and prove an analogous inclusion.

\item[2.] We formulate, relying on the recent work of the first named author with Kobayashi and Ota, plus and minus Iwasawa main conjectures for elliptic curves when $p$ is inert in $K$, and prove an analogous inclusion.
\end{itemize}
\end{abstract}

\maketitle
\tableofcontents

\section{Introduction} 
The nature of Iwasawa theory of an elliptic curve $E/\QQ$ along the anticyclotomic $\Zp$-extension of an imaginary quadratic field $K$ is intertwined with the root number of $E$ over $K$, the splitting of $p$ in $K$ as well as the type of reduction of $E$ at $p$.  
The aim of this article is to investigate the anticyclotomic Iwasawa theory at primes of non-ordinary reduction in the root number $+1$ case, allowing $p$ to either split or remain inert in $K$. 

Let $E/\QQ$ be an elliptic curve of conductor $N_0$ and let $K$ be an imaginary quadratic field of discriminant prime to $N_0$. Write $N_0=N^+N^-$, where $N^+$ (resp. $N^-$) is divisible only by primes which are split (resp. inert) in $K$. Let $p\ge5$ be a prime of good supersingular reduction for $E$  and so $a_p(E)=0$. Assume that 
\begin{itemize}
\item[(\mylabel{item_cp}{\textbf{cp}})]$p$ does not divide the class number of $K$. 
\end{itemize}
Let $K_\infty$ denote the anticyclotomic $\Zp$-extension of $K$. Under the assumption that 
\begin{itemize}
    \item [(\mylabel{item_def}{\textbf{def}})] $N^-$ is a square-free product of odd number of primes. 
\end{itemize}
and that $p$ is {\it split} in $K$,  Darmon--Iovita \cite{darmoniovita} studied the Iwasawa theory of $E$ along $K_\infty$. This is a generalization of the {seminal} work of Bertolini--Darmon \cite{BertoliniDarmon2005} in the ordinary case, where $p$ is allowed to be split or inert in $K$ (see also \cite{BertoliniDarmon1997}). 
Darmon--Iovita formulated and proved one inclusion of the plus and minus Iwasawa main conjectures: an upper bound for plus and minus Selmer groups in terms of the associated $p$-adic $L$-functions. {This is an anticyclotomic counterpart of Kobayashi's supersingular Iwasawa theory  \cite{kobayashi03} along the cyclotomic $\Zp$-extension of $\QQ$}. 
A few years later, Pollack--Weston \cite{pollack-weston11} refined the work of Bertolini--Darmon and Darmon--Iovita.

In the current article, we generalize the results of Darmon--Iovita \cite{darmoniovita} and Pollack--Weston \cite{pollack-weston11} to two new settings. We first study the anticyclotomic Iwasawa theory of abelian varieties of $\GL_2$-type at non-ordinary primes when $p$ is split in $K$. Secondly, we study similar questions for elliptic curves $E/\QQ$  
when $p$ is inert in $K$.  

Let $f$ be a weight two elliptic
newform of level $N_0$. 
Let $p\geq 5$ be a prime of good non-ordinary reduction for $f$. 
Let $\bar{\QQ}$ be an algebraic closure of $\QQ$ and 
$\bar{\QQ}_p$ that of $\QQ_p$. 
For an extension $F$ of $\QQ$ in $\bar{\QQ}$, put $G_{F}=\Gal(\bar{\QQ}/F)$. 
Fix an embedding $\iota: \bar{\QQ} \hookrightarrow \bar{\QQ}_{p}$. 
Let $\rho_f: G_{\QQ} \rightarrow \GL_{2}(\bar{\QQ}_{p})$ be the corresponding Galois representation associated to the newform $f$ and $\bar{\rho}_f$ the residual representation. Let $K$ be an imaginary quadratic field satisfying 
$(D_{K},pN_{0})=1$, \eqref{item_cp} and \eqref{item_def}. 

As a first step of our work, we construct bounded sharp/flat $p$-adic $L$-functions $L_p(f,K)^{\sharp}$ and $L_p(f,K)^{\flat}$ using a Sprung-type matrix, which converts unbounded distributions attached to $p$ non-ordinary modular forms on definite quaternion algebras to bounded measures.  The unbounded distributions were also studied in \cite{kim19} and they encode $p$-adic variation of algebraic part of the central $L$-values $L(f_{K}\otimes\chi,1)$ as $\chi$ varies over finite order characters of $\Gal(K_{\infty}/K)$, where $f_K$ denotes the base change of $f$ to $K$. The details of this construction are given in \S\ref{sec:paLf}. 
As in the cyclotomic setting, at least one of the two $p$-adic $L$-functions is readily seen to be non-zero, while both are if $a_{p}(f)=0$
(cf. Corollary~\ref{cor:non-zero}). 

Consider the following hypotheses:
    \begin{itemize}
       \item[(\mylabel{item_BI}{\textbf{Im}})]  
        If $p=5$, then $\bar{\rho}_f(G_{\QQ(\mu_{p^\infty})})$ 
        contains a conjugate of ${\rm SL}_2(\mathbb{F}_p)$. If $p>5$, the $G_\QQ$-representation $\bar{\rho}_f$ is irreducible.
        \item[(\mylabel{item_Loc}{\textbf{ram}})]
        $\bar{\rho}_f$ is ramified at $\ell$ in the following cases:
        \begin{itemize}
        \item[$\circ$] $\ell \mid N^-$ with $\ell^{2} \equiv 1 \mod p$, 
        \item[$\circ$] $\ell \mid N^{+}$.
        \end{itemize}
    \end{itemize}
    
    The main result of this article is the following. 
\begin{theorem}[{Theorem~\ref{thm_div_def_main_conj}}]
\label{thm:main}
Let $f\in S_2(\Gamma_0(N_0))$ 
be an elliptic newform and $p\nmid 6N_0$ a prime such that $a_p(f)$ has positive $p$-adic valuation.
Let $K$ be an imaginary quadratic field such that $(D_{K},pN_{0})=1$ and 
that
 the hypotheses \eqref{item_cp}, \eqref{item_def}, \eqref{item_BI} and \eqref{item_Loc} hold. Assume in addition: 
 \begin{itemize}
\item[$\circ$] If $p$ is split in $K/\QQ$ and $a_{p}(f)\neq 0$, then the newform $f$ is $p$-isolated (cf. Definition \ref{defn_p-iso}).
\item[$\circ$] If $p$ remains inert in $K/\QQ$, then $a_p(f)=0$ and the Hecke field of $f$ is $\QQ$. 
\end{itemize}

Then 
we have 
$$L_p(f,K)^\bullet\in {\rm char}\left({\rm Sel}_{\bullet}(K_\infty,A_{f})^\vee\right)\,,\qquad \bullet\in\{\sharp,\flat\}\, .$$
\end{theorem}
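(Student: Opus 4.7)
My plan is to adapt the bipartite Euler system machinery of Bertolini--Darmon, as refined by Darmon--Iovita and Pollack--Weston, to the two new settings via the Sprung sharp/flat formalism (when $p$ splits and $a_p(f)\neq 0$) and the Burungale--Kobayashi--Ota plus/minus formalism (when $p$ is inert). The overall strategy is to produce, from level-raising congruences, a collection of Galois cohomology classes satisfying two explicit reciprocity laws: the first controls the signed Selmer group via Kolyvagin descent, and the second computes the classes in terms of the signed $p$-adic $L$-functions $L_p(f,K)^\bullet$ defined in \S\ref{sec:paLf}.

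The first step is to define signed local conditions at the primes above $p$ using the Sprung-type matrix (resp.\ the plus/minus Coleman-style decomposition in the inert case), producing signed Selmer groups $\Sel_\bullet(K_\infty, A_f)$. One must verify that the natural pairing of the unbounded distribution on $\Gamma = \Gal(K_\infty/K)$ with the signed quotient of the local Iwasawa cohomology factors through the bounded measure $L_p(f,K)^\bullet$, and that these Selmer groups admit the expected Poitou--Tate Euler characteristic formula. This is what allows the Euler system bound to be interpreted as a divisibility of characteristic ideals.

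The second step is the Euler system construction. For each $n \geq 1$ one chooses $n$-admissible primes $\ell$---inert in $K$, prime to $N_0 p$, satisfying a Ribet-type condition such as $a_\ell(f)^2 \equiv (\ell+1)^2 \pmod{p^n}$ together with $\ell^2 \not\equiv 1 \pmod p$. Level-raising \`a la Ribet, combined with the Jacquet--Langlands correspondence, yields a weight-$2$ newform $f^{(\ell)}$ on the definite quaternion algebra of discriminant $N^-\ell$ congruent to $f$ modulo $p^n$; Heegner-type CM divisors on the associated Shimura set then produce cohomology classes $\kappa_n(\ell) \in H^1(K_n, T_f/p^n)$ forming a bipartite Euler system. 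The $p$-isolated hypothesis (in the split case with $a_p(f)\neq 0$) and the assumption that the Hecke field equals $\QQ$ (in the inert case) are used here to control the congruence module, ensuring that these classes land in the expected integral lattice. Hypotheses \eqref{item_BI} and \eqref{item_Loc} enter as usual to guarantee a sufficient supply of admissible primes.

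The heart of the argument---and the main obstacle---is the pair of explicit reciprocity laws. The first, identifying the singular part of $\kappa_n(\ell)$ at $\ell$ with the value of $f^{(\ell)}$ at a CM point, follows by standard methods. The hard part is the second reciprocity law, which identifies the finite part of $\kappa_n(\ell_1\ell_2)$ at the primes above $p$---after applying the sharp/flat (resp.\ plus/minus) projection---with the signed $p$-adic $L$-function. In the split case with $a_p(f)\neq 0$, the Sprung matrix nontrivially intertwines the two signed components, so the argument must simultaneously track the pair $(\sharp,\flat)$ through the level-raising congruence and invert the matrix at the end. In the inert case, the unique prime above $p$ in $K_\infty$ lies in an unramified quadratic extension and the BKO decomposition rests on a subtler norm-compatible Coleman system, so matching it with the quaternionic construction of $L_p(f,K)^\bullet$ requires a delicate local computation at the inert prime. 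Once both reciprocity laws are in place, Howard's bipartite Euler system machinery, in the form used by Pollack--Weston, yields the desired divisibility $L_p(f,K)^\bullet \in \Char(\Sel_\bullet(K_\infty, A_f)^\vee)$.
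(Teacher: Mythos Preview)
Your overall strategy is correct and matches the paper: the argument is indeed a Bertolini--Darmon style bipartite Euler system, upgraded to the $\sharp/\flat$ (or $\pm$) setting via the Sprung matrix in the split case and the BKO $Q$-system in the inert case. But you have misidentified where the key technical difficulty lies, and this matters because the step you leave vague is precisely the one that requires the new hypotheses.

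Both reciprocity laws are statements about the behaviour of $\kappa(\ell)^\bullet$ at the \emph{admissible primes} $\ell,\ell_1,\ell_2$, which are away from $p$: the first law says $\partial_\ell(\kappa(\ell)^\bullet)\,\dot{=}\,\cL_f^\bullet$ (singular projection at $\ell$), and the second says $v_{\ell_2}(\kappa(\ell_1)^\bullet)\,\dot{=}\,\cL_h^\bullet$ (finite projection at $\ell_2$), where $h$ is the level-raised form at $N^-\ell_1\ell_2$. Neither involves the primes above $p$, and both follow from the Bertolini--Darmon arguments once one has factored the raw classes $\kappa(\ell)_m$ through the Sprung matrix $H_{f,m}$ (cf.\ Theorem~\ref{thm_main_sharpflat_Heegner_ES}); there is no delicate local computation at $p$ here. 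What you describe as ``the second reciprocity law at primes above $p$'' does not exist in this framework.

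The genuinely new and hard step, which your proposal does not isolate, is proving that the signed classes $\kappa(\ell)^\bullet$ satisfy the signed local condition at $p$: that is, $\res_\fp(\kappa(\ell)^\bullet)\in \widehat H^{1,\bullet}(K_{\infty,\fp},T_{f,n})$. This is the content of \S\ref{sec_Heegner_local_properties_at_p} and is what makes the Euler system run inside the correct Selmer group. In the split case with $a_p(f)\neq 0$ this requires comparing the $Q$-system Coleman maps to the Perrin--Riou big logarithm (\S\ref{sec_5_2022_09_23_1316}) and computing the image of $p$-stabilized Heegner classes under the eigen-projections of $\cL_{T,\fp}$; the $p$-isolated hypothesis enters \emph{here}, because one needs to lift the mod $\varpi^n$ level-raised form to characteristic zero (Proposition~\ref{prop_congruence_with_quoternionic_form}) in order for the $p$-stabilizations $\alpha,\beta$ to make sense. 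In the inert case the Hecke field $=\QQ$ hypothesis is needed not to control a congruence module but because the BKO primitive $Q$-system (Theorem~\ref{thm:Q-inert}) is built from modular parametrization of an elliptic curve. Your attribution of both hypotheses to ``controlling the congruence module'' is off target.
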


Here, ${\rm char}(-)$ denotes the characteristic ideal of a $\Lambda$-module for $\Lambda$ the anticyclotomic Iwasawa algebra and the Selmer group $\Sel_{\bullet}(K_\infty,A_{f})$ is as in  Definition~\ref{def:signedSelmer}. {A simple consequence of 
Theorem~\ref{thm:main} and Corollary~\ref{cor:non-zero} is the following}.

\begin{corollary}
{The Selmer group ${\rm Sel}_{\bullet}(K_\infty,A_{f})$ is $\LL$-cotorsion for some $\bullet\in\{\sharp,\flat\}$ if $a_{p}(f)\neq 0$,  and for both $\bullet\in\{\sharp,\flat\}$ if $a_p(f)=0$.}
    \end{corollary}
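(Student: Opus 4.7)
The Corollary is essentially an immediate consequence of Theorem~\ref{thm:main} combined with Corollary~\ref{cor:non-zero}, and the plan is correspondingly short. I would proceed as follows.

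The plan is to unwind the meaning of membership in a characteristic ideal. Recall that for a finitely generated $\Lambda$-module $M$ over the anticyclotomic Iwasawa algebra $\Lambda$, the characteristic ideal $\mathrm{char}_\Lambda(M)$ is a nonzero ideal of $\Lambda$ if and only if $M$ is $\Lambda$-torsion; otherwise, by the usual convention, $\mathrm{char}_\Lambda(M) = 0$. Consequently, if a nonzero element of $\Lambda$ lies in $\mathrm{char}_\Lambda(M)$, then $M$ is necessarily $\Lambda$-torsion.

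First, I would invoke Theorem~\ref{thm:main} to obtain the inclusion
\[
L_p(f,K)^\bullet \in \mathrm{char}_\Lambda\left(\mathrm{Sel}_\bullet(K_\infty, A_f)^\vee\right)
\]
for both $\bullet \in \{\sharp,\flat\}$. Next, I would appeal to Corollary~\ref{cor:non-zero}, which asserts that at least one of $L_p(f,K)^\sharp$, $L_p(f,K)^\flat$ is nonzero in general, and that both are nonzero whenever $a_p(f) = 0$. Combining these two inputs, for the relevant choice(s) of $\bullet$ we get that $\mathrm{char}_\Lambda(\mathrm{Sel}_\bullet(K_\infty, A_f)^\vee)$ contains the nonzero element $L_p(f,K)^\bullet$, hence is itself a nonzero ideal. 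By the observation above, this forces $\mathrm{Sel}_\bullet(K_\infty, A_f)^\vee$ to be a torsion $\Lambda$-module, equivalently that $\mathrm{Sel}_\bullet(K_\infty, A_f)$ is $\Lambda$-cotorsion.

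There is really no obstacle here beyond ensuring that the Selmer dual is finitely generated over $\Lambda$ in the first place, so that the characteristic ideal is well-defined; this is a standard input for the signed Selmer groups defined in Definition~\ref{def:signedSelmer} and should already be verified in the body of the paper when those Selmer groups are set up. Given that, the corollary follows in one line from the two ingredients cited above, and no further argument is required.
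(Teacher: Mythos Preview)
Your proposal is correct and follows exactly the approach indicated in the paper, which simply states the corollary as a consequence of Theorem~\ref{thm:main} and Corollary~\ref{cor:non-zero} without giving a separate proof. Your unpacking of why a nonzero element in the characteristic ideal forces $\Lambda$-torsion is precisely the one-line reasoning intended.
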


The hypotheses \eqref{item_BI} and \eqref{item_Loc} are precisely the ones required in \cite{pollack-weston11} 
(see also \cite[Remark 1.4]{kimpollackweston}). 
It may be possible to relax the latter as in \cite{kimpollackweston}
(cf.~Remark~\ref{remark_V4_HappyChanHo}).
One may be tempted to eliminate the $p$-isolated condition in Theorem~\ref{thm:main} (i.e. when $p$ splits and $a_{p}(f)\neq 0$), following the strategy in op. cit. At present, we are unable to do so since the calculations in \S\ref{sec_Heegner_local_properties_at_p_split_case} 
in the scenario when 
$a_p(f)\ne 0$ require the existence of a lift of a relevant mod $p^n$ modular form to characteristic zero.

The following corollary of Theorem~\ref{thm:main} is a generalization
of main results of \cite{darmoniovita, pollack-weston11}: 
\begin{corollary}
\label{cor_main_elliptic curves}
Let $E_{/\QQ}$ be an elliptic curve of conductor $N_E$ and $p\nmid 6N_E$ a prime of good supersingular reduction.
Let $K$ be an imaginary quadratic field such that $(D_{K},pN_{E})=1$ and that the hypothesis \eqref{item_cp} and \eqref{item_def} hold. 
Assume in addition:

\begin{itemize}
    \item[$\circ$] Either $p=5$ and the mod $5$ Galois representation $G_{\QQ}\to {\rm Aut}_{\mathbb{F}_5}(E[5])$ 
    is surjective, or $p>5$ and the mod $p$ Galois representation $G_{\QQ}\to {\rm Aut}_{\mathbb{F}_p}(E[p])$ is irreducible.
    \item[$\circ$] For any prime $\ell| N_{E}^{-}$ with $\ell^2\equiv 1 \mod p$, the inertia subgroup $I_\ell\subset G_{\QQ_\ell}$ acts non-trivially on $E[p]$.
\end{itemize}

Put $L_p(E,K)^\pm=L_p(f_E,K)^\pm$ for $f_E\in S_2(\Gamma_0(N_{E}))$ the newform associated to $E$. Then,
$$L_p(E,K)^\pm\in {\rm char}\, \left({\rm Sel}_{\pm}(K_\infty,E[p^\infty])^\vee\right)\,.$$
\end{corollary}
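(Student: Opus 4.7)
The plan is to deduce the corollary by applying Theorem~\ref{thm:main} to the weight-two newform $f_E\in S_2(\Gamma_0(N_E))$ attached to $E$ by modularity, and then to match the sharp/flat objects produced by the theorem with the plus/minus objects appearing in the statement of the corollary. The substantive work consists of verifying the hypotheses of the theorem for $f_E$ and comparing the Sprung-type construction with the Kobayashi / Darmon--Iovita / Burungale--Kobayashi--Ota plus/minus construction.

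The key observation is that good supersingular reduction at $p\ge 5$ forces $a_p(E)=0$: the congruence $p\mid a_p(E)$ together with the Hasse bound $|a_p(E)|\le 2\sqrt{p}<p$ leaves no room. This has two immediate consequences for the supplementary hypotheses of Theorem~\ref{thm:main}: in the split case the $p$-isolated assumption is vacuous, since it is imposed only when $a_p(f)\ne 0$; and in the inert case the auxiliary conditions $a_p(f)=0$ and ``Hecke field of $f$ equals $\QQ$'' hold automatically, the latter because $E$ is defined over $\QQ$. The remaining hypotheses \eqref{item_BI} and \eqref{item_Loc} translate under the identification $\bar{\rho}_{f_E}\simeq E[p]$ as follows: \eqref{item_BI} is precisely the irreducibility/surjectivity assumption of the corollary; the first bullet of \eqref{item_Loc} is the stated non-triviality of $I_\ell$-action on $E[p]$ for $\ell\mid N^-$ with $\ell^2\equiv 1\pmod p$; and the second bullet, for $\ell\mid N^+$, concerns primes of multiplicative reduction for $E$ and is part of the standard Darmon--Iovita / Pollack--Weston framework (cf.\ Remark~\ref{remark_V4_HappyChanHo} on possible relaxations of \eqref{item_Loc}).

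With these hypotheses in place, Theorem~\ref{thm:main} yields $L_p(f_E,K)^{\bullet}\in \mathrm{char}\,\mathrm{Sel}_{\bullet}(K_\infty,A_{f_E})^{\vee}$ for $\bullet\in\{\sharp,\flat\}$, and $A_{f_E}=E[p^\infty]$. To conclude, I would use that, since $a_p(f_E)=0$, the Sprung-type logarithm matrix of \S\ref{sec:paLf} converting unbounded distributions to bounded measures specializes to the Pollack matrix; consequently both the $p$-adic $L$-functions and the signed Selmer groups of Definition~\ref{def:signedSelmer} degenerate to the plus/minus versions (with $\sharp\leftrightarrow +$ and $\flat\leftrightarrow -$), giving $L_p(f_E,K)^{\sharp/\flat}=L_p(E,K)^{\pm}$ and $\mathrm{Sel}_{\sharp/\flat}(K_\infty,A_{f_E})=\mathrm{Sel}_{\pm}(K_\infty,E[p^\infty])$. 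The principal bookkeeping obstacle is precisely this final compatibility check, ensuring that the conventions of \S\ref{sec:paLf} specialize as claimed in both the split and inert cases, while the hypothesis verification in the previous paragraph is essentially formal.
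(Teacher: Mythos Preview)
Your approach is correct and matches the paper's: the corollary is stated without separate proof as a direct specialization of Theorem~\ref{thm:main} to $f=f_E$, using $a_p(E)=0$ (Hasse bound, $p\ge 5$) to trivialize the supplementary hypotheses and to identify $\sharp/\flat$ with $\pm$ (the paper itself declares these indices interchangeable in \S\ref{subsec_4_2_2022_07_12}). Your only loose end is the $N^+$-part of \eqref{item_Loc}, which is not automatic for elliptic curves and is also absent from the corollary's stated hypotheses---this is a lacuna in the paper's own statement rather than in your strategy, and your pointer to Remark~\ref{remark_V4_HappyChanHo} is the right gesture toward its resolution.
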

\begin{remark}
In view of the assumption $(D_{K},N_{E})=1$
the above results exclude the case that $E$ has CM by an order of $K$. 
For $p$ split in $K$, such an $E$ has ordinary reduction at $p$. The pertinent anticyclotomic CM Iwasawa theory has been studied by Rubin \cite{rubinmainconj} and Agboola--Howard \cite{agboolahowardordinary} (see also \cite{burungaletian}). For $p$ inert in $K$, $E$ has supersingular reduction and new Iwasawa-theoretic phenomena abound. Rubin \cite{rubin87} initiated the study of anticyclotomic CM Iwasawa theory at inert primes and made a basic conjecture on the structure of local units in the anticyclotomic $\ZZ_p$-extension of the unramified quadratic extension of $\QQ_p$. 
This conjecture was recently resolved in \cite{BKO1}. It led to a proof of the anticyclotomic CM main conjecture of Agboola--Howard \cite{agboolahowardsupersingular} and is also a key to the inert setting in  this article. 
\end{remark}

We now describe the strategy.

In \S\ref{sec:coleman}, we introduce the concept of $Q$-systems, which are sequences of local cohomology
classes satisfying certain norm relation, similar to the ones studied in \cite{knospe,lei-tohoku} (see also \cite{ota2014}). We then go on to construct explicit $Q$-systems in the two settings studied in this article and describe how these systems lead  to construction of sharp/flat Coleman maps. This generalizes earlier works of Kobayashi \cite{kobayashi03} and Sprung \cite{sprung09} in the cyclotomic setting as well as  that of Iovita--Pollack \cite{iovitapollack06} on elliptic curves in the anticyclotomic setting when $p$ is split in $K$. Our study in the inert setting is based on the recent work of the first named author\footnote{He is grateful to Shinichi Kobayashi and Kazuto Ota for inspiring discussions.} with Kobayashi and Ota \cite{BKO1,BKO2}.

As a preparation for a proof of 
 one sided inclusion of the sharp/flat Iwasawa main conjectures, 
 in \S\ref{sec_5_2022_09_23_1316}, 
 we study  an alternate definition of the Coleman maps in the split case using the Perrin-Riou big logarithm constructed by Loeffler--Zerbes \cite{LZ0} and show that the two approaches agree up to units. We then move on to study how Coleman maps behave under congruences of modular forms in \S\ref{sec:congruences}. These results may be of independent interest. We remark that, even though Theorem~\ref{thm_div_def_main_conj} concerns an elliptic modular form $f$, our proof dwells on congruences between modular forms on more general Shimura curves, and we proceed in \S\ref{sec:congruences} (and onward) in this required level of generality.

Using the Coleman maps, 
we define 
the sharp/flat Selmer groups over $K_\infty$  as well as 
certain auxiliary Selmer groups in \S\ref{sec:Sel}. We then move on to construct sharp/flat bipartite Euler systems\footnote{These are not Euler systems in the traditional sense and  one may prefer to refer to them as Bertolini--Darmon Kolyvagin systems.} in \S\ref{sec_Heeg_classes_construction_reciprocity} which are built out of Heegner points associated to certain weight two newforms that are congruent to $f$. In this section, 
it is also shown that the Euler systems satisfy the reciprocity laws,  as needed in our Euler system argument for the one sided inclusion.

As a final preparation for the proof of 
the main result, we show that  the aforementioned Euler systems satisfy the suitable local conditions in \S\ref{sec_Heegner_local_properties_at_p}. 
The final section  
is then dedicated to the proof of 
the main result. 

Anticyclotomic Iwasawa theory at primes which are non-ordinary and non-split in the imaginary quadratic field is outside the conjectural framework of Iwasawa theory. Besides the CM case initiated by Rubin \cite{rubin87, BKO1, BKO2,BKO3,BKO4,BKOY} and the present article, Andreatta--Iovita \cite{AndreattaIovitaBDP} recently constructed a locally analytic $p$-adic $L$-function in the non-CM case, for which formulation of an Iwasawa main conjecture is a basic open problem. The setting in \cite{AndreattaIovitaBDP} assumes the Heegner hypothesis, complementing the assumption \eqref{item_def} that our main result relies on. In the sequel \cite{BBL2} we study Iwasawa theory of pertinent Heegner points.

\subsection*{Acknowledgement}
We thank Francesc Castella, Henri Darmon, Ming-Lun Hsieh, Adrian Iovita, Chan-Ho Kim, Shinichi Kobayashi, Matteo Longo, Kazuto Ota, Chris Skinner, Ye Tian and Stefano Vigni for answering our questions during the preparation of the article. 

{We are grateful to the referee for valuable comments and suggestions on an earlier version of the article, which led to notable improvements.} 

AB's research is partially supported by the NSF grants DMS-2303864 and DMS-2302064. 
KB's research is partially supported by an IRC Advanced Laureate Award (HighCritical). Parts of this work were carried out during AL's visit at University College Dublin in fall 2022 supported by a Distinguished Visiting Professorship and the Seed Funding Scheme. He thanks UCD for the financial support and the warm hospitality. AL's research is also supported by the NSERC Discovery Grants Program RGPIN-2020-04259 and RGPAS-2020-00096 as well as a startup grant at the University of Ottawa. 
\section{Set-up and notation}

Throughout this article, $K$ is an imaginary quadratic field and  $p\ge5$ a prime unramified in $K$. 

We fix a weight two newform $f$ of level $N_0$ and trivial nebentypus so that $p\nmid N_0$ and $(N_{0},D_{K})=1$. 
Let $F$ be the Hecke field 
generated by the Fourier coefficients of $f$.
Let $\cA_f$ be an associated $\GL_2$-type abelian variety over $\QQ$ so that $\cO_F \hookrightarrow {\rm End}(\cA_f)$ and
\[
L(f,s)=L(\cA_{f},s).
\]
As in the introduction, write $N_0=N^+N^-$, where $N^+$ (resp. $N^-$) is only divisible by primes which are split (resp. inert) in $K$. We assume throughout that  $N^-$ is a square-free product of an odd number of primes; cf.~\eqref{item_def}.

Assume that $p$ does not divide the class number of $K$; cf.~\eqref{item_cp}. Let $K_\infty$ denote the anticyclotomic $\Zp$-extension of $K$. (Note that any prime above $p$ is totally ramified in $K_\infty$.) The Galois group of $K_\infty$ over $K$ is denoted by $\Gamma$. For an integer $m\ge0$, we write $K_m$ for the unique subextension of $K_\infty$ such that $[K_m:K]=p^m$. Further, write $\Gamma_m=\Gal(K_\infty/K_m)$ and $G_m=\Gal(K_m/K)$.

 Fix a prime $v$ of $F$ lying above $p$ and let $L$ be the completion of $F$ at $v$. Fix a uniformizer $\varpi$ of $L$. We assume that $$\ord_\varpi(a_p(f))>0.$$

Write $T_f$ for the $v$-adic Tate module of $\cA_f$. In particular, it is a free $\cO_L$-module of rank two equipped with a continuous $G_\QQ$-action. Write $V_f=T_f\otimes_{\cO_L}L$ and $A_f=V_f/T_f$. Given an integer $n\ge0$,  write $\Tfn$ for $T_f/\varpi^n T_f=A_f[\varpi^n]$.

Throughout this article, we shall work with weight two quaternionic  Hecke eigenforms that are congruent to our fixed newform $f$ and so their attached Galois representations are also congruent. Let $X=X_{M^+,M^-}$ be the Shimura curve {attached to a quaternion algebra of discriminant $M^-$ together with a $\Gamma_0(M^+)$-level structure (see for example \cite[\S1.3]{BertoliniDarmon1996})} and $h$ an $\cO_L$-valued weight two Hecke eigenform on $X$. Write $T_h$ for the $\cO_L$-linear $G_\QQ$-representation associated to $h$ and define $V_h$ and $\Thn$ just as above. 

For a field $k$, an extension $k'/k$ and  a $G_k$-representation $W$, we shall write 
$$ \cor_{k'/k}:H^1(k',W)\rightarrow H^1(k,W)\quad\text{and}\quad \res_{k'/k}:H^1(k,W)\rightarrow H^1(k',W)$$
for the corestriction and restriction maps respectively. For a $p$-adic Lie extension $\mathcal{K}/k$, write
\[
H^i_{\mathrm{Iw}}(\mathcal{K},W)= \varprojlim H^i(k',W),
\]
where the inverse limit is over the finite extensions $k'$ of $k$  contained in $\mathcal{K}$ and the connecting maps are corestrictions.


\section{$p$-adic $L$-functions}\label{sec:paLf}
\subsection{Preliminaries}
\label{subsec_prelim_quoternionic_padic_L_functions}

As discussed in the introduction, even though our main results concern elliptic modular forms, we will work with modular forms on more general Shimura curves.
Let $M$ be a positive integer that is coprime to $p$. We factor $M$ as $M^+M^-$, where $M^+$ (resp. $M^-$) is a positive integer divisible by primes which are split (resp. inert) in $K$. We assume throughout this section that $M^-$ is  square-free and has an odd number of prime factors.

Let $B$ be the definite quaternion algebra ramified at precisely the primes dividing $M^-$. Let $R$ be an Eichler $\ZZ[1/p]$-order of level $M^+$ in $B$ and $B_p=B\otimes\Qp$. Fix an isomorphism 
\[
\iota:B_p\rightarrow M_2(\Qp).
\]

Denote by $\cT$ the Bruhat--Tits tree of $B_p^\times/\Qp^\times$. Write $\cV(\cT)$ and $\Vec{\cE}(\cT)$ for the sets of vertices and ordered edges of $\cT$ respectively. Let $\bGamma=R^\times/\ZZ[1/p]^\times$. Let $Z$ be a ring.  We recall that a $Z$-valued weight two modular form on $\cT/\bGamma$ is a $Z$-valued function $h$ on $\Vec{\cE}(\cT)$ such that $$ h(\gamma e)=h(e)$$ for all $\gamma\in\bGamma$. The set of such modular forms will be denoted by $S_2(\cT/\bGamma,Z)$. Similarly, let $S_2(\cV/\bGamma,Z)$ denote the space of $\bGamma$-invariant non-constant $Z$-valued functions on $\cV(\cT)$.

The following is a simple consequence of  the Jacquet--Langlands correspondence (for example, see \cite[Proposition~1.3]{BertoliniDarmon2005} and \cite[Theorem~2.2, Proposition~2.3]{darmoniovita}).

\begin{proposition}\label{prop:JL}
Let $\phi$ be a newform in $S_2(\Gamma_0(M),\CC)$. Then there exists $h\in S_2(\cT/\bGamma,\CC)$ such that it shares the same eigenvalues as $h$ for the Hecke operators $T_\ell$, $\ell\nmid M$. Such an $h$ is unique up to multiplication by a non-zero complex number. 
\end{proposition}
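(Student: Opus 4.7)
The plan is to deduce this from the global Jacquet--Langlands correspondence, using strong approximation for $B^\times$ to identify $S_2(\cT/\bGamma,\CC)$ with a subspace of automorphic forms on the definite quaternion algebra $B$.

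First, I would recast $S_2(\cT/\bGamma,\CC)$ in adelic terms. Since $B_p\cong M_2(\Qp)$, the group $\GL_2(\Qp)$ acts transitively on the ordered edges of $\cT$ with stabiliser $I_p\cdot\Qp^\times$, where $I_p\subset\GL_2(\Zp)$ is the Iwahori subgroup. Combining this with strong approximation for $B^\times$ (valid because $B$ is split at $p$), one obtains a bijection between $\bGamma$-orbits of ordered edges of $\cT$ and the finite double coset set $B^\times\backslash B^\times(\AA_f)/U$, where $U=\bigl(\prod_{\ell\ne p}R_\ell^\times\bigr)\cdot I_p\cdot \Qp^\times$. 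This identifies $S_2(\cT/\bGamma,\CC)$ with the space of right-$U$-invariant automorphic forms on $B^\times(\AA)$ having trivial central character, compatibly with the Hecke action away from $p$.

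Next, I would apply the global Jacquet--Langlands correspondence to the cuspidal automorphic representation $\pi=\pi(\phi)$ attached to $\phi$. The hypotheses that $M^-$ is squarefree with an odd number of prime factors and that $\phi$ has trivial nebentypus ensure that $\pi_\ell$ is the Steinberg representation at every $\ell\mid M^-$, so that $\pi$ is square-integrable at every finite place where $B$ ramifies. Jacquet--Langlands then furnishes a unique (up to isomorphism) cuspidal automorphic representation $\pi'$ of $B^\times(\AA)$ whose local components match those of $\pi$ at places where $B$ splits and are the corresponding local Jacquet--Langlands transfers elsewhere.

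Finally, I would verify that the subspace of $U$-fixed vectors in $\pi'$ with the Hecke eigensystem of $\phi$ is one-dimensional. This is a standard local dimension count: at $\ell\nmid Mp$, $\pi'_\ell$ is spherical with a one-dimensional $R_\ell^\times$-fixed line; at $\ell\mid M^+$, local newform theory at Eichler level $v_\ell(M^+)$ supplies a single line; at $\ell\mid M^-$, $R_\ell$ is the maximal order in the local division algebra and the one-dimensional Jacquet--Langlands transfer contributes a line; at $p$, the Iwahori-fixed subspace of $\pi'_p$ is two-dimensional, and imposing the matching $T_p$-eigenvalue of $\phi$ singles out a unique line. Transporting the resulting line back through the dictionary of the first step yields the desired $h$, with uniqueness following from strong multiplicity one for $\GL_2$. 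The main technical subtlety lies in this last point: one must verify that the tree-theoretic Hecke action at $p$ matches the Iwahori-level Hecke action on $\pi'_p$ so that the eigenvalue constraint at $\ell=p$ does indeed cut the two-dimensional Iwahori-fixed space down to a line. Once this dictionary is in place, the remainder is a formal assembly of standard facts.
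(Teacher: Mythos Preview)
The paper does not give a proof of this proposition; it simply records it as a consequence of the Jacquet--Langlands correspondence and points to \cite[Proposition~1.3]{BertoliniDarmon2005} and \cite[Theorem~2.2, Proposition~2.3]{darmoniovita}. Your outline via strong approximation and the global transfer is exactly the right shape, and your local analysis at primes away from $p$ is fine. The gap is at $p$. You correctly observe that the Iwahori-fixed subspace of $\pi'_p$ is two-dimensional, but the claim that imposing the $T_p$-eigenvalue of $\phi$ singles out a line is not correct: the spherical $T_p$, regarded via the Bernstein isomorphism as an element of the centre of the Iwahori Hecke algebra, acts by the \emph{scalar} $a_p(\phi)=\alpha+\beta$ on that entire two-dimensional space (which is an irreducible Iwahori--Hecke module when $\alpha\neq\beta$), so the eigenvalue constraint is vacuous there. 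Reading ``$T_p$'' as the Iwahori operator $U_p$ does not help either, since its eigenvalues $\alpha,\beta$ are not $a_p(\phi)$. You flag this as the main subtlety but do not resolve it, and no tree-versus-Iwahori dictionary can, for the reason just given.

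The clean fix is to work at spherical level at $p$: the eigenform attached to a newform of level prime to $p$ naturally lives in $S_2(\cV/\bGamma,\CC)$ (functions on vertices), which under strong approximation corresponds to automorphic forms with full $\GL_2(\Zp)$-invariance at $p$. The spherical vector in $\pi'_p$ is then unique up to scalar and the uniqueness claim is immediate, with no constraint at $p$ beyond belonging to $\pi'$. This is what the cited references actually do, and it is consistent with how $h$ is used later in the paper (evaluation at the vertices $v_m$, and the occurrence of $S_2(\cV/\bGamma,\cO_L)$ in the statement of Theorem~\ref{thm_div_def_main_conj_reduced}); the symbol $\cT$ in the proposition appears to be a slip for $\cV$.
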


Applying Proposition~\ref{prop:JL} to $M=N_0$ and $f=\phi$, we may identify $f$ with an element of $S_2(\cT/\bGamma,\cO_L)$, which is not divisible by $\varpi$.

For the rest of this section, fix a Hecke eigenform $h\in S_2(\cT/\bGamma,\cO_L)$ that is $\varpi$-indivisible. 
Assume that the Hecke eigenvalue at $p$, denoted by $a_p(h)$, is divisible by $\varpi$.

The following notion will be crucial to some of our later arguments (cf.~\cite[Definition 1.2]{BertoliniDarmon2005}).
\begin{defn}
\label{defn_p-iso}
{A Hecke eigenform $h \in S_{2}(\cT/\bGamma,\cO_L)$ is said to be $p$-isolated if $h$ is not congruent modulo $\varpi$ to any other Hecke eigenform in $S_{2}(\cT/\bGamma,\cO_L)$. }
\end{defn}

 Fix an embedding $\Psi:K\rightarrow B$ so that $\Psi(K)\cap R=\Psi(\cO_K[1/p])$.
Let $\Pi_\infty=K_p^\times/\Qp^\times$. It acts on $\cT$ by
\[
g\star x=\iota\Psi(g)(x),
\]
where $x\in\cV(\cT)$ or $\Vec{\cE}(\cT)$.

Let $u_p$ be a fundamental $p$-unit of $K$, meaning that it is a generator of the group of elements of $\cO_K[1/p]^\times$ of norm one modulo torsion. Put $\widetilde{G}_\infty=\Pi_\infty/u_p^\ZZ$. There is a natural decreasing filtration
\[
\cdots\subset U_n\subset \cdots \subset U_1\subset U_0\subset \Pi_\infty
\]
 given as in \cite[(2.2) and (2.3)]{darmoniovita}. Let $\widetilde{G}_m=\widetilde{G}_\infty/U_m$. For any given $h\in S_2(\cT/\bGamma,\cO_L)$, there exists a sequence of functions
\begin{align*}
    h_{K,m}:\widetilde{G}_m&\rightarrow \cO_L\\
\alpha&\mapsto h(\alpha\star v_m),
\end{align*}
where $v_m\in\cT$ is chosen as in Figures 1 and 2 in op. cit. We can then define, for our chosen $h$, the following elements:
\[
\widetilde{\cL}_{h,m}:=\sum_{\sigma\in\widetilde{G}_m}h_{K,m}(\sigma)\sigma^{-1}\in\cO_L[\widetilde{G}_m].
\]

Let $\pi_{m+1,m}: \cO_L[\widetilde{G}_{m+1}]\rightarrow \cO_L[\widetilde{G}_{m}]$ be the projection map and $\widetilde{\xi}_m:\cO_L[\widetilde{G}_{m}]\rightarrow \cO_L[\widetilde{G}_{m+1}]$ the norm map. The proof of Lemma~2.6 of op. cit. gives
\begin{equation}\label{eq:norm-relation}
    \pi_{m+1,m}(\widetilde{\cL}_{h,m+1})=a_p(h)\widetilde{\cL}_{h,m}-\widetilde{\xi}_{m-1}\widetilde{\cL}_{h,m-1}.
\end{equation}

Recall that $\widetilde{G}_{m+1}\simeq\Delta\times G_m$, where $\Delta$ is a finite group independent of $m$.
Write $\cL_{h,m}$ for the image of $\widetilde{\cL}_{h,m+1}$ under the 
natural projection $\cO_L[\widetilde{G}_{m+1}]\rightarrow\cO_L[G_m]$. We also  denote the latter by $\pi_{m+1,m}$ 
and the norm map $\cO_L[{G}_{m}]\rightarrow \cO_L[{G}_{m+1}]$ by $\xi_m$. Then \eqref{eq:norm-relation} implies 
\begin{equation}
     \pi_{m+1,m}({\cL}_{h,m+1})=a_p(h){\cL}_{h,m}-{\xi}_{m-1}{\cL}_{h,m-1}.
\label{eq:norm-relation2}
\end{equation}

\subsection{Construction of Sprung-type matrices}
\label{subsec_Sprung_matrices}
In this section, we outline the construction of Sprung-type matrices based on \cite{sprung09,sprung17} and recall their basic properties.

{Let $$\Lambda=\varprojlim_m\cO_L[G_m]$$ be the Iwasawa algebra of $\Gamma$ over $\cO_L$}, and let us fix a topological generator $\gamma$ of $\Gamma\simeq \Zp$. We can identify $\Lambda$ with the power series ring $\cO_L\lb X\rb$, sending $\gamma-1$ to $X$. For an integer $m\ge0$, write $\omega_m=\gamma^{p^m}-1\in\Lambda$. Note that $\cO_L[G_m]$ maybe identified with $\Lambda/(\omega_m)$. We denote this ring by $\Lambda_m$.
For $m\ge1$, write $\Phi_{m}:=\dfrac{\omega_m}{\omega_{m-1}}$ for the $p^m$-th cyclotomic polynomial in the variable $\gamma$. 

\begin{defn}
\label{defn_2022_07_04_1735}
Let $B_h=\begin{pmatrix}a_p(h)&1\\-p&0\end{pmatrix}$. For an integer $m\ge1$, we write $C_{h,m}$ for the matrix $\begin{pmatrix}a_p(h)&1\\-\Phi_{m}&0\end{pmatrix}$ and define
\[
M_{h,m}=B_h^{-m-1}C_{h,m} \cdots C_{h,1} .
\]

We write $H_{h,m}$ for the $\Lambda$-morphism
\begin{align*}
\Lambda_m^2&\lra\Lambda_m^2\\
\begin{pmatrix}
x\\ y
\end{pmatrix}&\longmapsto C_{h,m}\cdots C_{h,1}\begin{pmatrix}
x\\ y
\end{pmatrix}
.
\end{align*}

\end{defn}
The matrix $C_{h,m+1}$ is congruent to $B_h$ modulo $\omega_m$ and this allows us to show that $M_{h,m}$ converges to a matrix $\Mlog\in M_{2\times 2}(\cH(\Gamma))$. Furthermore, there is a natural isomorphism
\begin{equation}\label{eq:inverselimit}
  \Lambda^2 \stackrel{\sim}{\lra}\varprojlim_m\Lambda_m^2/\ker (H_{h,m}) 
\end{equation}
induced by the natural projections $\LL\to \LL_m$, where the inverse limits are with respect to the maps $\LL_{m+1}^2/\ker(H_{h,m+1})\to \LL_m^2/\ker(H_{h,m})$ induced from the obvious surjections $\LL_{m+1}\to \LL_{m}$
(see \cite[Proposition~2.5 and Lemma~2.12]{buyukboduklei0}).

In what follows,  $R$ denotes either $\Lambda$ or $\LL/\varpi^n$ for some integer $n\ge1$. We let $R_m$ denote $R/(\omega_m)$, which is the group ring of $G_m$ with coefficients in $\cO_L$ or $\cO/\varpi^n$. By a slight abuse of notation, we write $\pi_{m+1,m}$ for the natural project map $R_{m+1}\rightarrow R_m$ and $\xi_{m}$ for the norm map $R_{m-1}\rightarrow R_{m}$. The map $H_{h,m}$ (composed with modulo $\varpi^n$ in the case of $R=\LL/\varpi^n$) defines an $R$-morphism on $R_m^2\rightarrow R_m^2$. As in \eqref{eq:inverselimit}, 
\begin{equation}\label{eq:inverselimit2}
  R^2 \stackrel{\sim}{\lra}\varprojlim_m R_m^2/\ker (H_{h,m}) .
\end{equation}

\begin{theorem}\label{thm:factorize-mod}
Let $R$  be either $\Lambda$ or $\LL/\varpi^n$ for some integer $n\ge1$. Let $F_m\in R_m, m\ge0$ be a sequence of elements satisfying the relation
\begin{equation}
\pi_{m+1,m}(F_{m+1})=a_p(h)F_m-\xi_{m-1}F_{m-1},m\ge1.
\label{eq:norm-relation-hyp}    
\end{equation}
Then there exist unique $F^\sharp,F^\flat\in R$ such that
\[
H_{h,m}\begin{pmatrix}
F^\sharp\\F^\flat
\end{pmatrix}\equiv \begin{pmatrix}
F_m\\-\xi_{m-1}F_{m-1}
\end{pmatrix}
\mod \omega_m.
\]
\end{theorem}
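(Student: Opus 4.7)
Plan. Our strategy is to exploit the isomorphism \eqref{eq:inverselimit2}, which identifies $R^2$ with $\varprojlim_m R_m^2/\ker(H_{h,m})$. Through this identification, producing the pair $(F^\sharp, F^\flat) \in R^2$ amounts to exhibiting, for each $m \geq 1$, a vector $\vec{w}_m \in R_m^2$ with $H_{h,m}(\vec{w}_m) = v_m$, where $v_m := (F_m, -\xi_{m-1} F_{m-1})^T$, and verifying that the classes $[\vec{w}_m] \in R_m^2/\ker(H_{h,m})$ form a compatible system under the transition maps. Uniqueness of $(F^\sharp, F^\flat)$ then follows at once from the isomorphism.

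The essential computational ingredient is the direct matrix identity
$$C_{h,m+1}\begin{pmatrix} F_m \\ F_{m+1} - a_p(h) F_m \end{pmatrix} = \begin{pmatrix} F_{m+1} \\ -\Phi_{m+1} F_m \end{pmatrix} = v_{m+1},$$
together with the observation, immediate from the norm relation \eqref{eq:norm-relation-hyp}, that the vector $\vec{u}_m := (F_m, F_{m+1} - a_p(h) F_m)^T \in R_{m+1}^2$ (with $F_m$ lifted arbitrarily to $R_{m+1}$) reduces modulo $\omega_m$ to $v_m$. Existence of $\vec{w}_m$ is established by induction on $m$: the base case $m = 1$ is explicit, taking $\vec{w}_1 := (F_0, F_1 - a_p(h) F_0)^T$; in the inductive step, it suffices to lift $\vec{w}_m$ to some $\vec{w}_{m+1} \in R_{m+1}^2$ satisfying $H_{h,m}(\vec{w}_{m+1}) = \vec{u}_m$, whence $H_{h,m+1}(\vec{w}_{m+1}) = C_{h,m+1}(\vec{u}_m) = v_{m+1}$ automatically. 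Such a lift is constructed by correcting an arbitrary lift of $\vec{w}_m$ by a term in $\omega_m R_{m+1}^2$, exploiting the congruence $C_{h,m+1} \equiv B_h \pmod{\omega_m}$, which follows from $\Phi_{m+1} \equiv p \pmod{\omega_m}$ (since $\Phi_{m+1} = \sum_{k=0}^{p-1}(1+X)^{k p^m}$ reduces to $p$ modulo $\omega_m$).

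For the compatibility of classes: if $\vec{w}_{m+1}$ satisfies $H_{h,m+1}(\vec{w}_{m+1}) = v_{m+1}$, then $H_{h,m}(\vec{w}_{m+1})$ solves $C_{h,m+1}(\cdot) = v_{m+1}$ and hence lies in the affine coset $\vec{u}_m + \ker(C_{h,m+1})$. A direct inspection shows that $\ker(C_{h,m+1}) = \{(x, -a_p(h) x)^T : \Phi_{m+1} x = 0\} \subseteq \omega_m R_{m+1}^2$, since the annihilator of $\Phi_{m+1}$ in $R_{m+1}$ equals $\omega_m R_{m+1}$ (as $\omega_{m+1} = \Phi_{m+1} \omega_m$). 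Projecting modulo $\omega_m$ therefore yields $H_{h,m}(\pi_{m+1,m}(\vec{w}_{m+1})) = \pi_{m+1,m}(\vec{u}_m) = v_m = H_{h,m}(\vec{w}_m)$, so $\pi_{m+1,m}(\vec{w}_{m+1}) \equiv \vec{w}_m \pmod{\ker(H_{h,m})}$, confirming compatibility.

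The main obstacle lies in the inductive existence step: verifying that the obstruction to producing $\vec{w}_{m+1} \in R_{m+1}^2$ with $H_{h,m}(\vec{w}_{m+1}) = \vec{u}_m$ actually vanishes. This requires a careful analysis of the image of $H_{h,m}$ in $R_{m+1}^2$ modulo $\Phi_{m+1}$, and it is here that the specific structure of the Sprung-type matrices plays an essential role. All remaining steps are essentially formal once this technical point is resolved, and the $R = \Lambda/\varpi^n$ case runs in parallel to the $R = \Lambda$ case without modification.
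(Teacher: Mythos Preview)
Your plan is structurally sound, and the compatibility argument you give is actually a clean alternative to the paper's: you analyse $\ker(C_{h,m+1})$ directly and show it lies in $\omega_m R_{m+1}^2$, whereas the paper reaches the same conclusion by multiplying through by $B_h$ and invoking the norm relation a second time. Both routes are valid.

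However, the gap you flag in the inductive existence step is real, and the hint you offer does not close it. You suggest correcting an arbitrary lift $\tilde{\vec w}_m\in R_{m+1}^2$ by an element of $\omega_m R_{m+1}^2$ so as to force $H_{h,m}(\tilde{\vec w}_m+\vec\delta)=\vec u_m$, ``exploiting the congruence $C_{h,m+1}\equiv B_h\pmod{\omega_m}$''. But the obstruction concerns the image of $H_{h,m}=C_{h,m}\cdots C_{h,1}$ in $R_{m+1}^2$, not $C_{h,m+1}$. Writing $\vec\delta=\omega_m\vec\delta'$, the condition becomes $H_{h,m}(\vec\delta')\equiv \vec y' \pmod{\Phi_{m+1}}$ for a prescribed $\vec y'$; since $\det(H_{h,m})=\Phi_1\cdots\Phi_m$ is \emph{not} a unit in $R/(\Phi_{m+1})$ (each $\Phi_i(\zeta_{p^{m+1}})$ has positive valuation), $H_{h,m}$ is far from surjective there, and no congruence for $C_{h,m+1}$ helps. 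One must verify that the \emph{specific} target lies in the image, and this is exactly where the paper's adjugate argument enters: working with lifts $\tilde F_m\in\Lambda$, one shows by induction that
\[
C_{h,1}'\cdots C_{h,m}'\begin{pmatrix}\tilde F_m\\ -\Phi_m\tilde F_{m-1}\end{pmatrix}\equiv 0\pmod{\Phi_1\cdots\Phi_m},
\]
using the norm relation \eqref{eq:norm-relation-hyp} in the inductive step. Over the UFD $\Lambda$ this divisibility is precisely what allows one to divide by $\Phi_1\cdots\Phi_m$ and produce $(\tilde F_m^\sharp,\tilde F_m^\flat)$. Without this (or an equivalent) computation, your existence argument cannot be completed.
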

\begin{proof}
Let $\tilde F_m\in\Lambda$ be a lift of $F_m$ under the natural projection map. We show that there exist $\tilde F_m^\sharp,\tilde F_m^\flat\in \Lambda$ such that
\begin{equation}\label{eq:existence}
    C_{h,m}\cdots C_{h,1}
\begin{pmatrix}
\tilde F_m^\sharp\\ \tilde F_m^\flat 
\end{pmatrix}= \begin{pmatrix}
\tilde F_{m}\\ -\Phi_{m} \tilde F_{m-1} 
\end{pmatrix}
\end{equation}

We prove \eqref{eq:existence} by induction. When $m=1$, just take $\tilde F_1^\sharp=\tilde F_0$  and $\tilde F_1^\flat=\tilde F_1-a_p(h)\tilde F_1^\sharp$.

Let $C_{h,m}'=\begin{pmatrix}
0&-1\\ \Phi_m&a_p(h)
\end{pmatrix}$ be the adjugate matrix of $C_{h,m}$, so that $C_{h,m}C_{h,m}'=C_{h,m}'C_{h,m}=\Phi_m I_2$, where $I_2$ is the $2\times 2$ identity matrix. The existence of  $\tilde F_m^\sharp$ and $\tilde F_m^\flat$ in \eqref{eq:existence} is equivalent 
\begin{equation}
\label{eq:adj-eqn}   C_{h,1}'\cdots C_{h,m}'\begin{pmatrix}
\tilde F_m\\-\Phi_{m} \tilde F_{m-1}
\end{pmatrix}\equiv 0\mod \Phi_1\cdots \Phi_m. 
\end{equation}

Let $m\ge 2$ and suppose that \eqref{eq:adj-eqn} holds on replacing $m$ by $m-1$. A direct calculation shows that
\[
 C_{h,1}'\cdots C_{h,m}'\begin{pmatrix}
\tilde F_m\\-\Phi_m\tilde F_{m-1}
\end{pmatrix}= C_{h,1}'\cdots C_{h,m-1}'\begin{pmatrix}
\Phi_{m}\tilde F_{m-1}\\ \Phi_m(\tilde F_m-a_p(h)\tilde F_{m-1})
\end{pmatrix},
\]
which is divisible by $\Phi_m$.
Furthermore, thanks to \eqref{eq:norm-relation-hyp}, \[
\tilde F_m\equiv a_p(h)\tilde F_{m-1}- \Phi_{m-1}\tilde F_{m-2}\mod \omega_{m-1}.
\]
Therefore,
\[
 C_{h,1}'\cdots C_{h,m}'\begin{pmatrix}
\tilde F_m\\-\Phi_m\tilde F_{m-1}
\end{pmatrix}\equiv p C_{h,1}'\cdots C_{h,m-1}'\begin{pmatrix}
\tilde F_{m-1}\\ -\Phi_{m-1}\tilde F_{m-2}
\end{pmatrix}\equiv 0\mod \Phi_1\cdots \Phi_{m-1}
\]
by our inductive hypothesis. Therefore, \eqref{eq:adj-eqn} holds, which implies \eqref{eq:existence}. In particular, there exist $F_m^\sharp,F_m^\flat\in R_m$ such that
\[
H_{h,m}\begin{pmatrix}
    F_m^\sharp\\  F_m^\flat
\end{pmatrix}=\begin{pmatrix}
    F_m\\-\xi_{m-1}F_{m-1}
\end{pmatrix}.
\]

Applying \eqref{eq:norm-relation-hyp} once again, we deduce that
\[
\pi_{m+1,m}\left(H_{h,m+1}\begin{pmatrix}
F_{m+1}^\sharp\\F_{m+1}^\flat
\end{pmatrix}\right)= \begin{pmatrix}
a_p(h)F_m-\xi_{m-1} F_{m-1}\\ pF_m
\end{pmatrix}=B_h\begin{pmatrix}
F_m\\ -\xi_{m-1} F_{m-1}
\end{pmatrix}.
\]
But the left-hand is also equal to $B_h\cdot H_{h,m}\left(\pi_{m+1,m}\begin{pmatrix}
F_{m+1}^\sharp\\F_{m+1}^\flat
\end{pmatrix}\right)$. Therefore, we deduce that
\[
\pi_{m+1,m}\begin{pmatrix}
F_{m+1}^\sharp\\F_{m+1}^\flat
\end{pmatrix}\equiv\begin{pmatrix}
F_{m}^\sharp\\F_m^\flat
\end{pmatrix}\mod \ker H_{h,m}
\]
and that the elements $F_m^\sharp,F_m^\flat$ result in a unique pair of elements in $R$ via the inverse limit \eqref{eq:inverselimit2}.
\end{proof}

\begin{theorem}\label{thm:factorize-L}
There exist unique $\cL^\sharp_h,\cL^\flat_h\in \Lambda$ such that for all $m\ge1$
\[
H_{h,n}
\begin{pmatrix}
\cL_h^\sharp\\ \cL_h^\flat 
\end{pmatrix}\equiv \begin{pmatrix}
\cL_{h,m}\\ -\xi_{m-1} (\cL_{h,m-1}) 
\end{pmatrix}\mod \omega_m.
\]
\end{theorem}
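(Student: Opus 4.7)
The plan is to deduce this statement as a direct application of Theorem~\ref{thm:factorize-mod} with $R = \Lambda$ and $F_m = \cL_{h,m}$. To do so, I need only verify that the sequence $\{\cL_{h,m}\}_{m\geq 0}$ satisfies the norm-compatibility hypothesis \eqref{eq:norm-relation-hyp}, namely
\[
\pi_{m+1,m}(\cL_{h,m+1}) = a_p(h)\,\cL_{h,m} - \xi_{m-1}\,\cL_{h,m-1}
\qquad \text{for } m\ge 1.
\]
But this is precisely the relation recorded in \eqref{eq:norm-relation2}, which was obtained by projecting \eqref{eq:norm-relation} along $\cO_L[\widetilde{G}_{m+1}] \to \cO_L[G_m]$.

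Once this verification is in place, Theorem~\ref{thm:factorize-mod} produces unique elements $\cL_h^\sharp,\cL_h^\flat \in \Lambda$ characterized by
\[
H_{h,m}\begin{pmatrix}\cL_h^\sharp\\ \cL_h^\flat\end{pmatrix}
\equiv \begin{pmatrix}\cL_{h,m}\\ -\xi_{m-1}(\cL_{h,m-1})\end{pmatrix} \pmod{\omega_m}
\]
for every $m\ge 1$, which is exactly the asserted factorization (with the evident correction that $H_{h,n}$ in the displayed statement should read $H_{h,m}$). The uniqueness follows from the isomorphism \eqref{eq:inverselimit} together with the fact that compatible systems modulo $\ker(H_{h,m})$ correspond bijectively to elements of $\Lambda^2$.

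There is essentially no independent obstacle at this stage: the real work has already been done in establishing \eqref{eq:norm-relation2} (via the calculation leading to \eqref{eq:norm-relation}) and in proving Theorem~\ref{thm:factorize-mod}. The present statement is the specialization of that general factorization mechanism to the distinguished input sequence $\{\cL_{h,m}\}$ coming from the quaternionic eigenform $h$, and the proof can be written in a few lines.
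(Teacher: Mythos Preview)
Your proposal is correct and matches the paper's own proof essentially verbatim: the paper simply says this is an immediate consequence of Theorem~\ref{thm:factorize-mod} together with the norm relation \eqref{eq:norm-relation} (from which \eqref{eq:norm-relation2} follows), and you have spelled out exactly that reduction. Your side remark that $H_{h,n}$ in the displayed statement should read $H_{h,m}$ is also on point.
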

\begin{proof}This is an immediate consequence of Theorem~\ref{thm:factorize-mod} and \eqref{eq:norm-relation}.\end{proof}
\begin{defn}
We define the following $p$-adic $L$-functions  \[L_p(h,K)^\sharp=\cL_h^\sharp\left(\cL_h^\sharp\right)^\iota\quad\text{and}\quad L_p(h,K)^\flat=\cL_h^\flat\left(\cL_h^\flat\right)^\iota,\]
where $\iota$ denotes the involution map on $\Lambda$ arising from the inversion on $\Gamma$.

When $h$  arises from our fixed weight two newform $f$,  we shall write $B_f$, $H_{f,m}$, $C_{f,m}$, $\cL_{f,m}$, $\cL_f^\sharp$, $\cL_f^\flat$, $L_p(f,K)^{\sharp}$ and $L_p(f,K)^{\flat}$ for the corresponding elements.
\end{defn}

\subsubsection{$p$-adic $L$-functions attached to modular forms modulo $\varpi^n$}
\label{subsubsec_padic_L_mod_p}
In the notation \S\ref{subsec_prelim_quoternionic_padic_L_functions},  choose a Hecke eigenfom $h\in S_2(\cT/\bGamma,\cO_L/(\varpi^n))$ such that the $U_p$-operator acts with the eigenvalue $a_p(h)\equiv 0 \mod \varpi$. 

Then the discussion in \S\ref{subsec_prelim_quoternionic_padic_L_functions} carries over verbatim leading to  elements $$\cL_{h,m}\in \LL_{m,n}:=\LL_{m}/(\varpi^n).$$ Furthermore,  Theorem~\ref{thm:factorize-mod} implies that there exist unique elements $\cL_h^\sharp, \cL_h^\flat\in \LL/(\varpi^n)$ verifying
\[\label{eq:decompo-mod}
H_{h,m}
\begin{pmatrix}
\cL_h^\sharp\\ \cL_h^\flat 
\end{pmatrix}\equiv \begin{pmatrix}
\cL_{h,m}\\ -\xi_{m-1} \cL_{h,m-1} 
\end{pmatrix}\mod \omega_m\,.
\]
(Strictly speaking, we have introduced $H_{h,m}$ only when $h$ is defined over $\cO_L$ in Definition~\ref{defn_2022_07_04_1735}. One may define the matrices $C_{h,m}$ and carry out the subsequent calculations in a similar manner when the matrices are defined over $\cO_L/(\varpi^n)$.)

\begin{lemma}
Let $1\le n\le n'\le \infty$. Suppose that  $h\in S_2(\cT/\bGamma,\cO_L/(\varpi^n))$ and $h'\in S_2(\cT/\bGamma,\cO_L/(\varpi^{n'}))$ (where we have taken the convention that $\cO_L/(\varpi^\infty)$ means $\cO_L$) are Hecke eigenforms such that $a_p(h)\equiv a_p(h')\equiv 0\mod\varpi$ and that $h\equiv h'\mod \varpi^n$. Then for $\bullet\in\{\sharp,\flat\}$, we have
\[
\cL_h^\bullet\equiv \cL_{h'}^\bullet\mod \varpi^n\Lambda.
\]
\end{lemma}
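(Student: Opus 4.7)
The plan is to reduce the claim to the uniqueness assertion in Theorem~\ref{thm:factorize-mod} applied with $R = \Lambda/\varpi^n$.

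First, I would observe that the hypothesis $h \equiv h' \mod \varpi^n$ implies, directly from the definition $h_{K,m}(\alpha) = h(\alpha \star v_m)$, that
\[
\cL_{h,m} \equiv \cL_{h',m} \mod \varpi^n \Lambda_m
\]
for every $m \ge 0$, and that $a_p(h) \equiv a_p(h') \mod \varpi^n$. In particular the matrices $C_{h,m}$ and $C_{h',m}$ (and thus the maps $H_{h,m}$ and $H_{h',m}$) coincide after reduction modulo $\varpi^n$.

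Next, let $\overline{\cL_{h'}^\bullet} \in \Lambda/\varpi^n$ denote the reduction of $\cL_{h'}^\bullet$ modulo $\varpi^n$ (when $n' < \infty$ this uses $n \le n'$; when $n' = \infty$ the reduction is taken from $\Lambda$). Reducing the defining congruences of Theorem~\ref{thm:factorize-L} (or its mod-$\varpi^{n'}$ analogue recalled in \S\ref{subsubsec_padic_L_mod_p}) and using the two observations above, I obtain
\[
H_{h,m} \begin{pmatrix} \overline{\cL_{h'}^\sharp} \\ \overline{\cL_{h'}^\flat} \end{pmatrix} \equiv \begin{pmatrix} \cL_{h,m} \\ -\xi_{m-1} \cL_{h,m-1} \end{pmatrix} \mod \omega_m
\]
in $(\Lambda/\varpi^n)_m$, since all the relevant data on both sides have been identified modulo $\varpi^n$. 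Thus $(\overline{\cL_{h'}^\sharp}, \overline{\cL_{h'}^\flat})$ satisfies exactly the same system of congruences that characterises $(\cL_h^\sharp, \cL_h^\flat)$ in $\Lambda/\varpi^n$.

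Finally, applying the uniqueness part of Theorem~\ref{thm:factorize-mod} with $R = \Lambda/\varpi^n$ to the sequence $F_m := \cL_{h,m}$ (which satisfies \eqref{eq:norm-relation-hyp} by the norm relation \eqref{eq:norm-relation2}), I conclude $\overline{\cL_{h'}^\bullet} = \cL_h^\bullet$ in $\Lambda/\varpi^n$ for $\bullet \in \{\sharp, \flat\}$, which is the desired congruence. There is no real obstacle here: all the work has been done inside Theorem~\ref{thm:factorize-mod}, and the only point that requires a little care is keeping track of the coefficient rings $\cO_L/(\varpi^n) \subseteq \cO_L/(\varpi^{n'})$ when $n < n' < \infty$ so that the reduction maps $\Lambda/(\varpi^{n'}) \to \Lambda/(\varpi^n)$ are compatible with the projections $\Lambda_{m,n'} \to \Lambda_{m,n}$ used to describe the sequences $\cL_{h,m}, \cL_{h',m}$.
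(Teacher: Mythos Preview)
The proposal is correct and follows essentially the same approach as the paper: both argue that the congruence $h\equiv h'\bmod\varpi^n$ forces $\cL_{h,m}\equiv\cL_{h',m}\bmod\varpi^n$ and $H_{h,m}\equiv H_{h',m}\bmod\varpi^n$, and then invoke the uniqueness in Theorem~\ref{thm:factorize-mod} with $R=\Lambda/\varpi^n$. Your write-up is in fact a bit more explicit than the paper's about why the matrices agree modulo $\varpi^n$, which is a helpful clarification.
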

\begin{proof}
It is apparent from their construction that
\[
\cL_{g,m}\equiv \cL_{g',m}\mod (\varpi^n,\omega_m)
\]
for all $m\ge0$. It then follows from \eqref{eq:decompo-mod} that
\[
H_{h,m}
\begin{pmatrix}
\cL_h^\sharp\\ \cL_h^\flat 
\end{pmatrix}\equiv H_{h,m}
\begin{pmatrix}
\cL_{h'}^\sharp\\ \cL_{h'}^\flat 
\end{pmatrix}\mod (\varpi^n,\omega_m)\,.
\]
The result now follows from the uniqueness of $\cL_h^\sharp$ and $ \cL_h^\flat $.
\end{proof}

\subsection{Non-vanishing of $p$-adic $L$-functions}

Let $\alpha$ and $\beta$ be the roots of the Hecke polynomial of $f$ at $p$.

Fix $\lambda\in\{\alpha,\beta\}$ and consider the $p$-stabilized elements
\[
\cL_{f,m}^{\lambda}:=\frac{1}{\lambda^{m+1}}\left(\cL_{f,m}-\frac{1}{\lambda}\xi_{m-1} (\cL_{f,m-1})\right).
\]
Then it follows from \eqref{eq:norm-relation2} that $$\pi_{m+1,m}\left(\cL_{f,m
+1}^{\lambda}\right)=\cL_{f,m}^{\lambda}.$$
Thus, by \cite[Lemme~1.2.1]{perrinriou94}, the sequence $\left(\cL_{f,m}^\lambda\right)_{m\ge0}$ converges to an element $$\cL_f^\lambda\in\cH(\Gamma)$$  where $\cH(\Gamma)$ denotes the set of power series in $L\lb X\rb$ that converge in the open unit disk. In fact, since $\ord_p(\lambda)<1$, this element is of growth rate $o(\log)$.

\begin{lemma}\label{lem:non-zero-paL}
For $\lambda\in\{\alpha,\beta\}$, $\cL_f^\lambda\ne0$.
\end{lemma}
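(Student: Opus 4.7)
The plan is as follows. Since $\cL_f^\lambda$ is an element of $\cH(\Gamma)$, it suffices to exhibit a single finite-order character $\chi$ of $\Gamma$ for which $\chi(\cL_f^\lambda) \neq 0$. I will achieve this by relating such values to toric periods on the definite quaternion algebra and then invoking a classical non-vanishing theorem for central $L$-values.

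Fix a character $\chi \colon \Gamma \to \overline{\QQ}_p^\times$ of conductor exactly $p^m$. The norm-compatibility $\pi_{m+1,m}(\cL_{f,m+1}^\lambda) = \cL_{f,m}^\lambda$ together with continuity of the evaluation-at-$\chi$ map identifies $\chi(\cL_f^\lambda)$ with $\chi(\cL_{f,m}^\lambda)$. Because $\chi$ is non-trivial on $\ker(G_m \twoheadrightarrow G_{m-1})$, the sum defining the norm map $\xi_{m-1}$ collapses under $\chi$, so that $\chi(\xi_{m-1}(y)) = 0$ for every $y \in \cO_L[G_{m-1}]$. The definition of $\cL_{f,m}^\lambda$ then yields
\[
\chi(\cL_f^\lambda) \;=\; \lambda^{-(m+1)}\, \chi(\cL_{f,m}) \;=\; \lambda^{-(m+1)} \sum_{\sigma \in \widetilde{G}_m} f(\sigma \star v_m)\, \chi^{-1}(\sigma),
\]
and the right-hand side is, up to the non-zero factor $\lambda^{-(m+1)}$, a toric period associated to $\Psi$ and $\chi$.

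Next I would invoke a Gross-style Waldspurger formula in the definite quaternionic setting (in the spirit of the works of Bertolini--Darmon and Chida--Hsieh) to express the square of this toric period as an explicit non-zero multiple of the central value $L(f/K \otimes \chi, 1)$. Under the hypothesis \eqref{item_def}, the sign of the functional equation for $f/K \otimes \chi$ is $+1$, so this $L$-value need not vanish. The non-vanishing theorem of Vatsal (with refinements due to Cornut--Vatsal) guarantees that for all sufficiently large $m$ there exist characters $\chi$ of $\Gamma$ of conductor $p^m$ for which $L(f/K \otimes \chi, 1) \neq 0$. For any such $\chi$ we conclude $\chi(\cL_f^\lambda) \neq 0$, and hence $\cL_f^\lambda \neq 0$.

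The principal point of care is matching normalizations: the toric period $\sum_\sigma f(\sigma \star v_m)\chi^{-1}(\sigma)$ arising from the Bruhat--Tits tree construction (with the specific choice of vertices $v_m$ and embedding $\Psi$ made in \S\ref{subsec_prelim_quoternionic_padic_L_functions}) must be identified, up to explicit non-zero local factors, with the one appearing in the Gross--Waldspurger formula that feeds into Vatsal's theorem. The analytic step is essentially bookkeeping; once the normalization is in place, the non-vanishing conclusion is an immediate appeal to the literature.
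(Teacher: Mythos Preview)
Your argument is correct and follows essentially the same approach as the paper: both evaluate $\cL_f^\lambda$ at finite-order characters, relate the resulting toric periods to central $L$-values $L(f_K\otimes\chi,1)$ via a Gross--Waldspurger-type formula (the paper cites the end of \cite[\S1]{BertoliniDarmon2005} for this), and then invoke Vatsal's non-vanishing theorem. One small bookkeeping slip: $\cL_{f,m}$ is by definition the image of $\widetilde{\cL}_{f,m+1}$ in $\cO_L[G_m]$, so your displayed sum should be over $G_m$ (with vertex $v_{m+1}$), not $\widetilde{G}_m$ with $v_m$; this does not affect the argument.
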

\begin{proof}
As discussed towards the end of \cite[\S1]{BertoliniDarmon2005}, if   $\chi$ is a finite order character of $\Gamma$, then
\[
\cL_{f}^\lambda\cL_{f}^{\lambda,\iota}(\chi)\stackrel{\cdot}{=}\frac{L(f_{K}\otimes\chi,1)}{\sqrt{\mathrm{Disc}(K)}\Omega_f},
\]
where $\Omega_f$ is the Peterson inner product of $f$ with itself and $\stackrel{\cdot}{=}$ signifies an equality up to a non-zero algebraic  fudge factor. The main result of \cite{Vatsal2002} shows that $L(f_{K}\otimes\chi,1)\ne0$ for all but finitely many $\chi$. 
\end{proof}

\begin{theorem}
\label{thm_3_9_2023_12_05}
    At least one of the two elements $\cL_f^\sharp$ and $\cL_f^\flat$ is non-zero. If $a_p(f)=0$, then both are non-zero.
\end{theorem}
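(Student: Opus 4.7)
The plan is to factor $\cL_f^\lambda$ ($\lambda\in\{\alpha,\beta\}$) through $\cL_f^\sharp,\cL_f^\flat$ using the left eigenvector structure of $B_f$, and then to rule out $\cL_f^\alpha=\pm\cL_f^\beta$ in the case $a_p(f)=0$ via primitive-character interpolation and Vatsal's non-vanishing.

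I first observe that $(1,\lambda^{-1})$ is a left $\lambda$-eigenvector of $B_f$. Multiplying the congruence of Theorem~\ref{thm:factorize-L} on the left by $(1,\lambda^{-1})B_f^{-m-1}$ and using the identity $B_f^{m+1}M_{f,m}=H_{f,m}$ yields
\[
(1,\lambda^{-1})M_{f,m}\begin{pmatrix}\cL_f^\sharp\\ \cL_f^\flat\end{pmatrix}\equiv \lambda^{-m-1}\bigl(\cL_{f,m}-\lambda^{-1}\xi_{m-1}\cL_{f,m-1}\bigr)=\cL_{f,m}^\lambda\pmod{\omega_m}.
\]
Passing to the limit as $m\to\infty$, via $M_{f,m}\to M_{f,\log}$ in $M_{2\times 2}(\cH(\Gamma))$ together with $\cL_{f,m}^\lambda\to\cL_f^\lambda$ recalled before Lemma~\ref{lem:non-zero-paL}, gives the factorization
\begin{equation}\label{eq:plan-factor}
(1,\lambda^{-1})\,M_{f,\log}\begin{pmatrix}\cL_f^\sharp\\ \cL_f^\flat\end{pmatrix}=\cL_f^\lambda.
\end{equation}
The first assertion is then immediate: if $\cL_f^\sharp=\cL_f^\flat=0$, then \eqref{eq:plan-factor} forces $\cL_f^\lambda=0$, contradicting Lemma~\ref{lem:non-zero-paL}.

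For the second assertion I next compute $M_{f,\log}$ explicitly when $a_p(f)=0$. In that case both $B_f=\begin{pmatrix}0 & 1\\ -p & 0\end{pmatrix}$ and every $C_{f,m}=\begin{pmatrix}0 & 1\\ -\Phi_m & 0\end{pmatrix}$ are antidiagonal, so a short parity argument shows each $M_{f,m}$ is antidiagonal, whence $M_{f,\log}=\begin{pmatrix}0 & A_2\\ A_3 & 0\end{pmatrix}$, with $A_2$ and $A_3$ essentially Pollack's half-logarithms $\prod_{i\ge 1}\Phi_{2i}/(-p)$ and $\prod_{i\ge 1}\Phi_{2i-1}/(-p)$ (up to units), both non-zero in $\cH(\Gamma)$. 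Substituting in \eqref{eq:plan-factor} with $\beta=-\alpha$:
\[
\cL_f^\alpha=\tfrac{A_3}{\alpha}\cL_f^\sharp+A_2\cL_f^\flat,\qquad \cL_f^\beta=-\tfrac{A_3}{\alpha}\cL_f^\sharp+A_2\cL_f^\flat,
\]
so $\cL_f^\sharp=0$ (resp.\ $\cL_f^\flat=0$) is equivalent to $\cL_f^\alpha=\cL_f^\beta$ (resp.\ $\cL_f^\alpha=-\cL_f^\beta$).

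To rule out $\cL_f^\alpha=\epsilon\cL_f^\beta$ for $\epsilon\in\{\pm 1\}$: for any primitive character $\chi$ of exact conductor $p^m$ one has $\chi\circ\xi_{m-1}=0$, so $\cL_f^\lambda(\chi)=\lambda^{-m-1}\chi(\cL_{f,m})$. Using $\beta=-\alpha$, the hypothetical identity evaluated at $\chi$ reads $\alpha^{-m-1}\chi(\cL_{f,m})=\epsilon(-\alpha)^{-m-1}\chi(\cL_{f,m})$, forcing $\chi(\cL_{f,m})=0$ whenever $\epsilon\ne(-1)^{m+1}$—hence for \emph{every} primitive $\chi$ of $G_m$, for every $m$ of one fixed parity. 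Combined with the interpolation $\cL_f^\lambda\cL_f^{\lambda,\iota}(\chi)\stackrel{\cdot}{=}L(f_K\otimes\chi,1)/(\sqrt{\mathrm{Disc}(K)}\Omega_f)$ from the proof of Lemma~\ref{lem:non-zero-paL}, this forces $L(f_K\otimes\chi,1)=0$ for infinitely many primitive anticyclotomic $\chi$, contradicting \cite{Vatsal2002}. The main technical obstacle is the explicit antidiagonality of $M_{f,\log}$ (and the non-vanishing of its entries) in the $a_p(f)=0$ case; once that is established, the primitive-character argument together with Vatsal's theorem closes the proof cleanly.
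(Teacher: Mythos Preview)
Your proof is correct and takes essentially the same approach as the paper: your left-eigenvector factorization is the row-by-row form of the paper's identity $\begin{pmatrix}\cL_f^\alpha\\ \cL_f^\beta\end{pmatrix}=Q_f^{-1}M_{f,\log}\begin{pmatrix}\cL_f^\sharp\\ \cL_f^\flat\end{pmatrix}$, since the rows of $Q_f^{-1}$ are exactly $(1,\alpha^{-1})$ and $(1,\beta^{-1})$. For the $a_p(f)=0$ case the paper simply cites \cite[Corollary~5.11]{pollack03}, and your explicit antidiagonal computation plus the parity argument on primitive characters is precisely Pollack's argument adapted to the anticyclotomic setting (with Vatsal replacing Rohrlich).
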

\begin{proof}
Let $Q_f=\frac{1}{\alpha-\beta}\begin{pmatrix}
    \alpha&-\beta\\-p&p
\end{pmatrix}$. Then,
\[
B_f^{m+1}Q_f=\begin{pmatrix}
    \cL_{f,m}\\-\xi_{m-1}\cL_{f,m-1}
\end{pmatrix}\equiv C_{f,m}\cdots C_{f,1}\begin{pmatrix}
    \cL_{f}^\sharp\\\cL_f^\flat
\end{pmatrix}\mod \omega_m.
\]
Letting $m\rightarrow\infty$ gives
\begin{equation}\label{eq:L-linear-combo}
    \begin{pmatrix}
    \cL_f^\alpha\\ \cL_f^\beta
\end{pmatrix}=Q_f^{-1}M_{f,\log}\begin{pmatrix}
    \cL_f^\sharp\\ \cL_f^\flat
\end{pmatrix}.
\end{equation}
By Lemma~\ref{lem:non-zero-paL}, both $\cL_f^\alpha$ and $\cL_f^\beta$ are non-zero and so it cannot happen that $\cL_f^\sharp=\cL_f^\flat=0$, proving the first assertion of the theorem. 

When $a_p(f)=0$, we may proceed just as in \cite[proof of Corollary~5.11]{pollack03} to show that both $\cL_f^\sharp$ and $\cL_f^\flat$ are non-zero.
\end{proof}

\begin{corollary}\label{cor:non-zero}
        At least one of the two elements $L_p(f,K)^\sharp$ and $L_p(f,K)^\flat$ is non-zero. If $a_p(f)=0$, then both are non-zero.
\end{corollary}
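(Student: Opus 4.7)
The plan is to deduce Corollary~\ref{cor:non-zero} as a direct consequence of Theorem~\ref{thm_3_9_2023_12_05}, using only two elementary algebraic facts about the Iwasawa algebra $\Lambda$ and the involution $\iota$.

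First I would recall the defining formula
\[
L_p(f,K)^\bullet \;=\; \cL_f^\bullet \cdot (\cL_f^\bullet)^\iota, \qquad \bullet \in \{\sharp,\flat\},
\]
so that it suffices to transfer the non-vanishing statements for $\cL_f^\sharp, \cL_f^\flat$ supplied by Theorem~\ref{thm_3_9_2023_12_05} to the corresponding products. Two observations make this transfer immediate: (i) $\Lambda \cong \cO_L\lb X\rb$ is an integral domain, hence a product of elements of $\Lambda$ vanishes if and only if one of the factors vanishes; (ii) the involution $\iota$ is a ring automorphism of $\Lambda$ (induced by the automorphism $\gamma \mapsto \gamma^{-1}$ of $\Gamma$), so $(\cL_f^\bullet)^\iota = 0$ if and only if $\cL_f^\bullet = 0$.

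Combining (i) and (ii), we see that $L_p(f,K)^\bullet \neq 0$ precisely when $\cL_f^\bullet \neq 0$. Both assertions of the corollary then follow by invoking the corresponding assertions in Theorem~\ref{thm_3_9_2023_12_05}: at least one of $\cL_f^\sharp, \cL_f^\flat$ is non-zero in general, and both are non-zero when $a_p(f) = 0$.

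There is no genuine obstacle to this argument; the entire content has already been packaged into Theorem~\ref{thm_3_9_2023_12_05}, and the step from $\cL_f^\bullet$ to $L_p(f,K)^\bullet$ is purely formal. If any technical care is needed, it is only to note that $\iota$ really does extend to a continuous automorphism of $\Lambda$ (and not merely of its field of fractions), but this is standard since $\iota$ is defined on the topological generator $\gamma$ of $\Gamma$ and extends by continuity to $\Lambda = \cO_L\lb \Gamma\rb$.
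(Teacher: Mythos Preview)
Your proof is correct and follows the same approach as the paper, which simply states that the corollary follows immediately from Theorem~\ref{thm_3_9_2023_12_05} and the definition of $L_p(f,K)^{\sharp/\flat}$. You have merely made explicit the two trivial algebraic facts (that $\Lambda$ is a domain and that $\iota$ is an automorphism) that the paper leaves implicit.
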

\begin{proof}
    This follows immediately from the previous theorem and the definition of $L_p(f,K)^{\sharp/\flat}$.
\end{proof}
\begin{remark}
    {Let $\chi$ be a finite character of $\Gamma$. As in the cyclotomic setting (cf.~\cite[(3.4)--(3.6)]{kobayashi03} and \cite[Corollary~6.6]{sprung09}), an explicit linear combination\footnote{It is obtained by evaluating \eqref{eq:L-linear-combo} at the character $\chi$.} of $L_p(f,K)^{\sharp}(\chi)$ and $L_{p}(f,K)^{\flat}(\chi)$ is related to the $L$-value $L(f_{K}\otimes\chi,1)$. For instance, if $a_p(f)=0$, then $L_p(f,K)^{\sharp/\flat}(\chi)$ is an explicit non-zero multiple of $L(f_{K}\otimes\chi,1)$ for $\chi$ of conductor a power of $p$ with exponent of a certain parity. See \cite[proofs of Lemmas~5.12 and~7.2]{BBL2} for an explicit description when $\chi$ is the trivial character.}
\end{remark}
\section{$Q$-systems and Coleman maps}\label{sec:coleman}
Throughout this section, let $\fp$ be a fixed prime of $K$ above $p$. 
We also denote the unique prime of $K_m$ above $\fp$ by the same notation. The completion of $K_m$ at $\fp$ will be denoted by $k_m$. Our goal is to discuss the construction of Coleman maps over $k_\infty/k_0$ using the concept of $Q$-systems, which can be regarded as a generalization of Kobayashi's construction of plus and minus Coleman maps over the $p$-adic cyclotomic extension of $\Qp$ for elliptic curves $E$ with $a_p(E)=0$ in \cite{kobayashi03} (which has also been generalized by Sprung to the case $a_p(E)\ne0$ in \cite{sprung09}).

\subsection{Definition of Coleman maps}
\label{subsec_4_1_2022_09_27_1703}
As in \S\ref{subsec_prelim_quoternionic_padic_L_functions}, we fix a {$\varpi$-indivisible Hecke eigenform $h\in S_2(\cT/\bGamma,\cO_L)$} with $a_p(h)\equiv 0\mod \varpi$. Furthermore, when $p$ is inert in $K$, we assume that $T_h$ is the $p$-adic Tate module of an elliptic curve $E_h/\QQ$.

Given a finite extension $L/K_\fp$, let $H^1_\f(L,T_h)\subset H^1(L,T_h)$ denote the Bloch--Kato subgroup. For an integer $n\ge1$, the image of $H^1_\f(L,T_h)$ in $H^1(L,\Thn)$ will be denoted by $H^1_\f(L,\Thn)$.

\begin{defn}\label{def:Q}
Let $1\le n\le \infty$. We say that $(d_m)_{m\ge0}$ is a  \textbf{primitive $Q$-system} for the representation $\Thn$  (where $T_{h,\infty}$ means $T_h$) if   
\begin{enumerate}\setlength\itemsep{0.5em}
         \item $d_m\in H^1_\f(k_m,\Thn)$ for all $m\ge0$; 
     \item $d_0\notin\varpi H^1_\f(k_0,\Thn)$;
    \item $\cor_{k_1/k_0}(d_1)\notin\varpi H^1_\f(k_0,\Thn)$;
        \item $\cor_{k_{m+1}/k_m}(d_{m+1})=a_p(h) d_m-\res_{k_{m-1}/k_m}(d_{m-1})$ for all $m\ge1$.
    \end{enumerate}
\end{defn}

\begin{defn}
Let $1\le n\le \infty$ and $m\ge0$. We write $\Lambda_{m,n}=\Lambda/(\omega_m,\varpi^n)$ where the convention for $n=\infty$ is that $\varpi^n=0$. For $c\in H^1(k_m,\Thn)$, define the Perrin-Riou pairing
\begin{align*}
    P_c:H^1(k_m,\Thn)&\lra \cO_{k_0}\otimes_{\Zp}\Lambda_{m,n}\\
    z&\longmapsto \sum_{\sigma\in G_m}\langle z^{\sigma^{-1}},c\rangle_{m,n}\cdot \sigma,
\end{align*}
where $\langle-,-\rangle_{m,n}$ is defined as follows. If $p$ is split in $K$, $\langle-,-\rangle_{m,n}$ is given by the cup-product pairing 
$$ H^1(k_m,\Thn)\times H^1(k_m,\Thn)\xrightarrow{\cup} H^2(k_m,\cO_L/(\varpi^n)(1))\xrightarrow{\sim} \cO_L/(\varpi^n)\,,$$
{whereas if $p$ is inert in $K$, it is given by 
$$ H^1(k_m,\Thn)\times H^1(k_m,\Thn)\xrightarrow{\cup} H^2(k_m,\cO_{k_0}/(\varpi^n)(1))\xrightarrow{\sim} \cO_{k_0}/(\varpi^n)\,.$$
(The latter relies on the fact that $\Thn$ is equipped with an $\cO_{k_0}$-module structure inherited from the height two Lubin--Tate formal group attached to $E_h$ at $p$, which leads to identification of the $G_{k_m}$-representation $\Thn$ with $\Hom_{\cO_{k_0}}(\Thn,\cO_{k_0}/(\varpi^n))(1)$.})
\end{defn}

Note that the map $P_c$ is a $\Lambda$-morphism.

For notational simplicity, we shall write $\Lambda_{m,n}'$ for the tensor product $\cO_{k_0}\otimes_{\Zp}\Lambda_{m,n}$ from now on. Similarly, write $\Lambda'=\cO_{k_0}\otimes_{\Zp}\Lambda$, $L'=k_0\otimes_{\Zp} L$ and $\cO_L'=\cO_L\otimes_{\Zp}\cO_{k_0}$. Note that when $p$ splits in $K$, then $\Lambda_{m,n}'=\Lambda_{m,n}$,  $\Lambda'=\Lambda$, $L'=L$ and $\cO_L'=\cO_L$. In the inert case, we have $\Lambda_{m,n}'=\cO_{k_0}/(\varpi^n)[G_m]$, $\Lambda'=\cO_{k_0}\lb \Gamma\rb$, $L'=k_0$ and $\cO_L=\cO_{k_0}$. 

\begin{defn}\label{def:Col}
Let $R$ (resp. $R_{m}$) to be either $\Lambda/(\varpi^n)$ or $\Lambda'/(\varpi^n)$ (resp. $\Lambda_{m,n}$ or $\Lambda_{m,n}'$) depending on whether $p$ is split or inert in $K$.

Suppose that $\bd=(d_m)_{m\ge 0}$ is a primitive $Q$-system for $\Thn$. We define a family of $R$-morphisms 
\[\col_{\bd,m}: \HIw (k_\infty,\Thn)\rightarrow R_m
\]by sending $\bz=(z_m)_{m\ge0}$ to $P_{d_m}(z_m)$.
\end{defn}

For the rest of this section, 
fix $0\le n\le \infty$ and a primitive $Q$-system $\bd$ for $\Thn$.

\begin{lemma}\label{lem:norm-col-d}
For all $\bz=(z_m)_{m\ge0}\in \HIw (k_\infty,\Thn)$, we have
\[
\pi_{m+1,m}(\col_{\bd,m+1}(\bz))=a_p(h)\col_{\bd,m}(\bz)-\xi_{m-1}\col_{\bd,m-1}(\bz).
\]
\end{lemma}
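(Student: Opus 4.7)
The plan is to verify this identity by directly unpacking the definition of each Coleman map as a sum over group elements and manipulating the resulting cup-product expressions using two standard pieces of structure: (i) the Galois-equivariance $\langle x^\delta, y^\delta\rangle = \langle x, y\rangle$ of the local cup-product pairing, and (ii) the adjointness $\langle \cor_{k'/k}(x), y\rangle_{k} = \langle x, \res_{k'/k}(y)\rangle_{k'}$. The norm relation for $\bd$ (property (4) in Definition~\ref{def:Q}) and the tower-compatibility $\cor_{k_{m+1}/k_m}(z_{m+1})=z_m$ built into $\bz\in\HIw(k_\infty,\Thn)$ are the two ingredients driving the computation.

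Concretely, I would first fix $\sigma\in G_m$ and examine the coefficient of $\sigma$ in $\pi_{m+1,m}(\col_{\bd,m+1}(\bz))$. By definition of $\pi_{m+1,m}$, this coefficient is
\[
\sum_{\substack{\tau\in G_{m+1}\\ \tau|_{K_m}=\sigma}}\langle z_{m+1}^{\tau^{-1}}, d_{m+1}\rangle_{m+1,n}.
\]
Choosing a lift $\widetilde\sigma\in G_{m+1}$ of $\sigma$ and writing $\tau=\widetilde\sigma\delta$ for $\delta\in\Gal(K_{m+1}/K_m)$, the Galois-equivariance of the cup product converts the inner sum into
\[
\Big\langle z_{m+1}^{\widetilde\sigma^{-1}},\, \sum_{\delta} d_{m+1}^{\delta}\Big\rangle_{m+1,n}
=
\big\langle z_{m+1}^{\widetilde\sigma^{-1}},\, \res_{k_{m+1}/k_m}\cor_{k_{m+1}/k_m}(d_{m+1})\big\rangle_{m+1,n}.
\]
Applying adjointness and the Iwasawa-tower compatibility of $\bz$, this rewrites as $\langle z_m^{\sigma^{-1}},\cor_{k_{m+1}/k_m}(d_{m+1})\rangle_{m,n}$.

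At this point I would substitute the primitive $Q$-system relation
\[
\cor_{k_{m+1}/k_m}(d_{m+1}) = a_p(h)\,d_m - \res_{k_{m-1}/k_m}(d_{m-1}).
\]
Summing over $\sigma\in G_m$, the $a_p(h)d_m$ term manifestly contributes $a_p(h)\col_{\bd,m}(\bz)$. For the remaining term, I would apply adjointness in the opposite direction to obtain $\langle \cor_{k_m/k_{m-1}}(z_m^{\sigma^{-1}}),d_{m-1}\rangle_{m-1,n}=\langle z_{m-1}^{(\sigma|_{K_{m-1}})^{-1}},d_{m-1}\rangle_{m-1,n}$; gathering $\sigma$ according to its image in $G_{m-1}$ recognizes this as the norm-induced element $\xi_{m-1}\col_{\bd,m-1}(\bz)$, yielding the claimed identity.

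The argument is essentially formal; the only place requiring some care is checking that everything is consistent in both the split and inert cases. In the split setting, the pairing takes values in $\cO_L/(\varpi^n)$ and the manipulations reduce to the standard formalism of Perrin-Riou pairings. In the inert setting, the pairing lands in $\cO_{k_0}/(\varpi^n)$ and one must confirm that Galois-equivariance and corestriction-adjointness still hold with respect to the $\cO_{k_0}$-module structure on $\Thn$ induced by the Lubin--Tate action; this is fine because $k_m/k_0$ is totally ramified, so $G_m$ acts trivially on $\cO_{k_0}$ and the proof of the usual adjointness goes through verbatim with coefficients in $\cO_{k_0}/(\varpi^n)$. This compatibility step is the only mild subtlety; the rest of the verification is a routine rearrangement of sums.
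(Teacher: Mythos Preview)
Your proof is correct and follows exactly the approach the paper indicates: the paper's own proof is the one-line statement that the lemma follows from condition~(4) in Definition~\ref{def:Q} together with standard properties of the cup product, and your argument simply unpacks those standard properties (Galois-equivariance and cor/res adjointness) in detail. The only thing you add beyond the paper is the explicit coefficient-by-coefficient bookkeeping and the remark on the inert case, both of which are routine.
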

\begin{proof}
This follows from condition (4) in Definition~\ref{def:Q} and standard properties of the cup product.
\end{proof}

\begin{corollary}\label{cor:defn-col}
There exist unique $R$-morphisms
\[
\col_\bd^\sharp,\col_\bd^\flat: \HIw(k_\infty,\Thn)\lra R
\]
such that \[
H_{h,m}
\begin{pmatrix}
\col_\bd^\sharp(\bz)\\ \col_\bd^\flat (\bz)
\end{pmatrix}\equiv \begin{pmatrix}
\col_{\bd,m}(\bz)\\ -\xi_{m-1} \col_{\bd,m-1} (\bz)
\end{pmatrix}\mod \omega_m.
\]
\end{corollary}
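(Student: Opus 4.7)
The plan is to deduce this corollary directly from Theorem~\ref{thm:factorize-mod} by applying it to the sequence of elements $F_m := \col_{\bd,m}(\bz) \in R_m$ for a fixed Iwasawa class $\bz \in \HIw(k_\infty,\Thn)$. The crucial point is that, by Lemma~\ref{lem:norm-col-d}, this sequence satisfies precisely the norm relation \eqref{eq:norm-relation-hyp} that is the hypothesis of Theorem~\ref{thm:factorize-mod}, namely
\[
\pi_{m+1,m}(F_{m+1}) = a_p(h)\, F_m - \xi_{m-1}F_{m-1}, \qquad m\ge 1.
\]
Thus Theorem~\ref{thm:factorize-mod} produces unique elements $F^\sharp(\bz), F^\flat(\bz) \in R$ verifying the required congruence involving $H_{h,m}$, and we define $\col_\bd^\sharp(\bz) := F^\sharp(\bz)$ and $\col_\bd^\flat(\bz) := F^\flat(\bz)$.

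It then remains to verify three things. First, uniqueness is built into Theorem~\ref{thm:factorize-mod}. Second, $R$-linearity of $\col_\bd^\sharp$ and $\col_\bd^\flat$: given $\bz_1,\bz_2$ and $\lambda \in R$, the elements $\lambda\col_\bd^\sharp(\bz_1) + \col_\bd^\sharp(\bz_2)$ and $\lambda\col_\bd^\flat(\bz_1) + \col_\bd^\flat(\bz_2)$ together solve the factorization problem for the sequence $\lambda F_m(\bz_1) + F_m(\bz_2) = \col_{\bd,m}(\lambda\bz_1 + \bz_2)$ by $R$-linearity of each $\col_{\bd,m}$ (Definition~\ref{def:Col}), so uniqueness forces $\col_\bd^\bullet(\lambda\bz_1+\bz_2) = \lambda\col_\bd^\bullet(\bz_1) + \col_\bd^\bullet(\bz_2)$ for $\bullet \in \{\sharp,\flat\}$.

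Third, one must confirm that Theorem~\ref{thm:factorize-mod} applies with the coefficient ring $R$ as specified in Corollary~\ref{cor:defn-col}. In the split case $R = \Lambda/(\varpi^n)$ and this is literally the statement of Theorem~\ref{thm:factorize-mod}. In the inert case $R = \Lambda'/(\varpi^n) = \cO_{k_0}/(\varpi^n) \otimes_{\Zp} (\Lambda/(\varpi^n))$, where $\cO_{k_0}$ is a free $\Zp$-module of rank two, so the induction argument of Theorem~\ref{thm:factorize-mod} carries over verbatim (one may either rerun the proof over $\Lambda'$ or decompose $R$ into a direct sum of copies of $\Lambda/(\varpi^n)$ as a $\Lambda$-module and solve componentwise). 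Either way, the inverse limit identification \eqref{eq:inverselimit2} continues to hold, and the same proof produces the required $F^\sharp, F^\flat$.

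There is no real obstacle here; the work has effectively been done in Theorem~\ref{thm:factorize-mod} and Lemma~\ref{lem:norm-col-d}. The only mild care point is the adaptation to the coefficient ring $\Lambda'$ in the inert setting, but this is purely formal because $\cO_{k_0}$ is $\Zp$-free and all constructions (the matrices $C_{h,m}$, the cyclotomic polynomials $\Phi_m$, the norm and projection maps) are defined over $\Lambda$ and extend scalars harmlessly.
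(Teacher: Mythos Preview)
Your proposal is correct and follows exactly the paper's approach: the paper's proof is the single sentence ``This follows immediately from Theorem~\ref{thm:factorize-mod} and Lemma~\ref{lem:norm-col-d}.'' Your additional remarks on $R$-linearity and on the passage from $\Lambda/(\varpi^n)$ to $\Lambda'/(\varpi^n)$ in the inert case are useful elaborations that the paper leaves implicit.
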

\begin{proof}
This follows immediately from Theorem~\ref{thm:factorize-mod} and  Lemma~\ref{lem:norm-col-d}.
\end{proof}

\begin{lemma}\label{lem:compatibility}
Let $1\le n\le n'\le \infty$. Suppose that  $\bd$ and $\bd'$  are primitive $Q$-systems for $\Thn$ and $T_{h,n'}$ respectively such that $d_m'$ is sent to $d_m$ under the natural morphism $H^1(k_m,T_{h,n'})\rightarrow H^1(k_m,\Thn)$ for all $m$. Then for $\bullet\in\{\sharp,\flat\}$, 
\[
\col^\bullet_{\bd'}=\col^\bullet_\bd\circ \pr_{n'/n},
\]
where $\pr_{n'/n}$ is the natural map $\HIw(k_\infty, T_{h,n'})\rightarrow \HIw(k_\infty,\Thn)$.
\end{lemma}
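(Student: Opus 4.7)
The strategy is to reduce the statement, via the uniqueness assertion in Corollary~\ref{cor:defn-col}, to a compatibility check at the level of the unfactored Coleman maps $\col_{\bd,m}$. Concretely, the first step would be to verify that for every $m\ge 0$ and every $\bz'=(z_m')_{m\ge 0}\in \HIw(k_\infty,T_{h,n'})$ with image $\bz=\pr_{n'/n}(\bz')\in \HIw(k_\infty,\Thn)$, one has
\[
\col_{\bd',m}(\bz')\equiv \col_{\bd,m}(\bz)\pmod{\varpi^n R_m'},
\]
where $R_m'$ denotes the coefficient ring at level $n'$ (so $\Lambda_{m,n'}$ in the split case and $\Lambda_{m,n'}'$ in the inert case).

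This congruence follows from the definition of the Perrin-Riou pairing $P_{d_m}$ underlying $\col_{\bd,m}$ together with two functoriality ingredients: first, the naturality of the cup product under the change-of-coefficient morphism $T_{h,n'}\to T_{h,n}$, so that the hypothesis $d_m'\mapsto d_m$ combined with $z_m'\mapsto z_m$ yields $(z_m')^{\sigma^{-1}}\cup d_m' \mapsto z_m^{\sigma^{-1}}\cup d_m$ in $H^2$; and second, the compatibility of the local duality isomorphism $H^2(k_m,\cO_L/(\varpi^{n'})(1))\xrightarrow{\sim}\cO_L/(\varpi^{n'})$ with its reduction modulo $\varpi^n$, and analogously for the Lubin--Tate identification $H^2(k_m,\cO_{k_0}/(\varpi^{n'})(1))\xrightarrow{\sim}\cO_{k_0}/(\varpi^{n'})$ in the inert case. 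Both are standard properties of local Tate duality and of the Lubin--Tate formal group functor.

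With the unfactored compatibility in hand, the second step is to apply Corollary~\ref{cor:defn-col} over the coefficient ring $R=\Lambda^{(\prime)}/(\varpi^n)$. Both maps $\col^\bullet_{\bd'}\bmod \varpi^n$ and $\col^\bullet_{\bd}\circ \pr_{n'/n}$ (for $\bullet\in\{\sharp,\flat\}$) then satisfy the same defining identity
\[
H_{h,m}\begin{pmatrix}*\\ *\end{pmatrix}\equiv \begin{pmatrix}\col_{\bd,m}\circ\pr_{n'/n}(\bz')\\ -\xi_{m-1}\col_{\bd,m-1}\circ\pr_{n'/n}(\bz')\end{pmatrix}\pmod{\omega_m}
\]
at every finite level $m$, so the uniqueness clause of Corollary~\ref{cor:defn-col} forces them to coincide. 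The only mild obstacle is the bookkeeping for the local pairings, and in particular ensuring in the inert case that the $\cO_{k_0}$-module identification $\Thn\cong \Hom_{\cO_{k_0}}(\Thn,\cO_{k_0}/(\varpi^n))(1)$ inherited from the Lubin--Tate formal group is the reduction modulo $\varpi^n$ of the analogous identification at level $n'$; this is immediate from the functoriality of the Lubin--Tate construction, after which the split and inert cases run in parallel.
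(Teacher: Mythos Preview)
Your proposal is correct and follows exactly the same approach as the paper, which simply invokes the uniqueness clause of Corollary~\ref{cor:defn-col}. You have spelled out in more detail the underlying compatibility of the unfactored maps $\col_{\bd',m}$ and $\col_{\bd,m}$ via functoriality of cup products and local duality, and you have correctly noted that the identity must be read after reducing the target of $\col^\bullet_{\bd'}$ modulo $\varpi^n$; this is implicit in the paper's one-line proof.
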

\begin{proof}
This follows from the uniqueness of the Coleman maps given by Corollary~\ref{cor:defn-col}.
\end{proof}

\begin{proposition}\label{prop:im}
The $R$-morphisms $\col_\bd^\sharp$ and $\col_\bd^\flat$ are surjective onto $R$.
\end{proposition}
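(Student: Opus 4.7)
The plan is to apply Nakayama's lemma. In both the split and inert cases, $R$ is a local ring with maximal ideal $\mathfrak{m}_R=(\varpi,\omega_0)$ and residue field $k$ (equal to $\mathbb{F}_v$ in the split case and $\mathbb{F}_{p^2}$ in the inert case). Since $R$ is a cyclic $R$-module, the image of $\col^\bullet_\bd$ is an ideal of $R$; by Nakayama, it will suffice, for each $\bullet\in\{\sharp,\flat\}$, to produce some $\bz\in\HIw(k_\infty,\Thn)$ with $\col^\bullet_\bd(\bz)\not\in\mathfrak{m}_R$.

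To produce such a $\bz$, I would first specialise Corollary~\ref{cor:defn-col} at $m=1$. Using the explicit shape of $C_{h,1}$, the identity $\cor_{k_1/k_0}(z_1)=z_0$ (from the inverse-limit structure of $\HIw$), and the standard compatibility of local Tate duality with corestriction, this yields
\begin{align*}
\col^\sharp_\bd(\bz)&\equiv \langle z_0,d_0\rangle_{0,n}\pmod{\omega_0},\\
\col^\flat_\bd(\bz)&\equiv \langle z_0,\cor_{k_1/k_0}(d_1)-a_p(h)d_0\rangle_{0,n}\pmod{\omega_0}.
\end{align*}
Reducing further modulo $\varpi$ and using $a_p(h)\in\varpi\cO_L$, the right-hand sides become $\langle \bar z_0,\bar d_0\rangle$ and $\langle \bar z_0,\overline{\cor(d_1)}\rangle$ in $k$, where bars denote reductions modulo $\varpi$. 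Conditions (2) and (3) of Definition~\ref{def:Q} then guarantee that both $\bar d_0$ and $\overline{\cor(d_1)}$ are non-zero in $H^1_\f(k_0,T_{h,1})$, so the perfectness of local Tate duality over $k$ makes each of the resulting pairings a non-trivial $k$-linear functional $H^1(k_0,T_{h,1})\twoheadrightarrow k$.

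It will remain to show that the composition
\[
\HIw(k_\infty,\Thn)\lra H^1(k_0,\Thn)\lra H^1(k_0,T_{h,1})
\]
is surjective, so that these functionals can be realised as $\bz\mapsto\col^\bullet_\bd(\bz)\bmod\mathfrak{m}_R$. This will follow from a standard Iwasawa-cohomology descent: under hypothesis~\eqref{item_BI} and $a_p(h)\equiv 0\pmod{\varpi}$, one verifies $H^0(k_\infty,T_{h,1})=0$ (in the split case because $T_{h,1}\vert_{G_{\Qp}}$ is absolutely irreducible and cannot be induced from any intermediate $G_{k_m}$, since $[k_m:\Qp]=p^m\neq 2$ for $p\geq 5$; in the inert case because the characters appearing in $\bar\rho_f\vert_{G_{k_0}}$ have order prime to $p$ and therefore remain non-trivial on the pro-$p$ subgroup $G_{k_\infty}$), together with the vanishing of the relevant $H^2$-obstructions via local Tate duality. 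The hard part will be precisely this final descent argument, which requires care in matching the Iwasawa-cohomology setup with the mod-$\varpi$ coefficients; the preceding steps are essentially formal.
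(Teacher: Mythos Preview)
Your approach coincides with the paper's: both apply Nakayama, specialise Corollary~\ref{cor:defn-col} at $m=1$, compute $\col^\bullet_\bd$ modulo $\omega_0$ as the explicit pairings $\langle z_0,d_0\rangle_{0,n}$ and $\langle z_0,\cor_{k_1/k_0}(d_1)-a_p(h)d_0\rangle_{0,n}$, and invoke conditions (2) and (3) of Definition~\ref{def:Q}. The paper stops there and asserts surjectivity onto $R_0$; your further reduction modulo $\varpi$ is harmless but not needed.

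A few corrections to your descent discussion, which the paper leaves entirely implicit. Hypothesis \eqref{item_BI} is global and irrelevant here; only $a_p(h)\equiv 0\pmod\varpi$ is used, as it forces $T_{h,1}\vert_{G_{\Qp}}$ to be absolutely irreducible (split case) or to decompose over $G_{k_0}$ into nontrivial tame characters (inert case). Also, $G_{k_\infty}$ is not pro-$p$; it is the quotient $\Gal(k_\infty/k_0)\cong\Zp$ that is, and this is what forces a character of order prime to $p$ that vanishes on $G_{k_\infty}$ to be trivial. Finally, you overestimate the difficulty of this step: once $H^0(k_\infty,T_{h,1})=0$ is known (cf.\ the proof of Lemma~\ref{lem:iso-inv}), local duality gives $H^2(k_m,\Thn)=0$ for all $m$ and the surjectivity $\HIw(k_\infty,\Thn)\twoheadrightarrow H^1(k_0,\Thn)$ is standard---the paper does not even pause to mention it.
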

\begin{proof}
By Nakayama's lemma, it is enough to show that $\image(\col^\bullet)_\Gamma=R_0$ for $\bullet\in\{\sharp,\flat\}$. Let $\bz\in \HIw(k_\infty,\Thn)$. By definition, we have
\[
H_{h,1}\begin{pmatrix}
\col_\bd^\sharp(\bz)\\\col_\bd^\flat(\bz)
\end{pmatrix}\equiv\begin{pmatrix}
\col_{\bd,1}(\bz)\\-\xi_0\col_{\bd,0}(\bz)
\end{pmatrix}
\mod \omega_1R.
\]
Therefore,
\[
\begin{pmatrix}
a_p(h)&1\\ -p&0
\end{pmatrix}\begin{pmatrix}
\col_\bd^\sharp(\bz)\\\col_\bd^\flat(\bz)
\end{pmatrix}\equiv\begin{pmatrix}
\col_{\bd,1}(\bz)\\-p\col_{\bd,0}(\bz)
\end{pmatrix}
\mod XR.
\]
In particular, 
\begin{align*}
    \col_\bd^\sharp(\bz)&\equiv\col_{\bd,0}(\bz)\mod XR,\\
\col_\bd^\flat(\bz)&\equiv\col_{\bd,1}(\bz)-a_p(h)\col_0(\bz)\mod XR.
\end{align*}
Let $z_0$ be the image of $\bz$ in $H^1(k_0,\Thn)$. The right-hand sides of the congruences above are given by
\[
\langle z_0,d_0\rangle_{0,n}\quad\text{and}\quad\langle z_0,\cor_{k_1/k_0}(d_1)-a_p(h)d_0\rangle_{0,n}
\]
respectively. Therefore, the conditions (2) and (3) in Definition~\ref{def:Q} imply that both maps modulo $X$ are surjective onto $R_0$ as required.
\end{proof}

\begin{defn}\label{def:signed-conditions}
 For $m\ge0$ and $\bullet\in\{\sharp,\flat\}$, define $H^{1,\bullet}(k_m,\Thn)\subset H^1(k_m,\Thn)$ to be the image of $\ker\col^\bullet_\bd$ under the natural projection $\HIw(k_\infty,\Thn)\rightarrow H^1(k_m,\Thn)$.

Let $\Ahn$ denote $A_h[\varpi^n]$. Define $H^1_\bullet(k_m,\Ahn)\subset H^1(k_m,\Ahn)$ to be the orthogonal complement of $H^{1,\bullet}(k_m,\Thn)$ under the local Tate pairing
\[
H^1(k_m,\Thn)\times H^1(k_m,\Ahn)\stackrel\cup\longrightarrow H^2(k_m,L'/\cO_L'(1))\stackrel{\sim}{\longrightarrow} L'/\cO_L'.
\]
\end{defn}
Notice that when $n=\infty$, $\Ahn$ is simply $A_h$. Otherwise, $\Ahn=\Thn$. In the definition above, we have suppressed the dependency on $\bd$ from our notation for simplicity. In subsequent sections, we shall fix a choice of $\bd$ and work with the resulting subgroups. 

\begin{remark}\label{rk:different}
    The local conditions $H^1_\bullet(k_m,\Thn)$ for $m>0$ defined above are  different from their counterparts  in \cite[\S3.2]{darmoniovita} even when $T_h$ is the $p$-adic Tate module of an elliptic curve $E_h/\QQ$. We start with local conditions for the extension $k_\infty$, then descent to $k_m$, whereas the local conditions in loc. cit. are defined directly from points on an elliptic curve over  $k_m$. Note that our definition of local conditions is similar to the ones studied in \cite[\S3.3]{kim07}, \cite[\S2]{kim08} and \cite[\S2]{kim09}; see also \cite[Remark A.1]{pollack-weston11}. This divergence will be crucial in our proof of Theorem~\ref{thm:main}, see also Remark~\ref{remark_something_fishy} below for a further discussion.
    \end{remark}

\begin{lemma}\label{lem:iso-inv}
For integers $m,n\ge0$, there are natural $\Lambda$-isomorphisms
\[
H^1(k_m,\Ahn)\simeq H^1(k_\infty,A_h)^{\Gamma_m}[\varpi^n]\simeq \left(H^1(k_\infty,A_h)[\varpi^n]\right)^{\Gamma_m}.
\]
\end{lemma}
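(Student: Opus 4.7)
The plan is to handle the two isomorphisms separately. The rightmost identification is purely formal: the $\varpi^n$-torsion submodule of $H^1(k_\infty, A_h)$ is a $\LL$-submodule, so taking $\varpi^n$-torsion commutes with the left-exact functor $(-)^{\Gamma_m}$. Explicitly, both sides describe the set $\{x \in H^1(k_\infty, A_h) : \varpi^n x = 0 \text{ and } \sigma\cdot x = x \text{ for all } \sigma \in \Gamma_m\}$.

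For the left isomorphism, I would first apply the inflation--restriction sequence to $k_\infty/k_m$ with coefficients in $\Ahn$. Since $\Gamma_m \simeq \Zp$ has $p$-cohomological dimension one, the higher terms vanish on discrete torsion modules and we obtain
\[
0 \to H^1(\Gamma_m, \Ahn(k_\infty)) \to H^1(k_m, \Ahn) \to H^1(k_\infty, \Ahn)^{\Gamma_m} \to 0.
\]
Next, the Kummer-type sequence $0 \to \Ahn \to A_h \xrightarrow{\varpi^n} A_h \to 0$ of $G_{k_\infty}$-modules yields
\[
0 \to A_h(k_\infty)/\varpi^n \to H^1(k_\infty, \Ahn) \to H^1(k_\infty, A_h)[\varpi^n] \to 0.
\]
Taking $\Gamma_m$-invariants of this sequence (again using $\mathrm{cd}_p(\Gamma_m) = 1$) and splicing with the previous one, one finds that the natural map $H^1(k_m, \Ahn) \to (H^1(k_\infty, A_h)[\varpi^n])^{\Gamma_m}$ is an isomorphism modulo correction terms controlled by the $\Gamma_m$-cohomology of the two $\cO_L$-modules $\Ahn(k_\infty) = A_h(k_\infty)[\varpi^n]$ and $A_h(k_\infty)/\varpi^n$.

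The remaining point, which I expect to be the main technical obstacle, is to verify that these correction terms vanish, which reduces to proving $A_h(k_\infty) = 0$. In the split case, the big-image hypothesis \eqref{item_BI} implies that $\bar\rho_h|_{G_{k_\infty}}$ has no trivial sub-quotient (as $G_{k_\infty}$ has infinite but controlled image in $G_\QQ$ and the big-image condition survives restriction to an open subgroup of $G_K$), so $A_h[\varpi]^{G_{k_\infty}} = 0$, and an induction on $n$ via the sequence $0 \to A_h[\varpi] \to A_h[\varpi^{n+1}] \to A_h[\varpi^n] \to 0$ upgrades this to $A_h(k_\infty) = 0$. In the inert case, one may alternatively exploit the non-ordinary local description of $V_h|_{G_{k_0}}$: the Frobenius on $\Dcris(V_h)$ has no fixed vector (as $a_p(h) = 0$ precludes eigenvalue $1$ among the roots $\pm\sqrt{-p}$ of the Hecke polynomial), ruling out a trivial sub-quotient of $A_h|_{G_{k_0}}$ and hence forcing the same vanishing. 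Once $A_h(k_\infty) = 0$ is established, both correction terms disappear, the two short exact sequences degenerate into isomorphisms, and composing them yields the desired identification.
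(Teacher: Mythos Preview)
Your overall architecture matches the paper's: both arguments reduce to the vanishing $A_h(k_\infty)=0$, and then combine inflation--restriction with the long exact sequence coming from $0\to \Ahn\to A_h\xrightarrow{\varpi^n} A_h\to 0$. The paper runs these in the opposite order (first $H^1(k_m,A_h)\simeq H^1(k_\infty,A_h)^{\Gamma_m}$ via inflation--restriction for $A_h$, then $H^1(k_m,\Ahn)\simeq H^1(k_m,A_h)[\varpi^n]$ via Kummer at level $m$), but this is immaterial.

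The genuine gap is in your justification of $A_h(k_\infty)=0$ in the split case. You invoke the big-image hypothesis \eqref{item_BI}, but this is a \emph{global} hypothesis on $\bar\rho_f$ (not on a general $h$ as in \S\ref{subsec_4_1_2022_09_27_1703}), and more importantly $k_\infty$ is a \emph{local} field: the image of $G_{k_\infty}$ in $G_K$ is a decomposition subgroup, not an open subgroup, so the statement ``big image survives restriction to an open subgroup of $G_K$'' does not apply. Global irreducibility of $\bar\rho_h$ says nothing about $\bar\rho_h\vert_{G_{k_\infty}}$ having a trivial sub.

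The correct argument---and the one the paper uses---is purely local and uses only the standing hypothesis $a_p(h)\equiv 0\bmod \varpi$. Non-ordinarity forces $\bar\rho_h\vert_{I_{\Qp}}$ to be described by the level-$2$ fundamental character $\omega_2$ (and its Frobenius twist), which is a tame character of order $p^2-1$. Since $k_0/\Qp$ is unramified and $k_\infty/k_0$ is a totally ramified $\Zp$-extension (hence pro-$p$), the tame quotient of $I_{k_\infty}$ coincides with that of $I_{\Qp}$, so $\omega_2\vert_{I_{k_\infty}}$ is still nontrivial. Thus $A_h[\varpi]^{G_{k_\infty}}=0$, and your induction on $n$ then gives $A_h(k_\infty)=0$. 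This single local argument works uniformly in the split and inert cases and makes your separate treatment (and the appeal to \eqref{item_BI}) unnecessary.
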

\begin{proof} 
In view of the assumption $a_p(h)\equiv0\mod\varpi$, we have $H^0(k_\infty,A_h)=0$. Thus, the inflation-restriction exact sequence gives 
\[
H^1(k_m,A_h)\simeq H^1(k_\infty, A_h)^{\Gamma_m}.
\]
Furthermore, on taking Galois cohomology of the tautological exact sequence $$0\rightarrow \Thn\longrightarrow A_h\stackrel{\times \varpi^n}\longrightarrow A_h\rightarrow0,$$
we have $H^1(k_m,A_h)[\varpi^n]\simeq H^1(k_m,\Thn)$, giving the first isomorphism. The second isomorphism can be proved similarly.
\end{proof}

\begin{corollary}\label{cor:compatibility-conds}
Let $\bullet\in\{\sharp,\flat\}$ and  $0\le m\le \infty$. We have:
\item[i)]The image of $H^1_\bullet(k_\infty,A_h)^{\Gamma_m}[\varpi^n]$ in $H^1(k_m,\Ahn)$ under the isomorphism given by Lemma~\ref{lem:iso-inv} coincides with $H^1_\bullet(k_m,\Ahn)$.
\item[ii)]We have $\varinjlim H^1_\bullet(k_m,\Ahn)=H^1_\bullet(k_m,A_h)$.
\item[iii)]The $R_m$-module $H^1_\bullet(k_m,\Ahn)$ is free of rank one.
\end{corollary}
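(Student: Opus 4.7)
My strategy is to treat the three parts in order, with (i) providing the key descent identification, (ii) being a formal colimit argument bootstrapping on (i), and (iii) being the structural content that requires more delicate input.

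For part (i), the idea is to use the compatibility of the local Tate pairing with restriction and corestriction in a $\Zp$-extension. Given $\tilde\alpha \in H^1(k_\infty, A_h)^{\Gamma_m}[\varpi^n]$, let $\alpha \in H^1(k_m, A_{h,n})$ be its image under the isomorphism of Lemma~\ref{lem:iso-inv}. For any $\bz = (z_i)_{i \ge 0} \in H^1_{\mathrm{Iw}}(k_\infty, T_{h,n})$, the standard adjunction between cup product and corestriction gives the equality
\begin{equation*}
\langle \bz,\tilde\alpha\rangle_{\infty} \;=\; \langle z_m,\alpha\rangle_{m,n}.
\end{equation*}
The left-hand side vanishes for every $\bz \in \ker\col^\bullet_\bd$ if and only if the right-hand side vanishes against every element of the image $H^{1,\bullet}(k_m,T_{h,n})$. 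By Definition~\ref{def:signed-conditions}, this is precisely the condition $\alpha \in H^1_\bullet(k_m, A_{h,n})$.

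For part (ii), I would take the direct limit over $n$ of the characterization in (i). Since the Tate pairings at different levels $n$ are compatible with the transition maps $H^1(k_m,A_{h,n}) \hookrightarrow H^1(k_m,A_{h,n+1})$ (dually, with $H^1(k_m,T_{h,n+1}) \twoheadrightarrow H^1(k_m,T_{h,n})$), and since $H^1(k_m, A_h) = \varinjlim_n H^1(k_m, A_{h,n})$, the orthogonality condition defining $H^1_\bullet(k_m,A_{h,n})$ passes to the colimit to yield $H^1_\bullet(k_m, A_h)$, invoking Lemma~\ref{lem:compatibility} to ensure that the Coleman maps at different $n$-levels are themselves compatible under the projection $\pr_{n'/n}$.

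For part (iii), the main step is to show that $H^1(k_m,T_{h,n})/H^{1,\bullet}(k_m,T_{h,n})$ is $R_m$-free of rank one; the assertion on $H^1_\bullet(k_m,A_{h,n})$ then follows by local Tate duality (with $n=\infty$ handled by passage to the limit via part (ii)). The input is Proposition~\ref{prop:im}, which yields the short exact sequence
\begin{equation*}
0 \to \ker\col^\bullet_\bd \to H^1_{\mathrm{Iw}}(k_\infty,T_{h,n}) \to R \to 0.
\end{equation*}
Reducing modulo $\omega_m$, the image of $\ker\col^\bullet_\bd$ in $H^1(k_m,T_{h,n})$ is by definition $H^{1,\bullet}(k_m,T_{h,n})$, and the induced map lands in $R_m$. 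To show that the quotient is exactly $R_m$-free of rank one, I would combine surjectivity mod $X$ (computed as in the proof of Proposition~\ref{prop:im} using conditions (2)–(3) of Definition~\ref{def:Q}) with a Nakayama argument over $R_m$, and use the Tate local Euler characteristic formula to match ranks.

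\textbf{Main obstacle.} The delicate point in (iii) is that the matrix $H_{h,m}$ has determinant $\omega_m$ and is therefore singular modulo $\omega_m$, so the pair $(\col^\sharp_\bd,\col^\flat_\bd) \bmod \omega_m$ does not literally factor through $H^1(k_m,T_{h,n})$ as two independent maps. The key is to exploit the defining congruence
\begin{equation*}
H_{h,m}\begin{pmatrix}\col^\sharp_\bd(\bz) \\ \col^\flat_\bd(\bz)\end{pmatrix} \equiv \begin{pmatrix}\col_{\bd,m}(\bz) \\ -\xi_{m-1}\col_{\bd,m-1}(\bz)\end{pmatrix} \pmod{\omega_m}
\end{equation*}
to transfer information between the descent of $\col^\bullet_\bd$ and the individually better-behaved finite-level pairings $P_{d_m}$ and $P_{d_{m-1}}$, and to verify that each $\col^\bullet_\bd \bmod \omega_m$ depends only on the image $z_m$ in $H^1(k_m,T_{h,n})$ (which it does, since both $z_m$ and $z_{m-1}=\cor_{k_m/k_{m-1}}(z_m)$ are determined by $z_m$).
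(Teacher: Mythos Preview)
Your approach is essentially the same as the paper's: parts (i) and (ii) via the compatibility of Coleman maps under change of $n$ (Lemma~\ref{lem:compatibility}), and part (iii) via the surjectivity in Proposition~\ref{prop:im} together with local Tate duality. Two remarks are worth making.

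First, in part (i) you already need Lemma~\ref{lem:compatibility}, not just in part (ii). Your pairing identity $\langle \bz,\tilde\alpha\rangle_\infty=\langle z_m,\alpha\rangle_{m,n}$ shows that $\alpha\in H^1_\bullet(k_m,A_{h,n})$ if and only if $\tilde\alpha$ is orthogonal to $\ker\col^\bullet_{\bd}$ for the $T_{h,n}$-level Coleman map. But the condition $\tilde\alpha\in H^1_\bullet(k_\infty,A_h)$ is phrased in terms of the $T_h$-level Coleman map. To pass between the two you need precisely that $\ker\col^\bullet_{\bd,T_h}$ surjects onto $\ker\col^\bullet_{\bd,T_{h,n}}$ under $\pr_{\infty/n}$, which follows from Lemma~\ref{lem:compatibility} together with the surjectivity of $\pr_{\infty/n}$ (itself a consequence of $H^0(k_\infty,A_h)=0$).

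Second, your ``main obstacle'' in (iii) is not an obstacle at all. The map $\col^\bullet_\bd:H^1_{\Iw}(k_\infty,T_{h,n})\to R$ is $R$-linear by construction (Corollary~\ref{cor:defn-col}), so reducing modulo $\omega_m$ automatically yields a well-defined map $H^1_{\Iw}(k_\infty,T_{h,n})_{\Gamma_m}=H^1(k_m,T_{h,n})\to R_m$, independently of any properties of $H_{h,m}$. The singularity of $H_{h,m}$ modulo $\omega_m$ is irrelevant here: that matrix was only used to \emph{construct} $\col^\bullet_\bd$, and once constructed, descent is purely formal. The clean argument is simply to tensor the short exact sequence $0\to\ker\col^\bullet_\bd\to H^1_{\Iw}(k_\infty,T_{h,n})\to R\to 0$ with $R_m$ over $R$ (using that $R$ is $R$-flat) to obtain $H^1(k_m,T_{h,n})/H^{1,\bullet}(k_m,T_{h,n})\cong R_m$, and then dualize using that $R_m$ is a Frobenius algebra.
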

\begin{proof}
The first two assertions follow from Lemma~\ref{lem:compatibility}, whereas the third is a consequence of Proposition~\ref{prop:im} and duality.
\end{proof}
We have the following analogous statement of Corollary \ref{cor:compatibility-conds} iii) for $H^{1,\bullet}(k_m,\Thn)$:
\begin{lemma}\label{lem:free-local-conds-T}
    The $R_m$-module $H^{1,\bullet}(k_m,\Thn)$ is free of rank one.
\end{lemma}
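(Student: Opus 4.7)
The plan is to leverage the surjectivity of the Coleman map (Proposition~\ref{prop:im}) together with the freeness of the Iwasawa cohomology $\HIw(k_\infty, T_{h,n})$ as an $R$-module, and then to descend to finite levels.

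First, the surjection $\col^\bullet_\bd\colon \HIw(k_\infty, T_{h,n}) \twoheadrightarrow R$ from Proposition~\ref{prop:im} gives rise to the short exact sequence of $R$-modules
$$0 \longrightarrow \ker(\col^\bullet_\bd) \longrightarrow \HIw(k_\infty, T_{h,n}) \longrightarrow R \longrightarrow 0,$$
which splits since $R$ is free (of rank one) over itself. Thus $\ker(\col^\bullet_\bd)$ is realized as a direct summand of $\HIw(k_\infty, T_{h,n})$.

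Next I would establish that $\HIw(k_\infty, T_{h,n})$ is free of rank $2$ over $R$. Under the irreducibility hypotheses on the residual representation of $T_h$ in force, we have $H^0(k_\infty, T_h) = 0$, and local Tate duality then forces $H^2_{\Iw}(k_\infty, T_h) = 0$ as well; combined with the local Euler--Poincar\'e characteristic this yields freeness of $\HIw(k_\infty, T_h)$ over $\Lambda$ of rank $2[k_0:\Qp]$. In the inert case, the $\cO_{k_0}$-module structure on $T_h$ inherited from the height-two Lubin--Tate formal group of $E_h$ promotes this to freeness of rank $2$ over $\Lambda' = \cO_{k_0}\lb\Gamma\rb$. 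Reducing modulo $\varpi^n$ preserves freeness, so $\HIw(k_\infty, T_{h,n})$ is free of rank $2$ over $R$. Consequently $\ker(\col^\bullet_\bd)$ is a direct summand of $R^2$, hence a finitely generated projective $R$-module of rank one, and since $R$ is local Noetherian (being a quotient of the local rings $\Lambda$ or $\Lambda'$) it is in fact free of rank one.

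Finally, the vanishing $H^2_{\Iw}(k_\infty, T_{h,n}) = 0$ gives the descent isomorphism $\HIw(k_\infty, T_{h,n})/\omega_m \cong H^1(k_m, T_{h,n})$, under which the image of $\ker(\col^\bullet_\bd) \cong R$ becomes $R/\omega_m R = R_m$. By Definition~\ref{def:signed-conditions}, this image is precisely $H^{1,\bullet}(k_m, T_{h,n})$, which is therefore free of rank one over $R_m$, as required. The main obstacle I anticipate is the freeness of $\HIw(k_\infty, T_{h,n})$ over $R$ of rank $2$, particularly in the inert case where one must carefully track the $\cO_{k_0}$-module structure arising from Lubin--Tate theory; the essential inputs are the Iwasawa-theoretic vanishings $H^0(k_\infty, T_h) = 0$ and $H^2_{\Iw}(k_\infty, T_h) = 0$.
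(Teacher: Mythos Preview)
Your proposal is correct and follows essentially the same route as the paper: establish that $\HIw(k_\infty,T_{h,n})$ is free of rank $2$ over $R$ (the paper simply cites \cite[Proposition~3.2.1]{perrinriou94}), use the surjectivity of $\col^\bullet_\bd$ to conclude that $\ker\col^\bullet_\bd$ is free of rank one, and then descend via $(\ker\col^\bullet_\bd)_{\Gamma_m}\cong R_m$. One small imprecision: the vanishing $H^0(k_\infty,T_h)=0$ is not a consequence of the global irreducibility of $\bar\rho_h$ but rather of the local hypothesis $a_p(h)\equiv 0\pmod\varpi$ (cf.\ the proof of Lemma~\ref{lem:iso-inv}); with that correction your argument goes through.
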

\begin{proof}
It follows from \cite[Proposition~3.2.1]{perrinriou94} that $\HIw(k_\infty,\Thn)$ is free of rank 2 over $R$. Proposition~\ref{prop:im} says that $\HIw(k_\infty,\Thn)/\ker \col^\bullet_\bd$ is free of rank 1 over $R$. Thus, $\ker \col_\bd^\bullet$ itself is free of rank 1 over $R$. Consequently, it follows from Lemma~\ref{lem:compatibility} that $H^{1,\bullet}(k_m,\Thn)=\left(\ker\col^\bullet_\bd\right)_{\Gamma_m}$ is free of rank 1 over $R_m$.
\end{proof}
 
\begin{remark}
\label{remark_something_fishy}
    Note that Lemma~\ref{lem:free-local-conds-T} may be regarded as a more general version of \cite[Lemma~3.9]{darmoniovita} which concerns the case $a_p(f)=0$ and $p$ split in $K$. It is asserted in the proof of loc. cit.  that the plus and minus local conditions at the finite level, denoted by $H^1_\pm(L_m,T_pE)$, are free of rank one over $\Zp[G_m]$. However, it is not clear to us how it follows from \cite[Proposition~4.16]{iovitapollack06}. The inverse limits of $H^1_\pm(L_m,T_pE)$, denoted by $\mathbf{H}^1_\pm(T)$ in loc. cit., are free of rank one over $\Lambda$. But $H^1_\pm(L_m,T_pE)\neq\mathbf{H}^1_\pm(T)_{\Gamma_m}$ unless $m=0$. In fact, by definition, the $\Zp$-ranks of the plus and minus subgroups $\hat E^\pm(L_m)$ are strictly less than $p^m$ when $m> 0$. Consequently, their orthogonal complements $H^1_\pm(L_m,T_p(E))$ have $\Zp$-ranks strictly greater than $p^m$. In particular, they cannot be free of rank one over $\Zp[G_m]$.  As already observed in \cite[Appendix A]{pollack-weston11}, the alternative approach to define local conditions by replacing $H^1_\pm(L_m,T_pE)$ with $\mathbf{H}^1_\pm(T)_{\Gamma_m}$ resolves this issue.
\end{remark}

\subsection{Constructing local points on abelian varieties (split case)}
\label{sec:Q-split}
Throughout \S\ref{sec:Q-split}, we assume that $p$ splits in $K$. Our goal is to construct a primitive $Q$-system for $T_h$. 

\subsubsection{Review on the Perrin-Riou map}
Throughout, we identify $K_\fp$ with $\Qp$. Furthermore, we fix $\cF$ to be a Lubin--Tate formal group of height one such that the extension of $\Qp$ generated by $\cF[p^\infty]$ contains $k_\infty$. For simplicity, write $T=T_h$ and $V=V_h$.
\begin{defn}
\item[i)]For an integer $m\ge0$, we write $\tilde k_m$ for  $\Qp(\cF[p^m])$.
\item[ii)]Write $\vp_\cF$ and $\psi_\cF$ for the operators on $\Zp[[X]]$ given as in \cite[\S3.1]{castellahsieh-rank2}.

\item[iii)]Let $\tilde\Gamma=\Gal(\cF[p^\infty]/\Qp)$ and $\tilde\Lambda=\Zp[[\tilde\Gamma]]$. 
\item[iv)]   Let $\Omega_{V,1}^\varepsilon:\Zp[[X]]^{\psi_\cF=0}\otimes \Dcris(V)\rightarrow\cH(\tilde\Gamma)\otimes\HIw(\tilde k_\infty,T)$ denote the Perrin-Riou map defined by Kobayashi \cite[Appendix]{kobayashiGHC} (see also \cite[Theorem~3.2]{castellahsieh-rank2}). Here  $\varepsilon=(\varepsilon_m)_{m\ge0}$ denotes a choice of generator of $T_p\cF$. 
\end{defn}

\begin{defn}
    We define
    \begin{align*}
        \Sigma_{T,m}:\Zp[[X]]^{\psi_\cF=0}\otimes \Dcris(T)&\rightarrow H^1_\f(\tilde k_m,V)\\
        g&\mapsto\exp\left(G(\varepsilon_m)\right),
    \end{align*}
    where $G$ is a solution to $(1-\vp_\cF)G=g$ and $\exp$ is the Bloch--Kato exponential map. 
\end{defn}

\begin{remark}\label{rk:PR-Sigma}
    The map $\Sigma_{T,m}$ is used in the construction of $\Omega_{V,1}^\varepsilon$. Indeed the image of $\Omega_{V,1}^\varepsilon(g)$ in $H^1(\tilde k_m,V)$ is given by
    \[
\Sigma_{T,m}((p\otimes\vp)^{-m}g).
    \]
\end{remark}

\begin{lemma}\label{lem:formula-Sigma}
    The image of $\Sigma_{T,m}$ lands inside $H^1_\f(\tilde k_m,T)$.
\end{lemma}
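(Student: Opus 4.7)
\emph{Plan.} The strategy is to exploit the relationship between $\Sigma_{T,m}$ and the Perrin-Riou big exponential $\Omega_{V,1}^\varepsilon$ recorded in Remark~\ref{rk:PR-Sigma}, and then invoke the integrality properties of the latter. Since Remark~\ref{rk:PR-Sigma} identifies $\Sigma_{T,m}\bigl((p\otimes\vp)^{-m}g\bigr)$ with the projection of $\Omega_{V,1}^\varepsilon(g)$ to $H^1(\tilde k_m,V)$, the integrality statement reduces to showing that, on the integral submodule $\Zp[[X]]^{\psi_\cF=0}\otimes\Dcris(T)$, the image of this projection lies in $H^1_\f(\tilde k_m,T)$ rather than merely $H^1_\f(\tilde k_m,V)$.

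I would begin by solving $(1-\vp_\cF)G=g$ in an appropriate crystalline period ring (e.g.\ $\Brig^+\otimes\Dcris(V)$): since $g$ lies in the kernel of $\psi_\cF$, the operator $1-\vp_\cF$ is invertible on its image, so $G$ exists and in fact lives in the integral subring $\cO_{\mathcal{E}}^+ \otimes \Dcris(T)$ up to controlled denominators. Evaluating at $\varepsilon_m$ produces an element of $\tilde k_m\otimes\Dcris(V)$, to which the Bloch--Kato exponential is applied. The task is then to certify that the resulting cohomology class is the image of an integral class.

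The key input is Kato's (equivalently Berger's) theorem on the integrality of Perrin-Riou's big exponential for crystalline representations with Hodge--Tate weights $\{0,1\}$, which is the relevant case here since $T$ arises from a weight-two newform. Concretely, this asserts that $\Omega_{V,1}^\varepsilon$ restricts to a map $\Zp[[X]]^{\psi_\cF=0}\otimes\Dcris(T) \longrightarrow \HIw(\tilde k_\infty, T)$ after dividing by $\ell_0 = \log\chi_{\cyc}(\gamma)$ (equivalently, multiplying $g$ by $\ell_0$). Because the Hodge--Tate weight $0$ does not contribute a pole at the identity character, no genuine denominators appear at level $m\geq 1$; projecting the resulting Iwasawa class to level $m$ then lands in $H^1_\f(\tilde k_m,T)$ by the compatibility of $\Omega_{V,1}^\varepsilon$ with Bloch--Kato exponentials (which is precisely the content of Remark~\ref{rk:PR-Sigma}).

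The main obstacle, and the step that requires the most care, is the bookkeeping of denominators: the Bloch--Kato exponential introduces factors of $p$ and the twist operator $(p\otimes\vp)^{-m}$ appearing in Remark~\ref{rk:PR-Sigma} must be tracked precisely to ensure that no denominators survive. If a direct verification through Kato's theorem proves awkward, an alternative would be to argue trace-compatibly: establish that the classes $\Sigma_{T,m}(g)$ for varying $m$ are norm-compatible (so they assemble into an element of $\HIw(\tilde k_\infty, T) \otimes_{\Zp} \Qp$), and then use the fact that $\HIw(\tilde k_\infty,T)$ is a finitely generated $\tilde\Lambda$-module together with the known integrality of $\Omega_{V,1}^\varepsilon$ on $\Zp[[X]]^{\psi_\cF=0}\otimes\Dcris(T)$ to deduce that the assembled class is integral, whence so is each $\Sigma_{T,m}(g)$.
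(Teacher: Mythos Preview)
Your instinct to reduce the claim to an integrality statement about Perrin-Riou's big exponential in the Hodge--Tate weight $\{0,1\}$ case is correct, and this is essentially what the paper does as well. The difference is in execution: rather than re-deriving the denominator control from scratch or tracking the twist $(p\otimes\vp)^{-m}$ by hand, the paper simply cites \cite[Theorem~10.8]{kobayashiGHC}, which asserts that $p^c\,\Sigma_{T,m}(g)\in H^1_\f(\tilde k_m,T)$ for an explicit constant $c=(r-b)m-1+r+s$ (with $b$ the top Hodge--Tate weight and $r,s$ as in \cite[Corollary~3.2]{lei-tohoku}). In the present weight-two setting one has $b=r=1$ and $s=0$, so $c=0$ and the lemma is immediate.

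Your proposal, by contrast, leaves the precise integrality input unspecified (``Kato's, equivalently Berger's, theorem'') and spends effort on exactly the denominator bookkeeping that the cited formula for $c$ already packages away. The remark about dividing by $\ell_0$ is a red herring here: that factor governs the $\delta$-relation between $\Omega_{V,0}$ and $\Omega_{V,1}$ and the explicit reciprocity law (cf.\ Corollary~\ref{cor:L-Omega}), not the integrality of $\Sigma_{T,m}$ itself. Your alternative trace-compatibility argument would again require knowing in advance that the assembled Iwasawa class has bounded denominator, which comes down to citing a result like Kobayashi's anyway. So the plan is workable in principle but circuitous; the direct computation of $c=0$ is both shorter and sharper.
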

\begin{proof}
    By \cite[Theorem~10.8]{kobayashiGHC}, there exists a constant $p^c$ such that
    \[
    p^c\Sigma_{T,m}\left(g\right)\in H^1_\f(\tilde k_m,T)
    \]
    for all $m$ and $g$. The constant $p^c$  is given by Proposition~10.3 in op. cit. In particular, this is the same constant as the one considered in \cite[Corollary~3.2]{lei-tohoku}. In particular, 
    \[
    c=(r-b)m-1+r+s,
    \]
    where $b$ is the largest Hodge--Tate weight of $T$ and the constants $r$ and $s$ are given by (3.1) and (3.2) in op. cit. In our current setting, $b=r=1$. The constant $s$  is  0  as given by  Lemma~6.3, \textit{bis.} Thus, $p^c=1$ and the lemma follows. 
\end{proof}

\subsubsection{Construction of classes and norm relations}

Let $\rho:\hat{\mathbb{G}}_m\rightarrow \cF$ be a fixed isomorphism of formal groups.
Let $\sW$ denote the ring of integers of the completion of the maximal unramified extension of $\Qp$.
We have an isomorphism $\tilde\rho:\sW[[X]]\rightarrow \sW[[X]]$  given by $F\mapsto F\circ \rho^{-1}$. Note that both $\vp_\sF$ and $\psi_\sF$ extend to $\sW[[X]]$ by acting on $\sW$ as the arithmetic and the geometric Frobenius, respectively. We shall denote the arithmetic Frobenius {on $\sW$ by $\sigma$. In particular, the action of $\vp_\sF$ on $\sW[[X]]$ is given by $$\sum b_{n}X^n\mapsto \sum b_n^{\sigma}((1+X)^p-1).$$} Recall that
\begin{equation}
\varepsilon_m=\rho^{\sigma^{-n}}(\zeta_{p^m}-1),
\label{eq:uniformizers}
\end{equation}
where $\zeta_{p^m}$ is a primitive $p^m$-th root of unity satisfying $\zeta_{p^{m+1}}^p=\zeta_{p^m}$.

\begin{defn}
    We extend $\Sigma_{T,m}$ to $$\sW[[X]]^{\psi_\cF=0}\otimes \Dcris(T)\rightarrow \sW\otimes_{\Zp}H^1_\f(\tilde k_m,T)$$ by sending $g$ to $\exp(G^{\sigma^{-m}}(\varepsilon_m))$, where $G$ is a solution to $(1-\vp_\cF)G=g$.

    Denote the map $\sW\otimes H^1_\f(\tilde k_{m+1},T)\rightarrow \sW\otimes H^1_\f(\tilde k_m,T)$ obtained from extending the corestriction map $\sW$-linearly also by $\cor_{\tilde k_{m+1}/\tilde k_m}$.
\end{defn}
\begin{remark}\label{rk:trace}
    By Local Class Field Theory, we have the identification
    \[
    \sW\otimes \cO_{\tilde k_m}=\sW[\zeta_{p^m}].
    \]
    The trace map $\sW\otimes \cO_{\tilde k_{m+1}}\rightarrow\sW\otimes \cO_{\tilde k_m}$ sends $\zeta_{p^{m+1}}$ to $0$ or $-1$ depending on whether $m\ge 1$ or $m=0$.
\end{remark}

\begin{proposition}\label{prop:norm-relation}
    Suppose that $g=\tilde\rho(1+X)\otimes v$, where $v=\vp(\omega)\in\Dcris(T)$ for some  $\omega\in\Fil^0\Dcris(T)$. Then,
    \[
\cor_{\tilde k_{m+1}/\tilde k_m}\circ \Sigma_{T,m+1}(g)-a_p(h)\cdot \Sigma_{T,m}(g)+\res_{\tilde k_m/\tilde k_{m-1}}\circ\Sigma_{T,m-1}(g)=0
    \]
    for all $m\ge1$. 
\end{proposition}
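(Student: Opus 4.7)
The plan is to solve the equation $(1-\varphi_\cF)G=g$ explicitly in a way that exposes the Hecke relation, then evaluate everywhere and exploit the standard Coleman trace identity to compute the corestriction. The key features I will use are: (i) $F:=\tilde\rho(1+X)$ satisfies $\psi_\cF F = 0$, (ii) the ``universal'' identity $F^{\sigma^{-j}}(\varepsilon_j) = \zeta_{p^j}$, and (iii) the Hecke relation $\varphi^2 = a_p(h)\varphi - p$ on $\Dcris(V_h)$, which gives the factorization of the characteristic polynomial into eigenvalues $\alpha,\beta$ with $\alpha+\beta = a_p(h)$, $\alpha\beta = p$. Note that since $h$ is non-ordinary, both $\alpha,\beta$ have positive $p$-adic valuation, so the geometric series below converge.

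First I would decompose $\Dcris(V_h) = L v_\alpha \oplus L v_\beta$ along the $\varphi$-eigenspaces (with $v_\alpha = v-\beta\omega$, $v_\beta = v-\alpha\omega$), writing $g = \tfrac{1}{\alpha-\beta}(\alpha F\otimes v_\alpha - \beta F\otimes v_\beta)$. The unique solution to $(1-\varphi_\cF)G = g$ decomposes accordingly as $G = \tfrac{1}{\alpha-\beta}(G_\alpha\otimes v_\alpha - G_\beta\otimes v_\beta)$ with $G_\lambda = \lambda\sum_{k\ge 0}\lambda^k\varphi_\cF^k F$ for $\lambda\in\{\alpha,\beta\}$. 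Evaluating at $\varepsilon_m$ with the appropriate Frobenius twist gives explicit quantities $\tilde g_\lambda(m) := G_\lambda^{\sigma^{-m}}(\varepsilon_m)$ that satisfy the recurrence $\tilde g_\lambda(m+1) = \lambda\zeta_{p^{m+1}} + \lambda\,\tilde g_\lambda(m)$, which can be read off directly from the explicit power-series expansion. Since $\psi_\cF G = G$ (a consequence of $\psi_\cF g = 0$ together with $\psi_\cF\varphi_\cF=\id$), the Coleman trace identity $\mathrm{Tr}_{\tilde k_{m+1}/\tilde k_m}(H^{\sigma^{-m-1}}(\varepsilon_{m+1})) = p(\psi_\cF H)^{\sigma^{-m}}(\varepsilon_m)$ for $m\ge 1$ yields $\mathrm{Tr}(\tilde g_\lambda(m+1)) = p\lambda\,\tilde g_\lambda(m)$. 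This shows $\cor_{\tilde k_{m+1}/\tilde k_m}\!\circ\Sigma_{T,m+1}(g)$ decomposes as $p\alpha\,\Sigma^\alpha_m + p\beta\,\Sigma^\beta_m$ where $\Sigma^\lambda_m := \exp\!\big(\tfrac{\pm\lambda\tilde g_\lambda(m)}{\alpha-\beta}\,v_\lambda\big)$ are the $p$-stabilized components of $\Sigma_{T,m}(g)$.

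Combining the three terms and using $\exp$-linearity reduces the identity to a statement about the coefficients of $v_\alpha,v_\beta$ in $\tilde k_m\otimes\Dcris(V_h)$. Since $\bd_{\rm cris}(V_h)^{\varphi=1} = 0$ for our non-ordinary $h$, the Bloch--Kato exponential $\exp\colon \tilde k_m\otimes\Dcris(V_h)/\Fil^0 \xrightarrow{\sim} H^1_\f(\tilde k_m,V)$ is an isomorphism with kernel exactly $\Fil^0 = \tilde k_m\cdot\omega$, and the condition $Xv_\alpha + Yv_\beta\in\Fil^0$ is equivalent to $X+Y = 0$. Substituting the recurrence for $\tilde g_\lambda(m)$ into the sum of $v_\alpha$- and $v_\beta$-coefficients, the telescoping produced by the Hecke relation $\alpha+\beta=a_p$, $\alpha\beta = p$ collapses the $\zeta_{p^m}$- and $\tilde g_\lambda(m-1)$-terms, yielding the required cancellation modulo $\Fil^0$. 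The main obstacle I anticipate is bookkeeping: carefully tracking the $\sigma$-twists on $\sW$-coefficients, the conventions relating $\varphi_\cF$ (acting on $\sW[[X]]$ via $[p]_\cF$ and arithmetic Frobenius) to $\varphi$ on $\Dcris$, and the normalizing factors in the Coleman trace identity---any of which could alter the final linear combination and thereby force a different presentation of the Hecke polynomial on the norm side.
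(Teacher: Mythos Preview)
Your approach is sound and will work, but it differs from the paper's proof in an interesting way. The paper does \emph{not} diagonalize $\varphi$ on $\Dcris(V)$. Instead it works directly with the $\varphi$-powers of $v=\varphi(\omega)$: citing the computations in \cite[Lemma~5.6]{lei-tohoku}, it writes down the closed formula
\[
\Sigma_{T,m}(g)=\exp\Big(\sum_{i=1}^m \zeta_{p^i}\otimes \varphi^{m-i}(v)+(1-\varphi)^{-1}\varphi^m(v)\Big),
\]
computes the trace using $\mathrm{Tr}_{\tilde k_{m+1}/\tilde k_m}\zeta_{p^{m+1}}=0$ for $m\ge1$, and then applies the Hecke relation in the undiagonalized form $p\varphi^{k+1}(v)=a_p(h)\varphi^{k}(v)-\varphi^{k-1}(v)$. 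The only subtlety is that an extra summand $\exp(\zeta_{p^m}\otimes\varphi^{-1}(v))$ appears, and this vanishes because $\varphi^{-1}(v)=\omega\in\Fil^0$. This is shorter than your route since it avoids extending scalars to adjoin $\alpha,\beta$ and then recombining; your eigenspace decomposition is essentially a change of basis that makes the recurrence first-order in each component, at the cost of extra bookkeeping.

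On that bookkeeping: your anticipated obstacle is real. With the paper's convention $\varphi(v_{h,\lambda})=\lambda^{-1}v_{h,\lambda}$, the correct solution is $G_\lambda=\sum_{k\ge0}\lambda^{-k}\varphi_\cF^kF$, and the $\psi$-identity on the $\lambda$-component reads $\psi G_\lambda=\lambda^{-1}G_\lambda$, giving $\mathrm{Tr}(\tilde g_\lambda(m+1))=\tfrac{p}{\lambda}\tilde g_\lambda(m)$ rather than $p\lambda\,\tilde g_\lambda(m)$; since $p/\lambda$ equals the other root, the linear combination still closes up under $\alpha+\beta=a_p$, $\alpha\beta=p$, but your displayed coefficients need adjusting. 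None of this affects the validity of the strategy.
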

\begin{proof}
Following the calculations carried out in \cite[proof of Lemma~5.6]{lei-tohoku}, combined with Remark~\ref{rk:PR-Sigma} and \eqref{eq:uniformizers}, we deduce that \[
\Sigma_{T,m}(g)=\exp\left(\sum_{i=1}^m \zeta_{p^i}\otimes \vp^{m-i}(v)+(1-\vp)^{-1}\vp^m(v))\right).
    \]
In view of Remark~\ref{rk:trace}, we have
\begin{align*}
    \cor_{\tilde k_{m+1}/\tilde k_m}\circ \Sigma_{T,{m+1}}(g)&=\exp\circ \Tr_{\tilde k_{m+1}/\tilde k_n}\left(\sum_{i=1}^{m+1} \zeta_{p^i}\otimes \vp^{m+1-i}(v)+(1-\vp)^{-1}\vp^{m+1}(v))\right)\\
    &=p\cdot \exp\circ \left(\sum_{i=1}^{m} \zeta_{p^i}\otimes \vp^{m+1-i}(v)+(1-\vp)^{-1}\vp^{m+1}(v))\right)\\
    &=\exp\left(\sum_{i=1}^m \zeta_{p^i}\otimes(a_p(h) \vp^{m-i}(v)-\vp^{m-i-1}(v))+(1-\vp)^{-1}(a_p(h)\vp^{m}(v)-\vp^{m-1}(v))\right)\\
    &=a_p(h)\cdot \exp\left(\sum_{i=1}^m\zeta_{p^i}\otimes\vp^{m-i}(v)+(1-\vp)^{-1}\vp^m(v)\right)\\
    &\qquad -\exp\left(\sum_{i=1}^{m-1}\zeta_{p^i}\otimes\vp^{m-1-i}(v)+(1-\vp)^{-1}\vp^{m-1}(v)\right)-\exp\left(\zeta_{p^{m-1}}\otimes\vp^{-1}( v)\right)\\
    &=a_p(h)\cdot \Sigma_{T,m}(g)-\res_{\tilde k_m/\tilde k_{m-1}}\circ\Sigma_{T,m-1}(g),
\end{align*}
since $\vp^{-1}(v)=\omega\in\Fil^0\Dcris(T)$. This concludes the proof.
\end{proof}

\begin{corollary}\label{cor:norm-relation}
    Let $e$ be a $\tilde\Lambda$-basis of $\Zp[[X]]^{\psi_\cF=0}$. For $m\ge0$, let $$c_m=\Sigma_{T,m}\left(e\otimes v\right)\in H^1_\f(\tilde k_m,T), $$ where $v=\vp(\omega)$ for some $\cO$-basis of $\Fil^0\Dcris(T)$. Then
    \[
\cor_{\tilde k_{m+1}/\tilde k_m}(c_{m+1})-a_p(h)\cdot c_m+\res_{\tilde k_m/\tilde k_{m-1}}(c_{m-1})=0.
    \]
\end{corollary}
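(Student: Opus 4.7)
The plan is to reduce the corollary to Proposition~\ref{prop:norm-relation} by exploiting the $\tilde\Lambda$-equivariance of the entire construction. Proposition~\ref{prop:norm-relation} already supplies the norm relation for the specific basis element $\tilde\rho(1+X)\in \sW[[X]]^{\psi_\cF=0}$; the corollary merely restates it for an arbitrary $\tilde\Lambda$-basis $e$ of $\Zp[[X]]^{\psi_\cF=0}$, and the bridge between the two is Galois-equivariance.

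First, I would observe that $\Zp[[X]]^{\psi_\cF=0}$ is a free $\tilde\Lambda$-module of rank one, and that $\tilde\rho(1+X)$ is a $\sW[[\tilde\Gamma]]$-basis of $\sW[[X]]^{\psi_\cF=0}$; this is the Lubin--Tate analogue of the classical fact that $1+X$ generates $\Zp[[X]]^{\psi=0}$ over the cyclotomic Iwasawa algebra, and it follows by transport along the isomorphism $\tilde\rho$. Hence any $\tilde\Lambda$-basis $e$ of $\Zp[[X]]^{\psi_\cF=0}$ may be written as $e = u\cdot \tilde\rho(1+X)$ for a unique unit $u\in \sW[[\tilde\Gamma]]^\times$.

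Next, I would verify the equivariance
\[
\Sigma_{T,m}\bigl((w\cdot g)\otimes v\bigr) \;=\; w_{|\tilde k_m} \cdot \Sigma_{T,m}(g\otimes v)
\]
for $w\in \sW[[\tilde\Gamma]]$, where $w_{|\tilde k_m}$ denotes the image of $w$ in $\sW[\Gal(\tilde k_m/\Qp)]$ acting on $\sW\otimes H^1_\f(\tilde k_m,T)$ through the natural Galois module structure. This is immediate from the defining recipe: if $(1-\vp_\cF)G = g$, then $(1-\vp_\cF)(\gamma G) = \gamma g$ for $\gamma\in\tilde\Gamma$, while the Galois action $\gamma\cdot\varepsilon_m = [\chi(\gamma)]_\cF(\varepsilon_m)$ converts the functional $\tilde\Gamma$-action on $G$ into the Galois action on $\exp(G^{\sigma^{-m}}(\varepsilon_m))$. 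Alternatively, the same equivariance follows from the $\tilde\Lambda$-linearity of the Perrin-Riou map $\Omega^\varepsilon_{V,1}$ combined with Remark~\ref{rk:PR-Sigma}.

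Finally, since $\cor$, $\res$, and multiplication by $a_p(h)$ all commute with the Galois action, applying $u$ to the norm relation of Proposition~\ref{prop:norm-relation} transports it into the identity
\[
\cor_{\tilde k_{m+1}/\tilde k_m}\Sigma_{T,m+1}(e\otimes v) - a_p(h)\,\Sigma_{T,m}(e\otimes v) + \res_{\tilde k_m/\tilde k_{m-1}}\Sigma_{T,m-1}(e\otimes v) = 0
\]
inside $\sW\otimes H^1_\f(\tilde k_m,T)$. Because $e\in\Zp[[X]]^{\psi_\cF=0}$ rather than merely in $\sW[[X]]^{\psi_\cF=0}$, the classes $c_m = \Sigma_{T,m}(e\otimes v)$ already lie in $H^1_\f(\tilde k_m,T)$, so the relation descends to the stated form. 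The only mildly nontrivial input is the equivariance step; once that is in place, no further obstacle remains.
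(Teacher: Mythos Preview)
Your argument is correct and matches the paper's own proof essentially step for step: write $e = u\cdot\tilde\rho(1+X)$ for a unit $u\in\sW[[\tilde\Gamma]]^\times$, invoke the $\sW[[\tilde\Gamma]]$-linearity of the maps $\Sigma_{T,m}$ together with their compatibility with corestriction and restriction, and transport the relation from Proposition~\ref{prop:norm-relation}. You are slightly more explicit about the equivariance and the descent to $H^1_\f(\tilde k_m,T)$, but the strategy is identical.
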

\begin{proof}
    Since $\tilde\rho\circ \psi_{\hat{\mathbb{G}}_m}=\psi_\cF\circ\tilde \rho$ and $(1+X)$ is a $\sW[[\tilde\Gamma]]$-basis of $\sW[[X]]^{\psi_{\hat{\mathbb{G}}_m}=0}$, there exists $x_e\in\sW[[\tilde\Gamma]]^\times$ such that
    \[
        e=x_e\cdot \tilde\rho(1+X).
    \]
     The maps $\Sigma_{T,m}$ are $\sW[[\tilde\Gamma]]$-linear and are compatible with the corestriction maps. Thus, the affirmed norm relation follows from Proposition~\ref{prop:norm-relation}.
\end{proof}
In particular, the classes $c_m$ will allow us to define classes in $H^1(k_m,T)$ satisfying conditions (1) and (4) in Definition~\ref{def:Q}.

\subsubsection{Primitivity of classes}

The goal of this subsection is that the classes built out of $(c_n)_{n\ge0}$ from Corollary~\ref{cor:norm-relation} satisfying conditions (2) and (3) in Definition~\ref{def:Q}.

\begin{lemma}\label{lem:BK}
We have   \[\exp\left(p\Dcris(T)/\Fil^0\Dcris(T)\right)=H^1_\f(\Qp,T).\]
\end{lemma}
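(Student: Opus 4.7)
The plan is to derive this from the Bloch--Kato fundamental exact sequence together with an analysis of the integral Frobenius structure on $\Dcris(T)$.

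First, I would verify that the two groups that could obstruct the Bloch--Kato exponential from being a bijection both vanish rationally. The characteristic polynomial of $\vp$ on $\Dcris(V_h)$ is $X^2 - a_p(h)X + p$, so $\det(1-\vp) = 1 - a_p(h) + p$, which is a $p$-adic unit thanks to the assumption $a_p(h)\in\varpi\cO_L$. Hence $\Dcris(V_h)^{\vp=1}=0$. Together with $V_h^{G_{\Qp}}=0$ (which follows from the irreducibility of $\bar\rho_h$ imposed in \eqref{item_BI}), the Bloch--Kato fundamental exact sequence collapses to the rational isomorphism
$$\exp\colon \Dcris(V_h)/\Fil^0\Dcris(V_h) \xrightarrow{\;\sim\;} H^1_\f(\Qp,V_h).$$

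To refine this to the integral statement, I would invoke the strong divisibility condition inherent to crystalline lattices with Hodge--Tate weights in $\{0,1\}$ (available for $p\geq 5$ via Fontaine--Laffaille / Wach module theory): one has $\vp(\Fil^0\Dcris(T_h)) \subseteq p\,\Dcris(T_h)$. Fixing a basis $\omega$ of the rank one $\cO_L$-module $\Fil^0\Dcris(T_h)$, this produces $\eta := \vp(\omega)/p \in \Dcris(T_h)$, so that $\{\omega,\eta\}$ is an $\cO_L$-basis of $\Dcris(T_h)$ and the class $\bar\eta$ generates $\Dcris(T_h)/\Fil^0\Dcris(T_h)$. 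The $\vp$-matrix in this basis is $\bigl(\begin{smallmatrix}0 & -1\\ p & a_p(h)\end{smallmatrix}\bigr)$, and a direct inversion shows $(1-\vp)^{-1}$ carries $p\eta = \vp(\omega)$ to an element of $p\,\Dcris(T_h)/\Fil^0$ whose coefficient is a unit (namely $p/(1-a_p(h)+p)$). This matches exactly with the integral image produced in Lemma~\ref{lem:formula-Sigma} for $m=0$, where $\Sigma_{T_h,0}(g) = \exp\bigl((1-\vp)^{-1}\vp(\omega)\bigr)$ is shown to lie in $H^1_\f(\Qp,T_h)$. Running the argument for an arbitrary generator of $p\Dcris(T_h)/\Fil^0\Dcris(T_h)$ then yields both the inclusion $\exp(p\Dcris(T_h)/\Fil^0\Dcris(T_h))\subseteq H^1_\f(\Qp,T_h)$ and, by a rank count (both are free $\cO_L$-modules of rank one inside the $L$-line $H^1_\f(\Qp,V_h)$), the asserted equality. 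Equivalently, when $T_h$ is realized as the Tate module of a factor of the Jacobian of a Shimura curve via Jacquet--Langlands, this is the content of the fact that the formal-group logarithm identifies $\hat{\cA}_h(\Zp) = H^1_\f(\Qp,T_h)$ with $p\cdot \Lie_{\Zp}(\cA_h) = p\Dcris(T_h)/\Fil^0\Dcris(T_h)$, since for $p\geq 5$ the formal exponential converges precisely on $p$ times the integral tangent space.

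The main delicate point is the integral comparison: tracking the factor of $p$ through the various normalizations of the exponential (Bloch--Kato, Perrin--Riou, formal group). The cleanest approach is via the Wach module, where the strong divisibility is built in; alternatively, the computation via the explicit $\vp$-matrix above bypasses Wach-module machinery at the cost of a small coordinate calculation. In either route, the rank-one nature of $H^1_\f(\Qp,T_h)$ as an $\cO_L$-module lets one upgrade an explicit inclusion of lattices to an equality.
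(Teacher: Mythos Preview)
Your proof has a genuine gap at the ``rank count'' step. You correctly establish that $\exp$ carries a generator of $p\Dcris(T_h)/\Fil^0\Dcris(T_h)$ into $H^1_\f(\Qp,T_h)$, giving the inclusion $\exp(p\Dcris(T_h)/\Fil^0)\subseteq H^1_\f(\Qp,T_h)$. But the assertion that equality follows ``by a rank count (both are free $\cO_L$-modules of rank one inside the $L$-line)'' is false: two free rank-one $\cO_L$-lattices inside a one-dimensional $L$-vector space, one contained in the other, differ by a power of $\varpi$, and nothing you have written forces that power to be zero. To close the gap you would need to show that your generator maps to a \emph{generator} of $H^1_\f(\Qp,T_h)$, and Lemma~\ref{lem:formula-Sigma} only gives containment, not primitivity of the image. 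Your formal-group alternative is a reasonable heuristic but, as stated, is not a proof either: the claim that the formal exponential identifies $\hat{\cA}_h(\Zp)$ with $p$ times the integral tangent space requires justification that depends on the specific formal group, and you have not supplied it.

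A secondary point: your Frobenius convention is not the paper's. You take the characteristic polynomial of $\vp$ on $\Dcris(V_h)$ to be $X^2-a_p(h)X+p$, whereas in the paper's normalization it is $X^2-\tfrac{a_p(h)}{p}X+\tfrac{1}{p}$ (see the displayed relation preceding Corollary~\ref{cor:norm-relation}). Under the paper's convention one has $\vp(\Fil^0\Dcris(T_h))\not\subseteq p\Dcris(T_h)$; rather, $\{\omega,\vp(\omega)\}$ is an $\cO_L$-basis of $\Dcris(T_h)$. This does not invalidate your strategy, but it means your strong-divisibility statement and your matrix for $\vp$ are off by a factor of $p$ relative to the ambient conventions.

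The paper avoids the lattice-index problem altogether by invoking \cite[Lemma~4.5(b)]{blochkato}, which reduces the claim to the purely linear-algebraic identity
\[
(1-\vp)\bigl(p\Dcris(T)/\Fil^0\Dcris(T)\bigr)=\Dcris(T)/(1-\vp)\Fil^0\Dcris(T).
\]
This is then checked in one line: with $\omega$ an $\cO_L$-basis of $\Fil^0\Dcris(T)$ and $\{\omega,\vp(\omega)\}$ a basis of $\Dcris(T)$ (Fontaine--Laffaille), one computes $(1-\vp)(p\vp(\omega))=(p-a_p)\vp(\omega)+\omega\equiv(1+p-a_p)\omega$ modulo $(1-\vp)\Fil^0$, and $1+p-a_p\in\cO_L^\times$. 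This simultaneously gives both inclusions, which is exactly what your rank-count argument was missing.
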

\begin{proof}By \cite[Lemma~4.5(b)]{blochkato}, it is enough to show that
\[
(1-\vp)\left(p\Dcris(T)/\Fil^0\Dcris(T)\right)=\Dcris(T)/(1-\vp)\Fil^0\Dcris(T).
\]
Since our representation satisfies the Fontaine--Laffaille condition, if $\omega$ is an $\cO$-basis of $\Fil^0\Dcris(T)$, then  $\Dcris(T)$ is generated by $\omega,\vp(\omega)$ as an $\cO$-module. Consequently, $\Dcris(T)/\Fil^0\Dcris(T)$ and $\Dcris(T)/(1-\vp)\Fil^0\Dcris(T)$ are generated by $\vp(\omega)$ and $\omega$ over $\cO$ respectively. Therefore, the lemma follows from the fact that
\[
(1-\vp)(p\vp(\omega))=(p-a_p(f))\vp(\omega)+\omega\equiv (1+p-a_p(f))\omega\mod (1-\vp)\Fil^0\Dcris(T).
\]
\end{proof}
\begin{remark}
{The reader may refer to \cite[Proposition~3.5.1]{rubin00} for a dual version of Lemma~\ref{lem:BK} for elliptic curves.}    \end{remark}

\begin{proposition}
   Let $c_n$ be the classes defined as in Corollary~\ref{cor:norm-relation}.  Then
   $\cor_{\tilde k_n/\Qp}(c_m)$ is an $\cO$-basis of $H^1_\f(\Qp,T)$ for  $n\in\{1,2\}$.
\end{proposition}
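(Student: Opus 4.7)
The strategy is a direct computation combined with Lemma~\ref{lem:BK}, which identifies $H^1_\f(\Qp,T)$ with the image under $\exp$ of the lattice $p\Dcris(T)/\Fil^0\Dcris(T)=\cO\cdot pv$ inside $\Dcris(V)/\Fil^0$. Hence it suffices to verify that, modulo $\Fil^0\Dcris(T)$, the argument of $\exp$ defining $\cor_{\tilde k_n/\Qp}(c_n)$ is of the form $p\cdot u_n\cdot v$ with $u_n\in\cO^\times$, for $n=1,2$. Since changing the $\tilde\Lambda$-basis $e$ of $\Zp\lb X\rb^{\psi_\cF=0}$ replaces $c_n$ by a $\sW\lb\tilde\Gamma\rb^\times$-multiple, whose corestriction to $\Qp$ is a $\sW^\times$-unit (which leaves the question of being an $\cO$-basis unaffected), we may work over $\sW$ with $e=\tilde\rho(1+X)$. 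The formula derived in the proof of Proposition~\ref{prop:norm-relation} then gives
$$c_n=\exp\!\left(\sum_{i=1}^n \zeta_{p^i}\otimes\vp^{n-i}(v)+(1-\vp)^{-1}\vp^n(v)\right).$$

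Applying $\cor_{\tilde k_n/\Qp}$ via the traces $\Tr_{\tilde k_n/\Qp}(\zeta_p)=-p^{n-1}$, $\Tr_{\tilde k_n/\Qp}(\zeta_{p^i})=0$ for $2\le i\le n$ (the minimal polynomial of $\zeta_{p^i}$ over $\tilde k_{i-1}$ is $X^p-\zeta_{p^{i-1}}$, with vanishing subleading coefficient), and $\Tr_{\tilde k_n/\Qp}(1)=p^{n-1}(p-1)$, we obtain
$$\cor_{\tilde k_n/\Qp}(c_n)=\exp\!\left(-p^{n-1}\vp^{n-1}(v)+p^{n-1}(p-1)(1-\vp)^{-1}\vp^n(v)\right).$$
It remains to reduce this modulo $\Fil^0\Dcris(T)=\cO\omega$. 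The Frobenius relation $p\vp^2(\omega)=a_p\vp(\omega)-\omega$ exploited in the proof of Lemma~\ref{lem:BK} gives $\vp(v)\equiv (a_p/p)v\pmod{\Fil^0}$, and the $v$-coefficient of $(1-\vp)^{-1}\vp^k(v)$ is then recovered by solving a $2\times 2$ linear system in the basis $\{\omega,v\}$. A direct calculation yields
\begin{align*}
\cor_{\tilde k_1/\Qp}(c_1)&\equiv \exp\!\left(\tfrac{p(a_p-2)}{1+p-a_p}\cdot v\right)\pmod{\Fil^0\Dcris(T)},\\
\cor_{\tilde k_2/\Qp}(c_2)&\equiv \exp\!\left(\tfrac{p((a_p-1)^2-p)}{1+p-a_p}\cdot v\right)\pmod{\Fil^0\Dcris(T)}.
\end{align*}

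Since $p\ge 5$ and $\ord_\varpi(a_p)>0$, each of $a_p-2$, $(a_p-1)^2-p$, and $1+p-a_p$ reduces to a unit modulo $\varpi$, so both coefficients are of the form $p\cdot u$ with $u\in\cO^\times$. Lemma~\ref{lem:BK} then identifies these classes as $\cO$-generators of $H^1_\f(\Qp,T)$, concluding the argument. The main obstacle is bookkeeping of conventions: the normalization $p\vp^2-a_p\vp+1=0$ implicit in the proof of Lemma~\ref{lem:BK} (under which $\vp$ has eigenvalues of negative $p$-adic valuation) must be applied consistently, not the more classical $\vp^2-a_p\vp+p=0$. It is precisely this normalization, combined with the non-ordinariness assumption $\ord_\varpi(a_p)>0$, that produces the extra factor of $p$ in the final coefficients---without which the classes would generate the larger sub-$\cO$-module $H^1_\f(\Qp,V)\cap H^1(\Qp,T)$ rather than the Bloch--Kato lattice $H^1_\f(\Qp,T)$ itself.
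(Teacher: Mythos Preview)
Your proof is correct and follows essentially the same approach as the paper: reduce to the choice $e=\tilde\rho(1+X)$, compute the corestriction explicitly using the trace formulae for roots of unity, express the result as $p\cdot(\text{unit})\cdot\exp(v)$ modulo $\Fil^0$, and conclude via Lemma~\ref{lem:BK}. Your $n=2$ coefficient $\frac{p((a_p-1)^2-p)}{1+p-a_p}$ is in fact the correct value (the paper records $\frac{p(1-p-2a_p)}{1+p-a_p}$, which drops the $a_p^2$ term), but since both expressions are units modulo $\varpi$ this does not affect the conclusion.
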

\begin{proof}
Since $e\otimes v$ and $\tilde\rho(1+X)\otimes v$ differ by a unit of $\sW[[\tilde \Gamma]]$, it is enough to consider the classes built out of $g=\tilde\rho(1+X)\otimes v$  given by the statement of Proposition~\ref{prop:norm-relation}.
Note that\[
\cor_{\tilde k_m/\Qp}\circ\Sigma_{T,m}(g)=p^{m-1}\exp\left(-\vp^{m-1}(v)+(p-1)(1-\vp)^{-1}\vp^m(v)\right).
\]
Recall that the action of $\vp$ on $\Dcris(V)$ satisfies
\[
\vp^2-\frac{a_p(h)}{p}\cdot \vp+\frac{1}{p}=0.
\]
Thus by \cite[proof of Lemma~5.6]{LLZ0.5},
\[
(1-\vp)^{-1}=\frac{p\vp+p-a_p(h)}{1+p-a_p(h)} 
\] and so 
\[
\cor_{\tilde k_m/\Qp}\circ\Sigma_{T,n}(g)=\begin{dcases}
    \frac{p(a_p(h)-2)}{1+p-a_p(h)}\exp(\vp(\omega))&m=1,\\
     \frac{p(1-p-2a_p(h))}{1+p-a_p(h)}\exp(\vp(\omega))&m=2.
\end{dcases}
\]
Since $\vp(\omega)$ is an $\cO$-basis of $\Dcris(T)/\Fil^0\Dcris(T)$, Lemma~\ref{lem:BK} implies that for $m=1,2$, $\cor_{\tilde k_n/\Qp}\circ\Sigma_{T,m}(g)$ is an $\cO$-basis of $ H^1_\f(\Qp,T)$.  This concludes the proof of the proposition.
\end{proof}
Combined this with Corollary~\ref{cor:norm-relation}, we deduce the following:
\begin{theorem}\label{thm:Q-split}
    For $m\ge0$, define $d_m$ to be the image of $c_{m+1}$ under the corestriction map $\cor_{\tilde k_{m+1}/k_m}$. Then $(d_m)$ is a primitive $Q$-system for $T_h$. Furthermore, if we denote by $\bar d_m$  the image of $d_m$ in $H^1_\f(k_m,\Thn)$, then $(\bar d_m)$ is a primitive $Q$-system for $\Thn$.
\end{theorem}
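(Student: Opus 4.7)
The strategy is to verify the four conditions of Definition~\ref{def:Q} for the sequence $(d_m)$. Condition~(1) is immediate: Lemma~\ref{lem:formula-Sigma} shows $c_{m+1}\in H^1_\f(\tilde k_{m+1},T_h)$, and corestriction preserves the Bloch--Kato finite subgroup, so $d_m=\cor_{\tilde k_{m+1}/k_m}(c_{m+1})\in H^1_\f(k_m,T_h)$.

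The key Galois-theoretic input for the remaining conditions is the position of $k_m$ inside the Lubin--Tate tower. Since $p$ splits in $K$, $k_0=\QQ_p$; with a suitable choice of $\cF$, the anticyclotomic extension $k_\infty/\QQ_p$ is contained in $\tilde k_\infty$, with $k_m$ being the unique subfield of $\tilde k_{m+1}$ of degree $p^m$ over $\QQ_p$, namely the fixed field of the tame inertia $\Delta=\mu_{p-1}\subset \Gal(\tilde k_{m+1}/\QQ_p)$. A count of degrees then yields $k_m\cdot \tilde k_m=\tilde k_{m+1}$ and $k_m\cap \tilde k_m=k_{m-1}$, so the Mackey double-coset formula reduces, in this single-orbit configuration above $k_{m-1}$, to the identity
$$\cor_{\tilde k_{m+1}/k_m}\circ \res_{\tilde k_{m+1}/\tilde k_m}=\res_{k_m/k_{m-1}}\circ \cor_{\tilde k_m/k_{m-1}}.$$

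To establish condition~(4), I would start from $\cor_{k_{m+1}/k_m}(d_{m+1})=\cor_{\tilde k_{m+2}/k_m}(c_{m+2})$ and factor the right-hand side as $\cor_{\tilde k_{m+1}/k_m}\circ\cor_{\tilde k_{m+2}/\tilde k_{m+1}}(c_{m+2})$. The inner corestriction is controlled by the $(m+1)$-level norm relation from Corollary~\ref{cor:norm-relation}, producing $a_p(h)\,d_m-\cor_{\tilde k_{m+1}/k_m}\circ\res_{\tilde k_{m+1}/\tilde k_m}(c_m)$, and the Mackey identity above converts the residual term to $\res_{k_m/k_{m-1}}(d_{m-1})$. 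For conditions~(2) and~(3), I would simply observe that $d_0=\cor_{\tilde k_1/\QQ_p}(c_1)$ and $\cor_{k_1/k_0}(d_1)=\cor_{\tilde k_2/\QQ_p}(c_2)$ (using $k_0=\QQ_p$ and the factorization of corestriction through $k_1$), both of which were identified as $\cO$-bases of $H^1_\f(\QQ_p,T_h)$ in the proposition immediately preceding the theorem.

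The mod $\varpi^n$ claim for $(\bar d_m)$ follows formally by reduction: (1) holds by the very definition of $H^1_\f(k_m,\Thn)$ as the image of $H^1_\f(k_m,T_h)$; (4) is the reduction of the integral relation; and since $d_0$ and $\cor_{k_1/k_0}(d_1)$ are $\cO$-generators of $H^1_\f(\QQ_p,T_h)\cong \cO$, their reductions generate the image in $H^1(\QQ_p,T_{h,n})$ and hence lie outside $\varpi H^1_\f(\QQ_p,\Thn)$. The main obstacle, in my view, is the Galois-theoretic bookkeeping: identifying $k_m$ as $\tilde k_{m+1}^\Delta$ and verifying the compositum/intersection identities $k_m\cdot \tilde k_m=\tilde k_{m+1}$, $k_m\cap \tilde k_m=k_{m-1}$. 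Once these are in place, the Mackey identity delivers the norm relation mechanically and the remaining verifications are routine.
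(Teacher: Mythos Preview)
Your proposal is correct and follows the same route the paper indicates. The paper records Theorem~\ref{thm:Q-split} as an immediate consequence of Corollary~\ref{cor:norm-relation} and the preceding proposition; you have simply made explicit the one nontrivial step the paper leaves to the reader, namely the Mackey identity $\cor_{\tilde k_{m+1}/k_m}\circ \res_{\tilde k_{m+1}/\tilde k_m}=\res_{k_m/k_{m-1}}\circ \cor_{\tilde k_m/k_{m-1}}$, which follows from the compositum and intersection identities $k_m\cdot\tilde k_m=\tilde k_{m+1}$ and $k_m\cap\tilde k_m=k_{m-1}$ that you correctly derive from the position of $k_\infty$ inside the Lubin--Tate tower.
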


\subsection{Local points on elliptic curves in the inert case}
\label{subsec_4_2_2022_07_12}
In this section, we assume that $p$ is inert in $K$. 

Let $f$ be the elliptic newform corresponding to our fixed elliptic curve $E$. Since $p\ge5$, we have necessarily $a_p(f)=0$ by the Weil bound. We recall the following result of Burungale--Kobayashi--Ota.

\begin{theorem}\label{thm:Q-inert}
There exists a system of local points $d_m\in \hat{E}(\fM_{k_m})$ such that:
\begin{itemize}
    \item[(1)] $\Tr_{k_m/k_{m-1}}d_m=-d_{m-2}$  for all $m\ge 2$;
    \item[(2)] $\Tr_{k_1/k_0}d_1=-d_{0}$;
    \item[(3)] $d_0\in \hat{E}(\fM_{k_0})\setminus p\hat{E}(\fM_{k_0})$.
\end{itemize}
\end{theorem}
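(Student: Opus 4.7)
The proof will be a direct invocation of the recent work of Burungale-Kobayashi-Ota \cite{BKO1, BKO2}, together with a brief reconciliation of their normalizations with ours.

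First, I would record the key structural observation: since $E/\QQ_p$ is supersingular with $a_p(E)=0$ and $p$ is inert in $K$, the formal group $\hat E$ base-changed to $\cO_{k_0}$ acquires a canonical structure of height-one Lubin-Tate formal group over $\cO_{k_0}$. The $\cO_{k_0}$-action is inherited from the embedding of $\cO_{k_0}$ (the ring of integers of the unramified quadratic extension of $\QQ_p$) into $\mathrm{End}(\hat E_{/\cO_{k_0}})$, which is compatible with the action of the crystalline Frobenius on $\Dcris(T_pE)$. Via local class field theory, the anticyclotomic $\ZZ_p$-extension $k_\infty/k_0$ is then identified with a $\ZZ_p$-quotient of the Lubin-Tate tower associated with $\hat E_{/\cO_{k_0}}$.

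Next, I would apply the Coleman-style construction of \cite{BKO1}: fix a norm-compatible sequence $(\pi_m)$ of Lubin-Tate torsion points, producing uniformizers of the associated torsion fields. The element $(1+X)$ in the relevant Coleman ring then yields a norm-coherent system of formal group elements, which, after pushing forward along the quotient to the anticyclotomic tower, becomes the sought sequence $(d_m)$. The trace relations (1) and (2) arise from the characteristic polynomial $\Phi^2+p=0$ of the crystalline Frobenius on $\Dcris(T_pE)$ (forced by $a_p(E)=0$), combined with the standard identity $\mathrm{cor}\circ \mathrm{res}=p$ and the explicit form of the Lubin-Tate norm operator. Unlike the cyclotomic/split case, no correction at the base step is required, because the residue-degree-$2$ factor is already absorbed by working over $k_0$ rather than $\QQ_p$; this accounts for the shape $\Tr_{k_1/k_0}d_1=-d_0$ (without an extra $p$-factor).

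The primitivity statement (3) would then be verified by a base-level Bloch-Kato calculation producing $d_0$ as the image, under the formal logarithm/exponential, of a specific generator of $\Dcris(T_pE)/\Fil^0$, analogous in spirit to Lemma~\ref{lem:BK} in the split case. The main obstacle is that this entire construction depends on the resolution of Rubin's local-units conjecture for height-two Lubin-Tate formal groups over the unramified quadratic extension of $\QQ_p$, which is the central result of \cite{BKO1}. Once that input is available, the verification of the norm relations and the primitivity of $d_0$ reduces to an explicit (if intricate) computation paralleling the cyclotomic/split story.
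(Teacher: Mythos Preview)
Your citation to \cite{BKO1} is correct, and indeed the paper's own proof is simply a reference to \cite[Theorem~5.5]{BKO1}. However, your account of what that result actually says and how it is proved is substantially off, and in a way that matters for understanding why the inert case is so much harder than the split case.

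First, a self-contradiction: you write that $\hat E_{/\cO_{k_0}}$ acquires a ``canonical structure of height-one Lubin--Tate formal group,'' but later invoke ``Rubin's local-units conjecture for height-two Lubin--Tate formal groups.'' The second is correct; the first is not. A supersingular formal group has height two, and for a non-CM curve there is no canonical $\cO_{k_0}$-module structure making it a relative Lubin--Tate group of height one. If there were, the construction would be as routine as in the split case and the paper's restriction to Hecke field $\QQ$ would be unnecessary.

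More importantly, the construction in \cite{BKO1} is \emph{not} the purely local Coleman-power-series argument you sketch. As the remark immediately following the theorem explains, it is semi-local: one uses Gross' theory of quasi-canonical liftings to produce points on modular curves defined over the anticyclotomic local fields $k_m$, and then pushes them to $\hat E$ via the modular parametrization. The norm relations (1) and (2) come from Hecke correspondences on the modular curve side. The primitivity statement (3) is \emph{not} a Bloch--Kato/exponential calculation in the style of Lemma~\ref{lem:BK}; it relies on the fact that the formal completion of the modular parametrization at a suitable supersingular point is an isomorphism of formal groups. This is precisely why the argument does not extend to abelian varieties of $\GL_2$-type with larger Hecke field---the formal completion is then a map between formal groups of different dimensions and cannot be an isomorphism---and is the reason for the Hecke-field-$\QQ$ hypothesis in the inert case.
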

\begin{proof}
This is \cite[Theorem~5.5]{BKO1}. 
\end{proof}

\begin{remark}
The construction of local points in Theorem~\ref{thm:Q-inert} is semi-local. It is based on Gross' theory of quasi-canonical lifts, leading to points on modular curves defined over anticyclotomic local fields, and modular parameterisation of an elliptic curve $E/\QQ$ supersingular at $p$. 
The key $p$-indivisibility property (3) relies on the fact that formal completion of the modular parametrization of $E$ at a well-chosen closed point is an isomorphism (cf. \cite{BKO1}, \S5.0.1). The latter no longer holds for higher dimensional abelian varieties of ${\rm GL}_2$-type over $\QQ$, and so we assume that the corresponding Hecke field is $\QQ$.    
\end{remark}

Theorem~\ref{thm:Q-inert} immediately implies:
\begin{corollary}
A primitive $Q$-system exists for $T_f$ (and thus for $\Tfn$ for all $n$).
\end{corollary}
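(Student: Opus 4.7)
The plan is to translate the local points produced by Theorem~\ref{thm:Q-inert} into Galois cohomology classes via the Kummer map, and then verify each of the four conditions in Definition~\ref{def:Q} using that $a_p(f) = 0$ (which holds by the Weil bound since $p \geq 5$). Because $E$ has supersingular reduction at the inert prime $p$, for every finite subextension $k_m/k_0$ the Kummer map identifies $\hat{E}(\fM_{k_m})$ with $H^1_\f(k_m, T_f)$; under this identification, the trace map $\Tr_{k_m/k_{m-1}}$ on formal group points corresponds to corestriction $\cor_{k_m/k_{m-1}}$ in cohomology, while the inclusion $\hat{E}(\fM_{k_{m-1}}) \hookrightarrow \hat{E}(\fM_{k_m})$ corresponds to restriction. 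Denote again by $d_m \in H^1_\f(k_m, T_f)$ the image of the local point furnished by Theorem~\ref{thm:Q-inert}; then condition~(1) of Definition~\ref{def:Q} is built in.

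For condition~(4), since $a_p(f) = 0$ the required identity collapses to
\[
\cor_{k_{m+1}/k_m}(d_{m+1}) = -\res_{k_{m-1}/k_m}(d_{m-1}),
\]
which is precisely the Kummer translation of Theorem~\ref{thm:Q-inert}(1) for $m \geq 2$ and of Theorem~\ref{thm:Q-inert}(2) for $m = 1$. Condition~(2) of Definition~\ref{def:Q} is Theorem~\ref{thm:Q-inert}(3) verbatim. For condition~(3), Theorem~\ref{thm:Q-inert}(2) yields $\cor_{k_1/k_0}(d_1) = -d_0$, which is not in $p\hat{E}(\fM_{k_0})$ by Theorem~\ref{thm:Q-inert}(3); so $\cor_{k_1/k_0}(d_1) \notin \varpi H^1_\f(k_0, T_f)$.

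For the parenthetical assertion about $\Tfn$, I would define $\bar d_m \in H^1_\f(k_m, \Tfn)$ as the image of $d_m$ under the reduction map. Conditions~(1) and~(4) are preserved by functoriality. For~(2) and~(3), suppose $\bar d_0 = \varpi c$ for some $c \in H^1_\f(k_0, \Tfn)$; lifting $c$ to $\tilde c \in H^1_\f(k_0, T_f)$, the class $d_0 - \varpi\tilde c$ maps to $0$ in $H^1_\f(k_0, \Tfn)$, hence lies in $\varpi^n H^1_\f(k_0, T_f) \subseteq \varpi H^1_\f(k_0, T_f)$ once $n \geq 1$, contradicting property~(2) for $T_f$. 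The same argument handles $\cor_{k_1/k_0}(\bar d_1)$.

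The substantive work is entirely in Theorem~\ref{thm:Q-inert}, which is imported as a black box from \cite{BKO1}; the present corollary is then a routine Kummer translation together with the simplification arising from $a_p(f) = 0$. The only mild subtlety is the compatibility of the Kummer identification with change of local fields in the supersingular setting, which is standard.
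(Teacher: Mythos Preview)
Your approach is exactly what the paper intends: the corollary is stated with no proof beyond the remark that it ``immediately'' follows from Theorem~\ref{thm:Q-inert}, and you have correctly spelled out the Kummer translation.

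There is, however, a small indexing slip in your verification of condition~(4). For $m=1$, condition~(4) reads $\cor_{k_2/k_1}(d_2)=-\res_{k_0/k_1}(d_0)$, which is Theorem~\ref{thm:Q-inert}(1) at index $2$, not Theorem~\ref{thm:Q-inert}(2). In fact Theorem~\ref{thm:Q-inert}(1) (reindexed as $m\mapsto m+1$) covers condition~(4) for \emph{all} $m\geq 1$; Theorem~\ref{thm:Q-inert}(2) is used only for condition~(3), as you correctly note afterwards.

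One further remark on the $T_{f,n}$ reduction: your claim that $d_0-\varpi\tilde c$ lies in $\varpi^n H^1_\f(k_0,T_f)$ (rather than merely in $\varpi^n H^1(k_0,T_f)$) needs the quotient $H^1(k_0,T_f)/H^1_\f(k_0,T_f)$ to be $\varpi$-torsion-free. This holds here---by local duality it is the Pontryagin dual of $H^1_\f(k_0,A_f)\cong E(k_0)\otimes\Qp/\Zp$, which is divisible since $E(k_0)[p]=0$---but it is worth making explicit. Alternatively, since in the inert case $\cO_L=\Zp$ and the Kummer map gives $H^1_\f(k_0,T_f)\cong\hat E(\fM_{k_0})$ with $H^1_\f(k_0,T_{f,n})\cong\hat E(\fM_{k_0})/p^n$, the non-divisibility of $\bar d_0$ by $p$ follows directly from the $p$-torsion-freeness of $\hat E(\fM_{k_0})$.
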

Let us define
\[
d_m^+=\begin{cases}
d_m&\textrm{if $m$ is even},\\
d_{m-1}&\textrm{if $m$ is odd},
\end{cases}\qquad d_m^-=\begin{cases}
d_{m-1}&\textrm{if $m\ge2$ is even},\\
d_{m}&\textrm{if $m$ is odd}.
\end{cases}\quad
\]
Let $\hat E_h^\pm(k_m)$ be the $\Lambda'$-modules generated by $d_m^\pm$. These modules can be described in terms of the trace maps, as in \cite[Definition~8.16]{kobayashi03}.

We may regard $\hat E^\pm(k_m)$ as subgroups of $H^1_\f(k_m,T_f)$ via the Kummer map. Similarly, $\hat E^\pm(k_m)/p^n$ may be regarded as subgroups of $H^1_\f(k_m,\Tfn)$. 

\begin{defn}
We define $(\hat E^\pm(k_m)/p^n)^\perp\supset H^1_\f(k_m,\Tfn)$ to be the orthogonal complement of $\hat E^\pm(k_m)/p^n$ under the pairing $\langle-,-\rangle_{m,n}$.
\end{defn}

\begin{proposition}\label{prop:same-kernel}
Let $\bd=(d_m)_{m\ge0}$ be the primitive $Q$-system for $\Tfn$ given by Theorem~\ref{thm:Q-inert}. Let $\col^{\sharp/\flat}_\bd$ be the resulting Coleman maps given by Corollary~\ref{cor:defn-col}. Then the kernels of $\col^{\sharp/\flat}_{\bd}$ are equal to $\varprojlim_m (\hat E^\pm(k_m)/p^n)^\perp$.
\end{proposition}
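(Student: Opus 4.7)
The plan is to exploit the vanishing $a_p(f)=0$ to render the matrix $H_{f,m}$ explicit, then convert the defining congruence of Corollary~\ref{cor:defn-col} into a statement about orthogonal complements of the local points $d_m$, and finally match these with the subgroups $\hat{E}^\pm(k_m)$ via the trace compatibilities of Theorem~\ref{thm:Q-inert}.

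First, with $a_p(f)=0$ each factor reduces to $C_{f,i}=\begin{pmatrix}0&1\\-\Phi_i&0\end{pmatrix}$, and an induction on $m$ yields
\[
C_{f,m}\cdots C_{f,1}=\begin{pmatrix}\alpha_m^- & 0\\ 0 & \alpha_m^+\end{pmatrix}\ (m\text{ even}),\qquad C_{f,m}\cdots C_{f,1}=\begin{pmatrix}0 & \beta_m^-\\ \beta_m^+ & 0\end{pmatrix}\ (m\text{ odd}),
\]
where $\alpha_m^\pm$ and $\beta_m^\pm$ are signed products of $\Phi_i$ sorted by the parity of $i$, in the spirit of Kobayashi's $\omega_m^\pm$. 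The congruence of Corollary~\ref{cor:defn-col} then reads, for $m$ even,
\[
\alpha_m^-\cdot \col^\sharp_\bd(\bz)\equiv \col_{\bd,m}(\bz)\pmod{\omega_m},\qquad \alpha_m^+\cdot \col^\flat_\bd(\bz)\equiv -\xi_{m-1}\col_{\bd,m-1}(\bz)\pmod{\omega_m},
\]
and analogously at odd levels. Since $\alpha_m^-$ and $\alpha_m^+$ are coprime in $\Lambda$ and $\deg\alpha_m^-\to\infty$, via the inverse limit presentation \eqref{eq:inverselimit2} one reads off that $\col^\sharp_\bd(\bz)=0$ iff $\col_{\bd,m}(\bz)=0$ in $\Lambda_{m,n}'$ for every even $m$; the forward direction is immediate, and the converse uses that $\col^\sharp_\bd(\bz)$ would then be divisible by $\omega_m/\alpha_m^-$ modulo $\omega_m$ for every $m$, hence must vanish in the limit.

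Next, by the very definition of $P_{d_m}$, the vanishing $\col_{\bd,m}(\bz)=P_{d_m}(z_m)=0$ is equivalent to $\langle\sigma z_m,d_m\rangle_{m,n}=0$ for every $\sigma\in G_m$, i.e.\ to $z_m$ being orthogonal to the $\Lambda_m'$-submodule generated by $d_m$. For $m$ even one has $d_m^+=d_m$, and this submodule is exactly $\hat{E}^+(k_m)/p^n$ via the Kummer map, giving $z_m\in(\hat{E}^+(k_m)/p^n)^\perp$. For $m$ odd, $\hat{E}^+(k_m)$ is generated by $d_m^+=d_{m-1}$, which is $\Gal(k_m/k_{m-1})$-invariant by Theorem~\ref{thm:Q-inert}, so its $G_m$-orbit agrees with its $G_{m-1}$-orbit viewed at level $m$. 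Combining the projection formula for the Tate pairing with the Iwasawa compatibility $\cor_{k_m/k_{m-1}}(z_m)=z_{m-1}$ then shows that $z_m\in(\hat{E}^+(k_m)/p^n)^\perp$ iff $z_{m-1}\in(\hat{E}^+(k_{m-1})/p^n)^\perp$, which is already known from the even case. Assembling these yields $\ker(\col^\sharp_\bd)=\varprojlim_m(\hat{E}^+(k_m)/p^n)^\perp$. The argument for $\col^\flat_\bd$ and $\hat{E}^-$ is strictly parallel, invoking the other diagonal (or antidiagonal) entry to recover $d_m^-$.

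The principal technical obstacle is the careful bookkeeping of the Kobayashi-type factors $\alpha_m^\pm,\beta_m^\pm$ and verifying the coprimality and degree estimates required to invert the matrix equations in the inverse limit \eqref{eq:inverselimit2}. These steps are standard in the cyclotomic setting \cite{kobayashi03, sprung09}, but here they must be executed over the enlarged coefficient ring $\Lambda'=\cO_{k_0}\lb\Gamma\rb$, where the Tate pairing is normalised through the height-two Lubin–Tate structure of $E$ at $p$; the non-triviality of this normalisation and the $\Gal(k_m/k_{m-1})$-invariance of $d_{m-1}$ needed to reduce the odd-level condition to the even one are the points demanding the most care.
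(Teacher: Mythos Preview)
Your argument is correct and is precisely the approach the paper takes: the paper's own proof is the one-line remark that ``this follows from the same proof as \cite[Proposition~8.18]{kobayashi03}'', and what you have written is a faithful unfolding of Kobayashi's argument in the present anticyclotomic/inert setting (explicit diagonal/antidiagonal form of $H_{f,m}$ when $a_p(f)=0$, reduction of $\ker\col_\bd^\bullet$ to vanishing of $P_{d_m}(z_m)$ at levels of the appropriate parity, and the parity-switching step via the projection formula).
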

\begin{proof}
This follows from the same proof as \cite[Proposition~8.18]{kobayashi03}.
\end{proof}
For notational simplicity, we shall employ the indices $\pm$ and $\sharp/\flat$ interchangeably.
\begin{corollary}
    For $\bullet\in\{\sharp,\flat\}=\{+,-\}$, we have the inclusion 
    \[H^1_\bullet(k_m,\Afn)\supset \hat E^\bullet(k_m)/p^n.\]
\end{corollary}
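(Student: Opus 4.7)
The plan is to unwind the definitions and invoke Proposition~\ref{prop:same-kernel} directly. Since we are in the inert case with $n$ finite, $A_{f,n}=T_{f,n}$ as $G_{k_m}$-modules, so $H^1(k_m,A_{f,n})=H^1(k_m,T_{f,n})$, and both $\hat E^\bullet(k_m)/p^n$ and $H^1_\bullet(k_m,A_{f,n})$ sit naturally inside this common cohomology group.

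First I would fix $c\in\hat E^\bullet(k_m)/p^n\subset H^1_\f(k_m,T_{f,n})$ (via the Kummer map) and take an arbitrary $y\in H^{1,\bullet}(k_m,T_{f,n})$. By Definition~\ref{def:signed-conditions}, $y$ is the image under the natural projection $\HIw(k_\infty,T_{f,n})\to H^1(k_m,T_{f,n})$ of some Iwasawa class $\bz=(z_j)_{j\ge0}\in\ker\col^\bullet_\bd$, so $y=z_m$. Next, Proposition~\ref{prop:same-kernel} identifies $\ker\col^\bullet_\bd$ with $\varprojlim_j(\hat E^\bullet(k_j)/p^n)^\perp$; specialising to $j=m$ therefore yields $z_m\in(\hat E^\bullet(k_m)/p^n)^\perp$, which gives $\langle z_m,c\rangle_{m,n}=0$.

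The remaining step is to match the pairing $\langle-,-\rangle_{m,n}$ from \S\ref{subsec_4_1_2022_09_27_1703} with the local Tate pairing appearing in Definition~\ref{def:signed-conditions}. In the inert case with finite $n$, $\langle-,-\rangle_{m,n}$ is the cup product into $H^2(k_m,\cO_{k_0}/(\varpi^n)(1))$, obtained from the self-duality $T_{f,n}\cong\Hom_{\cO_{k_0}}(T_{f,n},\cO_{k_0}/(\varpi^n))(1)$ coming from the $\cO_{k_0}$-module structure inherited from the Lubin--Tate formal group of $E_f$ at $p$; since $L'=k_0$ and $\cO_L'=\cO_{k_0}$, the $\varpi^n$-torsion of $L'/\cO_L'$ is exactly $\cO_{k_0}/(\varpi^n)$, and the two pairings agree on $\varpi^n$-torsion classes. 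Hence $y=z_m$ and $c$ are orthogonal under the local Tate pairing. As $y\in H^{1,\bullet}(k_m,T_{f,n})$ was arbitrary, $c$ lies in the Tate-orthogonal complement of $H^{1,\bullet}(k_m,T_{f,n})$, which by definition is $H^1_\bullet(k_m,A_{f,n})$.

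The argument is essentially formal and I do not anticipate a serious obstacle: the real work has already been absorbed into Proposition~\ref{prop:same-kernel}, and the only matter requiring any verification is the compatibility of the two pairings, which is built into the construction of $\langle-,-\rangle_{m,n}$ in the inert setting.
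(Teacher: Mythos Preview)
Your argument is correct and is essentially the same as the paper's: both show that $H^{1,\bullet}(k_m,T_{f,n})\subset\bigl(\hat E^\bullet(k_m)/p^n\bigr)^\perp$ by specialising the identification $\ker\col^\bullet_\bd=\varprojlim_j(\hat E^\bullet(k_j)/p^n)^\perp$ from Proposition~\ref{prop:same-kernel} to level $m$, and then dualise. The paper phrases this as $(\ker\col^\bullet_\bd)_{\Gamma_m}\subset(\hat E^\bullet(k_m)/p^n)^\perp$ and invokes Corollary~\ref{cor:compatibility-conds}, but the content is identical to your element-wise verification; your explicit check that the two pairings agree is a reasonable bit of care that the paper leaves implicit.
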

\begin{proof}   Evidently, 
    \[\left(\bigcup_i\hat E^\bullet(k_i)/p^n\right)^{\Gamma_m}\supset \hat E^\bullet(k_m)/p^n.
    \]
Thus, on combining Corollary~\ref{cor:compatibility-conds} and Proposition~\ref{prop:same-kernel}, we deduce  that
\[
\left(\ker\col^\bullet_\bd\right)_{\Gamma_m}=:H^{1,\bullet}(k_m,\Tfn)\subset\left(\hat E^\bullet(k_m)/p^n\right)^\perp.
\]
Hence, the affirmed inclusion follows by duality.
\end{proof}

\section{Coleman maps via the two-variable Perrin-Riou map}
\label{sec_5_2022_09_23_1316}
In this section, we concentrate on the case where $p$ is split in $K$. We give an alternative approach to define Coleman maps via the two-variable Perrin-Riou map of Loeffler--Zerbes from \cite{LZ0}. Throughout, fix a prime $\fp$ above $p$ and let the notation be as in \S\ref{sec:Q-split}.

Let $\cL_{T,\fp}:\HIw(k_\infty,T)\rightarrow\Dcris(T)\otimes\cH_\sW(\Gamma)$ be the Perrin-Riou map, which is defined as the specialization of the two-variable Perrin-Riou map in \cite{LZ0} (see \cite[Theorem~5.1]{CastellaHsiehGHC}). Here, $\cH_\sW(\Gamma)=\cH(\Gamma)\otimes_{\Zp}\sW$.

Let $\alpha$ and $\beta$ be the roots of the Hecke polynomial of $h$ at $p$. Let $v_{h,\alpha}$ and $v_{h,\beta}$ be  $\vp$-eigenvectors in $\Dcris(V)$ (so that $\vp(v_{h,\lambda})=\lambda^{-1}v_{h,\lambda}$).  We normalize these elements so that
\begin{equation}
\label{eq:eigen}    v_{h,\alpha}\equiv -v_{h,\beta}\mod \Fil^0\Dcris(V).
\end{equation}
Let $\{v_{h,\alpha}^*,v_{h,\beta}^*\}$ be basis of $\Dcris(V)$ dual to $\{v_{h,\alpha},v_{h,\beta}\}$.
In what follows, write
\[
\langle-,-\rangle:\Dcris(T)\times \Dcris(T)\rightarrow \Dcris(\cO(1))\simeq \cO
\]
for the natural pairing.

\begin{defn}
\item[i)]We extend the pairing $\langle-,-\rangle$ on $\Dcris(T)$ to
 \begin{align*}
    \langle-,-\rangle :\cH(\Gamma)\otimes \Dcris(T)  \times\cH(\Gamma)\otimes \Dcris(T) & \rightarrow \cH(\Gamma)\\
    (\lambda_1\otimes \eta_1,\lambda_2\otimes \eta_2)&\mapsto (\lambda_1\lambda_2^\iota)\otimes \langle\eta_1,\eta_2\rangle.
 \end{align*}

 \item[ii)]We define the pairing
 \begin{align*}
     [-,-]:\HIw(k_\infty,T)\times \HIw(k_\infty,T)&\rightarrow\Lambda\\
     \left((x_n),(y_n)\right)&\mapsto \left(\sum_{\tau\in G_n}\langle \tau^{-1}x_n,y_n\rangle\tau\right).
 \end{align*}
 \item[iii)]Let $\tilde\sigma_p$ denote the unique element of $\Gal(\Qp^\mathrm{ab}/\Qp)$ that acts as the arithmetic Frobenius on $\Qp^\mathrm{nr}$ and acts trivially on all $p$-power roots of unity.
\end{defn}

\begin{lemma}\label{lem:Col-pair}
   Given a $\tilde\Lambda$-basis $e$ of $\Zp[[X]]^{\psi_\cF=0}$, there exists a $\tilde\Lambda$-morphism 
   \[\tilde\cL_{T,e}^\varepsilon:\HIw(\tilde k_\infty,T)\rightarrow \cH(\tilde\Gamma)\otimes \Dcris(V)
   \] such that
   \[
    \langle\tilde\cL_{T,e}^\varepsilon(\bz),\eta\rangle=[\bz,\Omega_{V,1}^\varepsilon(e\otimes\eta)]
   \]
   for all $\bz\in\HIw(k_\infty,T)$ and $\eta\in \Dcris(V)$.
\end{lemma}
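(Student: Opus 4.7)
The plan is to exploit the non-degeneracy of $\langle-,-\rangle$ on $\Dcris(V)$: for each fixed $\bz$, the functional
$$\eta \longmapsto [\bz,\Omega_{V,1}^\varepsilon(e\otimes \eta)]$$
is $L$-linear in $\eta\in\Dcris(V)$ (with values in $\cH(\tilde\Gamma)$), and since the pairing identifies $\Dcris(V)$ with its $L$-linear dual, every such functional is represented uniquely by an element of $\cH(\tilde\Gamma)\otimes_L\Dcris(V)$. That element is to be $\tilde\cL_{T,e}^\varepsilon(\bz)$. A preliminary step is to extend $[-,-]$ in the natural way to a $\cH(\tilde\Gamma)$-valued pairing on $\HIw(\tilde k_\infty,T)\times\bigl(\cH(\tilde\Gamma)\otimes\HIw(\tilde k_\infty,T)\bigr)$, by scalar-extending the cup-product-with-twist formula along $\Gamma\hookrightarrow\tilde\Gamma$.

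Concretely, using the $\vp$-eigenbasis $\{v_{h,\alpha},v_{h,\beta}\}$ of $\Dcris(V)$ and the basis $\{v_{h,\alpha}^{*},v_{h,\beta}^{*}\}$ of $\Dcris(V)$ dual to it under $\langle-,-\rangle$, I would set
$$
\tilde\cL_{T,e}^\varepsilon(\bz) \;:=\; \bigl[\bz,\Omega_{V,1}^\varepsilon(e\otimes v_{h,\alpha})\bigr]\otimes v_{h,\alpha}^{*} \;+\; \bigl[\bz,\Omega_{V,1}^\varepsilon(e\otimes v_{h,\beta})\bigr]\otimes v_{h,\beta}^{*}.
$$
Writing an arbitrary $\eta\in\Dcris(V)$ as an $L$-linear combination of the $v_{h,\lambda}$ and invoking the definition of $\langle-,-\rangle$ on $\cH(\tilde\Gamma)\otimes\Dcris(V)$, the required identity $\langle\tilde\cL_{T,e}^\varepsilon(\bz),\eta\rangle=[\bz,\Omega_{V,1}^\varepsilon(e\otimes\eta)]$ falls out by bilinearity of both $\langle-,-\rangle$ and $\Omega_{V,1}^\varepsilon$ in their respective coefficient variables. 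Independence of the construction from the choice of basis then follows from the uniqueness of the representing element.

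It remains to verify $\tilde\Lambda$-linearity of $\bz\mapsto \tilde\cL_{T,e}^\varepsilon(\bz)$. This reduces to checking that, for fixed $\by\in\cH(\tilde\Gamma)\otimes\HIw(\tilde k_\infty,T)$, the assignment $\bz\mapsto [\bz,\by]$ is $\tilde\Lambda$-linear, and this is immediate from the shape of the defining formula $\sum_{\tau}\langle\tau^{-1}\bz,\by\rangle\tau$ (a translation in $\bz$ by $\gamma\in\tilde\Gamma$ becomes left multiplication by $\gamma$ on the output after reindexing).

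The main obstacle is not conceptual but bookkeeping: one must carefully extend $[-,-]$ across the tower $k_\infty\subset \tilde k_\infty$ and across the scalar extension by $\cH(\tilde\Gamma)$, track the involution $\iota$ arising from the $\tau^{-1}$ in the pairing, and ensure that the $\tilde\Lambda$-linearity statement is compatible with the $\tilde\Lambda$-structure on $\cH(\tilde\Gamma)\otimes \Dcris(V)$ appearing on the right-hand side. Beyond this, nothing deeper is needed, since the essential input is the perfectness of $\langle-,-\rangle$ on $\Dcris(V)$ combined with the formal $\tilde\Lambda$-linearity of $\Omega_{V,1}^\varepsilon$ recalled from \cite{LZ0, kobayashiGHC}.
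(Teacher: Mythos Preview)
Your proposal is correct: the construction via representability of linear functionals on $\Dcris(V)$ under the perfect pairing $\langle-,-\rangle$ is exactly how such maps are built, and your explicit dual-basis formula and $\tilde\Lambda$-linearity check are sound. The paper does not actually prove this lemma; it simply cites \cite[(3.7)]{castellahsieh-rank2}, where the map appears under the name $\col^\epsilon_e$, so your argument is effectively an unpacking of what that reference does rather than a different route.
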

\begin{proof}
    See \cite[(3.7)]{castellahsieh-rank2}, where the map $\tilde\cL_{T,e}^\varepsilon$ is denoted by $\col^\epsilon_{e}$ and the field $F$ in op. cit. is taken to be $\Qp$ here.
\end{proof}

\begin{proposition}\label{prop:Col-L}
    Let $\cL^\varepsilon_{T,e}:\HIw(k_\infty,{T})\rightarrow \cH(\Gamma)\otimes\Dcris(V)$ denote the $\Lambda$-morphism   induced by $\tilde \cL^\varepsilon_{T,e}$ after taking projection from $\tilde\Gamma$ to $\Gamma$. There exists a unit $u_e\in \sW[[\Gamma]]^\times$ such that
    \[
    \cL_{T,e}^\varepsilon=u_e\cdot \cL_{T,\fp}.
    \]
\end{proposition}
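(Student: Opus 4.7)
The plan is to reduce the comparison to the choice of basis of $\Zp[[X]]^{\psi_\cF=0}$ and then invoke (essentially) the uniqueness of the Perrin-Riou regulator. First, I would fix the canonical basis $e_0 := \tilde\rho(1+X)$; any other $\sW[[\tilde\Gamma]]$-basis of $\sW[[X]]^{\psi_\cF=0}$ has the form $e = v_e \cdot e_0$ for some unit $v_e \in \sW[[\tilde\Gamma]]^\times$. Because $\Omega_{V,1}^\varepsilon$ is $\sW[[\tilde\Gamma]]$-linear in its first argument and the pairing $[-,-]$ is $\Lambda$-sesquilinear, the defining identity of Lemma~\ref{lem:Col-pair} forces $\tilde\cL_{T,e}^\varepsilon = v_e \cdot \tilde\cL_{T,e_0}^\varepsilon$. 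Projecting from $\tilde\Gamma$ to $\Gamma$ yields $\cL_{T,e}^\varepsilon = \bar v_e \cdot \cL_{T,e_0}^\varepsilon$ with $\bar v_e \in \sW[[\Gamma]]^\times$, so everything reduces to comparing $\cL_{T,e_0}^\varepsilon$ and $\cL_{T,\fp}$.

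Next, I would argue that $\cL_{T,e_0}^\varepsilon$ and $\cL_{T,\fp}$ differ by an element of $\sW^\times$. The Loeffler--Zerbes map $\cL_{T,\fp}$ is characterized, up to a scalar depending on the choice of $\vp$-eigenbasis of $\Dcris(V)$, by its interpolation property at de Rham periods: for a finite character $\chi$ of $\tilde\Gamma$ of conductor $p^m$, the value $\langle \cL_{T,\fp}(\bz), \eta\rangle(\chi)$ computes the Bloch--Kato dual-exponential (or logarithm) of the image of $\bz$ in $H^1(\tilde k_m, V)$ paired against $\eta \in \Dcris(V)$. By construction, $\Omega_{V,1}^\varepsilon(e_0 \otimes \eta)$ is the Kobayashi--Perrin-Riou class realising the same interpolation under $[-,-]$, cf.\ the identifications carried out in \cite[Appendix]{kobayashiGHC} and \cite[\S3]{castellahsieh-rank2}. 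Matching the dual-basis conventions on $\Dcris(V)$ (in particular the normalisation fixed by \eqref{eq:eigen}) therefore introduces only a scalar $c_0 \in \sW^\times$, giving $\cL_{T,e_0}^\varepsilon = c_0 \cdot \cL_{T,\fp}$. Combining both steps, setting $u_e := \bar v_e \cdot c_0 \in \sW[[\Gamma]]^\times$ yields the claim.

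The main technical obstacle is the normalization bookkeeping in the second step: one must carefully track the conventions that enter both constructions --- the dual basis of $\Dcris(V)$, the Amice/Mellin transform used to pass from power series to measures on $\tilde\Gamma$, and the specific $\varepsilon$ chosen in Kobayashi's formulation --- in order to certify that the discrepancy is a scalar unit rather than a non-unit element of $\sW[[\Gamma]]$ (which could vanish at some character and thus destroy the statement). Since two elements of $\cH_\sW(\Gamma) \otimes \Dcris(V)$ that agree at a Zariski-dense set of finite order characters necessarily coincide, confirming the comparison at one sufficiently generic character is in principle enough; in practice one falls back on the folklore identification of Kobayashi's regulator with the Loeffler--Zerbes specialization. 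Everything else in the argument --- the passage $e \mapsto v_e e_0$, the $\tilde\Gamma \twoheadrightarrow \Gamma$ projection, and the assembly of $u_e$ --- is purely formal.
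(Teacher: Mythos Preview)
Your overall strategy coincides with the paper's: first absorb the choice of basis $e$ into a unit of $\sW[[\tilde\Gamma]]$, then compare the canonical map against $\cL_{T,\fp}$ via their interpolation properties at finite-order characters. The first step is fine (up to the harmless correction that sesquilinearity of $[-,-]$ gives $\tilde\cL_{T,e}^\varepsilon = v_e^{\iota}\cdot \tilde\cL_{T,e_0}^\varepsilon$, not $v_e$). The gap is in the second step.

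You assert that $\cL_{T,e_0}^\varepsilon$ and $\cL_{T,\fp}$ differ by a scalar $c_0\in\sW^\times$, but you never verify this, and in fact the claim as stated is too strong. The paper carries out the comparison explicitly: for a finite character $\theta$ of conductor $p^n$, \cite[Theorem~5.1]{CastellaHsiehGHC} gives $\cL_{T,\fp}(\bz)(\theta)=\underline{\epsilon}(\theta^{-1})\vp^n\exp^*(\bz^{\theta^{-1}})$, while \cite[Theorem~3.4]{castellahsieh-rank2} gives $y_e\tilde\sigma_p\cdot\cL_{T,e}^\varepsilon(\bz)(\theta)=\tau(\theta^{-1})\theta(\tilde\sigma_p)^n\vp^n\exp^*(\bz^{\theta^{-1}})$, where $\tilde\sigma_p\in\Gal(\Qp^{\rm ab}/\Qp)$ is the element acting as arithmetic Frobenius on $\Qp^{\rm nr}$ and trivially on $\mu_{p^\infty}$. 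Since $\underline{\epsilon}(\theta^{-1})=\theta(\tilde\sigma_p)^n\tau(\theta^{-1})$, one reads off $u_e=(y_e\tilde\sigma_p)^{-1}$. The correction factor therefore genuinely involves the group-like element $\tilde\sigma_p$ (projected to $\Gamma$), not merely a scalar in $\sW^\times$. Your appeal to a ``folklore identification'' does not control this; one has to match the epsilon-factor and Gauss-sum normalisations explicitly to see that the discrepancy is a unit rather than, say, something vanishing at a branch character. You yourself flag this as the main obstacle, and indeed it is the entire content of the proof.
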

\begin{proof}
By \cite[Theorem~5.1]{CastellaHsiehGHC}, for a finite character $\theta$ of conductor $p^n$, we have
\[
\cL_{T,\fp}(\bz)(\theta)=\underline{\epsilon}(\theta^{-1})\vp^n\exp^*(
\bz^{\theta^{-1}}),
\]
where $\underline{\epsilon}(\theta^{-1})$ denotes the epsilon factor defined as in \cite[\S2.8]{LZ0}.

Let $y_e\in\sW[[\Gamma]]^\times$ be such that $\tilde\rho(1+X)=y_e\cdot e$  (cf.~\cite[p.~15]{castellahsieh-rank2}, our $y_e$ corresponds to $h_e$ in loc. cit.). If $\theta$ is as above, it follows from Theorem~3.4 of op. cit. that
\[
y_e\tilde \sigma_p\cdot \cL^\varepsilon_{T,e}(\bz)(\theta)
=\tau(\theta^{-1})\varphi^n\exp^*(\tilde\sigma_p\cdot\bz^{\theta^{-1}})=\tau(\theta^{-1})\theta(\tilde\sigma_p)^n\varphi^n\exp^*(\bz^{\theta^{-1}}),\]
where $\tau(\theta^{-1})$ is the Gauss sum of $\theta^{-1}$ (we follow the convention of \cite[\S2.8]{LZ0} here, rather than the one in \cite{castellahsieh-rank2}).  Recall that
\[\underline{\epsilon}(\theta^{-1})=p^n\theta(\tilde\sigma_p)^n/\tau(\theta)=\theta(\tilde\sigma_p)^n\tau(\theta^{-1}).\]
Hence the result follows.
\end{proof}

\begin{corollary}\label{cor:L-Omega}
   We have
   \[\cL_{T,\fp}\circ\Omega_{V,1}=u_e^{-1}e^\iota\ell_0,\]
   where $\ell_0=\log\gamma/\log\kappa(\gamma)$ 
   and $\kappa$ is the Lubin--Tate character on $\Gamma$ induced from $\cF$.
\end{corollary}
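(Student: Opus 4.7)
The plan is to combine three ingredients already in place: the identification of $\cL_{T,\fp}$ with $\cL_{T,e}^\varepsilon$ (up to the unit $u_e$) from Proposition~\ref{prop:Col-L}; the adjunction between $\cL_{T,e}^\varepsilon$ and $\Omega_{V,1}^\varepsilon$ furnished by Lemma~\ref{lem:Col-pair}; and the Perrin-Riou explicit reciprocity law computing the self-pairing of the big exponential $\Omega_{V,1}^\varepsilon$ (in the form proved in the appendix to \cite{kobayashiGHC}).

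First, by Proposition~\ref{prop:Col-L}, one may write $\cL_{T,\fp}=u_e^{-1}\cdot \cL_{T,e}^\varepsilon$ after projecting from $\tilde\Gamma$ to $\Gamma$. This reduces the claim to checking the formula $\cL_{T,e}^\varepsilon\circ \Omega_{V,1}= e^\iota\ell_0$ on the appropriate space, since the $u_e^{-1}$ on the right of the corollary is precisely accounted for.

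Next, I would apply Lemma~\ref{lem:Col-pair} with $\bz=\Omega_{V,1}^\varepsilon(g\otimes \eta')$, which yields
\[
\langle\tilde\cL_{T,e}^\varepsilon(\Omega_{V,1}^\varepsilon(g\otimes \eta')),\eta\rangle=[\Omega_{V,1}^\varepsilon(g\otimes \eta'),\Omega_{V,1}^\varepsilon(e\otimes \eta)].
\]
The right-hand side is a self-pairing of the Perrin-Riou big exponential; by the explicit reciprocity law of Perrin-Riou (in the normalization of Kobayashi's appendix to \cite{kobayashiGHC}, specialized to our Hodge--Tate weights), this self-pairing equals $\ell_0$ times the natural pairing between $g$ and $e$ in $\Zp[[X]]^{\psi_\cF=0}$, twisted by the natural duality pairing on $\Dcris(V)$. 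The involution $\iota$ arises because one of the two inputs enters the pairing through its $\Gamma$-action inverted, which is the same feature responsible for the $\iota$ in the definition of the pairing $[-,-]$.

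Specializing the argument $g$ to the basis $e$ itself, the pairing on $\Zp[[X]]^{\psi_\cF=0}$ produces exactly the factor $e^\iota$, and combining with the previous step yields $\cL_{T,\fp}\circ\Omega_{V,1}=u_e^{-1}e^\iota\ell_0$, as desired. The main obstacle will be pinning down the precise normalization of Perrin-Riou's explicit reciprocity formula for this choice of $T$ (in particular, the absence of any power of $p$ multiplying $\ell_0$, cf.~the argument used in Lemma~\ref{lem:formula-Sigma}) and keeping track that the involution produced matches $e^\iota$ rather than $e$; once those normalizations are verified, the derivation is formal from Proposition~\ref{prop:Col-L} and Lemma~\ref{lem:Col-pair}.
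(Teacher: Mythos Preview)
Your approach is the same as the paper's, and the three ingredients you identify (Proposition~\ref{prop:Col-L}, Lemma~\ref{lem:Col-pair}, and the explicit reciprocity law) are exactly the ones used. There is, however, a confusion in your account of where the factor $e^\iota$ comes from. You write that it arises by ``specializing the argument $g$ to the basis $e$ itself'', but this is not the mechanism, and that specialization would in any case only verify the identity on inputs of the form $e\otimes\eta'$ rather than on all of $\Zp[[X]]^{\psi_\cF=0}\otimes\Dcris(V)$. The factor $e^\iota$ is present for \emph{every} input $F$: it comes from the fixed element $e\otimes\eta$ sitting in the second slot of the pairing supplied by Lemma~\ref{lem:Col-pair}, together with the definition of the extended pairing $\langle\lambda_1\otimes\eta_1,\lambda_2\otimes\eta_2\rangle=\lambda_1\lambda_2^\iota\langle\eta_1,\eta_2\rangle$.

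Concretely, the paper's computation is
\[
\langle\cL^\varepsilon_{T,e}(\Omega^\varepsilon_{V,1}(F)),\eta\rangle
=[\Omega^\varepsilon_{V,1}(F),\Omega_{V,1}^\varepsilon(e\otimes\eta)]
=[\ell_0\,\Omega^\varepsilon_{V,0}(F),\Omega_{V,1}^\varepsilon(e\otimes\eta)]
=\langle\ell_0 F,e\otimes\eta\rangle
=\langle e^\iota\ell_0 F,\eta\rangle,
\]
valid for arbitrary $F$. What you describe loosely as ``the self-pairing equals $\ell_0$ times the natural pairing'' is made precise by the identity $\Omega_{V,1}^\varepsilon=\ell_0\,\Omega_{V,0}^\varepsilon$ followed by the explicit reciprocity law $[\Omega^\varepsilon_{V,0}(F),\Omega_{V,1}^\varepsilon(G)]=\langle F,G\rangle$ of \cite[Theorem~10.11]{kobayashiGHC}. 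No specialization of the input is needed, and once this is in hand no further normalization issues remain.
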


\begin{proof}
    Recall the explicit reciprocity law
    \[ \langle F,G\rangle=[\Omega^\varepsilon_{V,0}(F),\Omega_{V,1}^\varepsilon(G) ]\]
    for all $F,G\in \cH(\Gamma)\otimes \Dcris(T_g)$ (see \cite[Theorem~10.11]{kobayashiGHC}; notice that there is an element $\delta_{-1}\in \tilde\Gamma$ in loc. cit., which is sent to the identity in $\Gamma$; the action of $\iota$ in loc. cit. also does not appear here since we have defined our pairings under a different convention). It follows from Lemma~\ref{lem:Col-pair} that
    \begin{align*}
         \langle\cL^\varepsilon_{T,e}\left(\Omega^\varepsilon_{V,1}(F)\right),\eta\rangle&=[\Omega^\varepsilon_{V,1}(F),\Omega_{V,1}^\varepsilon(e\otimes \eta) ]\\
         &=[\ell_0\Omega_{V,0}^\varepsilon(F),\Omega_{V,1}^\varepsilon(e\otimes \eta)]\\
         &=\langle\ell_0F,e\otimes \eta\rangle\\
         &=\langle e^\iota\ell_0F,\eta\rangle
    \end{align*}
Since this holds for all $F$ and $\eta$, and the pairing $[-,-]$ is non-degenerate,  the result follows from Proposition~\ref{prop:Col-L}.
\end{proof}

We set
\begin{equation}\label{eq:defn-Q}
    Q_h=\frac{1}{\alpha-\beta}\begin{pmatrix}
    \alpha&-\beta\\-p&p
\end{pmatrix}.
\end{equation}
This matrix diagonalizes $B_h$:
\[
 Q_h^{-1}B_hQ_h=\begin{pmatrix}
        \alpha &0\\0&\beta
    \end{pmatrix}.
\]
We normalize the choice of  $\vp$-eigenvectors so that
\begin{align*}
    v_{h,\lambda}^*&=\vp(v_0)-\frac{1}{\lambda}v_0,
\end{align*}
where $v_0$ is an $\cO$-basis of $\Fil^0\Dcris(V)$. Note that $v_{h,\alpha}^*\equiv v_{h,\beta}^*\mod\Fil^0\Dcris(V)$ and thus \eqref{eq:eigen} holds. The calculations in \cite[\S2.3]{BFSuper} show that we have a decomposition 
\begin{equation}
    \begin{pmatrix}
    \langle \cL_{T,\fp}(-),v_{h,\alpha}^*\rangle\\\langle \cL_{T,\fp}(-),v_{h,\alpha}^*\rangle
\end{pmatrix}=
Q_h^{-1}\Mlogg \begin{pmatrix}
    \col^\sharp_{T,\fp}\\\col^\flat_{T,\fp}
\end{pmatrix}\label{eq:factor-L-Col}
\end{equation}
for certain Coleman maps $\col^{\sharp/\flat}_{T,\fp}$.
 We  compare  these  maps with the ones given by Corollary~\ref{cor:defn-col} using the primitive $Q$-system constructed in \S\ref{sec:Q-split}. We first recast the latter in terms of $\cL_{T,\fp}$.

\begin{proposition}\label{prop:col-L-mod}
    Let $\bd=(d_m)_{m\ge0}$ be the primitive $Q$-system defined as in Theorem~\ref{thm:Q-split}, where $e$ is chosen  as in Lemma~\ref{lem:Col-pair} and write $\col_{\bd,m}$ for the  maps on $H^1(k_m,T)$ given by Definition~\ref{def:Col}. Let $z\in H^1(k_m,T)$ and pick a lifting $\bz\in\HIw(k_\infty,T)$ of $z$. Then, 
    \[
\col_{\bd,m}(z)\equiv p^{m+1}u_e\langle\cL_{T,\fp}(\bz),\vp^{m+1}(v)\rangle \mod \omega_m
    \]
    for some $v=\vp(v_0)$, where $v_0$ is an $\cO$-basis of $\Fil^0\Dcris({T})$, which is independent of $m$.
\end{proposition}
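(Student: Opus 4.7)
The plan is to unwind $\col_{\bd,m}(z)$ via the explicit description of the $Q$-system $\bd$ from Theorem~\ref{thm:Q-split}, rearrange the resulting cup-product sum so as to recognize it as a specialization of the Iwasawa pairing $[-,-]$ applied to a class in the image of $\Omega_{V,1}^\varepsilon$, and then invoke Lemma~\ref{lem:Col-pair} together with Proposition~\ref{prop:Col-L} to rewrite the result in terms of $\cL_{T,\fp}$.

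Concretely, by Theorem~\ref{thm:Q-split} we may take $v=\vp(v_0)$ to be precisely the element used to build the $Q$-system $\bd$, so that $d_m=\cor_{\tilde k_{m+1}/k_m}(c_{m+1})$ with $c_{m+1}=\Sigma_{T,m+1}(e\otimes v)$. The projection formula (cor/res adjointness for the cup product) then gives
\[
\col_{\bd,m}(z)=\sum_{\sigma\in G_m}\langle \res_{\tilde k_{m+1}/k_m}(z_m^{\sigma^{-1}}),\,c_{m+1}\rangle_{\tilde k_{m+1}}\sigma.
\]
Next, Remark~\ref{rk:PR-Sigma} shows that the image at $\tilde k_{m+1}$ of $\mathbf{W}_m:=\Omega_{V,1}^\varepsilon(e\otimes\vp^{m+1}(v))$ equals $\Sigma_{T,m+1}\bigl((p\otimes\vp)^{-(m+1)}(e\otimes\vp^{m+1}(v))\bigr)=p^{-(m+1)}c_{m+1}$. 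Substituting, $\col_{\bd,m}(z)$ is identified as $p^{m+1}$ times the analogous pairing sum involving this image.

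The remaining task is to express this sum via $\cL_{T,\fp}$. Here $\tilde z_{m+1}:=\res_{\tilde k_{m+1}/k_m}(z_m)$ is $\Delta:=\Gal(\tilde k_{m+1}/k_m)$-invariant by inflation-restriction (using that $H^0(\tilde k_{m+1},T)=0$ under our hypotheses). Consequently, upon unfolding the Iwasawa pairing $[\res(\bz),\mathbf{W}_m]_{\tilde k_\infty}$ at level $\tilde G_{m+1}$ and projecting to $G_m$ via the map $\tilde\Gamma\to\Gamma$, each fibre contributes the same value, yielding the pairing sum from the previous paragraph (up to the normalization built into the projection, which is folded into the unit $u_e$). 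Lemma~\ref{lem:Col-pair} then identifies $[\res(\bz),\mathbf{W}_m]$ with $\langle\tilde\cL_{T,e}^\varepsilon(\res(\bz)),\vp^{m+1}(v)\rangle$, and Proposition~\ref{prop:Col-L} matches its $\pi_\Gamma$-projection with $u_e\langle\cL_{T,\fp}(\bz),\vp^{m+1}(v)\rangle$, giving the claimed congruence modulo $\omega_m$.

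The main obstacle is the careful bookkeeping of normalization factors arising from the degree-$(p-1)$ extension $\tilde k_{m+1}/k_m$ and the resulting interplay with the projection $\pi_\Gamma:\cH(\tilde\Gamma)\to\cH(\Gamma)$; one must verify that all such factors fold consistently into the unit $u_e$ from Proposition~\ref{prop:Col-L} so that no residual constant survives in the final congruence. A secondary minor point is the compatibility of the specific basis $v_0$ appearing in the statement with the one implicitly fixed when constructing $c_{m+1}$ in Theorem~\ref{thm:Q-split}, which is why the statement asserts the formula \emph{for some} $v=\vp(v_0)$ rather than all such $v$.
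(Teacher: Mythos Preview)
Your approach is correct and follows the same route as the paper's proof, which is considerably terser: the paper simply asserts, citing Remark~\ref{rk:PR-Sigma}, that
\[
\col_{\bd,m}(z)\equiv[\bz,\Omega^\varepsilon_{V,1}(p^{m+1} e\otimes \vp^{m+1}(v))]\mod\omega_m,
\]
and then applies Lemma~\ref{lem:Col-pair} and Proposition~\ref{prop:Col-L} directly to rewrite the right-hand side as $p^{m+1}u_e\langle\cL_{T,\fp}(\bz),\vp^{m+1}(v)\rangle$. Your unpacking of this first step via the projection formula and the explicit description $d_m=\cor_{\tilde k_{m+1}/k_m}(c_{m+1})$ is exactly what underlies that assertion.

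Regarding your flagged ``main obstacle'': the normalization concern dissolves once you note that $\cL_{T,e}^\varepsilon$ in Proposition~\ref{prop:Col-L} is \emph{defined} as the projection of $\tilde\cL_{T,e}^\varepsilon$ from $\tilde\Gamma$ to $\Gamma$, and $u_e$ is then characterized by the identity $\cL_{T,e}^\varepsilon=u_e\cdot\cL_{T,\fp}$. Any factor arising from the passage $\tilde k_{m+1}\to k_m$ (and the corresponding projection of Iwasawa algebras) is therefore absorbed into $u_e$ by construction; there is no residual constant to track separately.
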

\begin{proof}
    It follows from Remark~\ref{rk:PR-Sigma} that
    \[
    \col_{\bd,m}(z)\equiv[\bz,\Omega^\varepsilon_{V,1}(p^{m+1} e\otimes \vp^{m+1}(v))]\mod\omega_m.
    \]  
    By Lemma~\ref{lem:Col-pair} and Proposition~\ref{prop:Col-L},
    the right-hand side is given by
    \[
    p^{m+1}\langle\cL^\epsilon_{T,e}(\bz),\vp^{m+1}(v)\rangle=p^{m+1}\langle u_e\cL_{T,\fp}(\bz),\vp^{m+1}(v)\rangle,
    \]
which concludes the proof of the proposition.
\end{proof}

\begin{corollary}\label{cor:same-ker}
    For $\bullet\in\{\sharp,\flat\}$, we have $\col_{\bd}^\bullet=u_e\col_{T,\fp}^\bullet$. In particular, $\col_{\bd}^\bullet$ and $\col_{T,\fp}^\bullet$ have the same kernel. 
\end{corollary}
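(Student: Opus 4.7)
The plan is to combine Proposition~\ref{prop:col-L-mod} with the factorization \eqref{eq:factor-L-Col} and then appeal to the uniqueness assertion of Corollary~\ref{cor:defn-col}. Explicitly, I would show that $u_e \col_{T,\fp}^\sharp$ and $u_e \col_{T,\fp}^\flat$ satisfy the same defining congruences as $\col_\bd^\sharp$ and $\col_\bd^\flat$, whence the two pairs must coincide; since $u_e \in \sW[[\Gamma]]^\times$ is a unit, the kernels then agree. The main obstacle will be a bookkeeping computation that identifies $\vp^{m+1}(v)$ in the $\vp$-eigenbasis and matches the resulting expression with the defining companion matrix $H_{h,m}$.

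The key computational step is to decompose $\vp^{m+1}(v)$ in terms of $\{v_{h,\alpha}^*,v_{h,\beta}^*\}$. Starting from $v_{h,\lambda}^* = \vp(v_0)-\tfrac{1}{\lambda}v_0$ and the relation $\vp^2 - \tfrac{a_p(h)}{p}\vp + \tfrac{1}{p} = 0$ on $\Dcris(V)$, one verifies that $\vp(v_{h,\lambda}^*) = \tfrac{\lambda}{p} v_{h,\lambda}^*$ (using $\lambda^2 = a_p(h)\lambda - p$ and $\alpha\beta=p$). Solving for $v=\vp(v_0)$ in terms of $v_{h,\alpha}^*$ and $v_{h,\beta}^*$ and iterating Frobenius yields
\[
\vp^{m+1}(v) \;=\; \frac{1}{p^{m+1}(\alpha-\beta)}\Bigl(\alpha^{m+2}v_{h,\alpha}^* - \beta^{m+2}v_{h,\beta}^*\Bigr).
\]
Substituting this into the congruence of Proposition~\ref{prop:col-L-mod} and invoking \eqref{eq:factor-L-Col} transforms $\col_{\bd,m}(\bz)$ into
\[
\col_{\bd,m}(\bz) \;\equiv\; \frac{u_e}{\alpha-\beta}\,(\alpha^{m+2}\!-\!\beta^{m+2},\,\alpha^{m+1}\!-\!\beta^{m+1})\,\Mlogg\begin{pmatrix}\col_{T,\fp}^\sharp(\bz)\\ \col_{T,\fp}^\flat(\bz)\end{pmatrix}\!\!\!\!\pmod{\omega_m}.
\]
The algebraic identity $(\alpha^{m+2}\!-\!\beta^{m+2},\,\alpha^{m+1}\!-\!\beta^{m+1}) = (\alpha-\beta)(1,0)\,B_h^{m+1}$ (which follows from the left-eigenvector decomposition of $B_h$, whose left eigenvectors are $(\alpha,1)$ and $(\beta,1)$), combined with the defining relation $B_h^{m+1}\Mlogg \equiv C_{h,m}\cdots C_{h,1} = H_{h,m} \pmod{\omega_m}$, simplifies the above to
\[
\col_{\bd,m}(\bz) \;\equiv\; u_e\cdot\Bigl[H_{h,m}\bigl(\col_{T,\fp}^\sharp(\bz),\,\col_{T,\fp}^\flat(\bz)\bigr)^{T}\Bigr]_{1}\pmod{\omega_m},
\]
where $[\,\cdot\,]_1$ denotes the first component.

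To match the second component required by Corollary~\ref{cor:defn-col}, one observes that the bottom row of $C_{h,m}$ is $(-\Phi_m,0)$, so the second component of $H_{h,m}(\col^\sharp_{T,\fp},\col^\flat_{T,\fp})^{T}$ equals $-\Phi_m$ times the first component of $H_{h,m-1}(\col^\sharp_{T,\fp},\col^\flat_{T,\fp})^{T}$; since $\Phi_m = \omega_m/\omega_{m-1}$ realizes the norm $\xi_{m-1}\colon \Lambda_{m-1}\to\Lambda_m$ modulo $\omega_m$, the already-established congruence at level $m-1$ gives precisely $-\xi_{m-1}\col_{\bd,m-1}(\bz)$ (divided by $u_e$). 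Thus the pair $(u_e\col^\sharp_{T,\fp},\,u_e\col^\flat_{T,\fp})$ satisfies the defining congruences of $(\col^\sharp_\bd,\col^\flat_\bd)$ of Corollary~\ref{cor:defn-col}, and the uniqueness portion of that corollary forces $\col^\bullet_\bd = u_e\col^\bullet_{T,\fp}$ for $\bullet\in\{\sharp,\flat\}$. The second assertion on kernels is then immediate from the fact that $u_e$ is a unit in $\sW[[\Gamma]]$.
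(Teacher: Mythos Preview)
Your proof is correct and follows essentially the same approach as the paper: both arguments combine Proposition~\ref{prop:col-L-mod} with the factorization \eqref{eq:factor-L-Col}, pass through the identity $B_h^{m+1}\Mlogg\equiv H_{h,m}\pmod{\omega_m}$, and then conclude by the uniqueness in Corollary~\ref{cor:defn-col} (equivalently, by letting $m\to\infty$).

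The only difference is in packaging. The paper carries out the full $2\times2$ matrix computation
\[
B_h^{m+1}Q_h\begin{pmatrix}\langle\cL_{T,\fp}(-),v_{h,\alpha}^*\rangle\\\langle\cL_{T,\fp}(-),v_{h,\beta}^*\rangle\end{pmatrix}=p^{m+1}\begin{pmatrix}\langle\cL_{T,\fp}(-),\vp^{m+2}(v_0)\rangle\\-\langle\cL_{T,\fp}(-),\vp^{m+1}(v_0)\rangle\end{pmatrix},
\]
thereby obtaining both components of $H_{h,m}\bigl(\col_{T,\fp}^\sharp,\col_{T,\fp}^\flat\bigr)^T$ simultaneously and matching them against Proposition~\ref{prop:col-L-mod} at levels $m$ and $m-1$. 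You instead extract the first component via the row vector $(1,0)B_h^{m+1}$ and then recover the second component from the structural identity $[H_{h,m}]_2=-\Phi_m[H_{h,m-1}]_1$ together with your level-$(m-1)$ congruence. This is the same linear algebra viewed from the row side rather than the column side; the paper's version is a little more streamlined because it avoids the separate recursive step for the second component.
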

\begin{proof}     
A direct calculation shows that
\[
B_h^{m+1}Q_h\begin{pmatrix}
    \langle \cL_{T,\fp}(-),v_{h,\alpha}^*\rangle\\\langle \cL_{T,\fp}(-),v_{h,\beta}^*\rangle
\end{pmatrix}=p^{m+1}\begin{pmatrix}
    \langle \cL_{T,\fp}(-),\vp^{m+2}(v_0)\rangle\\-\langle \cL_{T,\fp}(-),\vp^{m+1}(v_0)\rangle
\end{pmatrix}.
\]It follows from \eqref{eq:factor-L-Col} that
\begin{equation}\label{eq:finite-Coleman}
H_{h,m}\begin{pmatrix}
    \col_{T,\fp}^\sharp\\ \col_{T,\fp}^\flat
\end{pmatrix}\equiv p^{m+1}\begin{pmatrix}
    \langle \cL_{T,\fp}(-),\vp^{m+1}(v)\rangle\\-\langle \cL_{T,\fp}(-),\vp^{m}(v)\rangle
\end{pmatrix}\mod \omega_m.     
\end{equation}
Combined with Proposition~\ref{prop:col-L-mod} and Corollary~\ref{cor:defn-col}, we deduce that
\[
u_e H_{h,m}\begin{pmatrix}
    \col_{T,\fp}^\sharp\\ \col_{T,\fp}^\flat
\end{pmatrix}\equiv H_{h,m}\begin{pmatrix}
    \col_\bd^\sharp\\ \col_\bd^\flat
\end{pmatrix}\mod \omega_m.
\]
Hence the result follows after letting $m\rightarrow \infty$.
\end{proof}


\section{Coleman maps and congruences}\label{sec:congruences}

Let $h_1$ and $h_2$ be weight two {$\cO_L$-valued $p$-non-ordinary} Hecke eigenforms on two Shimura curves {$X_1=X_{M_1^+,M_1^-}$ and $X_2=X_{M_2^+,M_2^-}$}, which are not necessarily of the same level, such that 
\[
T_{h_1,n}\simeq T_{h_2,n}
\]
as $G_{\Qp}$-representations for some integer $n\ge1$. Our goal is to study the compatibility of Coleman maps  modulo $\varpi^n$.  We remark that, even though the main result of the present article (cf. Theorem~\ref{thm_div_def_main_conj}) concerns an eigenform $f$ on the classical modular curve $X_0(N_0)$, the methods to prove this result dwells on congruences between modular forms on more general Shimura curves.

\subsection{The split case}
\label{subsec_compatibility_under_congruences_split_case}
We assume that $p$ splits in $K$ and fix a prime $\fp$ above $p$. 

For $h\in\{h_1,h_2\}$, let $\NN(T_h)$ denote the Wach module attached to $T_h|_{G_{\Qp}}$ (see for example \cite[\S II.1]{berger04}). Note that $\NN(T_h)$ is a free $\cO_L\otimes\AQp$-module of rank $2$, equipped with an action by $\Gamma^\cyc:=\Gal(\Qp(\mu_{p^\infty})/\Qp)$. {Here, $\AQp=\Zp[[\pi]]$, on which $\sigma\in\Gamma^\cyc$ and $\vp$ act $\Zp$-linearly via $\pi\mapsto(1+\pi)^{\chi_\cyc(\sigma)-1}$, where $\chi_\cyc$ is the cyclotomic character and  $\pi\mapsto(1+\pi)^p-1$ respectively.} Furthermore, there is an action of $\vp$ on $\NN(T_h)[q^{-1}]${, where $q=\vp(\pi)/\pi$,} and a filtration 
\[
\Fil^i\NN(T_h)=\{x\in\NN(T_h):\vp(x)\in q^i\NN(T_h)\}.
\] We recall that there is an isomorphism of filtered modules
\[
\Dcris(T_h)\simeq \NN(T_h)/\pi\NN(T_h).
\]
Furthermore, \cite[Théorème IV.1.1]{berger04} tells us that 
\begin{equation}\label{eq:Wach-modulo}
    \NN(T_{h_1})/(\varpi^n)\simeq\NN(T_{h_2})/(\varpi^n).
\end{equation}
In particular, this gives an isomorphism of filtered modules
\begin{equation}\label{eq:iso-Dcris}
    \Dcris(T_{h_1})/(\varpi^n)\simeq \Dcris(T_{h_2})/(\varpi^n).
\end{equation}

Let $F_\infty$ be the unramified $\Zp$-extension of $\Qp$ and write $F_n$ for the sub-extension of degree $p^n$ over $\Qp$. Let $\fF_\infty=F_\infty(\mu_{p^\infty})$. We  write
\[\NN_{F_\infty}(T_h)=\left(\varprojlim\cO_{F_n}\right)\hat\otimes \NN(T_h).\]
Recall from \cite[Proposition~4.5]{LZ0} that
there is an isomorphism
\[
\HIw(\fF_\infty,T_h)\simeq \NN_{\fF_\infty}(T_h)^{\psi=1}.
\]
This gives the isomorphism
\begin{equation}\label{eq:Herr-modulo}
\HIw(\fF_\infty,T_h/(\varpi^n))\simeq \NN_{\fF_\infty}(T_h)^{\psi=1}/(\varpi^n).    
\end{equation}

Note that $\varprojlim\cO_{F_n}$ is isomorphic to the Yager module, which is free of rank-one over $\Zp[[U]]$ and can be identified with a submodule of $\Lambda_{\sW}(U)=\sW[[U]]$, where $U=\Gal(F_\infty/\Qp)$.

Let  $G=\Gal(\fF_\infty/\Qp)\simeq\Gamma^\cyc\times U$. We write $\Lambda_\sW(G)=\sW[[G]]$. Recall from  \cite[Definition~4.6]{LZ0} that the two-variable Perrin-Riou map is the $\Lambda(G)$-morphism defined by
\[
\cL_{T_h}:\HIw(\fF_\infty,T_h)\simeq \NN_{\fF_\infty}(T_h)^{\psi=1}
\stackrel{1-\vp}{\longrightarrow } \Lambda_\sW(U)\hat\otimes\vp^*\NN(T_h)^{\psi=0}\hookrightarrow\Lambda_\sW(U)\hat\otimes\cH(\Gamma^\cyc)\otimes \Dcris(T_h).
\]

Fix a topological generator $\gamma_\cyc$ of $\Gamma^\cyc$ and write $\omega_m^\cyc=\gamma_\cyc^{p^m}-1$.  Let  $\Lambda_\sW(G)_m=\Lambda_\sW(G)/\omega_m^\cyc$.
\begin{lemma}\label{lem:L-integral}
     The map $(1\otimes\vp^{-m-1})\circ\cL_{T_h}\mod \omega_m^\cyc$ induces a $\Lambda_\sW(G)$-morphism  
     \[
    \cL_{T_h,m}:\HIw(F_\infty(\mu_{p^{m+1}}),T_h)\rightarrow \Lambda_\sW(G)_m\otimes\Dcris(T_h).
    \]
\end{lemma}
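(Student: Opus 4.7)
The plan rests on the factorization displayed just before the statement: $\cL_{T_h}$ takes its values in the \emph{integral} module $\Lambda_\sW(U)\hat\otimes\vp^*\NN(T_h)^{\psi=0}$, and denominators only appear through the final embedding $\vp^*\NN(T_h)^{\psi=0}\hookrightarrow\cH(\Gamma^\cyc)\otimes\Dcris(T_h)$. I would therefore reduce the lemma to showing that, for every $y\in\vp^*\NN(T_h)^{\psi=0}$ with image $\widetilde{y}\in\cH(\Gamma^\cyc)\otimes\Dcris(T_h)$, the class $(1\otimes\vp^{-m-1})\widetilde{y}\bmod\omega_m^\cyc$ lies in $\Lambda(\Gamma^\cyc)_m\otimes\Dcris(T_h)$. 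The $\Lambda_\sW(U)$-variable is not an obstruction: it is already integral, and $U$ acts on $\Dcris(T_h)$ through the unramified Frobenius, which preserves the $\cO_L$-lattice.

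To establish the one-variable integrality, I would use the standard evaluation-at-characters formula: for any finite-order character $\theta$ of $\Gamma^\cyc$ of conductor $p^j$ with $j\le m+1$,
\[
\theta\!\left((1\otimes\vp^{-m-1})\widetilde{y}\right)=\vp^{-(m+1-j)}\!\left(\sum_{\tau\in\Gal(\Qp(\mu_{p^{j}})/\Qp)}\theta(\tau)^{-1}\,y(\zeta_{p^{j}}^{\tau}-1)\right),
\]
where $y(\zeta_{p^{j}}^{\tau}-1)$ denotes the substitution $\pi\mapsto\zeta_{p^{j}}^{\tau}-1$ in the element $y\in\vp^*\NN(T_h)\subset\NN(T_h)[q^{-1}]$. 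The poles in $q$ that appear when passing from the Wach-module basis to the crystalline basis are exactly cancelled by the twist $\vp^{-m-1}$, so the inner sum is integral in $\cO_{\Qp(\mu_{p^{j}})}\otimes\Dcris(T_h)$; since $\vp$ and $\vp^{-1}$ both stabilize the $\cO_L$-lattice $\NN(T_h)/\pi\NN(T_h)\simeq\Dcris(T_h)$, the outer $\vp^{-(m+1-j)}$ preserves integrality. Pointwise integrality at every finite-order character of $\Gamma^\cyc/\Gamma_m^\cyc$ then identifies $(1\otimes\vp^{-m-1})\widetilde{y}\bmod\omega_m^\cyc$ with an element of $\Lambda(\Gamma^\cyc)_m\otimes\Dcris(T_h)$ by Fourier duality on the finite group $\Gamma^\cyc/\Gamma_m^\cyc$.

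The hard part will be the verification of the displayed pointwise formula, which amounts to tracking precisely how the factor $\vp^{-m-1}$ cancels the $q$-denominators inherent in the embedding $\vp^*\NN(T_h)^{\psi=0}\hookrightarrow\cH(\Gamma^\cyc)\otimes\Dcris(T_h)$. This computation is the substance of the explicit reciprocity calculation carried out in \cite[\S4]{LZ0} (and of Perrin-Riou's work in the one-variable setting), so I would invoke it rather than reprove it. Every other step in the argument is formal: the tensoring with $\Lambda_\sW(U)$ and the passage from pointwise character values to measures on $\Gamma^\cyc/\Gamma_m^\cyc$ involve no additional input.
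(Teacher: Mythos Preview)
There is a genuine gap at the final step. You claim that pointwise integrality of $(1\otimes\vp^{-m-1})\widetilde{y}\bmod\omega_m^\cyc$ at every finite-order character of $\Gamma^\cyc/\Gamma_m^\cyc$ forces the element to lie in $\Lambda(\Gamma^\cyc)_m\otimes\Dcris(T_h)$ ``by Fourier duality.'' But Fourier inversion over a $p$-group introduces a factor of the group order in the denominator: an element of $L[G_m]$ whose values at all characters are integral need not lie in $\cO_L[G_m]$. The standard counterexample is $\frac{1}{p}\sum_{g\in\ZZ/p\ZZ}g$, which evaluates to $1$ at the trivial character and $0$ elsewhere. So character-by-character integrality is necessary but not sufficient, and your argument as written does not close.

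The paper simply cites \cite[Lemma~3.8]{LLZ3}, whose proof proceeds differently: one fixes an $\cO_L\otimes\AQp$-basis of $\NN(T_h)$ and an $\cO_L$-basis of $\Dcris(T_h)$, and shows directly that the matrix representing $(1\otimes\vp^{-m-1})\circ(1-\vp)\bmod\omega_m^\cyc$ with respect to these bases has entries in $\Lambda_\sW(G)_m$. (Indeed, the paper invokes exactly this matrix description in the proof of the next proposition.) This matrix computation exploits the explicit shape of the $\vp$-action on the Wach module and the congruence $q\equiv p\pmod{\pi}$, and it never passes through evaluation at characters. Your reduction to the one-variable case is fine, and your pointwise formula is essentially correct, but to finish you would need either to replace the Fourier-duality step by the matrix argument, or to supply an additional norm-compatibility input that rules out the $\frac{1}{p}\sum g$-type obstruction.
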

\begin{proof}
    This follows from \cite[Lemma~3.8]{LLZ3}.
\end{proof}

\begin{lemma}
    The map $\cL_{T_h,m}$ induces a $\Lambda_\sW(G)$-morphism
    \[
        \cL_{T_h,m,n}:\HIw(\fF_\infty,\Thn)\rightarrow\Lambda_\sW(G)_m\otimes\Dcris(T_h)/(\varpi^n).
    \]
\end{lemma}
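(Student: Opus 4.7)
The plan is to exploit the $\cO_L$-linearity of the map $\cL_{T_h,m}$ from the previous lemma to reduce it modulo $\varpi^n$, and then to rewrite the source in terms of $\HIw(\fF_\infty,\Thn)$ via the Wach module identification.

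Concretely, first note that $\cL_{T_h,m}$ is $\Lambda_\sW(G)$-linear and hence in particular $\cO_L$-linear; consequently, it sends $\varpi^n\HIw(F_\infty(\mu_{p^{m+1}}),T_h)$ into $\varpi^n\bigl(\Lambda_\sW(G)_m\otimes\Dcris(T_h)\bigr)$ and descends to a $\Lambda_\sW(G)$-morphism
$$\HIw(F_\infty(\mu_{p^{m+1}}),T_h)/(\varpi^n)\longrightarrow\Lambda_\sW(G)_m\otimes\Dcris(T_h)/(\varpi^n).$$
Precompose this with the natural corestriction projection $\HIw(\fF_\infty,T_h)\twoheadrightarrow\HIw(F_\infty(\mu_{p^{m+1}}),T_h)$ and reduce mod $\varpi^n$ to obtain a morphism whose source is $\HIw(\fF_\infty,T_h)/(\varpi^n)$.

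Next, the Wach module identification $\HIw(\fF_\infty,T_h)\simeq\NN_{\fF_\infty}(T_h)^{\psi=1}$ of \cite[Proposition~4.5]{LZ0} is $\cO_L$-linear; reducing it modulo $\varpi^n$ and combining with the isomorphism already recorded in \eqref{eq:Herr-modulo} yields the chain
$$\HIw(\fF_\infty,T_h)/(\varpi^n)\ \simeq\ \NN_{\fF_\infty}(T_h)^{\psi=1}/(\varpi^n)\ \simeq\ \HIw(\fF_\infty,\Thn).$$
Pulling back the morphism constructed above along this chain of isomorphisms produces the sought $\cL_{T_h,m,n}$, whose $\Lambda_\sW(G)$-linearity is inherited from $\cL_{T_h,m}$.

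The main (mild) technical point is the second identification in the display, which amounts to the surjectivity of the natural map $\HIw(\fF_\infty,T_h)/(\varpi^n)\to\HIw(\fF_\infty,\Thn)$ arising from $0\to T_h\xrightarrow{\varpi^n}T_h\to\Thn\to 0$. This is not automatic from the long exact cohomology sequence alone, but is transparent from the Wach module side: both quotients coincide with $\NN_{\fF_\infty}(T_h)^{\psi=1}/(\varpi^n)$ by \cite[Proposition~4.5]{LZ0} and \eqref{eq:Herr-modulo} respectively, and the functoriality in $T_h$ of the $\psi=1$-invariant construction (the same input that underlies \eqref{eq:Wach-modulo}, cf.~\cite[Théorème IV.1.1]{berger04}) ensures the identification is compatible with the natural maps.
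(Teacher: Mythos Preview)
Your proof is correct and follows essentially the same approach as the paper: reduce $\cL_{T_h,m}$ modulo $\varpi^n$ and identify the source with $\HIw(\fF_\infty,\Thn)$ via the Wach module description \eqref{eq:Herr-modulo}. The paper's proof is a two-line version of yours, simply citing \eqref{eq:Herr-modulo} for the identification $\HIw(\fF_\infty,T_h)/(\varpi^n)\simeq\HIw(\fF_\infty,\Thn)$ and then invoking the previous lemma; your more careful justification of this identification (via the $\psi=1$ invariants) is a welcome elaboration of a point the paper leaves implicit.
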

\begin{proof}
   It follows from \eqref{eq:Herr-modulo} that $\HIw(\fF_\infty,T_h)/(\varpi^n)\simeq \HIw(\fF_\infty,\Thn)$. Therefore, the lemma just follows from Lemma~\ref{lem:L-integral}.
\end{proof}

\begin{proposition}\label{prop:commute-modulo}
    We have the following commutative diagram
    \[
    \xymatrix{
    \HIw(F_\infty(\mu_{p^{m+1}}),T_{h_1,n})\ar[r]^{\cL_{h_1,m,n}\ \ \ }\ar[d]&\Lambda_\sW(G)_m\otimes\Dcris(T_{h_1})/(\varpi^n)\ar[d]\\
    \HIw(F_\infty(\mu_{p^{m+1}}),T_{h_2,n})\ar[r]^{\cL_{h_2,m,n}\ \ \ }&\Lambda_\sW(G)_m\otimes\Dcris(T_{h_2})/(\varpi^n),
    }
    \]
    where the vertical maps are induced from \eqref{eq:Wach-modulo}, \eqref{eq:iso-Dcris} and \eqref{eq:Herr-modulo}.
\end{proposition}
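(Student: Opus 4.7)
The plan is to establish the commutativity by tracing the definition of $\cL_{T_h}$ through each step and observing that every piece of the construction is functorial in the Wach module, so that the isomorphism \eqref{eq:Wach-modulo} induces consistent isomorphisms at every stage.

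First, I would observe that the isomorphism in \eqref{eq:Wach-modulo}, which comes from \cite[Théorème IV.1.1]{berger04}, is an isomorphism of $(\vp,\Gamma^\cyc)$-modules over $\cO_L/(\varpi^n)\otimes\AQp$, hence in particular commutes with the $\psi$-operator and with the filtration. The isomorphism \eqref{eq:iso-Dcris} is then nothing other than the induced isomorphism on $\NN(T_h)/(\pi,\varpi^n)$, and the isomorphism \eqref{eq:Herr-modulo} is induced on $\psi=1$-invariants. Thus one has a genuinely natural diagram of $(\vp,\Gamma)$-modules.

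Second, I would break the construction of $\cL_{T_h,m,n}$ into its constituent pieces:
\begin{enumerate}
    \item the Fontaine isomorphism $\HIw(\fF_\infty,T_h)\simeq \NN_{\fF_\infty}(T_h)^{\psi=1}$ modulo $\varpi^n$;
    \item the operator $1-\vp$ landing in $\Lambda_\sW(U)\hat\otimes\vp^*\NN(T_h)^{\psi=0}$;
    \item the embedding into $\Lambda_\sW(U)\hat\otimes\cH(\Gamma^\cyc)\otimes\Dcris(T_h)$, which amounts to writing an element of $\vp^*\NN(T_h)^{\psi=0}$ in terms of a $\cH(\Gamma^\cyc)$-basis after reducing mod $\pi$ with the help of the Mellin transform;
    \item the reduction mod $\omega_m^\cyc$, followed by applying $1\otimes\vp^{-m-1}$.
\end{enumerate}
Step (1) is functorial by construction of the Fontaine equivalence. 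Step (2) only uses the $\vp$-structure on the Wach module, which is preserved by \eqref{eq:Wach-modulo}. Step (3) only uses the $\Gamma^\cyc$-action and the identification $\NN(T_h)/\pi\NN(T_h)\simeq\Dcris(T_h)$, both of which are compatible with \eqref{eq:Wach-modulo} and \eqref{eq:iso-Dcris}. Step (4) is a purely formal operation at the level of $\Lambda_\sW(G)\otimes\Dcris$ that is evidently compatible with the identifications in~\eqref{eq:iso-Dcris}.

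The main subtlety, and the one place that requires a genuine check rather than pure formalism, is step (4) combined with Lemma~\ref{lem:L-integral}: one must verify that the factor of $\vp^{-m-1}$, which lands in $\Dcris(T_h)/(\varpi^n)$ after reduction, transports through the isomorphism~\eqref{eq:iso-Dcris}. Since \eqref{eq:iso-Dcris} is the isomorphism induced by \eqref{eq:Wach-modulo} on $\NN/\pi$, and \eqref{eq:Wach-modulo} intertwines the two $\vp$-operators, the element $\vp^{-m-1}(v)\in\Dcris(T_{h_1})/(\varpi^n)$ corresponds to $\vp^{-m-1}$ of the image of $v$ in $\Dcris(T_{h_2})/(\varpi^n)$. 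Assembling these observations, the asserted diagram commutes. Thus the proposition follows by a diagram chase, with the only real content being the functoriality and compatibility properties of Berger's equivalence modulo $\varpi^n$.
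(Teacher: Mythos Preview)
Your proof is correct and follows essentially the same idea as the paper's, namely that every ingredient of $\cL_{T_h,m,n}$ is built functorially out of the Wach module with its $(\vp,\Gamma^{\cyc})$-structure, so the isomorphism \eqref{eq:Wach-modulo} propagates through. The paper's argument is much terser: rather than decomposing into your steps (1)--(4), it invokes the proof of \cite[Lemma~3.8]{LLZ3} to note directly that the composite $(1\otimes\vp^{-m-1})\circ(1-\vp)\bmod\omega_m^{\cyc}$ is represented by a matrix with entries in $\Lambda_\sW(G)$ relative to $\Lambda(G)$-bases of $\NN_{\fF_\infty}(T_h)^{\psi=1}$ and $\Dcris(T_h)$, and hence reduces well modulo $\varpi^n$. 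This packaging sidesteps the concern you flagged about $\vp^{-m-1}$ not preserving the integral lattice on its own: integrality is asserted for the full composite, not for $\vp^{-m-1}$ in isolation, which makes the compatibility with \eqref{eq:iso-Dcris} immediate. Your step-by-step version is more explicit about why the functoriality holds, at the cost of having to address that integrality issue separately; both reach the same conclusion.
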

\begin{proof}
    As can be seen in \cite[proof of Lemma~3.8]{LLZ3}, the morphism $(1\otimes\vp^{-m-1})\circ(1-\vp)\mod \omega_m^\cyc$ is represented by a matrix defined over $\Lambda_\sW(G)$ with respect to bases of $\Lambda(G)$-bases of $\NN_{\fF_\infty}(T_h)^{\psi=1}$ and $\Dcris(T_h)$. Therefore, the maps $\cL_{h,m,n}$ are compatible with the vertical maps in the commutative diagram.
\end{proof}

Recall the maps $\col_{T_h,\fp}^{\sharp/\flat}$ from \eqref{eq:factor-L-Col}. We write $\col_{T_h,\fp,n}^{\sharp/\flat}$ for the induced maps
\begin{equation}
\label{eqn_2022_09_27_1707}
    \col_{T_h,\fp,n}^{\sharp/\flat}:\HIw(k_\infty,T_{h,n})\lra \Lambda/(\varpi^n).
\end{equation}

\begin{corollary}
\label{cor_2022_09_27_1725}
    Let $\bullet\in \{\sharp,\flat\}$. The maps $\col_{T_{h_1},\fp,n}^\bullet$ and $\col_{T_{h_2},\fp,n}^\bullet$ agree up to a unit under the identification $\HIw(k_\infty,T_{h_1,n})\simeq \HIw(k_\infty,T_{h_2,n})$.
\end{corollary}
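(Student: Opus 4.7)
The plan is to reduce the claim to the compatibility of the two-variable Perrin-Riou maps established in Proposition~\ref{prop:commute-modulo}, combined with the defining factorization \eqref{eq:finite-Coleman} for the Coleman maps $\col^{\sharp/\flat}_{T_h,\fp}$. First, I would specialize the commutative diagram of Proposition~\ref{prop:commute-modulo} along the quotient of $G = \Gal(\fF_\infty/\Qp)$ corresponding to the prime $\fp$, that is, restrict to the $\Zp$-direction cutting out the Lubin--Tate extension $k_\infty/\Qp$ fixed in \S\ref{sec:Q-split}. Since every constituent of the diagram is $\Lambda_\sW(G)$-linear, this yields an analogous commutative square at each finite level $m$, and in the inverse limit a compatibility statement for the one-variable Perrin-Riou maps $\cL_{T_{h_i},\fp}$ modulo $\varpi^n$, under the identifications induced by \eqref{eq:Wach-modulo} and \eqref{eq:iso-Dcris}.

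Next, I would exploit the hypothesis $T_{h_1,n} \simeq T_{h_2,n}$ to record two congruences. On the one hand, $a_p(h_1) \equiv a_p(h_2) \mod \varpi^n$, so the matrices $B_{h_i}$, $C_{h_i,m}$ and $H_{h_i,m}$ of Definition~\ref{defn_2022_07_04_1735} all coincide modulo $\varpi^n$. On the other hand, Berger's comparison \cite[Th\'eor\`eme IV.1.1]{berger04} underlying \eqref{eq:iso-Dcris} is an isomorphism of filtered $\vp$-modules; consequently it carries an $\cO_L$-basis $v_0^{(1)}$ of $\Fil^0 \Dcris(T_{h_1})$ to a unit multiple of some $\cO_L$-basis $v_0^{(2)}$ of $\Fil^0 \Dcris(T_{h_2})$ modulo $\varpi^n$, thereby matching $v^{(i)} := \vp(v_0^{(i)})$ up to a unit $u \in (\cO_L/\varpi^n)^\times$.

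Combining these observations with the defining formula \eqref{eq:finite-Coleman}, both pairs $(\col^\sharp_{T_{h_i},\fp,n}, \col^\flat_{T_{h_i},\fp,n})$ satisfy the system of congruences
\[
H_{h_i,m} \begin{pmatrix} \col^\sharp_{T_{h_i},\fp,n} \\ \col^\flat_{T_{h_i},\fp,n} \end{pmatrix} \equiv p^{m+1} \begin{pmatrix} \langle \cL_{T_{h_i},\fp}(-),\, \vp^{m+1}(v^{(i)})\rangle \\ -\langle \cL_{T_{h_i},\fp}(-),\, \vp^m(v^{(i)})\rangle \end{pmatrix} \mod (\omega_m,\varpi^n),
\]
for every $m \geq 1$, and the right-hand sides for $i=1,2$ agree up to the common unit $u$ by the first step, while the matrices on the left coincide modulo $\varpi^n$ by the second step. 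The uniqueness part of Theorem~\ref{thm:factorize-mod}, applied with $R = \Lambda/(\varpi^n)$, then forces $\col^\bullet_{T_{h_1},\fp,n}$ and $\col^\bullet_{T_{h_2},\fp,n}$ to coincide up to that same unit for each $\bullet \in \{\sharp,\flat\}$.

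The principal subtlety I anticipate is verifying that Berger's isomorphism respects the integral filtration and Frobenius structure modulo $\varpi^n$ carefully enough to identify the one-dimensional $\cO_L$-lines $\Fil^0 \Dcris(T_{h_i})$ (and hence their Frobenius images) up to units, rather than merely up to a general change of basis. Once this is pinned down, the remainder of the argument is bookkeeping driven by Proposition~\ref{prop:commute-modulo} and Theorem~\ref{thm:factorize-mod}.
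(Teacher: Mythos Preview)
Your proposal is correct and follows essentially the same route as the paper: specialize Proposition~\ref{prop:commute-modulo}, note that the filtered $\vp$-module isomorphism \eqref{eq:iso-Dcris} matches the vectors $v^{(i)}=\vp(v_0^{(i)})$ up to a unit, and then read off the claim from \eqref{eq:finite-Coleman}. The one step the paper makes explicit that you leave implicit is the duality identity
\[
p^{m+1}\langle \cL_{T_h,\fp}(-),\,\vp^{m+1}(v_h)\rangle \;=\; \langle \vp^{-m-1}\circ\cL_{T_h,\fp}(-),\,v_h\rangle,
\]
which is precisely what bridges the right-hand side of \eqref{eq:finite-Coleman} with the map $\cL_{T_h,m,n}$ appearing in Proposition~\ref{prop:commute-modulo} (recall that $\cL_{T_h,m}$ is defined via $(1\otimes\vp^{-m-1})\circ\cL_{T_h}\bmod\omega_m^\cyc$). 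Your phrase ``in the inverse limit a compatibility statement for the one-variable Perrin-Riou maps $\cL_{T_{h_i},\fp}$ modulo $\varpi^n$'' is slightly imprecise for this reason: the map $\cL_{T_h,\fp}$ lands in $\cH_\sW(\Gamma)\otimes\Dcris(T_h)$, which has no obvious integral structure modulo $\varpi^n$; the compatibility genuinely lives at each finite level $m$ after the Frobenius twist. Once you insert this identity, your argument and the paper's coincide.
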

\begin{proof}
    By duality,
    \[
    p^{m+1}\langle\cL_{h,\fp},\vp^{m+1}(v_h)\rangle=\langle\vp^{-m-1}\circ\cL_{h,\fp},v_h\rangle
    \]
    where $v_h$ is given as in Proposition~\ref{prop:col-L-mod}.
    By construction, $v_{h_1}$ and $v_{h_2}$ agree up to a unit under the isomorphism \eqref{eq:iso-Dcris}. Thus, the corollary follows from \eqref{eq:finite-Coleman} and Proposition~\ref{prop:commute-modulo}.
\end{proof}

As a consequence, the subgroups $H^{1,\bullet}(k_m,\Thn)$ and $H^1_\bullet(k_m,\Ahn)$ in Definition~\ref{def:signed-conditions} are preserved under congruences.

\subsection{The inert case}
Suppose in this section that $p$ is inert in $K$. In \S\ref{subsec_4_2_2022_07_12}, we have only considered primitive $Q$-systems for $\Tfn$. We discuss how to extend this construction to more general cases. To do so, we first establish a result on the compatibility of the Bloch--Kato subgroups $H^1_\f$ under congruences, which has been proved in \cite[Theorem~3.10]{darmoniovita} (see also \cite{IovitaMarmora} where a similar question has been studied). We present an alternative proof\footnote{This proof is based on a suggestion of Jan Nekov\'a\v{r} from his \href{https://mathscinet.ams.org/mathscinet/article?mr=2400724}{MathSciNet review} on the aforementioned article.}.

\begin{proposition}
\label{prop_nekovar_Mathscinet}
Suppose that $h_1$ and $h_2$ are elliptic newforms of weight $2$ (on any one of the Shimura curves considered in this paper) with Hecke eigenvalues in $\cO_L$, such that
\begin{equation}
\label{eqn_isom_Barsotti_Tate_1}
    T_{h_1,n}\stackrel{\sim}{\lra} T_{h_2,n}
\end{equation}
as $G_{\Qp}$-representations for some positive integer $n$ that is a multiple of $\ord_{\varpi}(p)$.
If $\mathscr{K}$ is a finite extension of $\Qp$, then the natural isomorphism $H^1(\mathscr{K},T_{h_1,n})\simeq H^1(\mathscr{K},T_{h_2,n})$ induces an isomorphism \[H^1_\f(\mathscr{K},T_{h_1,n})\simeq H^1_\f(\mathscr{K},T_{h_2,n}).\]
\end{proposition}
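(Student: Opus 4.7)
The plan is to realise the Bloch--Kato subgroup $H^1_{\f}(\mathscr{K},T_{h_i,n})$ as an invariant of a finite flat commutative group scheme over $\Zp$, and then transport this invariant across the given congruence \eqref{eqn_isom_Barsotti_Tate_1} by means of Raynaud's theorem on the full faithfulness of the generic-fibre functor. The hypothesis $e:=\ord_{\varpi}(p)\mid n$ will be used to ensure $\varpi^n\cO_L=p^{n/e}\cO_L$, so that $T_{h_i,n}$ coincides with $T_{h_i}/p^{n/e}T_{h_i}$ as a $\Zp[G_{\Qp}]$-module. Since $h_i$ has good reduction at $p$, the associated abelian variety extends to an abelian scheme $\underline{\cA}_{h_i}$ over $\Zp$, and $T_{h_i,n}$ is identified with the generic fibre of the finite flat $\Zp$-group scheme $\mathscr{G}_i:=\underline{\cA}_{h_i}[\varpi^n]$.

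The first step will be to upgrade the Galois-equivariant isomorphism \eqref{eqn_isom_Barsotti_Tate_1} to an isomorphism of $\Zp$-group schemes $\mathscr{G}_1\simeq\mathscr{G}_2$, invoking Raynaud's theorem (valid for a complete DVR of mixed characteristic with absolute ramification index strictly smaller than $p-1$, which applies to $\Zp$ since $p\geq 5$). Base-changing the resulting isomorphism along $\mathrm{Spec}(\cO_\mathscr{K})\to\mathrm{Spec}(\Zp)$ will yield an isomorphism of finite flat group schemes over $\cO_\mathscr{K}$.

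The second step will be to identify $H^1_{\f}(\mathscr{K},T_{h_i,n})$ intrinsically in terms of $\mathscr{G}_i$. Concretely, the aim is to establish
\[
H^1_{\f}(\mathscr{K},T_{h_i,n}) \;=\; \mathrm{image}\Bigl(H^1_{\mathrm{fl}}\bigl(\mathrm{Spec}\,\cO_\mathscr{K},\;\mathscr{G}_i\times_{\Zp}\cO_\mathscr{K}\bigr)\lra H^1(\mathscr{K},T_{h_i,n})\Bigr),
\]
which follows from the Bloch--Kato comparison for crystalline representations whose Hodge--Tate weights lie in the Fontaine--Laffaille range (here $\{0,1\}$); equivalently, via the Kummer sequence for $\underline{\cA}_{h_i}$ over $\cO_\mathscr{K}$, this coincides with the image of $\underline{\cA}_{h_i}(\cO_\mathscr{K})/\varpi^n$ in $H^1(\mathscr{K},T_{h_i,n})$. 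Since the isomorphism from Step~1 base-changes compatibly to $\cO_\mathscr{K}$, it will induce a matching of the flat-cohomology groups, and hence of the Bloch--Kato subgroups, completing the argument. The main obstacle is making this intrinsic description of $H^1_{\f}$ rigorous: the comparison between Galois cohomology on the generic fibre and flat cohomology of the integral model is standard but delicate, and relies crucially on the Hodge--Tate weights being in the Fontaine--Laffaille range; once it is in hand, Raynaud's theorem supplies the remainder essentially formally.
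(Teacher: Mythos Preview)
Your proposal is correct and follows essentially the same approach as the paper: both upgrade the Galois-module isomorphism \eqref{eqn_isom_Barsotti_Tate_1} to an isomorphism of finite flat group schemes over $\Zp$ via full faithfulness of the generic-fibre functor (you cite Raynaud, the paper cites \cite[A.1.2]{nekovar2012CJM}, which is the same result for $p>2$), and then identify $H^1_{\f}(\mathscr{K},T_{h_i,n})$ with the image of flat cohomology of the integral model. The paper organises Step~2 slightly differently, passing through the Barsotti--Tate groups $H_i=(H_{i,m})$ and the inverse limit $X(H_i):=\varprojlim_m H^1_{\rm fl}(\cO_{\mathscr{K}},H_{i,m})$, invoking \cite[A.2.6]{nekovar2012CJM} for the identifications $X(H_i)/p^m X(H_i)\simeq H^1_{\rm fl}(\cO_{\mathscr{K}},H_{i,m})$ and $X(H_i)/p^m X(H_i)\simeq H^1_{\f}(\mathscr{K},T_{h_i})/p^m$, but this is a packaging difference rather than a substantive one.
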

\begin{proof}
    Recall that $H^1_\f(\mathscr{K},T_{h_i,n})$ (for $i=1,2$) is defined as the natural image of $H^1_\f(\mathscr{K},T_{h_i})/\varpi^n H^1_\f(\mathscr{K},T_{h_i})$. We therefore need to establish a natural isomorphism 
    $$H^1_\f(\mathscr{K},T_{h_1})/\varpi^n H^1_\f(\mathscr{K},T_{h_1}) \stackrel{\sim}{\lra} H^1_\f(\mathscr{K},T_{h_2})/\varpi^n H^1_\f(\mathscr{K},T_{h_2})\,.$$
    It suffices to do so for quotients by powers $p$ in place of powers of $\varpi$. As noted in \cite[Remark 1.1.4, Item 1.c]{IovitaMarmora}, this follows from \cite[A.2.6]{nekovar2012CJM}.  
    
    We briefly outline the argument for the convenience of the reader. We shall use the notation from \cite[Appendix A]{nekovar2012CJM} until the end of this proof without any additional warning.  
    
    We begin by noting that the Galois representations $T_{h_i}$ arise as the Tate module of a Barsotti--Tate group (associated to the corresponding abelian schemes). Let $H_i=(H_{i,n})$ denote the corresponding Barsotti--Tate groups, so that $T_{h_i}=T_p(H_i):=\varprojlim_n H_{i,n}(\overline{\QQ}_p)$ and $T_{h_i,n}=H_{i,n}(\overline{\QQ}_p)$. The isomorphism \eqref{eqn_isom_Barsotti_Tate_1} is equivalent to an isomorphism
    \begin{equation}
\label{eqn_isom_Barsotti_Tate_2}
    H_{1,n}(\overline{\QQ}_p)\stackrel{\sim}{\lra} H_{2,n}(\overline{\QQ}_p)\,.
\end{equation}
It follows from \cite[A.1.2]{nekovar2012CJM} (since $H_{i,n}$ are defined over $\ZZ_p$ and $p>2$) that we have an isomorphism
    \begin{equation}
\label{eqn_isom_Barsotti_Tate_3}
    H_{1,n}\stackrel{\sim}{\lra} H_{2,n}
\end{equation}
of finite flat group schemes, which is uniquely determined by the isomorphism \eqref{eqn_isom_Barsotti_Tate_2}.

Let us put $X(H_i):=\varprojlim_n H^1_{\rm fl}(\cO_\mathscr{K},H_{i,n})$ as the inverse limit of the indicated flat cohomology groups. The proof of the proposition follows from the following chain of natural isomorphisms:
$$\resizebox{16.5cm}{!}{
\xymatrix{
X(H_1)/p^n X(H_1) \ar@{<->}[dd]^{\sim}_{\substack{{\hbox{\cite[A.2.6.2]{nekovar2012CJM}}}\\{+}\\{\hbox{\cite[A.2.6.3]{nekovar2012CJM}}}}}\ar@{<->}[rrrrrr]^-{\sim}_-{\hbox{\cite[A.2.6.5]{nekovar2012CJM}}} &&&&&& H^1_\f(\mathscr{K},T_{h_1})/p^n H^1_\f(\mathscr{K},T_{h_1})\\\\
H^1_{\rm fl}(\cO_\mathscr{K},H_{1,n})\ar@{<->}[d]^{\sim}_{\eqref{eqn_isom_Barsotti_Tate_3}} &&&&&& \\ H^1_{\rm fl}(\cO_\mathscr{K},H_{2,n})\ar@{<->}[rrr]^{\sim}_{\substack{{\hbox{\cite[A.2.6.2]{nekovar2012CJM}}}\\{+}\\{\hbox{\cite[A.2.6.3]{nekovar2012CJM}}}}} &&& X(H_2)/p^n X(H_2) \ar@{<->}[rrr]^-{\sim}_-{\hbox{\cite[A.2.6.5]{nekovar2012CJM}}} &&&H^1_\f(\mathscr{K},T_{h_2})/p^n H^1_\f(\mathscr{K},T_{h_2}).
}
}
$$
  
\end{proof}

We shall henceforth adopt the following convention. 

\begin{convention}
    If we denote a positive integer by $n$, then it will be assumed to be divisible by $\ord_\varpi(p)$. Strictly speaking, this is relevant only when we rely on Proposition~\ref{prop_nekovar_Mathscinet}, but since this restriction on the choices of $n$ is harmless as regards to 
our proof of Theorem~\ref{thm:main}, the convention will be in place from now on.
\end{convention}

\begin{corollary}
Let {$h\in S_2(\cT/\bGamma,\Zp)$ be a $p$-indivisible Hecke eigenform} such that $\Thn\simeq \Tfn$ as $G_{\Qp}$-representations for some integer $n\ge1$. Then there exists a primitive $Q$-system for the representation $\Thn$. 
\end{corollary}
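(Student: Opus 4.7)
The plan is to transport the primitive $Q$-system for $T_{f,n}$ provided by Theorem~\ref{thm:Q-inert} across the given $G_{\Qp}$-equivariant isomorphism $\varphi\colon T_{h,n} \stackrel{\sim}{\to} T_{f,n}$. In the inert setting the Hecke field of $f$ equals $\QQ$, hence $L = \Qp$, $\varpi = p$, and $a_p(f) = 0$ by the Weil bound. First I would take the local points $d_m \in \hat E(\fM_{k_m})$ of Theorem~\ref{thm:Q-inert}, push them through the Kummer map, and reduce modulo $p^n$ to obtain classes $\bar d_m \in H^1_\f(k_m, T_{f,n})$. The relations (1)--(3) of Theorem~\ref{thm:Q-inert}, together with $a_p(f)=0$, translate directly into the four conditions of Definition~\ref{def:Q}, so $(\bar d_m)$ is already a primitive $Q$-system for $T_{f,n}$.

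Next I would set $e_m := \varphi_\ast^{-1}(\bar d_m) \in H^1(k_m, T_{h,n})$ and invoke Proposition~\ref{prop_nekovar_Mathscinet}, which guarantees that the isomorphisms $H^1(k_m, T_{h,n}) \simeq H^1(k_m, T_{f,n})$ induced by $\varphi$ restrict to isomorphisms on Bloch--Kato subgroups. Consequently $e_m \in H^1_\f(k_m, T_{h,n})$, and the $p$-indivisibility conditions (2) and (3) of Definition~\ref{def:Q} transport intact through $\varphi$. Corestriction and restriction commute with $\varphi_\ast$ by functoriality, so the relation $\cor_{k_{m+1}/k_m}(\bar d_{m+1}) = -\res_{k_{m-1}/k_m}(\bar d_{m-1})$ pulls back to the same identity for $(e_m)$.

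The hard part will be reconciling this with condition (4) of Definition~\ref{def:Q}, namely $\cor_{k_{m+1}/k_m}(e_{m+1}) = a_p(h) e_m - \res_{k_{m-1}/k_m}(e_{m-1})$. For this I need $a_p(h) \equiv 0 \pmod{p^n}$. Since both $T_{h,n}$ and $T_{f,n}$ are crystalline with Hodge--Tate weights in $[0,1]$ and $p \geq 5$, the isomorphism $\varphi$ falls within the Fontaine--Laffaille range and descends to an isomorphism of filtered $\vp$-modules $\Dcris(T_h)/p^n \simeq \Dcris(T_f)/p^n$, in direct analogy with \eqref{eq:iso-Dcris}. Taking traces of Frobenius then yields $a_p(h) \equiv a_p(f) = 0 \pmod{p^n}$, as required. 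The principal technical inputs are therefore the Bloch--Kato compatibility afforded by Proposition~\ref{prop_nekovar_Mathscinet} and the Fontaine--Laffaille identification of the trace of Frobenius modulo $p^n$; once these are in hand, the rest of the verification is a purely functorial transport.
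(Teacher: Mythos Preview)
Your proof is correct and follows essentially the same route as the paper's: transport the $Q$-system for $T_{f,n}$ through the isomorphism, using Proposition~\ref{prop_nekovar_Mathscinet} to preserve the Bloch--Kato subgroups. The paper's proof is terser---it simply writes ``Consequently, $a_p(h)\equiv 0\bmod p^n$'' after noting $a_p(f)=0$---whereas you supply the underlying $p$-adic Hodge-theoretic justification (Fontaine--Laffaille, equivalently the Wach-module argument of \S\ref{subsec_compatibility_under_congruences_split_case}) for why the $G_{\Qp}$-isomorphism forces the traces of crystalline Frobenius to agree modulo $p^n$; this is exactly the content the paper is implicitly invoking.
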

\begin{proof}
    Since $p\ge5$, we have $a_p(f)=0$. Consequently, $a_p(h)\equiv 0\mod p^n$. The images of the elements $d_m\in H^1_\f(k_m,T_f)$ given by Theorem~\ref{thm:Q-inert} in $H^1_\f(k_m,\Tfn)$ then give rise to a primitive $Q$-system for $\Thn$ via the isomorphism afforded by Proposition~\ref{prop_nekovar_Mathscinet}.
\end{proof}

As a consequence, the resulting  subgroups $H^{1,\bullet}(k_m,\Thn)$ and $H^1_\bullet(k_m,\Ahn)$ as in Definition~\ref{def:signed-conditions} are preserved under congruences.


\section{Selmer groups}\label{sec:Sel}
Recall that $K_\infty$ is the anticyclotomic $\Zp$-extension of $K$. For an integer $m\ge0$, let $K_m\subset K_\infty$ denote the unique subextension such that $[K_m:K]=p^m$. 

\subsection{}
\addtocontents{toc}{\protect\setcounter{tocdepth}{1}}

Let {$h\in S_2(\cT/\bGamma,\cO_L)$ be a $\varpi$-indivisible Hecke eigenform.}  
Assume that
\[
\Thn\simeq \Tfn, \qquad (\hbox{ hence } \Ahn\simeq \Afn)
\]
as $G_\QQ$-representations for some integer $n\ge1$.

For a rational prime $\ell$ and $X=A,T$, let 
\[
K_{m,\ell}:=K_m\otimes_\QQ\QQ_\ell,\quad H^1(K_{m,\ell},X_{h,n}):=\bigoplus_{\lambda|\ell}H^1(K_{m,\lambda},X_{h,n}),
\]
where the direct sum runs over all primes of $K_m$ above $\ell$. We have the natural restriction map
\[
\res_\ell:H^1(K_m,X_{h,n})\rightarrow H^1(K_{m,\ell},X_{h,n}).
\]
Write $H^1_\fin(\Kml,X_{h,n})\subset H^1(\Kml,X_{h,n})$ for the Bloch--Kato subgroup. The singular quotient is given by
\[
H^1_\sing(\Kml,X_{h,n}):=\frac{ H^1(\Kml,X_{h,n})}{H^1_\fin(\Kml,X_{h,n})}
\]
\begin{defn}
The Bloch--Kato Selmer group of $A_{h,n}$ (resp. $\Tfn$) over $K_m$ is defined to be
\[
\Sel(K_m,A_{h,n}):=\ker\left(H^1(K_m,A_{h,n})\rightarrow \prod_\ell H^1_\sing(\Kml,A_{h,n})\right)\,,
\]
\[
H^1_{\f}(K_m,T_{h,n}):=\ker\left(H^1(K_m,T_{h,n})\rightarrow \prod_\ell H^1_\sing(\Kml,T_{h,n})\right).
\]
We set
\[
\Sel(K_\infty,T_{h,n}):=\varinjlim_m \Sel(K_m,T_{h,n})\,,
\]
\[
\widehat H^1(K_\infty,T_{h,n}):=\varprojlim_m H^1_{\f}(K_m,T_{h,n}).
\]
Furthermore, define $\Sel_?$ and $H^1_?$ ($?=0, \emptyset$) by replacing $H^1_\sing(K_{m,p},X_{h,n})$ with $H^1(K_{m,p},X_{h,n})$ and $0$ respectively ($X=T,A$).
\end{defn}

\begin{defn}
\label{def:signedSelmer}
{Let $\cF_\bullet$ (resp. $\cF^\bullet$) 
be the Selmer structure\footnote{In the sense of \cite{mr02}, Definition 2.1.1.} on the $G_{K_m}$-representation $A_{h,n}$ (resp. $T_{h,n}$) arising from the Bloch--Kato local condition at primes away from $p$ and $H^1_\bullet(K_{m,\fp},A_{h,n})$ (resp. $H^{1,\bullet}(K_{m,\fp},T_{h,n})$) at primes $\p$ of $K_m$ above $p$. The Selmer groups associated with these Selmer structures (cf. \cite{mr02}, \S2.1) are denoted by}
\[
\Sel_\bullet(K_m,A_{h,n}):=\ker\left(H^1(K_m,A_{h,n})\rightarrow \prod_{\ell\nmid p} H^1_\sing(\Kml,A_{h,n})\times \prod_{\fp| p}\frac{ H^1(K_{m,\fp},A_{h,n})}{H^1_\bullet(K_{m,\fp},A_{h,n})}\right)\,,
\]
\[
H^1_\bullet(K_m,T_{h,n}):=\ker\left(H^1(K_m,T_{h,n})\rightarrow \prod_{\ell\nmid p} H^1_\sing(\Kml,T_{h,n})\times \prod_{\fp| p}\frac{ H^1(K_{m,\fp},T_{h,n})}{H^{1,\bullet}(K_{m,\fp},T_{h,n})}\right).
\]
We further define
\[
\Sel_\bullet(K_\infty,A_{h,n}):=\varinjlim_m \Sel(K_m,A_{h,n})\,,\qquad \Sel_\bullet(K_\infty,A_{h}):=\varinjlim_{n} \Sel_\bullet(K_\infty,A_{h,n}).
\]
\[
\widehat H^1_\bullet(K_\infty,T_{h,n}):=\varprojlim_m H^1_\bullet(K_m,T_{h,n})\,.
\]
\end{defn}

For $0\le m\le\infty$, we similarly define the Selmer groups $\Sel_0(K_m,A_f)$, $\Sel_\square(K_m,A_f)$, $\Sel_\sharp(K_m,A_f)$ and $\Sel_\flat(K_m,A_f)$.

Note that the local conditions $H^1_\sharp(K_{m,\fp},A_f)$ and $H^1_\flat(K_{m,\fp},A_f)$ can be identified with {$\varinjlim_n H^1_\sharp(K_{m,\fp},\Afn)$ and  $\varinjlim_n H^1_\flat(K_{m,\fp},\Afn)$} respectively, thanks to Corollary~\ref{cor:compatibility-conds} ii).

We can now state the flat/sharp Iwasawa main conjectures in our current setting.

\begin{conj}\label{conj:IMC}
For $\bullet\in\{\sharp,\flat\}$, the $\Lambda$-module $\Sel_\bullet(K_\infty,A_f)^\vee$ is $\Lambda$-torsion. Furthermore, 
\[
{\rm char} (\Sel_\bullet(K_\infty,A_f)^\vee)=({L_p(f,K)}^\bullet).
\]
\end{conj}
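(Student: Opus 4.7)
The plan is to bootstrap the one-sided divisibility of Theorem~\ref{thm:main} to the full main conjecture in three stages: (i) $\Lambda$-torsionness of $\Sel_\bullet(K_\infty,A_f)^\vee$, (ii) the upper bound $L_p(f,K)^\bullet \in \mathrm{char}\,\Sel_\bullet(K_\infty,A_f)^\vee$, and (iii) the matching lower bound $\mathrm{char}\,\Sel_\bullet(K_\infty,A_f)^\vee \subseteq (L_p(f,K)^\bullet)$.

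The torsionness in the regime already handled by Corollary~\ref{cor:non-zero} is immediate: were $\Sel_\bullet^\vee$ not $\Lambda$-torsion, its characteristic ideal would vanish, forcing $L_p(f,K)^\bullet = 0$, which Corollary~\ref{cor:non-zero} rules out whenever $a_p(f)=0$ or for the distinguished sign in the $a_p(f)\neq 0$ case. Upgrading to both signs when $a_p(f)\neq 0$ requires the non-vanishing of both $L_p(f,K)^\sharp$ and $L_p(f,K)^\flat$, which is not provided by Theorem~\ref{thm_3_9_2023_12_05}. To produce it, I would exploit the change-of-basis \eqref{eq:L-linear-combo} together with the non-vanishing $\cL_f^\alpha,\cL_f^\beta\neq 0$ supplied by Lemma~\ref{lem:non-zero-paL}, and carry out a finer analysis of the entries of $Q_f^{-1}M_{f,\log}$ to rule out a simultaneous non-trivial linear cancellation; the anticyclotomic counterpart of Pollack's functional-equation argument in the cyclotomic setting is the natural model to follow.

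The opposite inclusion in (iii) is the principal obstacle. My plan is to attack it via Eisenstein congruences on the unitary group $\mathrm{U}(2,2)_{/K}$, in the spirit of Skinner--Urban and X.~Wan: construct families of cuspidal automorphic forms on $\mathrm{U}(2,2)$ congruent to Klingen Eisenstein series attached to $f\otimes\chi$ for $\chi$ varying over characters of $\Gamma$, and extract from their $p$-adic deformations Galois cohomology classes that annihilate a complement of $\Sel_\bullet(K_\infty,A_f)^\vee$, with annihilator precisely cut out by $L_p(f,K)^\bullet$. The crucial new ingredient required is a Sprung-type factorisation of the unitary $p$-adic $L$-function that matches the factorisation of \S\ref{subsec_Sprung_matrices} under the Eisenstein congruence, together with signed Coleman maps in the unitary setting that factor the two-variable Perrin-Riou map through the local conditions defining $\Sel_\bullet^\vee$.

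Controlling this factorisation modulo the Eisenstein congruence ideal is the main technical difficulty: the Skinner--Urban--Wan machinery is so far developed for ordinary (or, in select cases, cyclotomic non-ordinary) settings, not in the anticyclotomic Sprung-type regime. An alternative route would be to first prove a two-variable main conjecture over $K$ by combining a cyclotomic non-ordinary main conjecture with the anticyclotomic results of this paper and then specialise; however, this would require a genuinely two-variable Sprung-type Coleman map and a $p$-adic $L$-function whose specialisations recover both the cyclotomic and anticyclotomic Sprung-type objects, which is itself currently open. Either approach also demands the compatibility of the Sprung-type decomposition with the local conditions at the prime above $p$ inert in $K$ (using the Burungale--Kobayashi--Ota local points of \S\ref{subsec_4_2_2022_07_12}), a step which, while expected, has not been carried out in the Eisenstein-congruence framework.
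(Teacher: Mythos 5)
The statement you are trying to prove is Conjecture~\ref{conj:IMC}: the paper itself does not prove it, and offers no proof to compare yours against. What the paper establishes is only the one-sided divisibility $L_p(f,K)^\bullet\in{\rm char}(\Sel_\bullet(K_\infty,A_f)^\vee)$ (Theorem~\ref{thm_div_def_main_conj}), together with the consequence that $\Sel_\bullet(K_\infty,A_f)$ is $\Lambda$-cotorsion for \emph{some} $\bullet$ when $a_p(f)\neq 0$ and for both when $a_p(f)=0$. Your proposal correctly identifies this as the available input, but the remaining steps are a research programme rather than a proof, and each contains a genuine gap. For stage (i), the simultaneous non-vanishing of $L_p(f,K)^\sharp$ and $L_p(f,K)^\flat$ when $a_p(f)\neq 0$ does not follow from \eqref{eq:L-linear-combo} and Lemma~\ref{lem:non-zero-paL} alone: the relation $(\cL_f^\alpha,\cL_f^\beta)^T=Q_f^{-1}M_{f,\log}(\cL_f^\sharp,\cL_f^\flat)^T$ is perfectly consistent with one of $\cL_f^\sharp,\cL_f^\flat$ vanishing, since a single non-zero entry multiplied by the two rows of $Q_f^{-1}M_{f,\log}$ can produce two non-zero outputs; the ``finer analysis'' you invoke is exactly what is missing (and is why Theorem~\ref{thm_3_9_2023_12_05} is stated asymmetrically).

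For stage (iii), the reverse divisibility, you concede the point yourself: the Skinner--Urban/Wan Eisenstein-congruence machinery on ${\rm U}(2,2)$ has not been developed in the anticyclotomic non-ordinary regime, there is no Sprung-type factorisation of the relevant unitary $p$-adic $L$-function compatible with the signed local conditions of \S\ref{sec:coleman}, and the two-variable alternative requires objects (a two-variable Sprung-type Coleman map specialising to both the cyclotomic and anticyclotomic ones, and a treatment of the inert prime via the Burungale--Kobayashi--Ota local points in that framework) that are currently open. A proof cannot rest on machinery whose existence is itself conjectural. In short: your write-up is an accurate assessment of what would be needed to prove Conjecture~\ref{conj:IMC}, but it is not a proof, and the paper does not claim one either; the honest conclusion is that only the inclusion of Theorem~\ref{thm:main} is known.
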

\subsection{} Our main goal in this subsection is to introduce a useful set of primes (relative to the eigenform $f$) and study the $p$-local properties of the associated Galois representation at these primes.

\begin{defn}
A rational prime $\ell$ is said to be $n$-admissible relative to $f$ if it satisfies the following conditions:
    \item[i)] $\ell\nmid pN_0$ ;
    \item[ii)] $\ell$ is inert in $K$;
    \item[iii)] $p\nmid \ell^2-1$;
    \item[iv)] $p^n$ divides $\ell+1-a_\ell(f)$ or $\ell+1+a_\ell(f)$.
\end{defn}

As noted in \cite[\S2.2]{BertoliniDarmon2005}, it follows from the requirement i) that $T_{f,n}$ is unramified at an $n$-admissible prime $\ell$ and from the requirements iii) and iv) that the action of the Frobenius element over $\QQ$ on this module is semisimple with distinct eigenvalues $\pm \ell$ and $\pm 1$.

We 
describe some useful properties of $n$-admissible primes (see \cite[Lemma 2.6]{BertoliniDarmon2005}), which one may verify based on the 
previous paragraph.

\begin{lemma}
\label{lemma_local_factorization_at_adm_primes}
Suppose that $\ell$ is an $n$-admissible prime relative to $f$. 
\item[i)] We have canonical isomorphisms
\begin{align*}
H^1_{\f}(K_\ell,T_{f,n})&\stackrel{\sim}{\lra} T_{f,n}/({\rm Fr}_{(\ell)}-1)T_{f,n},\\
H^1_{\rm sing}(K_\ell,T_{f,n})&\stackrel{\sim}{\lra} {\rm Hom}_{\rm cts}(I_{K_\ell}^{\rm t},T_{f,n})^{{\rm Fr}_{(\ell)}=1},
\end{align*}
where $I_{K_\ell}\subset G_{K_\ell}$ is the inertia subgroup, $I_{K_\ell}^{\rm t}$ is the tame inertia, ${\rm Fr}_{(\ell)} \in G_{K_\ell}/I_{K_\ell}$ is the Frobenius element over $K$ at the prime $(\ell)$.
\item[ii)] The choice of a topological generator $t$ of $I_{K_\ell}^{\rm t}$ determines an isomorphism
$$H^1_{\rm sing}(K_\ell,T_{f,n})\stackrel{\sim}{\lra} T_{f,n}^{{\rm Fr}_{(\ell)}=\ell^2}$$
and in turn an isomorphism
$$H^1_{\rm sing}(K_\ell,T_{f,n})\xrightarrow[\phi_{t}^{(\ell)}]{\sim} H^1_{\f}(K_\ell,T_{f,n})$$
of free $\cO_L/(\varpi^n)$-modules of rank one.
\end{lemma}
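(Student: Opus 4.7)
The plan is to combine the standard inflation--restriction analysis at a prime $\ell\ne p$ of good reduction with an explicit $\mathrm{Fr}_\ell$-eigenspace decomposition of $T_{f,n}$ coming from the $n$-admissibility hypotheses. Throughout, write $\mathrm{Fr}$ for a Frobenius at $\ell$ in $G_\QQ/I_{K_\ell}$ and $\mathrm{Fr}_{(\ell)}=\mathrm{Fr}^2$ for the corresponding Frobenius over $K$, since $\ell$ is inert in $K$.

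For part (i), since $\ell\nmid pN_0$, the representation $T_{f,n}$ is unramified at $\ell$, i.e.\ $I_{K_\ell}$ acts trivially. Apply inflation--restriction to $G_{K_\ell}\twoheadrightarrow G_{K_\ell}/I_{K_\ell}\cong\widehat\ZZ$:
\[
0\to H^1(\widehat\ZZ,T_{f,n})\to H^1(K_\ell,T_{f,n})\to H^1(I_{K_\ell},T_{f,n})^{\mathrm{Fr}_{(\ell)}}\to 0,
\]
which is short exact because $H^2(\widehat\ZZ,T_{f,n})=0$. The inflation term is, by definition, $H^1_{\f}(K_\ell,T_{f,n})$ and it is canonically computed as the $\mathrm{Fr}_{(\ell)}$-coinvariants $T_{f,n}/(\mathrm{Fr}_{(\ell)}-1)T_{f,n}$. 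For the quotient, decompose $I_{K_\ell}=I_{K_\ell}^w\rtimes I_{K_\ell}^t$; since $I_{K_\ell}^w$ has pro-$\ell$ order and $T_{f,n}$ is $p$-power torsion with $\ell\ne p$, we get $H^1(I_{K_\ell},T_{f,n})=H^1(I_{K_\ell}^t,T_{f,n})=\mathrm{Hom}_{\mathrm{cts}}(I_{K_\ell}^t,T_{f,n})$ (the last equality because $I_{K_\ell}^t$ acts trivially as well).

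For part (ii), fix a topological generator $t$ of the pro-$p$ part of $I_{K_\ell}^t$, which is the only part that sees $T_{f,n}$. Evaluation $\phi\mapsto\phi(t)$ gives an isomorphism $\mathrm{Hom}_{\mathrm{cts}}(I_{K_\ell}^t,T_{f,n})\cong T_{f,n}$, under which the conjugation action of $\mathrm{Fr}_{(\ell)}$ on $I_{K_\ell}^t$ -- which, since $\ell$ is inert in $K$, is multiplication by $N(\ell)=\ell^2$ -- translates the Frobenius-invariance condition into the eigenvalue equation $\mathrm{Fr}_{(\ell)}v=\ell^2 v$. This produces the asserted isomorphism $H^1_{\sing}(K_\ell,T_{f,n})\xrightarrow{\sim}T_{f,n}^{\mathrm{Fr}_{(\ell)}=\ell^2}$. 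To finish, I would extract the eigenspace decomposition of $T_{f,n}$ from the $n$-admissibility data: since $p^n\mid\ell+1\pm a_\ell(f)$, the Hecke polynomial reduces to $(x\mp 1)(x\mp\ell)\mod\varpi^n$, and since $p\nmid\ell^2-1$ the two roots are distinct units modulo $\varpi$, so Hensel's lemma on the companion matrix yields a $\mathrm{Fr}$-stable direct sum decomposition $T_{f,n}=T^+\oplus T^-$ into free $\cO_L/(\varpi^n)$-modules of rank one, on which $\mathrm{Fr}$ acts by $\pm 1$ and $\pm\ell$ respectively. Passing to $\mathrm{Fr}_{(\ell)}=\mathrm{Fr}^2$, the eigenvalues become $1$ on $T^+$ and $\ell^2$ on $T^-$; in particular
\[
T_{f,n}/(\mathrm{Fr}_{(\ell)}-1)T_{f,n}\;\cong\;T^+,\qquad T_{f,n}^{\mathrm{Fr}_{(\ell)}=\ell^2}\;\cong\;T^-,
\]
where the first identification uses that $\mathrm{Fr}_{(\ell)}-1$ acts as the unit $\ell^2-1$ on $T^-$. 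Both sides of the desired map are therefore free of rank one over $\cO_L/(\varpi^n)$, and $\phi_t^{(\ell)}$ is defined as the composite of the two displayed identifications with any chosen identification $T^-\cong T^+$ of rank-one free modules (equivalently, record the bases produced by $t$ on one side and by the coinvariant projection on the other).

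The only place requiring care is the passage from $H^1(I_{K_\ell},T_{f,n})$ to $\mathrm{Hom}_{\mathrm{cts}}(I_{K_\ell}^t,T_{f,n})$ -- one must check both that the wild inertia contributes trivially and that the tame-inertia action is trivial so continuous cocycles become continuous homomorphisms -- but both are immediate from $\ell\ne p$ and unramifiedness. The rest is linear algebra over $\cO_L/(\varpi^n)$ driven by the arithmetic coincidences packaged into $n$-admissibility.
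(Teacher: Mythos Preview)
Your proposal is correct and follows essentially the same approach as the paper: inflation--restriction for part (i), evaluation at a tame-inertia generator for part (ii), and the eigenspace decomposition of $T_{f,n}$ under $\mathrm{Fr}_{(\ell)}$ (with eigenvalues $1$ and $\ell^2$, distinct modulo $\varpi$) to conclude rank-one freeness. Your treatment of the final isomorphism $\phi_t^{(\ell)}$ via an arbitrary identification $T^-\cong T^+$ of rank-one free modules is in fact more honest than the paper's phrasing, which invokes the ``natural projection'' $T_{f,n}^{\mathrm{Fr}_{(\ell)}=\ell^2}\to T_{f,n}/(\mathrm{Fr}_{(\ell)}-1)T_{f,n}$; since $(\mathrm{Fr}_{(\ell)}-1)T_{f,n}$ is precisely the $\ell^2$-eigenspace, that projection is literally zero, so in practice one is making a non-canonical choice either way (and this ambiguity is harmless, as only equalities up to units in $\cO_L/(\varpi^n)$ are used downstream).
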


\begin{proof}
\item[i)] The asserted first isomorphism 
is nothing but the composite
$$H^1_{\f}(K_\ell,T_{f,n})\simeq H^1(\langle{\rm Fr}_{(\ell)}\rangle,T_{f,n})\stackrel{\sim}{\lra} T_{f,n}/({\rm Fr}_{(\ell)}-1)T_{f,n},$$
where the last isomorphism
arises from the evaluation 
at ${\rm Fr}_{(\ell)}$. The asserted second isomorphism follows from the inflation-restriction sequence, combined with the fact that $I_{K_\ell}$ acts trivially on $T_{f,n}$ and that any continuous homomorphism from $I_{K_\ell}$ into $T_{f,n}$ necessarily factors through the tame quotient $I_{K_\ell}^{\rm t}$.
\item[ii)] Since ${\rm Fr}_{(\ell)}$ acts on $I_{K_\ell}^{\rm t}$ (by conjugation) as multiplication by $\ell^2$, the asserted first isomorphism 
is nothing but the composite 
$$H^1_{\rm sing}(K_\ell,T_{f,n})\stackrel{\sim}{\lra} {\rm Hom}_{\rm cts}(I_{K_\ell}^{\rm t},T_{f,n})^{{\rm Fr}_{(\ell)}=1}\xrightarrow[{\rm ev}_t]{\sim} T_{f,n}^{{\rm Fr}_{(\ell)}=\ell^2}\,,$$
where ${\rm ev}_t$ denotes the evaluation at $t$ map. The asserted second isomorphism is given by the composite 
$$H^1_{\rm sing}(K_\ell,T_{f,n})\stackrel{\sim}{\lra}T_{f,n}^{{\rm Fr}_{(\ell)}=\ell^2}\stackrel{\sim}{\lra} T_{f,n}/({\rm Fr}_{(\ell)}-1)T_{f,n} \stackrel{\sim}{\lra} H^1_{\f}(K_\ell,T_{f,n})\,,$$
where the second isomorphism is the natural projection. The fact that $T_{f,n}^{{\rm Fr}_{(\ell)}=\ell^2}$ is a free $\cO_L/(\varpi^n)$-module of rank one follows from the fact that ${\rm Fr}_{(\ell)}$ acts on $T_{f,n}$ with eigenvalues $\ell^2$ and $1$, which are distinct modulo $\varpi$. This concludes the proof. 
\end{proof}
\begin{corollary}
\label{cor_singular_projection_up_to_t}
Let $\ell$ be an $n$-admissible prime relative to $f$. We then have the isomorphisms
\begin{equation}
    \label{eqn_partial_ell_defn}
    \partial_\ell\,:\, \varprojlim_m H^1_{\rm sing}(K_{m,\ell},T_{f,n})=:\widehat{H}^1_{\rm sing}(K_{\infty,\ell},T_{f,n}) \stackrel{\sim}{\lra} \LL_n\,,
\end{equation}
\begin{equation}
    \label{eqn_v_ell_defn}
    v_\ell\,:\, \varprojlim_m H^1_{\f}(K_{m,\ell},T_{f,n})=:\widehat{H}^1_{\f}(K_{\infty,\ell},T_{f,n}) \stackrel{\sim}{\lra} \LL_n\,,
\end{equation}
of $\LL_n$-modules determined by the choice of a topological generator $t$ of $I_{K_\ell}^{\rm t}$ and an $\cO_L/(\varpi^n)$-module basis of $T_{f,n}^{{\rm Fr}_{(\ell)}=\ell^2}$. Any other choice changes $\partial_\ell$ and $v_\ell$ by multiplication by a unit in the ring $\cO_L/(\varpi^n)$.
\end{corollary}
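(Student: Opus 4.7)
The plan rests on first establishing that every prime of $K$ inert in $K/\QQ$ splits completely in the anticyclotomic $\Zp$-extension $K_\infty/K$. To prove this, let $\lambda$ denote the unique prime of $K$ above such an inert $\ell$; then $\lambda^c = \lambda$ (where $c$ denotes complex conjugation), while $c$ acts on $\Gamma$ by inversion by the defining property of the anticyclotomic extension. Hence $\mathrm{Frob}_\lambda = c\, \mathrm{Frob}_\lambda\, c^{-1} = \mathrm{Frob}_\lambda^{-1}$ in $\Gamma$, so $\mathrm{Frob}_\lambda^2 = 1$. Since $\Gamma \cong \Zp$ is torsion-free (as $p$ is odd), this forces $\mathrm{Frob}_\lambda = 1$.

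With this in hand, I would fix a prime $\tilde\lambda$ of $K_\infty$ above $\lambda$, which determines a compatible sequence $\lambda_m := \tilde\lambda \cap K_m$ of primes in $K_m$ above $\lambda$. Fix also a topological generator $t$ of $I^{\mathrm{t}}_{K_\ell}$ and a generator of the free rank-one $\cO_L/(\varpi^n)$-module $T_{f,n}^{\mathrm{Fr}_{(\ell)}=\ell^2}$. Thanks to the complete splitting, each $K_{m,\lambda_m'}$ is canonically isomorphic to $K_\ell$, so Lemma~\ref{lemma_local_factorization_at_adm_primes}(i) identifies each local factor $H^1_{\mathrm{sing}}(K_{m,\lambda_m'},T_{f,n})$ with $\cO_L/(\varpi^n)$. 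Indexing these local factors by $\sigma \in G_m$ via $\lambda_m' = \sigma(\lambda_m)$ then yields a $G_m$-equivariant isomorphism
\[
H^1_{\mathrm{sing}}(K_{m,\ell},T_{f,n}) \;=\; \bigoplus_{\lambda_m' \mid \ell} H^1_{\mathrm{sing}}(K_{m,\lambda_m'},T_{f,n}) \;\stackrel{\sim}{\lra}\; \LL_{m,n}.
\]

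The main verification I would carry out is that, under these identifications, the corestrictions $H^1_{\mathrm{sing}}(K_{m+1,\ell},T_{f,n}) \to H^1_{\mathrm{sing}}(K_{m,\ell},T_{f,n})$ correspond precisely to the natural projections $\LL_{m+1,n} \twoheadrightarrow \LL_{m,n}$. Since each local extension $K_{m+1,\lambda_{m+1}'}/K_{m,\lambda_m'}$ is trivial, the corresponding local corestriction is the identity; the global corestriction therefore sums over the $p$ primes above any given prime of $K_m$, exactly matching the collapsing $\sigma \mapsto \bar\sigma$ in the group ring. Passing to the inverse limit yields $\partial_\ell$. The isomorphism $v_\ell$ is then obtained by composing $\partial_\ell$ with the inverse limit of the Galois-equivariant isomorphisms $\phi_t^{(\ell)}$ from Lemma~\ref{lemma_local_factorization_at_adm_primes}(ii); the unit-indeterminacy statement follows from the evident dependence of these identifications on the choices of $t$ and of the basis vector. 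I expect the main obstacle to be the careful bookkeeping of Galois-equivariance and of the transition-map compatibility, ensuring the basis conventions are tracked consistently through the inverse limit so that no hidden unit ambiguity creeps in beyond the one permitted in the statement.
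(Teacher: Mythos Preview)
Your proposal is correct and follows essentially the same approach as the paper. The paper's proof is a one-line reference to Lemma~\ref{lemma_local_factorization_at_adm_primes} combined with \cite[Lemma~2.5]{BertoliniDarmon2005}; the latter encodes precisely the complete splitting of inert primes in $K_\infty/K$ and the resulting identification $\widehat{H}^1_{?}(K_{\infty,\ell},T_{f,n})\simeq H^1_{?}(K_\ell,T_{f,n})\otimes\LL$ (for $?\in\{\mathrm{sing},\f\}$) that you spell out by hand.
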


\begin{proof}
This is an immediate consequence of Lemma~\ref{lemma_local_factorization_at_adm_primes} combined with \cite[Lemma 2.5]{BertoliniDarmon2005}.
\end{proof}

Note that for an $n$-admissible prime $\ell$, \cite[Lemma 2.5]{BertoliniDarmon2005} equips us with natural isomorphisms
\begin{equation}
    \label{eqn_2022_09_26_1838_1}
    \widehat{H}^1_{\rm sing}(K_{\infty,\ell},T_{f,n}) \simeq {H}^1_{\rm sing}(K_{\ell},T_{f,n})\otimes \LL
\end{equation}
\begin{align}
\begin{aligned}
     \label{eqn_2022_09_26_1838_2}
    {H}^1_{\f}(K_{\infty,\ell},A_{f,n}) &\simeq {\rm Hom}_{\cO_L}\left({H}^1_{\rm sing}(K_{\infty,\ell},T_{f,n})\otimes \LL,L/\cO_L\right)\\
    &={\rm Hom}_{\cO_L}\left({H}^1_{\rm sing}(K_{\ell},T_{f,n})\otimes \LL,\cO_L/(\varpi^n)\right)\\
    &={\rm Hom}_{\LL}\left({H}^1_{\rm sing}(K_{\ell},T_{f,n})\otimes \LL,\LL_n\right)\\
    &\simeq {H}^1_{\rm sing}(K_{\ell},T_{f,n})^\vee \otimes \LL_n^\iota\\
     &\simeq {H}^1_{\f}(K_{\ell},A_{f,n}) \otimes \LL_n^\iota\,,
\end{aligned}
\end{align}
where the equality on the second line in \eqref{eqn_2022_09_26_1838_2} just follows from the fact that $T_{f,n}$ is annihilated by $\varpi^n$, 
the  isomorphism on the fourth line is a consequence of  ${H}^1_{\rm sing}(K_{\ell},T_{f,n})$ being a free $\cO_L/(\varpi^n)$-module of rank one, and the last isomorphism 
that of the local Tate duality.

\begin{defn}
Let $S$ be  a square-free integer prime to $pN_0$. We define for $?\in\{0,\{\},\sharp,\flat,\square\}$ the generalized Selmer group $\Sel_{S,?}(K_m,\Ahn)$ 
by
$$\Sel_{S,?}(K_m,\Ahn):=\ker\left(\Sel_?(K_m,\Ahn)\lra \bigoplus_{\ell \mid S} H^1(K_{m,\ell},\Ahn) \right)\,.$$
Similarly, define $H^1_{S,?}(K_m,\Thn)$ by
$$H^1_{S,?}(K_m,\Thn):=\ker\left(H^1(K_m,\Thn)\lra  \bigoplus_{\fp\mid p} \frac{H^1(K_{m,\fp},\Thn)}{H^1_?(K_{m,\fp},\Thn)}\,\oplus\,\bigoplus_{\ell\nmid S} \frac{H^1(\Kml,\Tfn)}{H^1_\fin(\Kml,\Tfn)}  \right)\,,$$
where $H^1_?(K_{m,\fp},\Thn)$ denotes $H^1(K_{m,\fp},\Thn)$, $H^1_\fin(K_{m,\fp},\Thn)$, $0$ for $?=0,\{\},\square$, respectively. Likewise, put 
$$H^1_{S,\bullet}(K_m,\Thn):=\ker\left(H^1(K_m,\Thn)\lra  \bigoplus_{\fp\mid p} \frac{H^1(K_{m,\fp},\Thn)}{H^{1,\bullet}(K_{m,\fp},\Thn)}\,\oplus\,\bigoplus_{\ell\nmid S} \frac{H^1(\Kml,\Tfn)}{H^1_\fin(\Kml,\Tfn)}  \right)\,,\quad  \bullet\in \{\sharp,\flat\}\,.$$
\end{defn}

\subsection{} The aim of this subsection is to introduce the notion of an $n$-admissible set, which will be useful for the Euler system machinery employed in our proof of Theorem~\ref{thm:main}. Given a non-empty set of rational primes $S$, we will denote the set of square-free products of primes in $S$ also by $S$, and vice versa.

\begin{defn}
A set $S$ of rational primes is said to be $n$-admissible if ${\rm Sel}_{S,\square}(K,T_{f,n})=0$.
\end{defn}

The following proposition shows that $n$-admissible sets exist.

\begin{proposition}
\label{prop_2022_07_04_1440}
Let $n$ be a positive integer and suppose that $\ell_1,\cdots,\ell_k$ are $n$-admissible primes. There exists an $n$-admissible set $S$ that contains $\ell_1,\cdots,\ell_k$.
\end{proposition}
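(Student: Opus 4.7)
The plan is to take $S_0:=\{\ell_1,\dots,\ell_k\}$ as a starting set, and to iteratively enlarge it by adding new $n$-admissible primes, each of which kills at least one non-zero class in the relevant Selmer group. Since $T_{f,n}$ is a finite $\cO_L/(\varpi^n)$-module and $K$ has only finitely many archimedean and ramified places, a standard compactness/Greenberg-type argument shows that the Selmer group ${\rm Sel}_{S_0,\square}(K,T_{f,n})$ is itself a finitely generated (in fact finite) $\cO_L/(\varpi^n)$-module. Hence, it suffices to exhibit, for an arbitrary non-zero class $s\in {\rm Sel}_{S,\square}(K,T_{f,n})$ (with $S\supset S_0$), an $n$-admissible prime $\ell\notin S$ such that ${\rm loc}_\ell(s)\neq 0$; then $s$ fails the trivial local condition imposed at $\ell$ in the enlarged Selmer group ${\rm Sel}_{S\cup\{\ell\},\square}(K,T_{f,n})$. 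Iterating this procedure on a finite generating set exhausts all classes and produces the desired $n$-admissible set $S$.

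The heart of the proof is a Chebotarev density argument carried out over the compositum $K_s:=K(T_{f,n})\cdot K(s)$, where $K(T_{f,n})$ is the splitting field of the representation $T_{f,n}$ and $K(s)$ is the fixed field of the kernel of the cocycle (more precisely, of a cocycle representative of) $s$. The $n$-admissibility conditions translate into conditions on the Frobenius conjugacy class of $\ell$ in $\mathrm{Gal}(K(T_{f,n})/\mathbb{Q})$: the constraint that $\ell$ is inert in $K$ pins down the image of $\mathrm{Frob}_\ell$ in $\mathrm{Gal}(K/\mathbb{Q})$, and the divisibility $p^n\mid \ell+1\pm a_\ell(f)$ translates to $\mathrm{Frob}_\ell$ acting as complex conjugation on $T_{f,n}$ (equivalently, $\mathrm{Frob}_{(\ell)}$ acting with eigenvalues $1$ and $\ell^{2}$). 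Using the big image hypothesis \eqref{item_BI}, one verifies that the subset of $\mathrm{Gal}(K(T_{f,n})/\mathbb{Q})$ realising these admissibility conditions is a non-empty union of conjugacy classes.

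The remaining (and main) task is to show that within the lift of this subset to $\mathrm{Gal}(K_s/\mathbb{Q})$ one can find an element $\tau$ whose action on $s$ is non-trivial, so that ${\rm loc}_\ell(s)\ne 0$ for any prime $\ell$ whose Frobenius falls in the conjugacy class of $\tau$. By the inflation–restriction sequence, the restriction of $s$ to $G_{K(T_{f,n})}$ yields a non-zero $G_{\mathbb{Q}}$-equivariant homomorphism $\overline{s}:\mathrm{Gal}(K_s/K(T_{f,n}))\hookrightarrow T_{f,n}$, and the big image hypothesis ensures that the semi-simple $\mathbb{F}_p[\mathrm{Gal}(K(T_{f,n})/K)]$-module $\mathrm{Gal}(K_s/K(T_{f,n}))\otimes \mathbb{F}_p$ contains a copy of the residual representation $\bar\rho_f$. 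An application of Chebotarev to $K_s/\mathbb{Q}$ then produces primes $\ell$ whose Frobenius realises the desired admissibility class and simultaneously pairs non-trivially with $\overline{s}$. Chebotarev guarantees infinitely many such $\ell$, so we may arrange $\ell \notin S$.

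The main obstacle is the last paragraph: verifying that the admissibility cosets in $\mathrm{Gal}(K(T_{f,n})/\mathbb{Q})$ lift to the larger Galois group $\mathrm{Gal}(K_s/\mathbb{Q})$ in a way compatible with the non-vanishing condition on $\overline{s}$. This is precisely where hypothesis \eqref{item_BI} is essential: without a sufficiently large image, it is possible that $\mathrm{Gal}(K_s/K(T_{f,n}))$ fails to contain an irreducible $\bar\rho_f$-isotypic component, in which case the two conditions (Frobenius in the admissible class, and non-trivial pairing with $\overline{s}$) could be mutually exclusive on disjoint cosets. With \eqref{item_BI} in force, this argument is a direct adaptation of the proof of \cite[Theorem 3.2]{BertoliniDarmon2005}, to which we will refer for the detailed Chebotarev computation.
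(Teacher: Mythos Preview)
Your proposal is correct and takes essentially the same approach as the paper: both argue that the result follows from \cite[Theorem 3.2]{BertoliniDarmon2005} (the paper also cites \cite[Corollary 4.1.9]{mr02}), with the paper simply noting that the $p$-local conditions and the splitting behaviour of $p$ in $K/\QQ$ play no role. Your write-up unpacks the Chebotarev argument behind that theorem, but the underlying strategy is identical.
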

\begin{proof}
This is a direct consequence of \cite[Theorem 3.2]{BertoliniDarmon2005}, cf. the discussion just before Proposition 3.3 in op. cit.; see also \cite[Corollary 4.1.9]{mr02}.
Note that neither the choice of local conditions at $p$ nor the splitting behaviour of $p$ in $K/\QQ$  plays any role in the argument. 
\end{proof}

A key utility of the notion of $n$-admissible sets is 
the following:

\begin{proposition}
\label{prop_DI_Prop_3_21}
If $S$ is an $n$-admissible set and $\bullet\in\{\sharp,\flat\}$, then $H^1_{S,\bullet}(K_{m},T_{f,n})$ is a free $\LL_{m,n}$-module.
\end{proposition}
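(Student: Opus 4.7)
The plan is to prove freeness by combining a Poitou--Tate cardinality computation (driven by the $n$-admissibility hypothesis) with a Nakayama-style rank argument, along the lines of \cite[Proposition~3.1]{BertoliniDarmon2005} but adapted to the signed local conditions at $p$. The expected outcome is that $H^1_{S,\bullet}(K_m,T_{f,n})$ is free of rank $|S|$ over $\LL_{m,n}$.

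First I would verify that $H^1_{S,\bullet}(K_m,T_{f,n})$ is $\varpi$-torsion free. Applying the long exact cohomology sequence attached to $0\to T_{f,n-1}\xrightarrow{\varpi}T_{f,n}\to T_{f,1}\to 0$, the kernel of multiplication by $\varpi$ on $H^1(K_m,T_{f,n})$ is controlled by $T_{f,1}^{G_{K_m}}$, which vanishes by \eqref{item_BI} because $K_m/K$ is a pro-$p$ extension and $\bar{\rho}_f$ has large image. The same argument, applied to the local conditions (which are $\LL_{m,n}'$-free at $\fp\mid p$ by Lemma~\ref{lem:free-local-conds-T}, and split according to the finite/singular decomposition at primes in $S$ via Corollary~\ref{cor_singular_projection_up_to_t}), shows that the local conditions defining $H^1_{S,\bullet}$ are cartesian with respect to $\varpi$-reduction. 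Consequently, there is a natural isomorphism
\[
H^1_{S,\bullet}(K_m,T_{f,n})/\varpi \;\xrightarrow{\sim}\; H^1_{S,\bullet}(K_m,T_{f,1}),
\]
reducing the rank computation to the mod-$\varpi$ case.

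Next, I would apply Poitou--Tate global duality for $G_{K_m}$ acting on $T_{f,n}$ to obtain an exact sequence relating $H^1_{S,\bullet}(K_m,T_{f,n})$ to its dual Selmer $\mathrm{Sel}_{S,\bullet^*}(K_m,A_{f,n})$, where $\bullet^*$ denotes the complementary signed condition at $p$. The $n$-admissibility hypothesis $\mathrm{Sel}_{S,\square}(K,T_{f,n})=0$ propagates to the vanishing of $\mathrm{Sel}_{S,\bullet^*}(K_m,A_{f,n})$ by an inflation-restriction argument based on the descent-compatibility of the signed conditions (Corollary~\ref{cor:compatibility-conds}) together with $T_{f,n}^{G_K}=0$. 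The Euler--Poincar\'e characteristic formula, combined with the rank-one freeness of $H^{1,\bullet}(K_{m,\fp},T_{f,n})$ at $\fp\mid p$ and the rank-two freeness of $H^1(K_{m,\ell},T_{f,n})$ at $\ell\in S$, then pins down the $\LL_{m,n}$-cardinality of $H^1_{S,\bullet}(K_m,T_{f,n})$ to $|\LL_{m,n}|^{|S|}$.

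Finally, combining the $\varpi$-torsion-freeness, the cardinality bound from Poitou--Tate, and the mod-$\varpi$ rank $|S|\cdot p^m$ of $H^1_{S,\bullet}(K_m,T_{f,1})$ as an $\mathbb{F}[G_m]$-module, Nakayama's lemma forces $H^1_{S,\bullet}(K_m,T_{f,n})$ to be free of rank $|S|$ over $\LL_{m,n}$. The main obstacle I expect to encounter is the descent step: verifying that $\mathrm{Sel}_{S,\square}(K,T_{f,n})=0$ really propagates to vanishing of $\mathrm{Sel}_{S,\bullet^*}(K_m,A_{f,n})$ in a manner compatible with the signed local structures at primes above $p$. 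This is where the compatibility framework developed in \S\ref{sec:coleman}--\S\ref{sec:congruences} (in particular Lemma~\ref{lem:compatibility} and Corollary~\ref{cor:compatibility-conds}) becomes essential, as it guarantees that the local signed conditions really do form a cartesian Selmer structure in the sense of Mazur--Rubin along the anticyclotomic tower.
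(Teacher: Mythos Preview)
Your overall strategy (Poitou--Tate cardinality plus Nakayama) is reasonable in spirit, but the argument as written contains a genuine gap at the step you yourself flag as the obstacle, and that gap is not of the ``technical but fillable'' kind.

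First, a small but telling error: the dual Selmer structure does \emph{not} carry the ``complementary'' signed condition. By Definition~\ref{def:signed-conditions}, $H^1_\bullet(K_{m,\fp},A_{f,n})$ is defined as the orthogonal complement of $H^{1,\bullet}(K_{m,\fp},T_{f,n})$ under local Tate duality, so the dual of the Selmer structure $(\textrm{relaxed at }S,\ H^{1,\bullet}\textrm{ at }p)$ on $T_{f,n}$ is $(\textrm{strict at }S,\ H^1_\bullet\textrm{ at }p)$ on $A_{f,n}$ with the \emph{same} sign $\bullet$, not an opposite $\bullet^*$.

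The serious problem is your claim that $\mathrm{Sel}_{S,\bullet}(K_m,A_{f,n})=0$ follows from $n$-admissibility by inflation--restriction. The hypothesis $\mathrm{Sel}_{S,\square}(K,T_{f,n})=0$ concerns the Selmer group with the \emph{strict} condition at $p$ (cf.\ the Matlis-duality identification in the proof of Proposition~\ref{prop_6_2_2022_07_05_1510}). Inflation--restriction together with $T_{f,n}^{G_{K_m}}=0$ does propagate this to $\mathrm{Sel}_{S,0}(K_m,A_{f,n})=0$ (strict at $p$), but the group you need, $\mathrm{Sel}_{S,\bullet}(K_m,A_{f,n})$, has the signed local condition $H^1_\bullet(K_{m,\fp},A_{f,n})$ at $p$, which is a free $R_m$-module of rank one and hence strictly larger than $\{0\}$. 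Vanishing of the smaller Selmer group says nothing directly about the larger one. In fact, showing $\mathrm{Sel}_{S,\bullet}(K_m,A_{f,n})=0$ is essentially equivalent (via the Poitou--Tate five-term sequence) to the surjectivity of the $p$-localization map $H^1_{S,\square}(K_m,T_{f,n})\to H^1(K_{m,p},T_{f,n})$, which is precisely the non-formal input your argument is missing. So the approach, as outlined, is circular.

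The paper avoids this by a short exact sequence argument instead of a direct cardinality count. It quotes from \cite[Propositions 3.19--3.20]{darmoniovita} (whose proofs use $n$-admissibility but make no reference to signed conditions) that $H^1_{S,0}(K_m,T_{f,n})$ is a free $\LL_{m,n}$-module and that the $p$-localization of $H^1_{S,\square}(K_m,T_{f,n})$ is surjective onto $H^1(K_{m,p},T_{f,n})$. This yields an exact sequence
\[
0\longrightarrow H^1_{S,0}(K_m,T_{f,n})\longrightarrow H^1_{S,\bullet}(K_m,T_{f,n})\longrightarrow \bigoplus_{\fp\mid p}H^{1,\bullet}(K_{m,\fp},T_{f,n})\longrightarrow 0,
\]
and since the right-hand term is free by Lemma~\ref{lem:free-local-conds-T}, freeness of the middle term follows because a short exact sequence of modules over a local ring with free outer terms splits. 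This argument isolates the role of $n$-admissibility entirely inside the strict-at-$p$ Selmer group, where it is genuinely sufficient, and then passes to the signed Selmer group purely by local algebra.
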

\begin{proof}
The following argument is essentially identical to the proof of \cite[Proposition 3.21]{darmoniovita}.

It follows from \cite[Proposition 3.20]{darmoniovita} that $H^1_{S,0}(K_m,T_{f,n})$ is a free $\LL_{m,n}$-module of rank $\#S-2$, and from Proposition 3.19 bis. that 
\begin{equation*}
   \# H^1_{S,\square}(K_m,T_{f,n})\,=\,\#H^1_{S,0}(K_m,T_{f,n})\,\cdot\, \#H^1(K_{m,p},T_{f,n})\,,
\end{equation*} 
where $H^1(K_{m,p},T_{f,n}):=\oplus_{\p\mid p} H^1(K_{m,\p},T_{f,n})$.
The proofs of these properties in \cite{darmoniovita} do not rely on the splitting behaviour of $p$ in $K/\QQ$,  {but crucially rely on the \emph{$N^+$-minimality condition} in \eqref{item_Loc}, i.e. $\overline{\rho}_f$ is ramified at primes dividing $N^+$ 
(cf. the discussion in the paragraph following Assumption 1.7 in \cite[\S1.2]{kimpollackweston}).} Consequently, we have an exact sequence 
\begin{equation}
    \label{eqn_2022_07_04_1536}
0\lra   H^1_{S,0}(K_m,T_{f,n}) \lra H^1_{S,\square}(K_m,T_{f,n})\lra  H^1(K_{m,p},T_{f,n})\lra 0\,.
\end{equation}
Since 
$$H^1_{S,\bullet}(K_m,T_{f,n}):=\ker\left(H^1_{S,\square}(K_m,T_{f,n})\lra \bigoplus_{\p\mid p}\frac{H^1(K_{m,\p},T_{f,n})}{H^1_{\bullet}(K_{m,\p},T_{f,n})} \right),\qquad\qquad \bullet=\sharp,\flat\,,$$ 
it follows from \eqref{eqn_2022_07_04_1536} that the sequence
\begin{equation}
    \label{eqn_2022_07_04_1551}
    0\lra   H^1_{S,0}(K_m,T_{f,n}) \lra H^1_{S,\bullet}(K_m,T_{f,n})\lra  \bigoplus_{\p\mid p} H^1_{\bullet}(K_{m,\p},T_{f,n})\lra 0
\end{equation}
is exact. By Lemma~\ref{lem:free-local-conds-T}, the semi-local term $$\bigoplus_{\p\mid p} H^1_{\bullet}(K_{m,\p},T_{f,n})$$ is a free $\LL_{m,n}$-module of rank $1$ and so the proof concludes.
\end{proof}

\begin{remark}
    \label{remark_V4_HappyChanHo}
    We are grateful to the referee for indicating that Proposition~\ref{prop_DI_Prop_3_21} requires the $N^+$-minimality condition in \eqref{item_Loc}.
    If $a_p(f)=0$, then one may relax the condition
    based on the strategy in \cite{kimpollackweston}      
    as follows. The strategy proceeds via level-lowering of $f$ modulo $p$ to a newform $g$ for which the $N^+$-minimality holds, and utilizing vanishing of the $\mu$-invariant of the plus/minus anticylotomic Selmer group associated to $g$. 
    The latter in turn relies on vanishing of the $\mu$-invariant of the plus/minus anticyclotomic $p$-adic $L$-function associated to $g$ due to Pollack and Weston \cite[Theorem~1.1(2)]{pollack-weston11} if $p$ splits in $K$. The inert case is an ongoing work of 
    the first-named author with Kobayashi and Ota.
\end{remark}

Let $\partial_\ell$ also denote the composite map
$$\varprojlim_m {H}^1_{S,\square}(K_{m},T_{f,n})=:\widehat{H}^1_{S,\square}(K_{\infty},T_{f,n}) \xrightarrow{\res_\ell} \widehat{H}^1(K_{\infty,\ell},T_{f,n})\lra \widehat{H}^1_{\rm sing}(K_{\infty,\ell},T_{f,n})$$
for any set of primes $S$ and prime $\ell$ as above. 


\section{Heegner point ``Bipartite'' Euler systems}
\label{sec_Heeg_classes_construction_reciprocity}
The aim of this section is to introduce the $\sharp/\flat$-Heegner point ``bipartite'' Euler systems. The initial geometric input is provided by the work of Bertolini--Darmon~\cite{BertoliniDarmon2005} and our discussion parallels that in Darmon--Iovita's work~\cite[\S4]{darmoniovita}, with the key difference that we no longer assume that the prime $p$ is split in $K/\QQ$ or $a_p(f)=0$. 
The verification of the $p$-local properties of these classes is significantly different from that in op. cit. (where $a_p(f)=0$),
which will be described in 
\S\ref{sec_Heegner_local_properties_at_p}.
\subsection{}
\label{subsec_8_1_2022_09_27_0906}
Let the notation be as in the previous sections and fix an $n$-admissible prime number $\ell$ relative to $f$. We let $$\kappa(\ell)_m\in H^1_{\ell,\square}(K_m,T_{f,n})$$
denote the element given as in \cite[\S5--\S8]{BertoliniDarmon2005} and \cite[\S4]{darmoniovita} (see also \cite[\S4]{pollack-weston11}, especially Proposition 4.4 in op. cit., to handle the scenario when $\cO_L\neq \Zp$), which is obtained via the Jacquet--Langlands correspondence from a Heegner point of conductor $p^{m+1}$  on an appropriately chosen Shimura curve (denoted by $X_{N^+,N^-\ell}$ in op. cit.). 
Note that this class depends on the choice of an auxiliary rational prime $q$ coprime to $pN\ell$, which we fix throughout. 

\subsection{} The cohomology classes $\{\kappa(\ell)_m\}$ satisfy the following fundamental trace relation:
\begin{equation}
    \label{eqn_Heegner_trace_relation}
    {\cor}_{K_{m+1}/K_{m}}\, \kappa(\ell)_{m+1}= a_p(f)\,\kappa(\ell)_{m}-\res_{K_{m}/K_{m-1}}\kappa(\ell)_{m-1}\,
\end{equation}
for any integer $m\geq 1$.
The reader is invited to compare \eqref{eqn_Heegner_trace_relation} with 
the relation (4) in the definition of a primitive $Q$-system (cf.  Definition~\ref{def:Q}).

\subsection{} As part of Theorem~\ref{thm_main_sharpflat_Heegner_ES} below, we introduce and outline the main properties of the $\sharp/\flat$-Heegner points. We will make use of these properties in \S\ref{sec_proof_main_results} as one of the key global inputs to prove Theorem~\ref{thm:main}. Theorem~\ref{thm_main_sharpflat_Heegner_ES} is a generalization of the material covered in \S4, up until the statement of Proposition 4.4 in \cite{darmoniovita}.
\begin{theorem}
\label{thm_main_sharpflat_Heegner_ES}
Fix a positive integer $n$ and an $n$-admissible prime $\ell$ relative to $f$. Let $S$ be any $n$-admissible set that contains $\ell$.  For any positive integer $m$, we have a unique pair of cohomology classes 
$$\begin{pmatrix}
\kappa(\ell)_m^\sharp\\ \kappa(\ell)_m^\flat 
\end{pmatrix}\,\,\in\,\, H^1_{S,\square}(K_m,T_{f,n})^{\oplus 2}/\ker(H_{f,m})\cdot H^1_{S,\square}(K_m,T_{f,n})^{\oplus 2}$$ 
that are independent of the choice of $S$, where $H_{f,m}$ is the $2\times 2$-matrix given as in Definition~\ref{defn_2022_07_04_1735}, satisfying the following properties:
\item[i)] We have
$$H_{f,m}
\begin{pmatrix}
\kappa(\ell)_m^\sharp \\ \kappa(\ell)_m^\flat
\end{pmatrix}= \begin{pmatrix}
\kappa(\ell)_m\\ -\res_{K_m/K_{m-1}}\left( \kappa(\ell)_{m-1}\right)
\end{pmatrix},$$
where the equality takes place in $H^1_{S,\square}(K_m,T_{f,n})^{\oplus 2}$.
\item[ii)] We have the containment
$${\cor}_{K_{m+1}/K_m}\,\begin{pmatrix}
\kappa(\ell)_{m+1}^\sharp \\ \kappa(\ell)_{m+1}^\flat
\end{pmatrix} -\begin{pmatrix}
\kappa(\ell)_{m}^\sharp \\ \kappa(\ell)_{m}^\flat
\end{pmatrix}\,\, \in \,\, \ker(H_{f,m}) \cdot H^1_{S,\square}(K_m,T_{f,n})^{\oplus 2}\,.$$
\end{theorem}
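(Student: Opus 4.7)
The plan is to adapt the scalar factorization in Theorem~\ref{thm:factorize-mod} to the module-theoretic setting where the scalars $R_m = \Lambda_{m,n}$ are replaced by the free $\Lambda_{m,n}$-module $H^1_{S,\square}(K_m,T_{f,n})$, taking as input the trace relation \eqref{eqn_Heegner_trace_relation} for the Heegner-type classes of \S\ref{subsec_8_1_2022_09_27_0906}.

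First I would fix an $n$-admissible set $S \ni \ell$. By Proposition~\ref{prop_DI_Prop_3_21}, the module $H^1_{S,\square}(K_m,T_{f,n})$ is free over $\Lambda_{m,n}$, and the classes $\kappa(\ell)_m$ lie in it and satisfy the trace relation \eqref{eqn_Heegner_trace_relation}, which is precisely the hypothesis \eqref{eq:norm-relation-hyp} of Theorem~\ref{thm:factorize-mod}. The proof of Theorem~\ref{thm:factorize-mod} relies only on two ingredients: the inductive construction of sharp/flat lifts via the adjugate identity $C_{f,m}' C_{f,m} = \Phi_m I_2$, and the module-theoretic inverse-limit identification \eqref{eq:inverselimit2}. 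Both ingredients extend mutatis mutandis to any sequence of norm-compatible elements in a free $\Lambda/\varpi^n$-module; equivalently, one may fix a $\Lambda_{m,n}$-basis of $H^1_{S,\square}(K_m,T_{f,n})$ and apply Theorem~\ref{thm:factorize-mod} coordinate by coordinate. This produces a pair
\[
\begin{pmatrix} \kappa(\ell)_m^\sharp \\ \kappa(\ell)_m^\flat \end{pmatrix} \in H^1_{S,\square}(K_m,T_{f,n})^{\oplus 2}/\ker(H_{f,m}) \cdot H^1_{S,\square}(K_m,T_{f,n})^{\oplus 2}
\]
verifying property (i) by construction, and the uniqueness follows from the uniqueness part of Theorem~\ref{thm:factorize-mod}.

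Property (ii) is then a direct translation of the inverse-limit compatibility established in the last paragraph of the proof of Theorem~\ref{thm:factorize-mod}: applying the trace relation \eqref{eqn_Heegner_trace_relation} to $\cor_{K_{m+1}/K_m} \kappa(\ell)_{m+1}$ together with $H_{f,m+1}\bigl(\cor \kappa(\ell)_{m+1}^\sharp, \cor \kappa(\ell)_{m+1}^\flat\bigr)^T \equiv B_f \cdot H_{f,m}(\kappa(\ell)_m^\sharp,\kappa(\ell)_m^\flat)^T \mod \omega_m$ forces the corestriction of the sharp/flat classes to agree with the level-$m$ classes modulo $\ker(H_{f,m})$. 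Independence of $S$ is then a formal consequence of uniqueness: for $S \subseteq S'$ both $n$-admissible, the natural injection $H^1_{S,\square}(K_m,T_{f,n}) \hookrightarrow H^1_{S',\square}(K_m,T_{f,n})$ sends the Heegner classes to their counterparts, and the associated sharp/flat classes must therefore be identified as well.

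The main obstacle is the extension of the factorization result from scalars to the module $H^1_{S,\square}(K_m,T_{f,n})$; the freeness ensured by Proposition~\ref{prop_DI_Prop_3_21} makes this formal, but one must verify carefully that the module-theoretic analogue of the inverse-limit isomorphism \eqref{eq:inverselimit2} is compatible with the projections $\pi_{m+1,m}$, which reduces to the observation that $H_{f,m}$ is represented by an explicit matrix with entries in $\Lambda$ and hence acts compatibly on the free-module coefficients.
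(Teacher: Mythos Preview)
Your overall strategy coincides with the paper's: reduce to the scalar factorization of Theorem~\ref{thm:factorize-mod} by exploiting the freeness of $H^1_{S,\square}(K_m,T_{f,n})$ over $\LL_{m,n}$ and working coordinate by coordinate. However, there is a genuine gap in your reduction.

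The modules $H^1_{S,\square}(K_m,T_{f,n})$ vary with $m$, and the trace relation \eqref{eqn_Heegner_trace_relation} links classes at three different levels via $\cor$ and $\res$. To convert this into the scalar relation $\pi_{m+1,m}(r_{i,m+1})=a_p(f)r_{i,m}-\xi_{m-1}(r_{i,m-1})$ required by Theorem~\ref{thm:factorize-mod}, you must choose $\LL_{m,n}$-bases $\{e_{i,m}\}_i$ that are \emph{compatible under corestriction}, i.e.\ $\cor_{K_{m+1}/K_m}(e_{i,m+1})=e_{i,m}$. Freeness at each individual level (Proposition~\ref{prop_DI_Prop_3_21}) does not by itself guarantee the existence of such a system of bases; what is needed in addition is that the corestriction maps
\[
\cor_{K_{m+1}/K_m}\colon H^1_{S,\square}(K_{m+1},T_{f,n})\longrightarrow H^1_{S,\square}(K_m,T_{f,n})
\]
are surjective. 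The paper establishes this separately as Proposition~\ref{prop_6_2_2022_07_05_1510}, via a control argument for Selmer complexes that uses the $n$-admissibility of $S$ in an essential way (to force the vanishing of a degree-$2$ extended Selmer group). Your final paragraph misidentifies the obstruction: the issue is not that $H_{f,m}$ has entries in $\LL$ and hence acts compatibly on coefficients---that much is automatic---but rather that the tower $\{H^1_{S,\square}(K_m,T_{f,n})\}_m$ has surjective transition maps, so that $\varprojlim_m H^1_{S,\square}(K_m,T_{f,n})$ is free over $\LL/\varpi^n$ and surjects onto each finite layer. Once this is in place, your argument (and the paper's) goes through.
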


The proof of Theorem~\ref{thm_main_sharpflat_Heegner_ES} will be given after the following preparatory result.
\begin{proposition}
\label{prop_6_2_2022_07_05_1510}
Let $n$ be a positive integer and $S$ an $n$-admissible set of rational primes. For any positive integer $m$, the corestriction map
$${\cor}_{K_{m+1}/K_m}:\, H^1_{S,\square}(K_{m+1},T_{f,n})\lra  H^1_{S,\square}(K_m,T_{f,n})$$
is surjective.
\end{proposition}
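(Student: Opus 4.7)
The plan is to dualize the assertion via Poitou--Tate global duality, reducing it to an injectivity statement on a dual Selmer group, which we then verify by inflation--restriction.

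The local conditions defining $H^1_{S,\square}(K_m, T_{f,n})$---namely the full local cohomology at primes $\fp \mid p$ and $\ell \in S$, and $H^1_\f$ at the remaining primes---are the annihilators, under the local Tate pairing combined with the Weil self-duality $T_{f,n}^*(1) \cong \Afn$, of the local conditions defining $\Sel_{S,0}(K_m, \Afn)$: the zero subgroup at $\fp \mid p$ and $\ell \in S$, and the (self-dual) $H^1_\f$ elsewhere. Poitou--Tate global duality therefore yields a perfect Pontryagin pairing
\begin{equation*}
H^1_{S,\square}(K_m, T_{f,n}) \times \Sel_{S,0}(K_m, \Afn) \lra \cO_L/(\varpi^n),
\end{equation*}
under which corestriction on the $T_{f,n}$-side is adjoint to restriction on the $\Afn$-side. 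Consequently, the desired surjectivity is equivalent to the injectivity of
\begin{equation*}
\res_{K_{m+1}/K_m} : \Sel_{S,0}(K_m, \Afn) \lra \Sel_{S,0}(K_{m+1}, \Afn).
\end{equation*}

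To establish this injectivity, set $G := \Gal(K_{m+1}/K_m)$. By inflation--restriction, the kernel of the restriction map on global cohomology $H^1(K_m, \Afn) \to H^1(K_{m+1}, \Afn)$ is $H^1(G, \Afn^{G_{K_{m+1}}})$, and any kernel on the Selmer subgroup is contained therein. It thus suffices to show $\Afn^{G_{K_\infty}} = 0$, which by Nakayama's lemma reduces modulo $\varpi$ to proving that $\bar{\rho}_f$ admits no nonzero $G_{K_\infty}$-fixed vector. This follows from hypothesis \eqref{item_BI}: since $K_\infty$ is anticyclotomic over $K$, we have $K_\infty \cap \QQ(\mu_{p^\infty}) = \QQ$, so the image of $G_{K_\infty \cdot \QQ(\mu_{p^\infty})}$ inside $\bar{\rho}_f(G_{\QQ(\mu_{p^\infty})})$ is a closed normal subgroup whose quotient is a subquotient of the virtually pro-$p$ group $\Gal(K_\infty/\QQ)$. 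For $p = 5$, the hypothesis places a conjugate of $\mathrm{SL}_2(\FF_5)$ inside $\bar{\rho}_f(G_{\QQ(\mu_{p^\infty})})$, and the simplicity of $\mathrm{PSL}_2(\FF_5)$ then forces this normal subgroup to still contain a conjugate of $\mathrm{SL}_2(\FF_5)$, which acts on $\bar{\rho}_f$ without a fixed vector; the analogous conclusion for $p > 5$ is obtained via the $G_\QQ$-irreducibility of $\bar{\rho}_f$.

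The main technical point is establishing the perfect Pontryagin duality and the corestriction/restriction adjunction; both are standard consequences of Poitou--Tate global duality once the orthogonality of the local conditions is verified, although the identification requires some care with the notation.
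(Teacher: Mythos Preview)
Your argument has a genuine gap at its central step: the claimed ``perfect Pontryagin pairing''
\[
H^1_{S,\square}(K_m, T_{f,n}) \times \Sel_{S,0}(K_m, \Afn) \lra \cO_L/(\varpi^n)
\]
does not exist. Poitou--Tate duality does \emph{not} identify a Selmer group with the Pontryagin dual of the Selmer group for the orthogonal local conditions; rather, it places these two groups in a long exact sequence. In Nekov\'a\v{r}'s language (which the paper uses), Matlis duality for Selmer complexes identifies $\widetilde{H}^1_\f$ for $T_{f,n}$ with the dual of $\widetilde{H}^2_\f$ for $A_{f,n}$, not with $\widetilde{H}^1_\f$ for $A_{f,n}$. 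So the injectivity of restriction on $\Sel_{S,0}$ that you correctly establish is simply not equivalent to the surjectivity of corestriction on $H^1_{S,\square}$.

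A red flag you could have noticed: your argument never uses that $S$ is $n$-admissible. The vanishing $A_{f,n}^{G_{K_\infty}}=0$ holds for any $S$, so if your reduction were valid you would have proved surjectivity of corestriction for every finite set $S$ of primes, which is false. The paper's proof, by contrast, passes through Shapiro's lemma to rewrite corestriction as the map induced by $\LL_{m+1,n}\to\LL_{m,n}$, then uses the base-change exact triangle for Selmer complexes to identify the cokernel with a piece of $\widetilde{H}^2_\f(G_{K,\Sigma},\Delta_S,T_{f,n}\otimes\LL_{m+1,n}^\dagger)$. This $\widetilde{H}^2$ is then shown to vanish by Nakayama and Matlis duality, which reduces to $\Sel_{S,\square}(K,A_{f,n})=0$: precisely the $n$-admissibility hypothesis. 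That is where the content of the proposition lies, and your approach bypasses it entirely.
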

\begin{proof}
For a pair of positive integers $m$ and $n$, let $\LL_{m,n}^{\dagger}$ denote the free $\LL_{m,n}$-module of rank one on which $G_K$ acts via the canonical morphism
$$G_K\twoheadrightarrow \Gamma \hookrightarrow \LL^\times \lra \LL_{m,n}^\times\,.$$

Shapiro's lemma gives rise to a natural identification
\begin{align*}
    \mathfrak{s}:  H^1_{S,\square}(K_m,T_{f,n})\xrightarrow{\sim} H^1_{S,\square}&(K,T_{f,n}\otimes_{\cO_L} \LL_{m,n}^{\dagger})\\
    &:=\ker\left(H^1(G_{K,\Sigma}, T_{f,n}\otimes_{\cO_L} \LL_{m,n}^{\dagger})\lra \prod_{v\nmid pS} H^1(K_v^{\rm ur}, T_{f,n}^{I_v}\otimes_{\cO_L} \LL_{m,n}^{\dagger}) \right)\,,
\end{align*}
where $\Sigma$ is the set of primes of $K$ that lie above those dividing $pNS$; $G_{K,\Sigma}=\Gal(K_\Sigma/K)$ and $K_\Sigma$ is the maximal  extension of $K$ unramified outside $\Sigma$. Moreover, we have a commutative diagram
$$\xymatrix{
H^1_{S,\square}(K_{m+1},T_{f,n}) \ar[r]^(.42){\mathfrak{s}}_(.42){\sim}\ar[d]_{{\cor}_{K_{m+1}/K_m}}& H^1_{S,\square}(K,T_{f,n}\otimes_{\cO_L} \LL_{m+1,n}^\dagger)\ar[d]\\
H^1_{S,\square}(K_{m},T_{f,n}) \ar[r]_(.42){\mathfrak{s}}^(.42){\sim} & H^1_{S,\square}(K,T_{f,n}\otimes_{\cO_L} \LL_{m,n}^{\dagger})
}$$
where the vertical arrow on the right is induced from the canonical projection $\LL_{m+1,n}\to \LL_{m,n}$. Hence, to conclude the proof, we need to prove that the natural map
$$H^1_{S,\square}(K,T_{f,n}\otimes_{\cO_L} {\LL_{m+1,n}^\dagger})\lra H^1_{S,\square}(K,T_{f,n}\otimes_{\cO_L} \LL_{m,n}^{\dagger})$$
is surjective. 

Note that $H^1_{S,\square}(K,T_{f,n}\otimes_{\cO_L} \LL_{m,n}^{\dagger})$ can be identified with the cohomology of a Greenberg--Selmer complex $\widetilde{R{\Gamma}}_{\rm f}(G_{K,\Sigma},\Delta_S,T_{f,n}\otimes_{\cO_L} \LL_{m,n}^{\dagger})$ (cf. \cite{nekovar06}) in degree $1$, which is given by the Greenberg (unramified) local conditions at all primes $v\in \Sigma$ with $v\nmid pS$ and for $v\mid pS$, by the conditions
$$\iota_v^+: C^\bullet(G_v,T_{f,n}\otimes_{\cO_L} \LL_{m,n}^{\dagger})\xrightarrow{\rm id}  C^\bullet(G_v,T_{f,n}\otimes_{\cO_L} \LL_{m,n}^{\dagger})\,, $$
where $G_v=\Gal(\overline{K}_v/K_v)$. The fundamental base change property of Selmer complexes (cf. the proof of Proposition 8.4.8.1 in \cite{nekovar06}) yields the exact sequence 
$$H^1_{S,\square}(K,T_{f,n}\otimes_{\cO_L} {\LL_{m+1,n}^\dagger})\lra H^1_{S,\square}(K,T_{f,n}\otimes_{\cO_L} \LL_{m,n}^{\dagger})\lra \widetilde{H}^2_{\rm f}(G_{K,\Sigma},\Delta_S,T_{f,n}\otimes_{\cO_L} {\LL_{m+1,n}^\dagger})[\gamma-1],$$
where 
$$\widetilde{H}^\bullet_{\rm f}(G_{K,\Sigma},\Delta_S,T_{f,n}\otimes_{\cO_L} {\LL_{m+1,n}^\dagger})=H^\bullet(\widetilde{R{\Gamma}}_{\rm f}(G_{K,\Sigma},\Delta_S,T_{f,n}\otimes_{\cO_L} {\LL_{m+1,n}^\dagger}))$$ denotes the cohomology and we have used the natural identification 
$$\widetilde{H}^1_{\rm f}(G_{K,\Sigma},\Delta_S,T_{f,n}\otimes_{\cO_L} {\LL_{j,n}^\dagger})\stackrel{\sim}{\lra }H^1_{S,\square}(K,T_{f,n}\otimes_{\cO_L} {\LL_{j,n}^\dagger})\,,\qquad\qquad j=m,m+1\,$$
arising from \cite[6.1.3.2]{nekovar06}.
It thus suffices to show that 
$$\widetilde{H}^2_{\rm f}(G_{K,\Sigma},\Delta_S,T_{f,n}\otimes_{\cO_L} {\LL_{m+1,n}^\dagger})=\{0\},$$
which by Nakayama's lemma is equivalent to showing that 
$$\widetilde{H}^2_{\rm f}(G_{K,\Sigma},\Delta_S,T_{f,n}\otimes_{\cO_L} {\LL_{m+1,n}^\dagger})/(\gamma-1)\widetilde{H}^2_{\rm f}(G_{K,\Sigma},\Delta_S,T_{f,n}\otimes_{\cO_L} {\LL_{m+1,n}^\dagger})=\{0\}\,.$$
The base change property of Selmer complexes combined with the fact that 
$$H^3(\widetilde{R{\Gamma}}_{\rm f}(G_{K,\Sigma},\Delta_S,T_{f,n}\otimes_{\cO_L} {\LL_{m+1,n}^\dagger}))=\{0\}$$ (which follows from the irreducibility of the residual $G_K$-representation and the Matlis duality for Selmer complexes; cf. \cite[Theorem 6.3.4]{nekovar06}) 
shows that the desired vanishing is equivalent to the vanishing of 
$$\widetilde{H}^2_{\rm f}(G_{K,\Sigma},\Delta_S,T_{f,n}\otimes_{\cO_L} {\LL_{m+1,n}^\dagger}/(\gamma-1))=\widetilde{H}^2_{\rm f}(G_{K,\Sigma},\Delta_S,T_{f,n})\,.$$
By Matlis duality for Selmer complexes, we have a natural isomorphism
$$\widetilde{H}^2_{\rm f}(G_{K,\Sigma},\Delta_S,T_{f,n}) \simeq {\rm Hom}\left(\Sel_{S,\square}(K,A_{f,n}),\Qp/\Zp\right)\,.$$
Since $S$ is as an $n$-admissible set, we have
$$\Sel_{S,\square}(K,A_{f,n})=\{0\}\,,$$ 
and so the proof concludes. 
\end{proof}

\begin{proof}[Proof of Theorem~\ref{thm_main_sharpflat_Heegner_ES}]
For any positive integer $m$, choose a $\LL_{m,n}$-module basis $\{e_{i,m}\}_i$ of $H^1_{S,\square}(K_{m},T_{f,n})$ such that 
\begin{equation}
\label{eqn_2022_07_05_1516}
    {\cor}_{K_{m+1}/K_m}(e_{i,m+1})=e_{i,m}\,.
\end{equation}
This is possible thanks to Proposition~\ref{prop_DI_Prop_3_21} and Proposition~\ref{prop_6_2_2022_07_05_1510}.

For a rational prime $\ell$ as in Theorem~\ref{thm_main_sharpflat_Heegner_ES},  write 
$$\kappa(\ell)_m=\sum_{i}r_{i,m}\cdot e_{i,m}\,,\qquad r_{i,m}\in \LL_{m,n}\,.$$
By \eqref{eqn_Heegner_trace_relation} and \eqref{eqn_2022_07_05_1516},
\begin{equation}
    \label{eqn_2022_07_05_1517}
    \pi_{m+1,m}\left(r_{i,m+1}\right)=a_p(f) r_{i,m}-\xi_{m-1}\left(r_{i,m-1}\right)\,.
\end{equation}
The argument in the proof of Theorem~\ref{thm:factorize-L} shows that 
$$H_{f,m}
\begin{pmatrix}
r_{i,m}^\sharp\\ r_{i,m}^\flat 
\end{pmatrix}\equiv \begin{pmatrix}
r_{i,m}\\ -\xi_{m-1} (r_{i,m-1}) 
\end{pmatrix}\in \LL_{m,n}^{\oplus 2}$$
for some $\left(r_{i,m}^\sharp,r_{i,m}^\flat \right)\in \LL_{m,n}\times \LL_{m,n}$ such that
$$\pi_{m+1,m}\begin{pmatrix}
r_{i,m+1}^\sharp\\ r_{i,m+1}^\flat 
\end{pmatrix}\equiv \begin{pmatrix}
r_{i,m}^\sharp\\ r_{i,m}^\flat 
\end{pmatrix} \mod \ker(H_{f,m})\,.$$
Set
$$\kappa(\ell)^\bullet_m:= \sum_i r_{i,m}^\bullet\cdot e_{i,m}\,,\qquad \bullet=\sharp,\flat\,. $$
Then $\begin{pmatrix}
\kappa(\ell)_m^\sharp\\ \kappa(\ell)_m^\flat 
\end{pmatrix}$ evidently verifies the required properties and its uniqueness modulo $\ker(H_{f,m})$ is clear by part i) of 
Theorem~\ref{thm_main_sharpflat_Heegner_ES}.
\end{proof}

Put $\widehat{H}^1_{S,\square}(K_\infty,T_{f,n}):=\varprojlim_{m}  H^1_{S,\square}(K_m,T_{f,n})$ and similarly define $\widehat{H}^1_{S,\bullet}(K_\infty,T_{f,n})$ for $\bullet\in \{\sharp,\flat\}$.

\begin{lemma}
\label{lemma_2022_07_04_1748}
The natural map
\begin{equation}
    \label{eqn_2022_07_04_1754}
   \widehat{H}^1_{S,\square}(K_\infty,T_{f,n})^{\oplus 2}\lra \varprojlim_{m}  \frac{H^1_{S,\square}(K_m,T_{f,n})^{\oplus 2}}{\ker(H_{f,m}) \cdot H^1_{S,\square}(K_m,T_{f,n})^{\oplus 2}}
\end{equation}
is an isomorphism of $\LL/(\varpi^n)$-modules.
\end{lemma}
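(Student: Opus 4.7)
My plan is to reduce the lemma to the abstract isomorphism \eqref{eq:inverselimit2} by choosing compatible $\LL_{m,n}$-bases of the Selmer modules $V_m := H^1_{S,\square}(K_m, T_{f,n})$. The first substantive task is to establish that $V_m$ is a free $\LL_{m,n}$-module of finite rank independent of $m$. For this I would invoke the short exact sequence \eqref{eqn_2022_07_04_1536} that appears in the proof of Proposition~\ref{prop_DI_Prop_3_21}, expressing $V_m$ as an extension of the semi-local term $H^1(K_{m,p}, T_{f,n})$ by the free $\LL_{m,n}$-module $H^1_{S,0}(K_m, T_{f,n})$. A standard Perrin-Riou local Iwasawa-theoretic argument (of the sort behind Lemma~\ref{lem:free-local-conds-T}) shows that the semi-local term is free over $\LL_{m,n}$ (or $\LL'_{m,n}$ in the inert case), whence the sequence splits and $V_m$ is free of uniformly bounded rank.

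Next, I would use Proposition~\ref{prop_6_2_2022_07_05_1510} to select, inductively in $m$, $\LL_{m,n}$-bases $\{e_{i,m}\}_{i \in I}$ of $V_m$ satisfying ${\rm cor}_{K_{m+1}/K_m}(e_{i,m+1}) = e_{i,m}$ for all $i$ and $m$. The induction uses surjectivity of corestriction at each stage to produce lifts of the basis at level $m$ to level $m+1$; those lifts form an $\LL_{m+1,n}$-basis by Nakayama's lemma, since they reduce modulo $\omega_m$ to the basis at level $m$.

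With such compatible bases in hand, the remainder is formal. Via the bases I obtain an identification
$$V_m^{\oplus 2}\,\big/\, \ker(H_{f,m})\cdot V_m^{\oplus 2} \;\simeq\; \bigoplus_{i \in I} \LL_{m,n}^{\oplus 2}\big/\ker(H_{f,m}),$$
compatible with the transition maps in $m$. Taking inverse limits (permissible since $I$ is finite) and invoking \eqref{eq:inverselimit2} with $R = \LL/(\varpi^n)$ yields
$$\varprojlim_m \left( V_m^{\oplus 2}\big/\ker(H_{f,m})\cdot V_m^{\oplus 2} \right) \;\simeq\; \bigoplus_{i \in I} (\LL/\varpi^n)^{\oplus 2} \;\simeq\; \widehat{H}^1_{S,\square}(K_\infty, T_{f,n})^{\oplus 2},$$
and one checks that this composite is inverse to the natural map in the statement.

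The main anticipated obstacle is the freeness claim in the first step, in particular the splitting of \eqref{eqn_2022_07_04_1536} and the freeness of the semi-local term $H^1(K_{m,p}, T_{f,n})$. These are most delicate in the inert case, where the natural Iwasawa algebra structure on the semi-local term is over $\LL'_{m,n}$ rather than $\LL_{m,n}$, and one must ensure the resulting $\LL_{m,n}$-module structure is still free (or, alternatively, that the whole argument can be carried out coherently over $\LL'_{m,n}$).
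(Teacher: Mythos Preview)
Your proposal is correct and follows essentially the same approach as the paper: both rely on the freeness of $H^1_{S,\square}(K_m,T_{f,n})$ over $\LL_{m,n}$ and reduce to the abstract isomorphism \eqref{eq:inverselimit2}, the only cosmetic difference being that you choose compatible bases explicitly (exactly as in the proof of Theorem~\ref{thm_main_sharpflat_Heegner_ES}) while the paper phrases the reduction via a tensor-product and double inverse-limit manipulation. Your flagged concern about the inert case is not a genuine obstacle: in that setting the standing hypotheses force $\cO_L=\Zp$, so $\LL'_{m,n}$ is free of rank two over $\LL_{m,n}$, and hence freeness of the semi-local term over $\LL'_{m,n}$ (from \cite[Proposition~3.2.1]{perrinriou94}) yields freeness over $\LL_{m,n}$, so the exact sequence \eqref{eqn_2022_07_04_1536} splits as you need.
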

\begin{proof}
Since $H^1_{S,\square}(K_m,T_{f,n})^{\oplus 2}$ is a free $(\LL_{m,n}\times \LL_{m,n})$-module of finite rank, we have the following chain of natural isomorphisms:
\begin{align*}
    \varprojlim_{m}  \frac{H^1_{S,\square}(K_m,T_{f,n})^{\oplus 2}}{\ker(H_{f,m}) \cdot H^1_{S,\square}(K_m,T_{f,n})^{\oplus 2}} & \simeq \varprojlim_{m}  \left(H^1_{S,\square}(K_m,T_{f,n})^{\oplus 2}\otimes_{(\LL\times \LL)}(\LL\times\LL)/(\ker(H_{f,m})) \right)\\
    &\simeq \varprojlim_{m,k}  \left(H^1_{S,\square}(K_m,T_{f,n})^{\oplus 2}\otimes_{(\LL\times \LL)}(\LL\times\LL)/(\ker(H_{f,k})) \right)\\
    &\simeq \varprojlim_{m}  \varprojlim_k H^1_{S,\square}(K_m,T_{f,n})^{\oplus 2}\otimes_{(\LL\times \LL)}(\LL\times\LL)/(\ker(H_{f,k}))\\
    &\stackrel{\eqref{eq:inverselimit}}{\simeq}\varprojlim_{m}  H^1_{S,\square}(K_m,T_{f,n})^{\oplus 2}\,.
\end{align*}
\end{proof}

\begin{defn}
\label{defn_sharpflat_classes_in_comppleted_cohom}
Let 
$$\begin{pmatrix}
\kappa(\ell)^\sharp \\ \kappa(\ell)^\flat
\end{pmatrix}\in \widehat{H}^1_{S,\square}(K_\infty,T_{f,n})^{\oplus 2}$$ 
denote the unique element that maps to 
$$\left\{\begin{pmatrix}
\kappa(\ell)_{m}^\sharp \\ \kappa(\ell)_{m}^\flat
\end{pmatrix}\right\}\in \varprojlim_{m}  \frac{H^1_{S,\square}(K_m,T_{f,n})^{\oplus 2}}{\ker(H_{f,m}) \cdot H^1_{S,\square}(K_m,T_{f,n})^{\oplus 2}}$$
under the isomorphism \eqref{eqn_2022_07_04_1754}.
\end{defn}


\subsection{Reciprocity laws} In this subsection, we prove a pair of reciprocity laws that relate the classes $\kappa(\ell)^\sharp$ and $\kappa(\ell)^\flat$ to the respective $\sharp/\flat$ $p$-adic $L$-functions. These results, which dwell crucially on \cite[\S8--\S9]{BertoliniDarmon2005} and are extensions of those proved in \cite[\S 4]{darmoniovita}, will play a central role in the proof of our main results in \S\ref{sec_proof_main_results}.

In what follows, 
let $\partial_\ell$ also denote the morphism
$$\widehat{H}^1_{S,\square}(K_\infty,T_{f,n})\xrightarrow{\res_\ell} \widehat{H}^1(K_{\infty,\ell},T_{f,n})\xrightarrow[\eqref{eqn_partial_ell_defn}]{\sim} \LL_n\,,$$
and likewise, for an $n$-admissible prime $\ell'\nmid pNS$, the morphism
$$\widehat{H}^1_{S,\square}(K_\infty,T_{f,n})\xrightarrow{\res_{\ell'}} \widehat{H}^1(K_{\infty,\ell'},T_{f,n})\xrightarrow[\eqref{eqn_v_ell_defn}]{\sim} \LL_n\,$$
by $v_{\ell'}$.
\subsubsection{First $\sharp/\flat$ reciprocity law} The following is the generalization of \cite[Proposition 4.4]{darmoniovita} to the present setting.
\begin{proposition}
\label{prop_first_reciprocity_law}
Let $\ell$ be an $n$-admissible prime. We then have
$$\partial_\ell\begin{pmatrix}
\kappa(\ell)^\sharp \\ \kappa(\ell)^\flat
\end{pmatrix}\,\dot{=}\, \begin{pmatrix}
\cL_f^\sharp \\ \cL_f^\flat
\end{pmatrix}\mod \varpi^n\,,$$
where ``$\dot{=}$'' means equality up to multiplication by an element of $\left(\LL/(\varpi^n)\right)^\times$. 
\end{proposition}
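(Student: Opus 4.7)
The plan is to reduce the statement to the first explicit reciprocity law of Bertolini--Darmon, and then combine it with the congruence-invariance of the Sprung-type decomposition established in \S\ref{subsubsec_padic_L_mod_p}.

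First I would recall the level-raising input from \cite{BertoliniDarmon2005}: for each $n$-admissible prime $\ell$, there is a weight-two Hecke eigenform $\phi_\ell$ on the Shimura curve $X_{N^+,N^-\ell}$ whose Hecke eigenvalues are congruent to those of $f$ modulo $\varpi^n$. In particular, $a_p(\phi_\ell) \equiv a_p(f) \mod \varpi^n$ has positive $\varpi$-adic valuation, so the constructions of \S\ref{subsec_prelim_quoternionic_padic_L_functions}--\S\ref{subsubsec_padic_L_mod_p} produce a norm-compatible sequence $\{\cL_{\phi_\ell,m}\}$ in $\LL_{m,n}$ and elements $\cL_{\phi_\ell}^\sharp, \cL_{\phi_\ell}^\flat \in \LL/(\varpi^n)$. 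By construction, $\kappa(\ell)_m$ arises from a Heegner point of conductor $p^{m+1}$ on $X_{N^+,N^-\ell}$, and the first explicit reciprocity law of Bertolini--Darmon asserts that
\begin{equation*}
\partial_\ell(\kappa(\ell)_m) \,\dot{=}\, \cL_{\phi_\ell,m} \mod \varpi^n
\end{equation*}
for every $m \geq 0$, with a unit of proportionality $u \in (\LL/(\varpi^n))^\times$ that is independent of $m$.

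Next I would apply $\partial_\ell$ levelwise to the identity of Theorem~\ref{thm_main_sharpflat_Heegner_ES}(i). Since $\partial_\ell$ is a $\Lambda$-module morphism that intertwines $\res_{K_m/K_{m-1}}$ with the norm map $\xi_{m-1}$ (a direct consequence of the Shapiro-type identification used to define it), and since $H_{f,m} \equiv H_{\phi_\ell,m} \mod \varpi^n$ (because $a_p(f) \equiv a_p(\phi_\ell) \mod \varpi^n$), we obtain
\begin{equation*}
H_{f,m}\begin{pmatrix} \partial_\ell\kappa(\ell)_m^\sharp \\ \partial_\ell\kappa(\ell)_m^\flat \end{pmatrix} \,\dot{=}\, \begin{pmatrix} \cL_{\phi_\ell,m} \\ -\xi_{m-1}(\cL_{\phi_\ell,m-1}) \end{pmatrix} \mod (\omega_m, \varpi^n).
\end{equation*}
The uniqueness assertion in Theorem~\ref{thm:factorize-mod}, applied with $R = \LL/(\varpi^n)$, then pins down $(\partial_\ell\kappa(\ell)_m^\sharp, \partial_\ell\kappa(\ell)_m^\flat)$ modulo $\ker(H_{f,m})$ as $u \cdot (\cL_{\phi_\ell}^\sharp, \cL_{\phi_\ell}^\flat)$. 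Passing to the inverse limit via Lemma~\ref{lemma_2022_07_04_1748} and \eqref{eq:inverselimit2} yields
\begin{equation*}
\partial_\ell\begin{pmatrix}\kappa(\ell)^\sharp \\ \kappa(\ell)^\flat\end{pmatrix} \,\dot{=}\, \begin{pmatrix}\cL_{\phi_\ell}^\sharp \\ \cL_{\phi_\ell}^\flat\end{pmatrix} \mod \varpi^n.
\end{equation*}
Finally, the congruence-compatibility of the $\sharp/\flat$ decomposition established at the end of \S\ref{subsubsec_padic_L_mod_p} delivers $\cL_{\phi_\ell}^\bullet \equiv \cL_f^\bullet \mod \varpi^n$ for $\bullet \in \{\sharp,\flat\}$, which finishes the argument.

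The main obstacle will be verifying that the first explicit reciprocity law of Bertolini--Darmon, originally formulated in the ordinary setting and subsequently extended by Darmon--Iovita under the hypotheses $a_p(f)=0$ and $p$ split in $K$, continues to hold in our broader setting where $a_p(f)$ may be nonzero (in the split case) or where $p$ may be inert in $K$. I expect the argument of \cite{BertoliniDarmon2005} to carry over with little modification, since it is essentially a local statement at the auxiliary prime $\ell$ (which is of good reduction and inert in $K$) and is insensitive to the behaviour of $f$ at $p$; nevertheless, one must confirm the character-group computations on $X_{N^+,N^-\ell}$ and the identification of $\partial_\ell(\kappa(\ell)_m)$ with the quaternionic distribution $\cL_{\phi_\ell,m}$ in each of the two settings treated here. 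A secondary technical point is the compatibility $\partial_\ell \circ \res = \xi \circ \partial_\ell$, which is transparent in the split case but in the inert case requires tracking the $\cO_{k_0}$-module structure encoded in the definition of the local pairing (Definition~\ref{def:Col}).
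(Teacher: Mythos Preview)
Your overall strategy matches the paper's: reduce to the Bertolini--Darmon first reciprocity law at finite level, apply the Sprung factorization $H_{f,m}$ on both sides, and pass to the limit via \eqref{eq:inverselimit2}. However, there is a conceptual error in how you invoke the reciprocity law. The level-raised form $\phi_\ell$ you introduce lives on the \emph{indefinite} Shimura curve $X_{N^+,N^-\ell}$ (since $N^-\ell$ has an even number of prime factors), while the construction of $\cL_{h,m}$ in \S\ref{sec:paLf} applies only to eigenforms on the Bruhat--Tits tree of a \emph{definite} quaternion algebra. So the symbol $\cL_{\phi_\ell,m}$ is undefined, and the congruence lemma at the end of \S\ref{subsubsec_padic_L_mod_p} (which compares two forms on the \emph{same} tree $\cT/\bGamma$) cannot be used to pass from $\cL_{\phi_\ell}^\bullet$ to $\cL_f^\bullet$.

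In fact no intermediate form is needed: Bertolini--Darmon's Theorem~4.1 (proved in \S8 of \cite{BertoliniDarmon2005}) already gives $\partial_\ell(\kappa(\ell)_m)\,\dot{=}\,\cL_{f,m}\bmod\varpi^n$ directly, via the identification of the component group of the Jacobian of $X_{N^+,N^-\ell}$ at $\ell$ with the space of forms on the definite quaternion algebra of discriminant $N^-$. Once you replace $\cL_{\phi_\ell,m}$ by $\cL_{f,m}$ throughout and drop the final congruence step, your argument is exactly the paper's proof. (A minor aside: your ``secondary technical point'' cites Definition~\ref{def:Col}, which is about the pairing at primes above $p$; but $\partial_\ell$ is purely $\ell$-local, and the compatibility $\partial_\ell\circ\res=\xi\circ\partial_\ell$ follows from \cite[Lemma~2.5]{BertoliniDarmon2005}, independent of any $p$-local structure.)
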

\begin{proof}
Let $m$ be a positive integer. As utilized in the proof of \cite[Proposition 4.4]{darmoniovita}, the proof of \cite[Theorem 4.1]{BertoliniDarmon2005} in \S8 of op. cit. can be adapted to the non-ordinary setting (we note that there is no assumption in \cite{BertoliniDarmon2005} on the splitting behaviour of the prime $p$ in $K/\QQ$) and gives
\begin{equation}
\label{eqn_BD05_reciprocity_1}
    \partial_\ell\begin{pmatrix}
\kappa(\ell)_m \\ \res_{K_m/K_{m-1}}\left(\kappa(\ell)_{m-1}\right) 
\end{pmatrix}\,\equiv\, \begin{pmatrix}
\cL_{f,m} \\ -\xi_{m-1}\left(\cL_{f,m-1}\right)
\end{pmatrix}\mod \varpi^n
\end{equation}
up to multiplication by units of $\LL_{m,n}\times \LL_{m,n}$ (where the ambiguous correction factors are compatible as $m$ varies). Combining \eqref{eqn_BD05_reciprocity_1} with the conclusions of Theorem~\ref{thm:factorize-L} and Theorem~\ref{thm_main_sharpflat_Heegner_ES}, we have
$$H_{f,m}\cdot \partial_{\ell}\begin{pmatrix}
\kappa(\ell)_m^\sharp\\ \kappa(\ell)_m^\flat 
\end{pmatrix}
\equiv H_{f,m} \cdot
\begin{pmatrix}
\cL_f^\sharp\\ \cL_f^\flat 
\end{pmatrix}\mod(\varpi^n,\omega_m)$$
up to multiplication by a unit of $\LL_{m,n}\times \LL_{m,n}$. 
So 
\begin{equation}
\label{eqn_prelimit_reciprocity_1}
    \partial_{\ell}\begin{pmatrix}
\kappa(\ell)_m^\sharp\\ \kappa(\ell)_m^\flat 
\end{pmatrix}
\equiv
\begin{pmatrix}
\cL_f^\sharp\\ \cL_f^\flat 
\end{pmatrix} \mod (\varpi^n,\ker(H_{f,m}))
\end{equation}
 up to multiplication by a unit of $(\LL_{m,n}\times \LL_{m,n})/\ker(H_{f,m})$. The asserted equality follows by passing to limit in \eqref{eqn_prelimit_reciprocity_1} with respect to $m$.
\end{proof}

\subsubsection{Rigid pairs (in the sense of Bertolini--Darmon)} 
Before describing the second $\sharp/\flat$ reciprocity law, we review \cite[\S3.3]{BertoliniDarmon2005} to introduce the notion of \emph{rigid pairs} of $n$-admissible primes $\{\ell_1,\ell_2\}$.  This notion is relevant for our arguments only when $a_p\neq 0$, where $f$ is assumed to be $p$-isolated. 

Let $W_f:={\rm ad}^0(T_{f,1})$ denote the trace-zero adjoint of the residual Galois representation $T_{f,1}$. For any set\footnote{ Recall our convention that $S$ also denotes the square-free product of primes in $S$, except for the scenario when $S=\emptyset$, in which case the corresponding product is set to be $1$.} of rational primes $S$ that does not contain any prime that divides $pN$, let us denote by ${\rm Sel}_S(\QQ,W_f)$  the Selmer group whose local conditions are given by the ones described in \cite[Definition 3.5]{BertoliniDarmon2005} (for the primes away from $p$) and the Bloch--Kato local condition (at $p$) (cf. \cite{darmoniovita}, p. 322).

The relevance of the Selmer group {${\rm Sel}_1(\QQ,W_f)$} is due to the following:
\begin{proposition}
The newform $f$ is $p$-isolated if and only if ${{\rm Sel}_1(\QQ,W_f)}=\{0\}$.
\end{proposition}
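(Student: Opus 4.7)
The plan is to interpret ${\rm Sel}_1(\QQ,W_f)$ as the tangent space of a suitable universal deformation ring, invoke an R=T theorem to match this ring with a localized Hecke algebra, and then translate vanishing of the tangent space into the $p$-isolation condition. This is essentially the strategy used in \cite{BertoliniDarmon2005}.

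First, I would identify ${\rm Sel}_1(\QQ,W_f)$ with the $\mathbb{F}_v$-tangent space (where $\mathbb{F}_v = \cO_L/\varpi$) of the universal deformation ring $R$ parameterizing deformations of $\bar\rho_f$ to complete local noetherian $\cO_L$-algebras with residue field $\mathbb{F}_v$ that are minimally ramified at primes dividing $N_0$ and satisfy the flat (Fontaine--Laffaille) condition at $p$. By Mazur's deformation theory, this tangent space is naturally isomorphic to the $H^1_f(\QQ,W_f)$ cut out by exactly the local conditions appearing in the definition of ${\rm Sel}_1(\QQ,W_f)$; the matching of the condition at $p$ with the Bloch--Kato / crystalline condition relies on $f$ having weight two and good reduction, while the matching at ramified primes uses \eqref{item_Loc}.

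Second, under the hypotheses \eqref{item_BI} and \eqref{item_Loc}, the modularity lifting machinery of Wiles--Taylor--Wiles--Diamond, together with its quaternionic extensions (Taylor, Fujiwara, Kisin), supplies an isomorphism $R \simeq \bT_\fm$, where $\bT_\fm$ is the completion at the maximal ideal $\fm = \fm_f$ of the Hecke algebra acting on $S_2(\cT/\bGamma,\cO_L)$ (transported to the relevant Shimura curve via Jacquet--Langlands).

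Third, $\bT_\fm$ is a reduced finite flat $\cO_L$-algebra whose $\overline{\QQ}_p$-valued points correspond bijectively to Hecke eigensystems in $S_2(\cT/\bGamma,\cO_L)$ whose $\ell$-th eigenvalues ($\ell \nmid N_0p$) are congruent modulo $\varpi$ to those of $f$. Thus $\bT_\fm = \cO_L$ if and only if $f$ is the unique such eigensystem, i.e.\ $f$ is $p$-isolated in the sense of Definition~\ref{defn_p-iso}. Since the tangent space of $R$ over $\cO_L$ vanishes if and only if the surjection $R \twoheadrightarrow \cO_L$ induced by $f$ is an isomorphism, chaining R=T with the previous equivalence yields ${\rm Sel}_1(\QQ,W_f)=\{0\} \Longleftrightarrow f$ is $p$-isolated. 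The main obstacle will be arranging the R=T setup in a form that precisely matches the local conditions defining ${\rm Sel}_1$ from \cite[Definition 3.5]{BertoliniDarmon2005}---in particular verifying that the condition at $p$ on the Galois side corresponds to the full weight-two Hecke algebra on the automorphic side, which is routine but requires care because $f$ is non-ordinary at $p$.
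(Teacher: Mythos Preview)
Your proposal is correct and follows exactly the approach of \cite[Proposition~3.6]{BertoliniDarmon2005}, which is all the paper does: it cites that result and remarks (following \cite[p.~322]{darmoniovita}) that the argument carries over to the non-ordinary case. Your outline is therefore more detailed than the paper's own proof, but the underlying strategy---identify ${\rm Sel}_1(\QQ,W_f)$ with the tangent space of the minimal deformation ring, invoke an $R=\bT$ theorem, and read off that $\bT_\fm=\cO_L$ iff $f$ is $p$-isolated---is the same.
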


\begin{proof}
This is \cite[Proposition 3.6]{BertoliniDarmon2005}. As noted in \cite[p. 322]{darmoniovita}, the argument in \cite{BertoliniDarmon2005} still applies when $f$ is non-ordinary at $p$.
\end{proof}

\begin{defn} 
\label{defn_rigid_pair_of_primes}
A pair $\{\ell_1,\ell_2\}$ of admissible primes is said to be a \textbf{rigid pair} if $\Sel_{\ell_1\ell_2}(\QQ,W_f)=\{0\}$ 
(cf. \cite[Definition 3.9]{BertoliniDarmon2005}).
\end{defn} 

\begin{lemma}
\label{lemma_BD_2005_Lemma_4_9}
 If the newform $f$ is $p$-isolated, then there exist primes $\ell_1,\ell_2\in \Pi$ such that $\{\ell_1,\ell_2\}$ is a rigid pair.
\end{lemma}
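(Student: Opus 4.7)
The plan is to adapt the proof of \cite[Lemma 4.9]{BertoliniDarmon2005} directly to our setting; the argument there involves only the residual Galois representation and its trace-zero adjoint $W_f$, so non-ordinariness of $f$ at $p$ plays no essential role (it only affects the choice of Bloch--Kato local conditions at $p$, which is still self-dual under Poitou--Tate).

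By the proposition preceding Definition~\ref{defn_rigid_pair_of_primes}, the hypothesis that $f$ is $p$-isolated is equivalent to the vanishing $\Sel_1(\QQ, W_f) = 0$. The plan is to proceed in two steps. First, I would invoke Poitou--Tate global duality to produce an exact sequence of the form
\[
0 \to \Sel_1(\QQ, W_f) \to \Sel_{\ell_1\ell_2}(\QQ, W_f) \to \bigoplus_{i=1,2} H^1_{\rm sing}(\QQ_{\ell_i}, W_f) \to \Sel_{\ell_1\ell_2}(\QQ, W_f)^\vee,
\]
using the self-duality $W_f \simeq W_f^*$ afforded by the trace pairing on $\mathrm{ad}^0$. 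Since $\Sel_1(\QQ, W_f) = 0$ by assumption, it then suffices to find admissible $\ell_1, \ell_2$ such that the localisation map into the singular quotients is injective, or equivalently, so that every nonzero class in the dual Selmer group has nonzero residue at one of $\ell_1, \ell_2$.

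The second step is a standard Chebotarev density argument. Given a fixed nonzero class $c \in H^1(\QQ, W_f)$, one considers the field $F_c$ obtained by adjoining to $\QQ(W_f)$ a choice of preimages of $c$ under inflation; the irreducibility of $\bar\rho_f$ ensured by hypothesis \eqref{item_BI} implies that the Galois group ${\rm Gal}(F_c/\QQ)$ is sufficiently rich that a positive density of primes $\ell$ are simultaneously admissible relative to $f$ (in the sense of \S\ref{subsec_8_1_2022_09_27_0906}) and detect $c$ under restriction. One chooses $\ell_1$ to annihilate a fixed generator of the (finite) dual Selmer group attached to $\Sel_1(\QQ, W_f^*)$ (up to its image at $\ell_1$), and then $\ell_2$ similarly to eliminate whatever class remains. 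The two-step structure mirrors the proof in \cite{BertoliniDarmon2005}.

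The anticipated main obstacle is the bookkeeping in the duality step when the coefficient ring $\cO_L$ is strictly larger than $\Zp$: one must verify that the Selmer structure on $W_f$ at $p$ remains its own orthogonal complement under the local Tate pairing, and that the Chebotarev step produces primes whose Frobenius acts with the prescribed eigenvalues simultaneously detecting $\cO_L$-module generators rather than merely $\mathbb F_p$-generators. Both points are routine variants of what is carried out in \cite[\S4]{pollack-weston11}, and no new technical difficulty is expected beyond what is already handled there.
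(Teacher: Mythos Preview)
Your opening paragraph is exactly the paper's proof: cite \cite[Lemma 4.9]{BertoliniDarmon2005} and observe that the argument there is insensitive to the $p$-local properties of $\bar\rho_f$, so it goes through in the non-ordinary case. Had you stopped there, the proposal would be complete and identical to the paper.

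The detailed outline that follows, however, has a genuine omission. The lemma does not merely ask for a rigid pair of admissible primes; it asks for $\ell_1,\ell_2\in\Pi$, where $\Pi$ is the set of $(n+t_0)$-admissible primes $\ell$ for which $\ord_\pi(\kappa_\varphi(\ell)^\bullet)$ attains its minimum value $t$. Your Chebotarev step only produces admissible primes detecting a given Selmer class in $W_f$; it says nothing about the minimality constraint. In the Bertolini--Darmon argument, this is handled by noting that membership in $\Pi$ is itself governed by a Chebotarev condition (the value $\ord_\pi(\kappa_\varphi(\ell)^\bullet)$ depends on $\ell$ only through the Frobenius class of $\ell$ in a suitable finite extension cut out by the global classes $\kappa$), so that the set of primes simultaneously lying in $\Pi$ and killing the relevant $W_f$-Selmer class has positive density. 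Without this point your sketch only finds a rigid pair of admissible primes, which is strictly weaker than the statement. The paper sidesteps this by wholesale reference to \cite{BertoliniDarmon2005}, where the $\Pi$-constraint is already built into the statement and proof of Lemma 4.9.
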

\begin{proof}
As remarked in the proof of \cite[Lemma~5.7]{darmoniovita}, the proof of \cite[Lemma 4.9]{BertoliniDarmon2005} does not rely on the $p$-local properties of the underlying Galois representations.
\end{proof}


\subsubsection{Second $\sharp/\flat$ reciprocity law} 
This subsection closely follows the discussion in 
\cite[pp.~318--319]{darmoniovita}, adapting it to the present set-up.

Let $\ell_1$ and $\ell_2$ be distinct $n$-admissible primes relative to $f$ such that $p^n$ divides $\ell_i+1+\epsilon_i a_{\ell_i}(f)$ where $\epsilon_i \in \{+1,-1\}$ ($i=1,2$). Let $B'$ denote the definite quaternion algebra of discriminant ${\rm Disc}(B)\ell_1\ell_2$. Let $R'$ be an Eichler $\ZZ[1/p]$-order of level $N^+$ in $B'$ (recall that $N^+\mid N$ is the largest integer only divisible by primes that are split in $K/\QQ$). Put $\bGamma' := (R')^\times\big{/}\ZZ[1/p]^\times$.


The following {key proposition, which is a consequence of Ihara's lemma for Shimura curves}, is a slight extension of \cite[Theorem 9.3]{BertoliniDarmon2005} (to allow more general coefficients than $\ZZ/p^n\ZZ$), that 
follows by the same argument as in op. cit.
\begin{proposition}[Bertolini--Darmon] 
\label{prop_congruence_with_quoternionic_form}
Suppose that $\ell_1$ and $\ell_2$ are distinct $n$-admissible primes relative to $f$ such that $p^n$ divides $\ell_i+1+\epsilon_i a_{\ell_i}(f)$ where $\epsilon_i \in \{+1,-1\}$ ($i=1,2$). For $\bGamma'$ as above, there exists an eigenform $h\in S_2(\mathcal{T}/\bGamma',\cO_L/(\varpi^n))$ such that the following congruences modulo $\varpi^n$ hold true: 
\begin{align}
\label{eqn_congruences_for_congruent_quot_form}
    \begin{aligned}
    T_q h\equiv a_q(f)h\quad (q\nmid N\ell_1\ell_2)&,\qquad U_q h\equiv  a_q(f)h \quad (q\mid N)\,,\\
U_{\ell_1}h=\epsilon_1h&,\qquad U_{\ell_2}h=\epsilon_2h\,.
    \end{aligned}
\end{align}
If further $f$ is $p$-isolated and the pair of primes $\{\ell_1,\ell_2\}$ is rigid in the sense of Definition~\ref{defn_rigid_pair_of_primes}, 
then $h$ lifts to an eigenform with $\cO_L$-coefficients that satisfies  the congruences \eqref{eqn_congruences_for_congruent_quot_form}. In this case, the eigenform $h$ is $p$-isolated.
\end{proposition}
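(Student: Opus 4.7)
The plan is to follow the strategy of Bertolini--Darmon~\cite[\S9]{BertoliniDarmon2005}, adapting it to the coefficient ring $\cO_L/(\varpi^n)$ and incorporating the refinements of Pollack--Weston~\cite{pollack-weston11}. The heart of the argument is a twofold level-raising (at $\ell_1$ and at $\ell_2$) whose geometric input is Ihara's lemma for Shimura curves. The eigenvalue congruences $p^n\mid \ell_i+1+\epsilon_i a_{\ell_i}(f)$ guarantee that the mod-$\varpi^n$ Galois representation $T_{f,n}$ has the correct semisimple local shape at each $\ell_i$ to admit a level-raising congruence in which the $U_{\ell_i}$-operator acts by $\epsilon_i$. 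After raising the level from $N$ to $N\ell_1\ell_2$, the Jacquet--Langlands correspondence (applied to the definite quaternion algebra $B'$ of discriminant $\mathrm{Disc}(B)\ell_1\ell_2$) transports the resulting eigenform to an eigenform $h\in S_2(\cT/\bGamma',\cO_L/(\varpi^n))$ verifying the congruences~\eqref{eqn_congruences_for_congruent_quot_form}.

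The passage from $\ZZ/p^n\ZZ$-coefficients in Bertolini--Darmon to $\cO_L/(\varpi^n)$-coefficients should be largely formal, since Ihara's lemma is a statement about mod-$p$ geometry of Shimura curves that implies the analogous assertion for any Artinian quotient of $\cO_L$ by base change. The main technical point I foresee is the need to know that the appropriate completed Hecke modules (the mod-$\varpi^n$ Jacobians and their quotients) remain free over $\cO_L/(\varpi^n)$ at the maximal ideal corresponding to $f$. This multiplicity-one property is where the residual irreducibility~\eqref{item_BI} and the ramification hypothesis~\eqref{item_Loc} enter, in a way entirely parallel to~\cite[\S4]{pollack-weston11}; it is the main obstacle one has to clear carefully, but it has essentially been addressed in loc.\ cit.\ in sufficient generality.

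For the lifting assertion, assume that $f$ is $p$-isolated and that $\{\ell_1,\ell_2\}$ is a rigid pair, so that $\Sel_{\ell_1\ell_2}(\QQ,W_f)=\{0\}$. Via the standard deformation-theoretic dictionary, this vanishing amounts to the statement that the local component of the characteristic-zero quaternionic Hecke algebra on $\cT/\bGamma'$ at the maximal ideal $\mathfrak{m}_{\overline{h}}$ defined by $\overline{h}:=h\bmod\varpi$ is isomorphic to $\cO_L$ --- the quaternionic analogue of an $R=T$ statement dual to the rigidity condition. Consequently, the reduction map from $\cO_L$-valued eigenforms on $\cT/\bGamma'$ at $\mathfrak{m}_{\overline{h}}$ to $(\cO_L/\varpi)$-valued ones is a bijection, so Hensel lifting produces a unique $\widetilde{h}\in S_2(\cT/\bGamma',\cO_L)$ whose reduction modulo $\varpi^n$ recovers $h$ (and hence satisfies~\eqref{eqn_congruences_for_congruent_quot_form} by construction). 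The same Hecke-algebra identification immediately shows that $\widetilde{h}$ is itself $p$-isolated, completing the proof.
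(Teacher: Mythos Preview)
Your proposal is correct and aligns with the paper's treatment: the paper does not give a proof at all but simply records that the proposition is a slight extension of \cite[Theorem~9.3]{BertoliniDarmon2005} (allowing $\cO_L/(\varpi^n)$-coefficients in place of $\ZZ/p^n\ZZ$), notes that it is a consequence of Ihara's lemma for Shimura curves, and asserts that the same argument as in op.\ cit.\ applies. Your sketch---twofold level-raising at $\ell_1,\ell_2$ via Ihara's lemma, transport by Jacquet--Langlands to the definite side, and the deformation-theoretic interpretation of rigidity for the lifting step---is exactly the content of that argument, so you have correctly reconstructed what the paper is invoking.
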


We are now ready to formulate and prove the second $\sharp/\flat$ reciprocity law, which should be compared to \cite[Proposition 4.6]{darmoniovita}.

\begin{proposition}
\label{prop_DI_prop_4_6}
Suppose that $\ell_1$ and $\ell_2$ are distinct $n$-admissible primes relative to $f$ such that $p^n$ divides $\ell_i+1+\epsilon_i a_{\ell_i}(f)$ where $\epsilon_i \in \{+1,-1\}$ ($i=1,2$). For $\bGamma'$ as above, let $h\in S_2(\mathcal{T}/\bGamma',\cO_L/(\varpi^n))$ be an eigenform satisfying \eqref{eqn_congruences_for_congruent_quot_form} for $\bGamma'$ as above. Let $S_1$ be an $n$-admissible set of primes containing $\ell_1$ but not $\ell_2$, and define $S_2$ exchanging the roles of $\ell_1$ and $\ell_2$. Let the elements
$$\begin{pmatrix}
\kappa(\ell_i)^\sharp \\ \kappa(\ell_i)^\flat
\end{pmatrix}\in \widehat{H}^1_{S_i,\square}(K_\infty,T_{f,n})^{\oplus 2}\,,\qquad i=1,2$$
be as in Definition~\ref{defn_sharpflat_classes_in_comppleted_cohom}.

We then have
\begin{equation}
\label{eqn_prop_DI_prop_4_6}
v_{\ell_2}\begin{pmatrix}
\kappa(\ell_1)^\sharp \\ \kappa(\ell_1)^\flat
\end{pmatrix}\dot{=} \begin{pmatrix}
\cL_h^\sharp \\ \cL_h^\flat
\end{pmatrix}\dot{=}\,v_{\ell_1}\begin{pmatrix}
\kappa(\ell_2)^\sharp \\ \kappa(\ell_2)^\flat
\end{pmatrix}
\end{equation}
in the ring $\LL/(\varpi^n)$, where ``$\dot{=}$'' denotes equality up to multiplication by elements of $\cO_L^\times$ and $\Gamma$.
\end{proposition}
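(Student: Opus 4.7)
The plan is to mirror the proof of Proposition~\ref{prop_first_reciprocity_law}, replacing the role of the first explicit reciprocity law of Bertolini--Darmon by that of the second. The essential finite-level geometric input that must be extracted is an adaptation of \cite[Theorem~4.2]{BertoliniDarmon2005} to $\cO_L/(\varpi^n)$-coefficients and to the non-ordinary setting: for each positive integer $m$, one needs to establish a congruence of the form
$$
v_{\ell_2}\begin{pmatrix}\kappa(\ell_1)_m \\ \res_{K_m/K_{m-1}}\kappa(\ell_1)_{m-1}\end{pmatrix}\,\equiv\,\begin{pmatrix}\cL_{h,m} \\ -\xi_{m-1}(\cL_{h,m-1})\end{pmatrix}\mod \varpi^n
$$
up to multiplication by units of $\LL_{m,n}\times \LL_{m,n}$ whose ambiguity is compatible with transition in $m$, where $\cL_{h,m}\in\LL_{m,n}$ is the mod $\varpi^n$ element attached to $h$ by the construction of \S\ref{subsubsec_padic_L_mod_p}. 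Following the strategy of \cite[\S9]{BertoliniDarmon2005}, this input is obtained by identifying $v_{\ell_2}(\kappa(\ell_1)_m)$, via the Jacquet--Langlands transfer and \v{C}erednik--Drinfeld uniformization, with the evaluation of the appropriate component of a modular form on the Shimura curve of discriminant $\mathrm{Disc}(B)\ell_1\ell_2$, and crucially using the Hecke-eigenvalue congruences \eqref{eqn_congruences_for_congruent_quot_form} satisfied by $h$.

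Granted the displayed congruence, the remainder of the argument is formal. The congruences \eqref{eqn_congruences_for_congruent_quot_form} yield $a_p(h)\equiv a_p(f)\mod \varpi^n$, hence $H_{h,m}\equiv H_{f,m}\mod (\varpi^n,\omega_m)$. Applying Theorem~\ref{thm:factorize-mod} with $R=\LL/(\varpi^n)$ to the norm-compatible sequence $\{\cL_{h,m}\}$, whose norm relation is inherited from \eqref{eq:norm-relation2} reduced modulo $\varpi^n$, produces $\cL_h^\sharp,\cL_h^\flat\in \LL/(\varpi^n)$ with
$$
H_{f,m}\begin{pmatrix}\cL_h^\sharp\\ \cL_h^\flat\end{pmatrix}\,\equiv\,\begin{pmatrix}\cL_{h,m}\\ -\xi_{m-1}(\cL_{h,m-1})\end{pmatrix}\mod(\varpi^n,\omega_m).
$$
On the other hand, applying the $\LL_{m,n}$-linear map $v_{\ell_2}$ to the defining identity of Theorem~\ref{thm_main_sharpflat_Heegner_ES}(i) for $\kappa(\ell_1)^\sharp,\kappa(\ell_1)^\flat$ yields
$$
H_{f,m}\cdot v_{\ell_2}\begin{pmatrix}\kappa(\ell_1)_m^\sharp\\ \kappa(\ell_1)_m^\flat\end{pmatrix}\,=\, v_{\ell_2}\begin{pmatrix}\kappa(\ell_1)_m\\ -\res_{K_m/K_{m-1}}\kappa(\ell_1)_{m-1}\end{pmatrix}.
$$
Combining these two displays with the finite-level input above gives
$$
H_{f,m}\cdot v_{\ell_2}\begin{pmatrix}\kappa(\ell_1)_m^\sharp\\ \kappa(\ell_1)_m^\flat\end{pmatrix}\,\equiv\, H_{f,m}\begin{pmatrix}\cL_h^\sharp\\ \cL_h^\flat\end{pmatrix}\mod(\varpi^n,\omega_m)
$$
up to a compatible unit factor. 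Reducing modulo $\ker(H_{f,m})$ and passing to the inverse limit in $m$ by means of Lemma~\ref{lemma_2022_07_04_1748} then establishes the first equality in \eqref{eqn_prop_DI_prop_4_6}; the second equality follows by the symmetric argument upon interchanging $\ell_1$ and $\ell_2$.

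The main obstacle lies entirely in the careful extraction of the finite-level second reciprocity congruence in the required generality: one has to transport the arguments of \cite[\S9]{BertoliniDarmon2005} from the ordinary $\ZZ/p^n$-coefficient setting to the (possibly $p$-inert) non-ordinary $\cO_L/(\varpi^n)$-coefficient setting, and track that the implicit unit factors at different levels are compatible under the projections $\pi_{m+1,m}$ so that the sharp/flat decomposition descends to the limit. Once this finite-level input is secured, the descent to the sharp/flat pair via $H_{f,m}$ and the passage to the inverse limit is a formal bookkeeping exercise that closely follows the pattern of Proposition~\ref{prop_first_reciprocity_law}.
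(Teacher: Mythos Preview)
Your proposal is correct and follows essentially the same route as the paper's own proof: invoke the finite-level second reciprocity law from \cite[\S9]{BertoliniDarmon2005} (adapted to the non-ordinary $\cO_L/(\varpi^n)$-coefficient setting), combine with the defining relation of Theorem~\ref{thm_main_sharpflat_Heegner_ES}(i) and the $\sharp/\flat$ factorization of $\{\cL_{h,m}\}$, then reduce modulo $\ker(H_{f,m})$ and pass to the limit. Your citation of Theorem~\ref{thm:factorize-mod} (rather than Theorem~\ref{thm:factorize-L}) is in fact the more precise reference, since $h$ here has $\cO_L/(\varpi^n)$-coefficients.
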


\begin{proof}
By symmetry, it suffices to prove the first equality in \eqref{eqn_prop_DI_prop_4_6}. 

Let $m$ be a positive integer. As indicated in the proof of \cite[Proposition 4.6]{darmoniovita}, the proof of \cite[Theorem 4.2]{BertoliniDarmon2005} in \S9 of op. cit. can be adapted to the non-ordinary setting (recall the arguments of \cite{BertoliniDarmon2005} allow $p$ to be split or inert in $K/\QQ$) and yields
\begin{equation}
\label{eqn_BD05_reciprocity_2}
    v_{\ell_2}\begin{pmatrix}
\kappa(\ell_1)_m \\ \res_{K_m/K_{m-1}}\left(\kappa(\ell_1)_{m-1}\right)
\end{pmatrix}\,=\, \begin{pmatrix}
\cL_{h,m} \\ -\xi_{m-1}\left(\cL_{h,m-1}\right)
\end{pmatrix}
\end{equation}
up to multiplication by units of $\LL_{m,n}\times \LL_{m,n}$ (where the ambiguous correction factors are compatible as $m$ varies). Combining \eqref{eqn_BD05_reciprocity_2} with 
Theorem~\ref{thm:factorize-L} and Theorem~\ref{thm_main_sharpflat_Heegner_ES}, 
it follows that 
$$H_{h,m}\cdot v_{\ell_2}\begin{pmatrix}
\kappa(\ell_1)_m^\sharp\\ \kappa(\ell_1)_m^\flat 
\end{pmatrix}
\equiv H_{h,m}\cdot
\begin{pmatrix}
\cL_h^\sharp\\ \cL_h^\flat 
\end{pmatrix} \mod \omega_m$$
up to multiplication by a unit of $\LL_{m,n}\times \LL_{m,n}$. 
So
\begin{equation}
\label{eqn_prelimit_reciprocity_2}
   v_{\ell_2}\begin{pmatrix}
\kappa(\ell_1)_m^\sharp\\ \kappa(\ell_1)_m^\flat 
\end{pmatrix}
=
\begin{pmatrix}
\cL_h^\sharp\\ \cL_h^\flat 
\end{pmatrix} \mod (\omega_m,\ker(H_{h,m}))
\end{equation}
 up to multiplication by a unit of $(\LL_{m,n}\times \LL_{m,n})/\ker(H_{f,m})$. The asserted equality follows by passing to limit in \eqref{eqn_prelimit_reciprocity_2} with respect to $m$.
\end{proof}

\begin{remark}
\label{remark_rec_laws_extends_shim_curves}
Assume that $f$ is $p$-isolated and let $h\in S_2(\mathcal{T}/\Gamma',\cO_L)$ be as in the  Proposition~\ref{prop_congruence_with_quoternionic_form}. 
We may 
start off 
with the eigenform $h$ instead of $f$ (but still rely on the isomorphism $T_{h,n}\simeq T_{f,n}$)  and introduce $\sharp/\flat$-Coleman maps and Selmer groups associated to $T_{h,n}$, just by propagating the ones for $f$ via the isomorphism $T_{h,n}\simeq T_{f,n}$. We may also construct Heegner classes $\kappa(\ell)^\bullet$ (where $\bullet\in \{\sharp,\flat\}$) associated to $h$, by choosing an eigenform $g'$ modulo $\varpi^n$ on the Shimura curve $X_{N^+,N^-_!\ell}$ (where $N^-_!:=N^-\ell_1\ell_2$) that is congruent to $h$.
Moreover, \cite[Lemma 4.9]{BertoliniDarmon2005} shows that there exists a rigid pair $(\ell_1',\ell_2')$ for $h$ and eigenform $h'$ on the Shimura curve $X_{N^+,N^-_!\ell_1'\ell_2'}$ 
so that Proposition~\ref{prop_congruence_with_quoternionic_form} holds with $\{f,h\}$ replaced by $\{h,h'\}$. 

Hence, the proofs of Proposition~\ref{prop_first_reciprocity_law} and ~\ref{prop_DI_prop_4_6} 
also apply for the pair $\{h,h'\}$. 
When $p$ splits in $K/\QQ$, one may proceed directly,  
without relying on the isomorphism $T_{h,n}\simeq T_{f,n}$. This is carried out in \S\ref{sec_5_2022_09_23_1316} and \S\ref{sec_Heegner_local_properties_at_p_split_case}; see also \S\ref{subsec_compatibility_under_congruences_split_case} where the constructions are shown to be compatible with congruences.

One may reiterate the above by replacing $h$ with $h'$ and so on.
We will crucially rely on these constructions 
in the inductive argument to prove the main result (cf.~Theorem~\ref{thm_div_def_main_conj}).
\end{remark}


\section{The local properties of Heegner classes}
\label{sec_Heegner_local_properties_at_p}
The aim of this section is to describe the local properties of the $\sharp/\flat$ Heegner classes constructed 
in \S\ref{sec_Heeg_classes_construction_reciprocity}. We treat the split and inert cases separately: the latter appears in \S\ref{subsec_Heegner_local_properties_at_p_inert} (in which case we continue to assume that $a_p(f)=0$ and that the Hecke field of $f$ is $\QQ$), while the 
former in \S\ref{sec_Heegner_local_properties_at_p_split_case}.
The underlying reason for this segregation is that the construction of $Q$-systems in the inert case is not presently available 
for eigenforms on a general Shimura curve\footnote{We hope to consider this question in the near future.}.

\subsection{The inert case}
\label{subsec_Heegner_local_properties_at_p_inert}
In this subsection, the setting is as in \S\ref{subsec_4_2_2022_07_12}.  That is, we assume that $p\geq 5$ is inert in $K/\QQ$, and the Hecke field of the $p$-isolated newform $f$ is $\QQ$ and so $a_p(f)=0$. 

Our study is  not directly built on the discussion in \cite[\S4]{darmoniovita} because of the issue 
noted in Remark~\ref{remark_something_fishy}.
However, we will still rely on the notation therein, and let $\omega_m, \omega_m^\pm$ and  $\widetilde{\omega}_m^\pm\in \LL$ be as in \S2 of op. cit. 
Let $\kappa(\ell)_m\in H^1_{\ell, \square}(K_m,T_{f,n})$ be the Heegner class introduced in \S\ref{subsec_8_1_2022_09_27_0906}, where $\ell$ is an $n$-admissible prime. Since $a_p(f)=0$, 
$$    {\cor}_{K_{m+1}/K_{m}}\, \kappa(\ell)_{m+1}= -\res_{K_{m}/K_{m-1}}\kappa(\ell)_{m-1}$$
(cf. \eqref{eqn_Heegner_trace_relation}) and so Theorem~\ref{thm_main_sharpflat_Heegner_ES} may be explicitly restated  
(cf. \cite[Proposition~4.3]{darmoniovita}):

\begin{proposition}
    \label{prop_thm_main_sharpflat_Heegner_ES_bis}
Fix a positive integer $n$ and an $n$-admissible prime $\ell$ relative to $f$. Let $S$ be any $n$-admissible set that contains $\ell$.  For any positive integer $m$, there exists a unique pair of cohomology classes 
$$\kappa(\ell)_m^+\in H^1_{S,\square}(K_m,T_{f,n}) /\omega_m^+H^1_{S,\square}(K_m,T_{f,n})\,,$$
$$\kappa(\ell)_{m-1}^- \in H^1_{S,\square}(K_{m-1},T_{f,n}) /\omega_m^-H^1_{S,\square}(K_{m-1},T_{f,n})\,$$
that are independent of the choice of $S$, and that have the following properties. 
\item[i)] For any even positive integer $m$, 
we have 
$$\begin{pmatrix}
\widetilde{\omega}_m^-&0\\ 0& \widetilde{\omega}_{m-1}^+ 
\end{pmatrix}
\begin{pmatrix}
\kappa(\ell)_m^+ \\ \kappa(\ell)_{m-1}^-
\end{pmatrix}= (-1)^{\frac{m}{2}}\begin{pmatrix}
\kappa(\ell)_m\\  \kappa(\ell)_{m-1}
\end{pmatrix}$$
in $H^1_{S,\square}(K_m,T_{f,n}) \,\oplus\, H^1_{S,\square}(K_{m-1},T_{f,n})$.
\item[ii)] For any even positive integer $m$, 
\begin{align*}
    {\cor}_{K_{m+2}/K_m}\,\begin{pmatrix}
\kappa(\ell)_{m+2}^+ \\ \kappa(\ell)_{m+1}^-
\end{pmatrix} -\begin{pmatrix}
\kappa(\ell)_{m}^+ \\ \kappa(\ell)_{m-1}^-
\end{pmatrix}\,\, &\in \,\, {\begin{pmatrix}
\omega_m^+&0\\ 0& \omega_m^- 
\end{pmatrix}\cdot \left(H^1_{S,\square}(K_m,T_{f,n}) \oplus H^1_{S,\square}(K_{m-1},T_{f,n})\right)}\,.
\end{align*}
\end{proposition}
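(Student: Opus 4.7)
Since $a_p(f)=0$, the matrices $B_f$ and $C_{f,i}$ entering the definition of $H_{f,m}$ are anti-diagonal: $B_f=\begin{pmatrix}0&1\\-p&0\end{pmatrix}$ satisfies $B_f^{2}=-p\,I_{2}$, and $C_{f,i}=\begin{pmatrix}0&1\\-\Phi_i&0\end{pmatrix}$ satisfies $C_{f,i}C_{f,i-1}=-\operatorname{diag}(\Phi_{i-1},\Phi_i)$. Multiplying pairwise therefore shows that $C_{f,m}\cdots C_{f,1}$ is diagonal for even $m$ and anti-diagonal for odd $m$, with entries that are products of the cyclotomic polynomials $\Phi_i$ indexed according to the parity of $i\le m$. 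Up to signs and powers of $p$, these products coincide with the elements $\widetilde{\omega}_m^{\pm}$ of \cite[\S2]{darmoniovita}. Combining this with the analogous computation for $B_f^{-(m+1)}$, the plan is to derive a closed form for $H_{f,m}$: for even $m$ it takes the anti-diagonal shape
\[
H_{f,m}\;=\;p^{-m/2}\begin{pmatrix}0 & -\widetilde{\omega}_m^{+}/p \\ \widetilde{\omega}_m^{-} & 0\end{pmatrix},
\]
with an analogous formula in the odd case.

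Feeding this expression into the identity from Theorem~\ref{thm_main_sharpflat_Heegner_ES}(i), the anti-diagonal structure decouples the two rows of
\[
H_{f,m}\begin{pmatrix}\kappa(\ell)_m^{\sharp}\\ \kappa(\ell)_m^{\flat}\end{pmatrix}=\begin{pmatrix}\kappa(\ell)_m\\ -\res_{K_m/K_{m-1}}\kappa(\ell)_{m-1}\end{pmatrix}
\]
into independent equations. The first row produces a class $\kappa(\ell)_m^{+}$ at level $m$ satisfying $\widetilde{\omega}_m^{-}\cdot \kappa(\ell)_m^{+}=(-1)^{m/2}\kappa(\ell)_m$, while the second row produces a class at level $m$ which, by the surjectivity supplied by Proposition~\ref{prop_6_2_2022_07_05_1510} together with the anti-diagonal factorisation, descends along $\res_{K_m/K_{m-1}}$ to a class $\kappa(\ell)_{m-1}^{-}$ at level $m-1$ satisfying $\widetilde{\omega}_{m-1}^{+}\cdot\kappa(\ell)_{m-1}^{-}=(-1)^{m/2}\kappa(\ell)_{m-1}$. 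Uniqueness modulo $\omega_m^{\pm}$ is inherited from the uniqueness of $(\kappa(\ell)_m^{\sharp},\kappa(\ell)_m^{\flat})$ modulo $\ker(H_{f,m})$ provided by Theorem~\ref{thm_main_sharpflat_Heegner_ES}, noting that $\ker(H_{f,m})$ translates, under the anti-diagonal shape above, precisely to the ideals generated by $\omega_m^{\pm}$.

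The norm-compatibility statement in part (ii) follows by applying the same decoupling at levels $m+2$ and $m$ and invoking Theorem~\ref{thm_main_sharpflat_Heegner_ES}(ii), using the relation $B_f^{2}=-p\,I_{2}$ to compare the closed forms for $H_{f,m+2}$ and $H_{f,m}$ and deduce the stated congruence modulo $\operatorname{diag}(\omega_m^{+},\omega_m^{-})$.

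The main obstacle is the bookkeeping: tracking signs, powers of $p$, and the precise translation between the conventions used for $\omega_m^{\pm}$ and $\widetilde{\omega}_m^{\pm}$ in \cite{darmoniovita}, together with verifying that the level descent from $m$ to $m-1$ for the $\flat$-component is well-defined modulo $\omega_m^{-}$. Once the anti-diagonal structure of $H_{f,m}$ is in hand, however, this bookkeeping is essentially mechanical, and no new geometric input beyond Theorem~\ref{thm_main_sharpflat_Heegner_ES} is required.
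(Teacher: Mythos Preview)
Your approach is exactly the one the paper (implicitly) takes: the proposition is presented there as a direct restatement of Theorem~\ref{thm_main_sharpflat_Heegner_ES} once $a_p(f)=0$ forces the matrices to collapse, with a reference to \cite[Proposition~4.3]{darmoniovita}. No new input is needed.

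There is, however, a slip in your bookkeeping. By Definition~\ref{defn_2022_07_04_1735}, $H_{f,m}$ is the map given by multiplication by $C_{f,m}\cdots C_{f,1}$ alone; the factor $B_f^{-(m+1)}$ enters $M_{f,m}$, not $H_{f,m}$. Consequently, for even $m$ the matrix $H_{f,m}=C_{f,m}\cdots C_{f,1}$ is \emph{diagonal}, namely
\[
H_{f,m}=(-1)^{m/2}\begin{pmatrix}\widetilde{\omega}_m^{-}&0\\0&\widetilde{\omega}_m^{+}\end{pmatrix},
\]
not anti-diagonal as you wrote. This actually makes the decoupling cleaner than you indicate: the first row of Theorem~\ref{thm_main_sharpflat_Heegner_ES}(i) reads $\widetilde{\omega}_m^{-}\kappa(\ell)_m^{\sharp}=(-1)^{m/2}\kappa(\ell)_m$ directly, so $\kappa(\ell)_m^{+}=\kappa(\ell)_m^{\sharp}$. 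For the minus class, rather than descending from the second row at level $m$, it is simpler to apply Theorem~\ref{thm_main_sharpflat_Heegner_ES}(i) at the odd level $m-1$, where $H_{f,m-1}$ is anti-diagonal with $(1,2)$-entry $(-1)^{(m-2)/2}\widetilde{\omega}_{m-1}^{+}$; the first row then gives $\widetilde{\omega}_{m-1}^{+}\kappa(\ell)_{m-1}^{\flat}=\pm\kappa(\ell)_{m-1}$, and $\kappa(\ell)_{m-1}^{-}$ is $\kappa(\ell)_{m-1}^{\flat}$ up to a sign. The uniqueness and norm-compatibility claims then follow exactly as you say, since $\ker(H_{f,m})$ is visibly generated by $\omega_m^{\pm}$ in the respective components.
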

Thanks to Proposition~\ref{prop_thm_main_sharpflat_Heegner_ES_bis} ii), we can define the elements
$$\kappa(\ell)^\pm\in \widehat{H}^1_{S,\square}(K_\infty,T_{f,n})$$
by passing to limit.
\begin{lemma}
\label{lemma_local_conditions_Heeg_inert_case}
\item[i)] $\res_p(\kappa(\ell)^\pm)\in \widehat{H}^{1,{\pm}}(K_{\infty,p},T_{f,n})$.
\item[ii)] For any prime $q$ of $K$ that does not divide $p\ell$, we have
$\res_{q}(\kappa(\ell)^\pm)\in \widehat{H}^1_{\rm f}(K_{\infty,q},T_{f,n})/(\omega_m^+)$.
\end{lemma}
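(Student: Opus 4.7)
The strategy parallels \cite[\S4]{darmoniovita} in the split case, adapted to the inert setting via the primitive $Q$-system of Theorem~\ref{thm:Q-inert}. Both parts rely on the divisibility relation
\[
\widetilde{\omega}_m^-\cdot \kappa(\ell)_m^+=(-1)^{m/2}\kappa(\ell)_m,\qquad \widetilde{\omega}_{m-1}^+\cdot \kappa(\ell)_{m-1}^-=(-1)^{m/2}\kappa(\ell)_{m-1}
\]
(for even $m$) established in Proposition~\ref{prop_thm_main_sharpflat_Heegner_ES_bis}(i), together with the basic fact that $\kappa(\ell)_m$ is the Kummer image of a Heegner point on the auxiliary Shimura curve and therefore has Bloch--Kato local behavior at every prime $q\ne \ell$ (including $q=p$).

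For part (ii), since $\res_q\kappa(\ell)_m\in H^1_{\rm f}(K_{m,q},T_{f,n})$ whenever $q\nmid p\ell$, the displayed formula gives $\widetilde{\omega}_m^\mp\cdot \res_q\kappa(\ell)_m^\pm\in H^1_{\rm f}(K_{m,q},T_{f,n})$. Passing to the inverse limit over $m$ and interpreting the resulting element in the quotient by the ideal $(\omega_m^+)$ that governs the well-definedness of $\kappa(\ell)^\pm$ in the inverse system, one obtains the desired containment.

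For part (i), Proposition~\ref{prop:same-kernel} identifies $\widehat{H}^{1,\pm}(K_{\infty,p},T_{f,n})=\ker\col_{\bd}^\pm$ with $\varprojlim_m(\hat{E}^\pm(k_m)/p^n)^\perp$ under local Tate pairing. It therefore suffices to verify that $\langle \res_p\kappa(\ell)_m^\pm,d_m^\pm\rangle_{m,n}\equiv 0\pmod{\omega_m^\pm}$. Multiplying inside the pairing by $\widetilde{\omega}_m^\mp$ and invoking the relation above, this reduces to the vanishing of $\langle\res_p\kappa(\ell)_m,d_m^\pm\rangle_{m,n}$ modulo $\omega_m^\pm$. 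Both inputs here are Kummer images of points on the formal group $\hat{E}$, and they satisfy matching trace relations: $\cor_{k_{m+1}/k_m}\kappa(\ell)_{m+1}=-\res_{k_{m-1}/k_m}\kappa(\ell)_{m-1}$ from \eqref{eqn_Heegner_trace_relation} (as $a_p(f)=0$), and $\Tr_{k_{m+1}/k_m}d_{m+1}=-d_{m-1}$ from Theorem~\ref{thm:Q-inert}. These matching compatibilities, combined with the defining characterization of $d_m^\pm$ via the $\pm$-decomposition of the inverse system of local points, force the asserted orthogonality by the standard argument in Kobayashi's work \cite{kobayashi03}.

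The principal obstacle I anticipate is the careful bookkeeping of the $\omega_m^\pm$ and $\widetilde{\omega}_m^\pm$ factors in the inert setting, where the Iwasawa cohomology $\widehat{H}^1(K_{\infty,p},T_{f,n})$ carries a refined $\cO_{k_0}$-module structure arising from the height-two Lubin--Tate formal group attached to $E$ at $p$, which must be tracked through the local duality computations. A related subtlety is the proper interpretation of the containment in (ii) as a statement within the $(\omega_m^+)$-quotient, reflecting that $\kappa(\ell)^\pm$ is itself defined only through successive approximations modulo $\omega_m^\pm$-type ideals.
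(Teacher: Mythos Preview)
Your outline for part (ii) is essentially what the paper does, but your argument for part (i) has a genuine gap at the division step.

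The issue is the passage from $\widetilde{\omega}_m^-\cdot\res_p\kappa(\ell)_m^+\in H^1_{\f}(k_m,T_{f,n})$ to the conclusion $\res_p\kappa(\ell)_m^+\in H^{1,+}(k_m,T_{f,n})/(\omega_m^+)$. You phrase this as ``multiplying inside the pairing by $\widetilde{\omega}_m^\mp$'' and then appeal to ``matching trace relations\ldots by the standard argument in Kobayashi's work''. But this division is a module-theoretic statement, and the relevant module $(\hat{E}^+(k_m)/p^n)^\perp$ (which is what Proposition~\ref{prop:same-kernel} hands you at finite level) is \emph{not} free over $\Lambda_{m,n}'$; cf.\ Remark~\ref{remark_something_fishy}. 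Trace relations between $\kappa(\ell)_m$ and $d_m$ do not by themselves control the $\widetilde{\omega}_m^-$-torsion of the quotient $H^1/H^{1,+}$. The paper's argument is precisely designed to handle this: it uses the containment $H^1_{\f}\subset H^{1,+}$ together with the freeness of $H^{1,+}(k_m,T_{f,n})$ over $\Lambda_{m,n}'$ (Lemma~\ref{lem:free-local-conds-T}) and an explicit snake-lemma diagram chase to show that multiplication by $\widetilde{\omega}_m^-$ is injective on $H^1/H^{1,+}$ modulo $\omega_m^+$. This freeness is exactly what makes the division valid, and it is not a consequence of the trace relations alone.

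There is a second inaccuracy. You assert that $\res_p\kappa(\ell)_m$ is ``a Kummer image of a point on the formal group $\hat{E}$''. It is not: by construction (\S\ref{subsec_8_1_2022_09_27_0906}) this class is the Kummer image of a Heegner point on the Shimura curve $X_{N^+,N^-\ell}$ attached to an eigenform $g$ that is congruent to $f$ modulo $\varpi^n$. That this class lands in $H^1_{\f}(k_m,T_{f,n})$ (as opposed to merely $H^1_{\f}(k_m,T_{g,n})$) is precisely the content of Proposition~\ref{prop_nekovar_Mathscinet}, and the paper cites it explicitly at this step. Once you have $\res_p\kappa(\ell)_m\in H^1_{\f}$, the relevant vanishing of the pairing with $d_m$ is immediate from the isotropy of $H^1_{\f}$ under local Tate duality, so the ``matching trace relations'' are not what is doing the work here either.
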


\begin{proof}
\item[i)] We prove the assertion for $\kappa(\ell)^+$ by an argument which also applies to $\kappa(\ell)^-$. 

Note that 
$$\kappa(\ell)^+=\{\kappa(\ell)^+_m\}\in \varprojlim_{m:\, {\rm even}} {H}^1_{S,\square}(K_m,T_{f,n})/(\omega_m^+)=\widehat{H}^1_{S,\square}(K_\infty,T_{f,n})\,.$$
The 
assertion therefore amounts to 
\begin{equation}
    \label{eqn_2022_09_27_1541}
    \res_p\left(\kappa(\ell)^+_m\right)\in {H}^{1,{\pm}}(K_{m,p},T_{f,n})/(\omega_m^+)
\end{equation}
for any positive even integer $m$. To see this, observe that 
$$\widetilde \omega_n^-\res_p\left(\kappa(\ell)^+_m\right)=\res_p\left(\widetilde \omega_m^-\kappa(\ell)^+_m\right)=(-1)^{\frac{m}{2}}\res_p(\kappa(\ell)_m)\in H^1_{\f}(K_{m,p},T_{f,n})$$
by Proposition~\ref{prop_nekovar_Mathscinet} and the construction of Heegner classes as the Kummer images (cf. \cite[\S7]{BertoliniDarmon2005}). Moreover, since $H^1_{\f}(K_{m,p},T_{f,n})\subset H^{1,+}(K_{m,p},T_{f,n})$ by definition, we deduce that 
\begin{equation}
    \label{eqn_2022_09_27_1521}
    \widetilde \omega_m^-\res_p\left(\kappa(\ell)^+_m\right)\in H^{1,+}(K_{m,p},T_{f,n})\,.
\end{equation}

Consider the following exact sequence of $\LL_{m,n}$-modules:  
\begin{equation}
    \label{eqn_2022_09_27_1522}
     0\lra H^{1,+}(K_{m,p},T_{f,n})\lra H^{1}(K_{m,p},T_{f,n}) \lra  H^1_{/+}(K_{m,p},T_{f,n}) \lra 0\,,
\end{equation}
where the right-most module is just defined by the exactness.
Applying the functor $(-)\otimes \LL/(\omega_m^+)$ 
to \eqref{eqn_2022_09_27_1522}, we obtain the exact sequence
\begin{equation}
    \label{eqn_2022_09_27_1527}
   H^{1,+}(K_{m,p},T_{f,n})/(\omega_m^+)\lra H^{1}(K_{m,p},T_{f,n})/(\omega_m^+) \lra  H^1_{/+}(K_{m,p},T_{f,n})/(\omega_m^+) \lra 0\,.
\end{equation}
Furthermore, we have the following commutative diagram with exact rows:
\begin{equation}
    \label{eqn_2022_09_27_1529}
    \begin{aligned}
    \xymatrix{
        & H^{1,+}(K_{m,p},T_{f,n})/(\omega_m^+)\ar[r]^{f_1}\ar@{^{(}->}[d]_{\times\, \widetilde \omega_m^-}^{v_1}& H^{1}(K_{m,p},T_{f,n})/(\omega_m^+) \ar[r]^{f_2}\ar@{^{(}->}[d]_{\times\, \widetilde \omega_m^-}^{v_2}&  H^1_{/+}(K_{m,p},T_{f,n})/(\omega_m^+) \ar[r] \ar@{-->}[d]^{v}& 0\\
        0\ar[r]&H^{1,+}(K_{m,p},T_{f,n})\ar[r]_{g_1}& H^{1}(K_{m,p},T_{f,n}) \ar[r]_{g_2} &  H^1_{/+}(K_{m,p},T_{f,n}) \ar[r]& 0\,.
        }
    \end{aligned}
\end{equation}
Note that the vertical maps in the middle and on the left are given by multiplication by $\widetilde \omega_n^-$ and they are injective since  the $\LL_{m,n}'$-modules $H^{1,+}(K_{m,p},T_{f,n})$ and $H^{1}(K_{m,p},T_{f,n})$ are both free 
by Lemma~\ref{lem:free-local-conds-T}. The dotted vertical arrow $v$ is induced from the exactness of the first row and the commutativity of the square on the left. 

We would like to prove \eqref{eqn_2022_09_27_1541}, which is equivalent to the assertion that 
$$\res_p(\kappa(\ell)^+_m)\in {\rm im}(f_1)=\ker(f_2),$$ relying on the containment \eqref{eqn_2022_09_27_1521}. Chasing the diagram \eqref{eqn_2022_09_27_1529}, this is equivalent to checking that the vertical map $v$ in this diagram is injective, which in turn is equivalent to, thanks to the snake lemma, that the induced map
$$H^{1,+}(K_{m,p},T_{f,n})/(\widetilde \omega_n^-)={\rm coker}(v_1)\lra {\rm coker}(v_2)=H^{1}(K_{m,p},T_{f,n})/(\widetilde \omega_n^-)$$
is injective. This follows from the following commutative diagram, where the vertical maps are injective since the $\LL_{m,n}'$-modules $H^{1,+}(K_{m,p},T_{f,n})$ and $H^{1}(K_{m,p},T_{f,n})$ are both free:
\begin{equation}
    \label{eqn_2022_09_27_1558}
    \begin{aligned}
    \xymatrix{
     H^{1,+}(K_{m,p},T_{f,n})/(\widetilde \omega_m^-) \ar[r]\ar@{^{(}->}[d]_{\times\, \omega_m^+}& H^{1}(K_{m,p},T_{f,n})/(\widetilde \omega_m^-) \ar@{^{(}->}[d]_{\times\, \omega_m^+}\\
        H^{1,+}(K_{m,p},T_{f,n})\ar@{^{(}->}[r]&  H^{1}(K_{m,p},T_{f,n})\,.
        }
    \end{aligned}
\end{equation}
\item[ii)] There is nothing to prove unless $q\in S$. In that case, this 
just follows by the argument 
for i), relying on the freeness 
of the $q$-local cohomology as in Corollary~\ref{cor_singular_projection_up_to_t}.
\end{proof}

\begin{remark}
Let $h\in S_2(\mathcal{T}/\bGamma',\cO_L)$ be an eigenform as in Remark~\ref{remark_rec_laws_extends_shim_curves} so that $T_{h,n}\simeq T_{f,n}$. The discussion in \S\ref{subsec_Heegner_local_properties_at_p_inert} works equally well if $f$ is replaced with $h$. Note that the definition of signed Selmer local conditions relies on the input from $f$ only via the isomorphism $T_{h,n}\simeq T_{f,n}$.
\end{remark}

\subsection{The split case}
\label{sec_Heegner_local_properties_at_p_split_case}
Our strategy in the split case follows closely the one employed in \cite[Corollary~3.15]{BFSuper}. Concretely, it consists of the following steps:
\begin{enumerate}
    \item Show that the sharp/flat Heegner classes attached to a Hecke eigenform $g$  satisfying the generalized Heegner hypothesis are related to $p$-stabilized Heegner classes via the equation
       \[
\begin{pmatrix}
    \bz_{g,\alpha}\\ \bz_{g,\beta}
\end{pmatrix}=Q_g^{-1}M_{\log,g} \begin{pmatrix}
    \bz_{g,\sharp}\\ \bz_{g,\flat}
\end{pmatrix};
\]
    \item Study the images of the $p$-stabilized Heegner classes under the projections of the Perrin-Riou map to the $\varphi$-eigenspaces;
    \item Combine these with the decomposition given in \eqref{eq:factor-L-Col} to calculate the image of the sharp/flat classes under the Coleman maps.
\end{enumerate}
We note in particular that we have to work with a modular form with coefficients in a ring of characteristic zero in order for the $p$-stabilized classes and the projections of the Perrin-Riou map to exist. This is where the $p$-isolated hypothesis is utilized in the case where $a_p(f)\neq 0$.

\subsubsection{$p$-stabilized generalized Heegner classes}

Fix a weight two Hecke eigenform  $g$ of level coprime to $p$ on a Shimura curve $X_{M^+,M^-}$, where $M^-$ is the square-free product of an even number of primes. Let $\alpha$ and $\beta$ be the roots of the Hecke polynomial of $g$ at $p$. 

For $m\ge 1$, write 
\[
z_{g,m}\in H^1(K_m,T_g)
\]
for the Heegner class defined as in \cite[\S6]{BertoliniDarmon2005}. They satisfy the norm relation
\begin{equation}\label{eq:norm-Heegner-g}
 \cor_{K_{m+1}/K_m}(z_{g,m+1})-a_p(g)z_{g,m}+\res_{K_m/K_{m-1}}(z_{g,m-1})=0.   
\end{equation}
For $\lambda\in\{\alpha,\beta\}$, write
\[
z_{g,m,\lambda}=\frac{1}{\lambda^{m+1}}\left(z_{g,m}-\frac{1}{\lambda}\res_{K_m/K_{m-1}}(z_{g,m-1})\right)\in H^1(K_m,V_g)
\]
for the $p$-stabilized class.
These classes are compatible with respect to the corestriction map as $m$ varies and so one obtains
\[
\mathbf{z}_{g,\lambda}\in\HIw(K_\infty,T_g)\otimes\cH(\Gamma).
\]

\subsubsection{Local properties of $p$-stabilized Heegner classes}
Now fix a prime $\fp$ of $K$ above $p$. We employ the same notation as in \S\ref{sec:Q-split}. Write $\zz_{g,\lambda,\fp}$ for the image of $\bz_{g,\lambda}$ at $\fp$.

\begin{proposition}\label{prop:Omega-BDP}
    There exists an element $A\in \Lambda\otimes_{\Zp}\Qp$ such that 
    \[
    \Omega_{V_g,1}^\epsilon \left(A\otimes v_{g,\lambda}^*\right)=\zz_{g,\lambda,\fp}.
    \]
\end{proposition}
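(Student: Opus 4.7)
The plan is to construct $A$ explicitly by inverting the Perrin-Riou map through the reciprocity identity of Corollary~\ref{cor:L-Omega}, and then to verify that the resulting candidate satisfies the required boundedness property.

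Applying $\cL_{T_g,\fp}$ to both sides of the desired equality $\Omega^\epsilon_{V_g,1}(A\otimes v_{g,\lambda}^*)=\zz_{g,\lambda,\fp}$ and invoking Corollary~\ref{cor:L-Omega}, the identity becomes
\[
u_e^{-1}e^\iota\ell_0\cdot (A\otimes v_{g,\lambda}^*)\;=\;\cL_{T_g,\fp}(\zz_{g,\lambda,\fp}).
\]
Pairing against $v_{g,\lambda}\in\Dcris(V_g)$ (dual to $v_{g,\lambda}^*$) then singles out a unique candidate
\[
A\;:=\;(u_e^{-1}e^\iota\ell_0)^{-1}\bigl\langle\cL_{T_g,\fp}(\zz_{g,\lambda,\fp}),\,v_{g,\lambda}\bigr\rangle\;\in\;\cH(\Gamma)\otimes_{\Zp}\Qp.
\]
For this $A$ to actually furnish the desired identity, the $v_{g,\mu}^*$-coefficient of $\cL_{T_g,\fp}(\zz_{g,\lambda,\fp})$ must vanish for $\mu\in\{\alpha,\beta\}\setminus\{\lambda\}$. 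I would deduce this vanishing from the decomposition \eqref{eq:factor-L-Col} of $\cL_{T_g,\fp}$ into its $\sharp/\flat$ components, combined with the Heegner norm-relation \eqref{eq:norm-Heegner-g}: the $p$-stabilization procedure that produces $\zz_{g,\lambda,\fp}$ from the compatible system $\{z_{g,m}\}$ is precisely designed so that the resulting class is orthogonal to the $\mu$-eigenline of Frobenius under the Coleman pairing, mirroring the classical cyclotomic theory of $p$-stabilizations.

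It then remains to show that this $A$ in fact lies in $\Lambda\otimes_{\Zp}\Qp$ and not merely in $\cH(\Gamma)\otimes_{\Zp}\Qp$. The key input is the slope bound $\ord_p(\lambda)<1$: the pairing $\langle\cL_{T_g,\fp}(\zz_{g,\lambda,\fp}),v_{g,\lambda}\rangle$ will have Iwasawa-theoretic growth of order $o(\log)$, by a direct adaptation of the argument preceding Lemma~\ref{lem:non-zero-paL} that was used to construct $\cL_f^\lambda$. Dividing by $\ell_0=\log\gamma/\log\kappa(\gamma)$ (a power series with a simple zero at the trivial character) preserves boundedness provided the numerator also vanishes at the trivial character; this latter vanishing reflects the Bloch--Kato exponential structure of $\zz_{g,\lambda,\fp}$ inherited from the Kummer-image construction of Heegner points.

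The main obstacle will be this final boundedness step, particularly pinning down the precise vanishing order of $\langle\cL_{T_g,\fp}(\zz_{g,\lambda,\fp}),v_{g,\lambda}\rangle$ at the trivial character, which is required to safely absorb the $\ell_0^{-1}$ factor without introducing a pole. The rest of the proof proceeds by a formal manipulation within the Loeffler--Zerbes formalism reviewed in \S\ref{sec_5_2022_09_23_1316}.
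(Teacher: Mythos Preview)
Your approach tries to construct $A$ by formally inverting the Perrin--Riou map, but this misses the arithmetic content of the proposition and, as written, is circular.

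The vanishing you need --- that $\cL_{T_g,\fp}(\zz_{g,\lambda,\fp})$ lies purely in the $v_{g,\lambda}^*$-line, i.e.\ $\langle\cL_{T_g,\fp}(\zz_{g,\lambda,\fp}),v_{g,\lambda}^*\rangle=0$ --- is precisely the first assertion of Proposition~\ref{prop:L-Heegner}, whose proof in the paper \emph{uses} Proposition~\ref{prop:Omega-BDP}. Your justification (``the $p$-stabilization procedure is precisely designed so that\ldots'') is not an independent argument: the $p$-stabilization is a purely group-theoretic manipulation on the cohomology side and says nothing a priori about which $\varphi$-eigenline the image under $\cL_{T_g,\fp}$ lands in. The decomposition \eqref{eq:factor-L-Col} and the norm relation \eqref{eq:norm-Heegner-g} alone do not produce this vanishing; in the paper the logical flow is Proposition~\ref{prop:Omega-BDP} $\Rightarrow$ Proposition~\ref{prop:L-Heegner} $\Rightarrow$ Lemma~\ref{lem:kill-Heegner}, and you are attempting to reverse it.

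More seriously, even granting the vanishing, the boundedness of your candidate $A$ --- that it lies in $\Lambda\otimes\Qp$ rather than merely in $\cH(\Gamma)$ --- is the genuine substance of the proposition, and you do not resolve it. The factor $\ell_0$ in your denominator vanishes at \emph{every} finite-order character of $\Gamma$, not just the trivial one, so your sketch addressing only the trivial character is insufficient. Establishing that the numerator vanishes to matching order at all of these characters is tantamount to proving the proposition.

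The paper's route is entirely different and not formal: it identifies $A$ explicitly as a multiple of the Bertolini--Darmon--Prasanna $p$-adic $L$-function $\sL_g$ constructed by Hunter Brooks, and appeals to Kobayashi's explicit reciprocity law (\cite[Lemma~9.3]{kobayashiGHC}) asserting that $\Omega^\epsilon_{V_g,1}$ applied to $\sL_g\otimes v_{g,\lambda}^*$ recovers the Heegner class. The boundedness of $A$ is then immediate from the construction of $\sL_g$. This reciprocity law is the essential arithmetic input and cannot be replaced by the formal manipulations in \S\ref{sec_5_2022_09_23_1316}.
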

\begin{proof}
  Let $\sL_g$ be the Bertolini--Darmon--Prasanna type $p$-adic $L$-function associated to $g$ due to Hunter Brooks \cite{hunterbrooks} (see also \cite{burungale2017}).   Then, by taking $A$ to be an appropriate multiple of $\sL_g$, 
  we follow the same proof as in \cite[Lemma~9.3]{kobayashiGHC}. 
\end{proof}

\begin{proposition}\label{prop:L-Heegner}
For $\lambda\in\{\alpha,\beta\}$, 
we have
\[
\langle\cL_{T_g,\fp}(\zz_{g,\lambda,\fp}),v_{g,\lambda}^*\rangle=0.
\]
If $\lambda'$ is the unique element of $\{\alpha,\beta\}\setminus\{\lambda\}$,
then
\[
\langle\cL_{T_g,\fp}(\zz_{g,\lambda,\fp}),v_{g,\lambda'}^*\rangle=-
\langle\cL_{T_g,\fp}(\zz_{g,\lambda',\fp}),v_{g,\lambda}^*\rangle.
\]
\end{proposition}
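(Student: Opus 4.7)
The plan is to combine Proposition~\ref{prop:Omega-BDP} with the explicit reciprocity law codified in Corollary~\ref{cor:L-Omega} to reduce both assertions to intrinsic properties of the pairing on $\Dcris(V_g)$. For each $\lambda\in\{\alpha,\beta\}$, I would first invoke Proposition~\ref{prop:Omega-BDP} to write $\zz_{g,\lambda,\fp} = \Omega_{V_g,1}^\varepsilon(A\otimes v_{g,\lambda}^*)$ for some $A\in\Lambda\otimes_{\Zp}\Qp$, intended to be independent of $\lambda$ (as will be discussed below). Applying $\cL_{T_g,\fp}$ and interpreting Corollary~\ref{cor:L-Omega} $\Dcris(V_g)$-linearly in the second factor then yields the explicit formula
\[
\cL_{T_g,\fp}(\zz_{g,\lambda,\fp}) \;=\; u_e^{-1}\,e^{\iota}\,\ell_0\, A\;\otimes\; v_{g,\lambda}^*.
\]

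For the first identity, pairing with $v_{g,\lambda}^*$ reduces the claim to the vanishing of $\langle v_{g,\lambda}^*, v_{g,\lambda}^*\rangle$ in $\Dcris(V_g)$. The explicit formula $v_{g,\lambda}^* = \vp(v_0) - \lambda^{-1}v_0$ combined with the relation $\vp^2 - \tfrac{a_p(g)}{p}\vp + \tfrac{1}{p} = 0$ on $\Dcris(V_g)$ gives $\vp(v_{g,\lambda}^*) = (\lambda')^{-1}\,v_{g,\lambda}^*$, where $\{\lambda,\lambda'\} = \{\alpha,\beta\}$. Since the pairing on $\Dcris(V_g)$ takes values in $\Dcris(L(1))\cong L$ on which $\vp$ acts by $p^{-1}$, the $\vp$-equivariance of the pairing forces $\langle v_{g,\lambda}^*, v_{g,\lambda}^*\rangle$ to vanish unless $(\lambda')^{-2} = p^{-1}$, i.e., $(\lambda')^2 = p$. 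Combined with $\alpha\beta = p$, this would force $\alpha = \beta$, contradicting their distinctness (which is valid under our running hypotheses). This settles the first identity.

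For the second identity, the same computation yields
\[
\langle \cL_{T_g,\fp}(\zz_{g,\lambda,\fp}), v_{g,\lambda'}^*\rangle \;=\; u_e^{-1}\,e^{\iota}\,\ell_0\, A \cdot \langle v_{g,\lambda}^*, v_{g,\lambda'}^*\rangle,
\]
and symmetrically with $\lambda$ and $\lambda'$ interchanged. The claim therefore reduces to the skew-symmetry $\langle v_{g,\lambda'}^*, v_{g,\lambda}^*\rangle = -\langle v_{g,\lambda}^*, v_{g,\lambda'}^*\rangle$, which is inherited from the alternating Weil pairing on the rational $p$-adic Tate module of the $\mathrm{GL}_2$-type abelian variety $\cA_g$.

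The main technical obstacle I anticipate is precisely this last point: ensuring that the element $A$ afforded by Proposition~\ref{prop:Omega-BDP} can be chosen the same for both $\lambda\in\{\alpha,\beta\}$ (otherwise the two sides in the second identity would pick up an unwanted discrepancy $A_\lambda/A_{\lambda'}$). This should follow by tracking Hunter Brooks's and Burungale's construction, where $A$ is essentially a $\Lambda$-multiple of a single BDP-type $p$-adic $L$-function $\sL_g$ that is independent of the choice of $p$-stabilization. A cleaner alternative, which sidesteps this point entirely, is to argue directly from the adjoint description
\[
\langle\cL_{T_g,\fp}(\bz),\eta\rangle \;=\; u_e^{-1}\,[\bz,\,\Omega_{V_g,1}^\varepsilon(e\otimes \eta)]
\]
afforded by Lemma~\ref{lem:Col-pair} and Proposition~\ref{prop:Col-L}, and then exploit the skew-symmetry of the Iwasawa-theoretic cup product pairing $[-,-]$ together with Proposition~\ref{prop:Omega-BDP} applied to both $\lambda$ and $\lambda'$ to obtain the required sign directly.
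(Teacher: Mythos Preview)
Your argument for the first identity coincides with the paper's: both invoke Proposition~\ref{prop:Omega-BDP} and Corollary~\ref{cor:L-Omega} to obtain $\cL_{T_g,\fp}(\zz_{g,\lambda,\fp})=u_e^{-1}e^\iota\ell_0\, A\otimes v_{g,\lambda}^*$ and then use $\langle v_{g,\lambda}^*,v_{g,\lambda}^*\rangle=0$.

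For the second identity the paper takes a genuinely different route. Rather than reuse Proposition~\ref{prop:Omega-BDP} (which, as you correctly flag, would require knowing that the same $A$ serves for both roots), the paper argues by interpolation and density: since the unstabilized classes $z_{g,m}$ are crystalline at $\fp$, the interpolation property of the Perrin--Riou map \cite[Theorem~4.15]{LZ0} forces $\fL_\lambda:=\cL_{T_g,\fp}(\zz_{g,\lambda,\fp})$ to be divisible by the $p$-adic logarithm. One then computes the derivatives of $\fL_\alpha$ and $\fL_\beta$ at nontrivial finite characters $\theta$ of $\Gamma$ (via \cite[Theorem~3.1.3]{LVZ}, following \cite[proof of Proposition~3.14]{BFSuper}) and uses the elementary relation $\alpha^m e_\theta\cdot\zz_{g,\alpha,\fp}=\beta^m e_\theta\cdot\zz_{g,\beta,\fp}$ for $\theta$ of conductor $\fp^m$ to deduce $\fL_\alpha\equiv\fL_\beta\bmod\cH(\Gamma)\otimes\Fil^0\Dcris(V_g)$. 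Combined with the first identity and the normalization~\eqref{eq:eigen}, the sign follows. This argument entirely sidesteps the question of whether $A$ depends on $\lambda$.

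Your direct approach is more economical and would succeed once the $\lambda$-independence of $A$ is verified; this is plausible from the construction (the BDP $p$-adic $L$-function $\sL_g$ does not involve a choice of $p$-stabilization), but the paper does not assert it and chooses the interpolation route instead. One caution about your proposed alternative via the adjoint formula: the Iwasawa pairing satisfies $[x,y]=-[y,x]^\iota$ rather than plain antisymmetry, and in any case unwinding that formula still leaves you comparing $\Omega^\varepsilon_{V_g,1}(e\otimes v_{g,\lambda'}^*)$ against $\zz_{g,\lambda',\fp}=\Omega^\varepsilon_{V_g,1}(A_{\lambda'}\otimes v_{g,\lambda'}^*)$, so the dependence on $A$ does not in fact disappear.
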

\begin{proof}
It follows from Proposition~\ref{prop:Omega-BDP} and Corollary~\ref{cor:L-Omega} that
\begin{align*}
    \langle\cL_{T_g,\fp}(\zz_{g,\lambda,\fp}),v_{g,\lambda}^*\rangle&=
    \langle\cL_{T_g,\fp}( \Omega_{V_g,1}^\varepsilon \left(A\otimes v_{g,\lambda}^*\right)),v_{g,\lambda}^*\rangle\\
    &=\langle u_e^{-1}e^\iota\ell_0 Av_{g,\lambda}^*,v_{g,\lambda}^*\rangle\\
    &=0.
\end{align*}

The localization of the classes $z_{g,m}$ are crystalline. In particular, the interpolation formulae of the Perrin-Riou {map given by \cite[Theorem~4.15]{LZ0}} imply that
$\cL_{T_g,\fp}(\zz_{g,\lambda,\fp})$ is divisible by the $p$-adic logarithm. Similar to \cite[proof of Proposition~3.14]{BFSuper},  we may compute the derivative of $\cL_{T_g,\fp}(\zz_{g,\lambda,\fp})$ at a non-trivial finite character $\theta$ of $\Gamma$ using \cite[Theorem~3.1.3]{LVZ}. Note that if $\theta$ is a character of conductor $\fp^m$, then  $$\alpha^m e_\theta\cdot\zz_{g,\alpha,\fp}=\beta^m e_\theta \cdot\zz_{g,\beta,\fp},$$ where $e_\theta$ is the idempotent corresponding to the character $\theta$. 
For $\fL_\lambda=\cL_{T_g,\fp}(\zz_{g,\lambda,\fp})$, we deduce that
\[
\fL_\alpha'(\theta)\equiv \fL_\beta'(\theta)\mod \Fil^0\Dcris(V_g)\otimes \Qp(\image(\theta)).
\]
Since this holds for infinitely many $\theta$, 
\[
\fL_\alpha\equiv \fL_\beta\mod\cH(\Gamma)\otimes\Fil^0\Dcris(V_g).
\]
Furthermore, in light of the first assertion of the proposition, we have
\[
\fL_\lambda=\langle\cL_{T_f,\fp}(\zz_{g,\lambda,\fp}),v_{g,\lambda'}^*\rangle v_{g,\lambda'}.
\]
The last assertion of the proposition then follows from \eqref{eq:eigen}.
\end{proof}

\subsubsection{Decomposition of Heegner classes}
Let $B_g$ and  $C_{g,m}$ be the matrices attached to $g$ given as in Definition~\ref{defn_2022_07_04_1735}. Let  $Q_g$ be the matrix defined in  \eqref{eq:defn-Q}.
Recall  that
\[
M_{\log,g}=\lim_{m\rightarrow\infty}B_g^{-m-1}C_{g,m}\cdots C_{g,1}.
\]

\begin{proposition}\label{prop:Heegner-factor}
    There exist $\bz_{g,\sharp},\bz_{g,\flat}\in \HIw(K_\infty,T_g)$ such that 
    \[
\begin{pmatrix}
    \bz_{g,\alpha}\\ \bz_{g,\beta}
\end{pmatrix}=Q_g^{-1}M_{\log,g} \begin{pmatrix}
    \bz_{g,\sharp}\\ \bz_{g,\flat}
\end{pmatrix}.
    \]
    Furthermore, if $g$ is a Hecke eigenform on $X_{N^+,N^-\ell}$ such that $\Tfn\simeq \Tgn$ as $G_\QQ$-representations, then for $\bullet\in\{\sharp,\flat\}$, the image of $\bz_{g,\bullet}$ in $\widehat{H}^1_{S,\square}(K_\infty,\Tfn)$ coincides with $\kappa(\ell)^\bullet$ (after identifying $T_{f,n}$ and $T_{g,n}$ via this fixed isomorphism).
\end{proposition}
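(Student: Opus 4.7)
The strategy is to mimic the factorization argument leading to \eqref{eq:L-linear-combo} in the proof of Theorem~\ref{thm_3_9_2023_12_05}, applied now to the sequence $(z_{g,m})_{m\geq 0}$ of Heegner classes in place of the $p$-adic $L$-function sequence $(\cL_{g,m})_{m\geq 0}$. The key observation is that the norm relation \eqref{eq:norm-Heegner-g} is formally identical to the norm relation \eqref{eq:norm-relation2} that drives Theorem~\ref{thm:factorize-mod}.

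First, I would adapt Theorem~\ref{thm:factorize-mod} to the cohomological setting: starting from $(z_{g,m})_{m\geq 0}$ and using \eqref{eq:norm-Heegner-g}, the same induction yields compatible pairs $(\bz_{g,\sharp,m}, \bz_{g,\flat,m})\in H^1(K_m, T_g)^{\oplus 2}$, unique modulo $\ker(H_{g,m})$, satisfying
\[
H_{g,m}\begin{pmatrix}\bz_{g,\sharp,m}\\ \bz_{g,\flat,m}\end{pmatrix} = \begin{pmatrix}z_{g,m}\\ -\res_{K_m/K_{m-1}}(z_{g,m-1})\end{pmatrix}.
\]
A cohomological analogue of the inverse-limit identification \eqref{eq:inverselimit2} would then yield classes $\bz_{g,\sharp}, \bz_{g,\flat}\in \HIw(K_\infty, T_g)$. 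The matrix equation then follows from a short computation: since $H_{g,m}=B_g^{m+1}M_{g,m}$, and since a direct verification using the definition of $z_{g,m,\lambda}$ and the relation $\alpha\beta = p$ gives the identity
\[
\begin{pmatrix}z_{g,m}\\ -\res_{K_m/K_{m-1}}(z_{g,m-1})\end{pmatrix}=B_g^{m+1}Q_g \begin{pmatrix}z_{g,m,\alpha}\\ z_{g,m,\beta}\end{pmatrix},
\]
multiplying the previous display on the left by $Q_g^{-1}B_g^{-m-1}$ and passing to the limit $m\to\infty$ (using $M_{g,m}\to M_{\log,g}$) gives the desired factorization.

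For the second assertion, one reduces the above construction modulo $\varpi^n$: since $\kappa(\ell)_m$ is by its very construction the image of $z_{g,m}$ under $T_g\twoheadrightarrow T_{g,n}\stackrel{\sim}{\to}T_{f,n}$ (cf.\ \S\ref{subsec_8_1_2022_09_27_0906}), the reductions of $\bz_{g,\sharp}, \bz_{g,\flat}$ satisfy the defining property of Theorem~\ref{thm_main_sharpflat_Heegner_ES}(i) for the pair $\kappa(\ell)^\sharp, \kappa(\ell)^\flat$. The uniqueness encoded in Definition~\ref{defn_sharpflat_classes_in_comppleted_cohom} then forces the identification.

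The main obstacle is the inverse-limit step that produces $\bz_{g,\sharp}, \bz_{g,\flat}\in \HIw(K_\infty, T_g)$ (rather than merely elements of $\HIw(K_\infty, T_g)\otimes\cH(\Gamma)$): at each finite level the pair is only canonically defined modulo $\ker(H_{g,m})$, and one must verify that a compatible system of genuine integral lifts exists. At the mod $\varpi^n$ level (Theorem~\ref{thm_main_sharpflat_Heegner_ES}) this was handled by Lemma~\ref{lemma_2022_07_04_1748}, which rested on the freeness established in Proposition~\ref{prop_DI_Prop_3_21}. In the present characteristic-zero setting, the analogous lifting statement should be accessible by working inside an $S$-relaxed Iwasawa cohomology module $\widehat{H}^1_{S,\square}(K_\infty, T_g)$ and invoking the base-change machinery for Selmer complexes (as in the proof of Proposition~\ref{prop_6_2_2022_07_05_1510}) to promote the mod $\varpi^n$ freeness to an integral analogue of \eqref{eq:inverselimit2}.
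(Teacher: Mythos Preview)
Your overall strategy matches the paper's: factor the sequence $(z_{g,m})$ through the matrices $H_{g,m}$, check compatibility, pass to the limit, and relate the result to the $p$-stabilized classes via the identity $B_g^{m+1}Q_g\begin{pmatrix}z_{g,m,\alpha}\\z_{g,m,\beta}\end{pmatrix}=\begin{pmatrix}z_{g,m}\\-\res_{K_m/K_{m-1}}(z_{g,m-1})\end{pmatrix}$. The second assertion is handled in both cases by reduction modulo $\varpi^n$ and uniqueness.

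The genuine difference lies in how the lift to characteristic zero is organized. You propose to work with $T_g$ throughout and to justify the inverse-limit step by establishing a characteristic-zero freeness result for $\widehat{H}^1_{S,\square}(K_\infty,T_g)$ via Selmer-complex base change. The paper avoids this entirely: for each fixed $n\geq 1$ it reduces modulo $\varpi^n$ and reuses verbatim the mechanism of Theorem~\ref{thm_main_sharpflat_Heegner_ES} (which rests on the already-proved freeness of $H^1_{S,\square}(K_m,T_{g,n})$ over $\LL_{m,n}$ from Proposition~\ref{prop_DI_Prop_3_21}) to produce classes $z_{g,m,n,\sharp},z_{g,m,n,\flat}\in H^1(K_m,T_{g,n})$. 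Their uniqueness modulo $(\ker H_{g,m},\varpi^n)$ makes them compatible as $n$ varies, and the inverse limit over $n$ yields the integral classes $z_{g,m,\sharp},z_{g,m,\flat}\in H^1(K_m,T_g)$; only then does one let $m\to\infty$. This two-step limit (first over $n$, then over $m$) recycles existing machinery and sidesteps the new freeness statement you would need; your route should also work but demands an additional result the paper does not prove.
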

\begin{proof}
    By definition, 
\[
B_g^{m+1}Q_g\begin{pmatrix}
    z_{g,m,\alpha}\\z_{g,m,\beta}
\end{pmatrix}=\begin{pmatrix}
    z_{g,m}\\-\res_{K_m/K_{m-1}}(z_{g,m-1})
\end{pmatrix}.
    \]
    Let $n\ge1$ be an integer. As in the proof of Theorem~\ref{thm_main_sharpflat_Heegner_ES}, by picking an auxiliary set of admissible primes $S$, the relation \eqref{eq:norm-Heegner-g} implies that there exist classes $z_{g,m,n,\sharp}, z_{g,m,n,\flat}\in H^1(K_m,\Tgn)$ such that
    \[
    \begin{pmatrix}
    z_{g,m}\\-\res_{K_m/K_{m-1}}(z_{g,m-1})
    \end{pmatrix}\equiv C_{g,m}\cdots C_{g,1}\begin{pmatrix}
        z_{g,m,n,\sharp}\\ z_{g,m,n,\flat}
    \end{pmatrix}\mod \varpi^n.
    \]
    Furthermore, these classes are unique modulo $(\ker H_{g,m},\varpi^n)$ and so they are compatible as $n$ varies in the sense that if $n'\ge n\ge1$ are integers,
    \[
    \pr_{n'/n}\begin{pmatrix}
        z_{g,m,n',\sharp}\\ z_{g,m,n',\flat}
    \end{pmatrix}\equiv \begin{pmatrix}
        z_{g,m,n,\sharp}\\ z_{g,m,n,\flat}
    \end{pmatrix}\mod(\ker H_{g,m},\varpi^n),
    \]
    where $\pr_{n'/n}$ is the natural reduction map $H^1(K_m,T_{g,n'})\rightarrow H^1(K_m,T_{g,n})$. Hence, this gives rise to elements $z_{g,m,\sharp}, z_{g,m,\flat}\in H^1(K_m,T_g)$, which are unique up to modulo $\ker H_{g,m}$ such that 
   \[
B_g^{m+1}Q_g\begin{pmatrix}
    z_{g,m,\alpha}\\z_{g,m,\beta}
\end{pmatrix}=\begin{pmatrix}
    z_{g,m}\\-\res_{K_m/K_{m-1}}(z_{g,m-1})
\end{pmatrix}=C_{g,m}\cdots C_{g,1}\begin{pmatrix}
    z_{g,m,\sharp}\\z_{g,m,\flat}
\end{pmatrix}.
    \]
    Thus, the proposition follows by letting $m\rightarrow\infty$.    
\end{proof}

\subsubsection{Vanishing of signed classes under Coleman maps}\label{Sec:Heegner-conditions}

\begin{lemma}\label{lem:kill-Heegner}
    For $\bullet\in \{\sharp,\flat\}$, let $\zz_{g,\bullet,\fp}$ denote the localization of $z_{g,\bullet}$ at $\fp$. Then  $\col_{T_g,\fp}^\bullet(\zz_{g,\bullet,\fp})=0$.
\end{lemma}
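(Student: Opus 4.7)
The plan is to combine three inputs: the factorization \eqref{eq:factor-L-Col} of the Perrin--Riou map $\cL_{T_g,\fp}$ through the signed Coleman maps, the analogous factorization of Heegner classes in Proposition~\ref{prop:Heegner-factor}, and the orthogonality relations for the $p$-stabilized Heegner classes established in Proposition~\ref{prop:L-Heegner}.

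Concretely, I set $M:=Q_g^{-1}M_{\log,g}$ and introduce two $2\times 2$ matrices: the matrix $X$ with entries $X_{\lambda\mu}:=\langle\cL_{T_g,\fp}(\zz_{g,\lambda,\fp}),v_{g,\mu}^*\rangle$ for $\lambda,\mu\in\{\alpha,\beta\}$, and the matrix $A$ with entries $A_{\epsilon\delta}:=\col_{T_g,\fp}^{\delta}(\zz_{g,\epsilon,\fp})$ for $\epsilon,\delta\in\{\sharp,\flat\}$. Substituting the decomposition of the Heegner classes from Proposition~\ref{prop:Heegner-factor} into the definition of $X$ and then invoking \eqref{eq:factor-L-Col}, the $\Lambda$-linearity of all the maps in question yields the matrix identity $X=MAM^T$. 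This is the crux of the argument.

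By Proposition~\ref{prop:L-Heegner}, the matrix $X$ is antisymmetric: its diagonal entries vanish and its off-diagonal entries are negatives of one another. Assuming $M$ is invertible over $\Frac(\cH(\Gamma))$, the identity $X=MAM^T$ propagates this antisymmetry to $A$, since $M A^T M^T = X^T = -X = -MAM^T$. In particular this forces the diagonal entries $A_{\sharp\sharp}=\col_{T_g,\fp}^\sharp(\zz_{g,\sharp,\fp})$ and $A_{\flat\flat}=\col_{T_g,\fp}^\flat(\zz_{g,\flat,\fp})$ to vanish in the characteristic-zero ring $\Lambda$, which is exactly the claim of the lemma.

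The one step that will require some care is the invertibility of $M$. Since $Q_g$ is invertible over $L$, this reduces to the non-vanishing of $\det M_{\log,g}\in\cH(\Gamma)$. Taking determinants in the defining formula $M_{g,m}=B_g^{-m-1}C_{g,m}\cdots C_{g,1}$ and using $\det B_g=p$ and $\det C_{g,m}=\Phi_m$, one obtains $\det M_{g,m}=p^{-m-1}\prod_{k=1}^{m}\Phi_k$; passing to the limit as $m\to\infty$ should yield a non-zero element of $\cH(\Gamma)$ expressible via the Perrin--Riou logarithm $\ell_0$ (cf.~Corollary~\ref{cor:L-Omega}), which suffices for the argument above.
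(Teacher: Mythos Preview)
Your proof is correct and follows essentially the same approach as the paper, which simply cites Propositions~\ref{prop:L-Heegner} and~\ref{prop:Heegner-factor} together with \eqref{eq:factor-L-Col} and refers to \cite[Corollary~3.15]{BFSuper} for the analogous computation. You have spelled out the matrix identity $X=MAM^T$ and the antisymmetry argument that the paper leaves implicit; your treatment of the invertibility of $M_{\log,g}$ via $\det M_{g,m}=p^{-m-1}\Phi_m\cdots\Phi_1$ is also standard and suffices.
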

\begin{proof}
    This just follows from Propositions~\ref{prop:L-Heegner} and \ref{prop:Heegner-factor} in combination with \eqref{eq:factor-L-Col}. See \cite[Corollary~3.15]{BFSuper} for a similar calculation.
\end{proof}


\begin{theorem}
\label{thm_2022_09_27_1634}
    For $\bullet\in\{\sharp,\flat\}$, we have
    \[
    \col_\bd^\bullet(\zz_{g,\bullet,\fp})=0.
    \]
\end{theorem}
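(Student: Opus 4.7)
The proof plan is essentially to combine two results already established in the paper. The theorem asks us to show that the signed Heegner classes $\zz_{g,\bullet,\fp}$ lie in the kernel of the Coleman map $\col_\bd^\bullet$ built out of the primitive $Q$-system $\bd=(d_m)$ for $T_g$ given by Theorem~\ref{thm:Q-split}. The key observation is that although $\col_\bd^\bullet$ was defined using the explicit primitive $Q$-system (the localized Heegner--Perrin-Riou construction of \S\ref{sec:Q-split}), Corollary~\ref{cor:same-ker} identifies it, up to multiplication by a unit $u_e\in\sW\lb\Gamma\rb^\times$, with the Coleman map $\col_{T_g,\fp}^\bullet$ that is obtained as a direct factor of the two-variable Perrin-Riou map via the decomposition~\eqref{eq:factor-L-Col}. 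In particular, the two maps have the same kernel.

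On the other hand, Lemma~\ref{lem:kill-Heegner} already establishes the vanishing $\col_{T_g,\fp}^\bullet(\zz_{g,\bullet,\fp})=0$. That lemma itself is the heart of the matter: it is proved by combining Proposition~\ref{prop:Heegner-factor} (which expresses the $p$-stabilized classes $\zz_{g,\lambda,\fp}$ as $Q_g^{-1}M_{\log,g}$ applied to the column vector $(\bz_{g,\sharp},\bz_{g,\flat})^{\rm T}$) with the ``orthogonality'' relation
\[
\langle\cL_{T_g,\fp}(\zz_{g,\lambda,\fp}),v_{g,\lambda}^*\rangle=0
\]
of Proposition~\ref{prop:L-Heegner}, after transporting these identities through the factorization~\eqref{eq:factor-L-Col}.

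Thus the proof of Theorem~\ref{thm_2022_09_27_1634} reduces to a single invocation of Corollary~\ref{cor:same-ker}, followed by an appeal to Lemma~\ref{lem:kill-Heegner}: for $\bullet\in\{\sharp,\flat\}$ we have
\[
\col_\bd^\bullet(\zz_{g,\bullet,\fp})\;=\;u_e\cdot \col_{T_g,\fp}^\bullet(\zz_{g,\bullet,\fp})\;=\;0.
\]
No additional obstacle arises; the work has essentially all been carried out in \S\ref{sec_5_2022_09_23_1316} (which was set up precisely to make this identification of the two constructions of Coleman maps available) together with the computations in \S\ref{sec_Heegner_local_properties_at_p_split_case}. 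The only small point worth emphasizing in the write-up is that $\bd$ here must be the primitive $Q$-system attached to the eigenform $g$ (not $f$), since $\zz_{g,\bullet,\fp}$ is a class for $T_g$; this is legitimate because the construction of $Q$-systems in Theorem~\ref{thm:Q-split} applies to any $p$-non-ordinary weight two Hecke eigenform with $a_p\equiv 0\pmod\varpi$ under the running hypotheses.
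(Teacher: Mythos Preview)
Your proposal is correct and matches the paper's own proof exactly: the paper simply says the result follows immediately from Lemma~\ref{lem:kill-Heegner} and Corollary~\ref{cor:same-ker}. Your additional remark that $\bd$ must be the primitive $Q$-system attached to $g$ is a reasonable clarification, though the paper leaves this implicit.
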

\begin{proof}
    This follows immediately from 
    Lemma~\ref{lem:kill-Heegner} and Corollary~\ref{cor:same-ker}. 
\end{proof}

We will apply Theorem~\ref{thm_2022_09_27_1634} in the following scenario. Let $h$ be a weight two cuspidal Hecke eigenform of level coprime to $p$ on a Shimura curve $X_{M_0^+,M_0^-}$ where $M_0^-$ is a square-free product of an even number of primes. Let $\ell$ be an $n$-admissible prime for $h$ and put $M^+:=M_0^+$ and $M^-:=M_0^-\ell$. Let $g$ denote a weight two cuspidal Hecke eigenform of level coprime to $p$ on the Shimura curve $X_{M^+,M^-}$ such that $T_{g,n}\simeq T_{h,n}$. Let 
\[\kappa(\ell)_m \in H^1_{\{\ell\}}(K_m,T_{h,n})
\] 
denote the class that is image of the Heegner class $z_{g,m,n}:=z_{g,m} \mod \varpi^n$ under the isomorphism induced from  $T_{g,n}\simeq T_{h,n}$. The construction in \S\ref{sec_Heeg_classes_construction_reciprocity} gives rise to the $\sharp/\flat$ Heegner classes for $h$.  These classes enjoy the following local properties.

\begin{corollary}
\label{cor_2022_10_03_1604}
In the setting above:
\item[i)] $\res_p(\kappa(\ell)^\bullet) \in \widehat H^{1,\bullet}(K_{\infty,p},T_{h,n})$.
\item[ii)] For any prime $q\nmid p\ell$ of $\cO_K$, we have $\res_q(\kappa(\ell)^\bullet) \in \widehat H^{1}_{\rm f}(K_{\infty,q},T_{h,n})$.
\end{corollary}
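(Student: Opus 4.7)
The plan is to identify $\kappa(\ell)^\bullet$ (for $h$), via the isomorphism $T_{g,n}\simeq T_{h,n}$, with the reduction modulo $\varpi^n$ of the characteristic-zero class $\bz_{g,\bullet}\in\HIw(K_\infty,T_g)$ from Proposition~\ref{prop:Heegner-factor}, and then to transfer the local properties of $\bz_{g,\bullet}$ (at primes above $p$) and of the Heegner classes $\kappa(\ell)_m$ (at auxiliary primes $q$) to obtain the desired properties for $\kappa(\ell)^\bullet$.

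For part (i), fix a prime $\fp\mid p$. By the final assertion of Proposition~\ref{prop:Heegner-factor} (applied with $h$ in place of $f$), the class $\res_\fp(\kappa(\ell)^\bullet)$ coincides with the image of $\zz_{g,\bullet,\fp}:=\res_\fp(\bz_{g,\bullet})$ under reduction modulo $\varpi^n$, composed with the identification $\HIw(k_\infty,T_{g,n})\simeq\HIw(k_\infty,T_{h,n})$. Theorem~\ref{thm_2022_09_27_1634} together with Corollary~\ref{cor:same-ker} gives $\col^\bullet_{T_g,\fp}(\zz_{g,\bullet,\fp})=0$, so reducing modulo $\varpi^n$ yields $\col^\bullet_{T_g,\fp,n}(\zz_{g,\bullet,\fp}\bmod\varpi^n)=0$. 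Corollary~\ref{cor_2022_09_27_1725} then transfers this, up to a unit, into $\col^\bullet_{T_h,\fp,n}(\res_\fp(\kappa(\ell)^\bullet))=0$. A further application of Corollary~\ref{cor:same-ker} modulo $\varpi^n$ identifies $\ker(\col^\bullet_{T_h,\fp,n})$ with $\ker(\col^\bullet_\bd)=:\widehat H^{1,\bullet}(k_\infty^\fp,T_{h,n})$ for the primitive $Q$-system $\bd$ attached to $T_{h,n}$ at $\fp$. Taking the direct sum over $\fp\mid p$ gives (i).

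For part (ii), if $q\notin S$, the containment $\res_q(\kappa(\ell)^\bullet)\in\widehat H^1_{\rm f}(K_{\infty,q},T_{h,n})$ is built into the definition of $\widehat H^1_{S,\square}(K_\infty,T_{h,n})$, so we may assume $q\in S$. Since $q$ is $n$-admissible and $q\neq\ell$, we have $q\nmid NM_0^-\ell$, so the Shimura curve $X_{M_0^+,M_0^-\ell}$ has good reduction at $q$; consequently the Kummer image $\kappa(\ell)_m$ (and its restriction $\res_{K_m/K_{m-1}}\kappa(\ell)_{m-1}$) lies in $H^1_{\rm f}(K_{m,q},T_{h,n})$. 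Applying $\partial_q$ to the defining relation of Theorem~\ref{thm_main_sharpflat_Heegner_ES}(i) yields
\[
H_{h,m}\begin{pmatrix} \partial_q(\kappa(\ell)^\sharp_m) \\ \partial_q(\kappa(\ell)^\flat_m) \end{pmatrix}=0\quad\textup{in}\quad H^1_{\rm sing}(K_{m,q},T_{h,n})^{\oplus 2}.
\]
Corollary~\ref{cor_singular_projection_up_to_t} gives that $H^1_{\rm sing}(K_{m,q},T_{h,n})$ is a free $\LL_{m,n}$-module of rank one, so the verbatim analog of Lemma~\ref{lemma_2022_07_04_1748} with $H^1_{S,\square}(K_m,T_{f,n})$ replaced by $H^1_{\rm sing}(K_{m,q},T_{h,n})$ yields an isomorphism
\[
\widehat H^1_{\rm sing}(K_{\infty,q},T_{h,n})^{\oplus 2}\,\stackrel{\sim}{\lra}\,\varprojlim_m\frac{H^1_{\rm sing}(K_{m,q},T_{h,n})^{\oplus 2}}{\ker(H_{h,m})\cdot H^1_{\rm sing}(K_{m,q},T_{h,n})^{\oplus 2}}.
\]
The displayed vanishing at each $m$ then forces $(\partial_q(\kappa(\ell)^\sharp),\partial_q(\kappa(\ell)^\flat))=0$ in the left-hand module, which is the desired claim.

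The main subtlety lies in part (i): reconciling the two parallel constructions of the Coleman map---via $Q$-systems (\S\ref{subsec_4_1_2022_09_27_1703}) and via the two-variable Perrin-Riou map of Loeffler--Zerbes (\S\ref{sec_5_2022_09_23_1316})---and verifying that their kernels remain identified both after reduction modulo $\varpi^n$ and under the congruence $T_{g,n}\simeq T_{h,n}$. These checks are precisely the content of Corollaries~\ref{cor:same-ker} and~\ref{cor_2022_09_27_1725}; with them in hand the argument reduces to a mechanical transfer of local properties from $(g,\bz_{g,\bullet},z_{g,m})$ to $(h,\kappa(\ell)^\bullet,\kappa(\ell)_m)$.
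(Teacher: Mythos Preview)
Your proof is correct and follows essentially the same approach as the paper: for (i) you identify $\kappa(\ell)^\bullet$ with the mod-$\varpi^n$ reduction of $\bz_{g,\bullet}$ via Proposition~\ref{prop:Heegner-factor}, invoke Theorem~\ref{thm_2022_09_27_1634}, and transfer through Corollaries~\ref{cor:same-ker} and~\ref{cor_2022_09_27_1725}; for (ii) you use that the Heegner classes $\kappa(\ell)_m$ lie in the Kummer image together with the freeness from Corollary~\ref{cor_singular_projection_up_to_t} and Theorem~\ref{thm_main_sharpflat_Heegner_ES}(i). The only difference is that you spell out the inverse-limit step in (ii) by explicitly invoking the analog of Lemma~\ref{lemma_2022_07_04_1748}, whereas the paper leaves this implicit.
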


\begin{proof}
Let us denote by $\bz_{g,\bullet,n}\in \widehat H^1(K_\infty,T_{g,n})$ the image of $\bz_{g,\bullet}$ modulo $\varpi^n$. 

In view of Corollary~\ref{cor:same-ker} and the definition of $H^{1,\bullet}(K_{\infty,p},T_{h,n})$ as in \S\ref{subsec_4_1_2022_09_27_1703}, the containment in i) asserts for $\p\mid p$ that $\res_\p(\kappa(\ell)^\bullet)$ belongs to the kernel of $\col_{T_h,\fp,n}^\bullet$ as in \eqref{eqn_2022_09_27_1707}. 
By Corollary~\ref{cor_2022_09_27_1725} and the constructions, this is equivalent to checking the same for the map $\col_{T_g,\fp,n}^\bullet$ (after identifying $T_{h,n}$ and $T_{g,n}$ via our fixed isomorphism). This follows from Theorem~\ref{thm_2022_09_27_1634}, as the classes $\kappa(\ell)^\bullet$ and $\bz_{g,\bullet,n}$ coincide (after identifying $T_{h,n}$ and $T_{g,n}$ via our fixed isomorphism) by Proposition~\ref{prop:Heegner-factor}.

The local property at $q \nmid p\ell$ is clear unless $q\in S$. When $q\in S$, the assertion follows from the local property of $\kappa(\ell)$ (which is clear since $\kappa(\ell)$ belongs to the Kummer image by definition), the freeness results in Corollary~\ref{cor_singular_projection_up_to_t} and Theorem~\ref{thm_main_sharpflat_Heegner_ES} i).
\end{proof}

\section{Proof of the main result}
\label{sec_proof_main_results}
We are now in a position to prove the main result of this article (Theorem~\ref{thm:main} stated in the introduction):
\begin{theorem}
\label{thm_div_def_main_conj}
Let $f\in S_2(\Gamma_0(N_0))$ 
be an elliptic newform and $p\nmid 6N_0$ a prime such that $a_p(f)$ has positive $p$-adic valuation.
Let $K$ be an imaginary quadratic field such that $(D_{K},pN_{0})=1$ and 
that
 the hypotheses \eqref{item_cp}, \eqref{item_def}, \eqref{item_BI} and \eqref{item_Loc} hold. Assume in addition: 
 \begin{itemize}
\item[$\circ$] If $p$ is split in $K/\QQ$ and $a_{p}(f)\neq 0$, then the newform $f$ is $p$-isolated (cf. Definition \ref{defn_p-iso}).
\item[$\circ$] If $p$ remains inert in $K/\QQ$, then $a_p(f)=0$ and the Hecke field of $f$ is $\QQ$. 
\end{itemize}



Then we have
$$\cL_f^\bullet(\cL_f^\bullet)^\iota\in {\rm char}({\rm Sel}_{\bullet}(K_\infty,A_{f,\infty})^\vee)\,,\qquad \bullet\in\{\sharp,\flat\}\,.$$

\end{theorem}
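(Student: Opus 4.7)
The plan is to adapt the bipartite Euler system argument of Bertolini--Darmon \cite{BertoliniDarmon2005} and Darmon--Iovita \cite{darmoniovita} to the present $\sharp/\flat$ setting, replacing the original cyclotomic/ordinary Kolyvagin system by the Heegner-point system constructed in \S\ref{sec_Heeg_classes_construction_reciprocity}. First, I would reduce to the mod-$\varpi^n$ assertion: it suffices to prove for every $n\ge 1$ that $\cL_f^\bullet(\cL_f^\bullet)^\iota \bmod \varpi^n$ belongs to the characteristic ideal of $\Sel_{\bullet}(K_\infty,A_{f,n})^\vee$, since the target $\Sel_\bullet(K_\infty,A_f)^\vee$ is a finitely generated $\Lambda$-module whose characteristic ideal is controlled by these finite-level quotients. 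For fixed $n$, Proposition~\ref{prop_2022_07_04_1440} allows me to work inside an $n$-admissible set $S$ and use the $\sharp/\flat$-Heegner classes $\kappa(\ell)^\bullet\in \widehat{H}^1_{S,\square}(K_\infty,T_{f,n})$ of Definition~\ref{defn_sharpflat_classes_in_comppleted_cohom}.

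The analytic input enters through the first reciprocity law (Proposition~\ref{prop_first_reciprocity_law}), which identifies the singular residue $\partial_\ell \kappa(\ell)^\bullet$ with $\cL_f^\bullet$ up to a unit of $\Lambda/(\varpi^n)$. The local properties proved in \S\ref{sec_Heegner_local_properties_at_p}---Corollary~\ref{cor_2022_10_03_1604} in the split case and Lemma~\ref{lemma_local_conditions_Heeg_inert_case} in the inert case---ensure that $\kappa(\ell)^\bullet$ belongs to the $\sharp/\flat$-Selmer group relaxed only at $\ell$; therefore global Poitou--Tate duality produces an exact bridge between $\partial_\ell\kappa(\ell)^\bullet$, $v_\ell\kappa(\ell)^\bullet$, and (dual) $\sharp/\flat$-Selmer groups, which is the linear-algebra mechanism that converts analytic information into Selmer bounds.

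The proof proper is an induction on (the length of) a quotient of $\Sel_\bullet(K_\infty,A_{f,n})^\vee$, carried out exactly as in \cite[\S4]{BertoliniDarmon2005}/\cite[\S6]{darmoniovita}. Given a nonzero element in such a quotient, a Chebotarev argument produces an $n$-admissible prime $\ell_1$ whose finite residue $v_{\ell_1}$ detects the element. Proposition~\ref{prop_congruence_with_quoternionic_form} then furnishes a quaternionic eigenform $h$ on a Shimura curve of raised level $N^-\ell_1\ell_2$ (with a rigid choice of an auxiliary admissible prime $\ell_2$) congruent to $f$ modulo $\varpi^n$, and the second reciprocity law (Proposition~\ref{prop_DI_prop_4_6}) identifies $v_{\ell_2}\kappa(\ell_1)^\bullet \doteq \cL_h^\bullet \doteq v_{\ell_1}\kappa(\ell_2)^\bullet$. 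Using Remark~\ref{remark_rec_laws_extends_shim_curves} one may now substitute $h$ for $f$ and reiterate; each step strips off a factor detected by the $p$-adic $L$-function of the current congruent form, and the combinatorics of the induction---together with the self-duality under $\iota$ built into the definition of $L_p(f,K)^\bullet = \cL_f^\bullet(\cL_f^\bullet)^\iota$---assembles these factors into the required divisibility modulo $\varpi^n$. Passing to the inverse limit in $n$ yields the theorem.

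The main obstacle is ensuring that every ingredient in this induction (Heegner classes, Coleman maps, local conditions, Selmer groups) transports correctly along the congruences that underlie the argument, especially when the level is raised twice at every inductive step. In the inert case, this is handled by the identification $T_{h,n}\simeq T_{f,n}$ combined with the Bloch--Kato congruence of Proposition~\ref{prop_nekovar_Mathscinet}, and the signed local conditions are simply defined by transport of structure. In the split case with $a_p(f)\neq 0$, the congruent form $h$ must lift to characteristic zero in order to apply the Perrin-Riou and Wach-module machinery of \S\ref{sec:coleman}--\S\ref{sec_5_2022_09_23_1316} that is used to verify the local $\sharp/\flat$-vanishing of Heegner classes in \S\ref{sec_Heegner_local_properties_at_p_split_case}; this is precisely the reason the $p$-isolated hypothesis is imposed. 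The remaining technical points needed to make the Euler system machine run---freeness over $\Lambda_{m,n}$ of the signed local conditions and of the generalized Selmer groups (Lemma~\ref{lem:free-local-conds-T}, Proposition~\ref{prop_DI_Prop_3_21}), surjectivity of corestriction (Proposition~\ref{prop_6_2_2022_07_05_1510}), and invariance of Coleman maps under congruences (Corollary~\ref{cor_2022_09_27_1725})---have already been established in the preceding sections, so the argument above can be carried out essentially verbatim from this point.
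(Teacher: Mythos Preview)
Your proposal is essentially correct and follows the same bipartite Euler system approach as the paper. The one point of divergence worth noting is the reduction step: you reduce to a ``characteristic ideal of $\Sel_{\bullet}(K_\infty,A_{f,n})^\vee$'' modulo $\varpi^n$, but since $\Lambda/(\varpi^n)$ is not a domain this notion needs care. The paper instead invokes \cite[Proposition~3.1]{BertoliniDarmon2005} to reduce to proving, for every ring homomorphism $\varphi\colon\Lambda\to\cO$ into a DVR and every $n$, that $\varphi(\cL_h^\bullet)^2\in\Fitt^0\big(\Sel_\bullet(K_\infty,A_{h,n})^\vee\otimes_\varphi\cO\big)$; the induction then runs on the integer $t_h:=\ord_\pi\big(\varphi(\cL_h^\bullet)\big)$ rather than on a Selmer-group length, and the inductive step shows $t_{h'}<t_h$ for the level-raised form $h'$ together with a Fitting-ideal multiplicativity argument. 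This DVR-specialization framing is what makes the bookkeeping of the Bertolini--Darmon machine run cleanly. Apart from this technical packaging, every ingredient you list---the two reciprocity laws, the $\sharp/\flat$ local conditions and their freeness, surjectivity of corestriction, level-raising via rigid pairs (when $a_p(f)\neq 0$) or via a Chebotarev choice (when $a_p(f)=0$), congruence-compatibility of Coleman maps, and the role of the $p$-isolated hypothesis in ensuring a characteristic-zero lift in the split non-ordinary case---matches the paper's proof.
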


Granted the input from earlier sections,
the proof of Theorem~\ref{thm_div_def_main_conj} 
is essentially identical to 
\cite[\S5]{darmoniovita}  and \cite[\S4.4]{pollack-weston11}, where the authors proved an analogous containment in anticyclotomic Iwasawa main conjecture for a newform $f$ of weight $2$ 
when $p$ is split in $K/\QQ$ and $a_p(f)=0$. The latter in turn dwells on the strategy in the groundbreaking work of Bertolini and Darmon \cite[\S4.2]{BertoliniDarmon2005}, where the authors 
considered the case of a newform $f$ of weight $2$ when $a_p(f)$ is a $p$-adic unit and $\cO_L=\Zp$. We provide a brief overview of the argument 
following \cite[\S5]{darmoniovita} and  \cite[\S4.4]{pollack-weston11}.

We remark that, even though 
the cases $a_p(f)\neq 0$ and $a_p(f)=0$ are treated separately (especially in \S\ref{sec_Heegner_local_properties_at_p} where the $p$-local properties of the signed Euler system is verified), the same Euler system argument 
applies to both the cases. 

The proof of Theorem~\ref{thm_div_def_main_conj} can be reduced, thanks to \cite[Proposition 3.1]{BertoliniDarmon2005}, to the following:
\begin{theorem}
\label{thm_div_def_main_conj_reduced}
For $f$ as in 
Theorem~\ref{thm_div_def_main_conj}, suppose that $h\in S_2(\mathcal{V}/\bGamma,\cO_L)$ is an eigenform such that 
\begin{equation}
    \label{eqn_thm_div_def_main_conj_reduced_bisbis}
    T_{h,n}\simeq T_{f,n}\,.
\end{equation}
Then for any ring homomorphism $\varphi: \LL\to \cO$, where $\cO$ is a discrete valuation ring, we have
\begin{equation}
\label{eqn_thm_div_def_main_conj_reduced}
    \varphi(\cL_{h}^\bullet)^2\in {\rm Fitt}^0({\rm Sel}_{\bullet}(K_\infty,A_{h,n})^\vee\otimes _{\varphi}\cO)\,,\qquad \bullet\in\{\sharp,\flat\}.
\end{equation}
\end{theorem}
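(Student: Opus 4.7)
The plan is to follow the bipartite Euler system strategy pioneered by Bertolini--Darmon~\cite{BertoliniDarmon2005} and adapted by Darmon--Iovita~\cite{darmoniovita} and Pollack--Weston~\cite{pollack-weston11}, using the ingredients already assembled in this paper: the $\sharp/\flat$-Heegner classes $\kappa(\ell)^\bullet$ from Theorem~\ref{thm_main_sharpflat_Heegner_ES}, the first and second reciprocity laws (Propositions~\ref{prop_first_reciprocity_law} and~\ref{prop_DI_prop_4_6}), and the $p$-local properties of these classes from Lemma~\ref{lemma_local_conditions_Heeg_inert_case} and Corollary~\ref{cor_2022_10_03_1604}. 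Fix $\bullet\in\{\sharp,\flat\}$ and $\varphi:\Lambda\to\cO$ as in the statement. We may assume $\varphi(\cL_h^\bullet)\neq 0$, for otherwise \eqref{eqn_thm_div_def_main_conj_reduced} is vacuous. Writing $t = v_\varpi(\varphi(\cL_h^\bullet))$, the assertion reduces to the inequality
$$\mathrm{length}_\cO\bigl(\Sel_{\bullet}(K_\infty,A_{h,n})^\vee\otimes_\varphi \cO\bigr) \,\leq\, 2t.$$

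\textbf{Base step.} Given a nonzero class $s$ in the $\cO$-dual above, the plan is to use \cite[Theorem~3.2]{BertoliniDarmon2005} (the same global duality input employed in \cite{darmoniovita, pollack-weston11}) to choose an $n$-admissible prime $\ell_1$ such that the specialization of $\res_{\ell_1}$ detects $s$ non-trivially. The first reciprocity law (Proposition~\ref{prop_first_reciprocity_law}) then gives $\partial_{\ell_1}\kappa(\ell_1)^\bullet \doteq \cL_h^\bullet \bmod \varpi^n$, while Corollary~\ref{cor_2022_10_03_1604}/Lemma~\ref{lemma_local_conditions_Heeg_inert_case} ensure that $\kappa(\ell_1)^\bullet$ lies in the relaxed-at-$\ell_1$ $\sharp/\flat$-Selmer group. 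Applying global Poitou--Tate duality and the fact that all other local terms vanish (by the Selmer conditions defining $s$ and the local properties of $\kappa(\ell_1)^\bullet$), one obtains a pairing identity forcing $\varpi^t$ to annihilate the relevant component of $s$.

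\textbf{Inductive step.} To promote the bound from $t$ to $2t$, the idea (as in \cite[\S4.4]{pollack-weston11}) is to choose a second $n$-admissible prime $\ell_2$ and invoke Proposition~\ref{prop_congruence_with_quoternionic_form} to produce a congruent eigenform $h'$ on the Shimura curve $X_{N^+,N^-\ell_1\ell_2}$ (lifting to characteristic zero in the split case via the $p$-isolated hypothesis, and applying Proposition~\ref{prop_nekovar_Mathscinet} in the inert case). By the second reciprocity law (Proposition~\ref{prop_DI_prop_4_6}, together with Remark~\ref{remark_rec_laws_extends_shim_curves}), one gets $v_{\ell_2}\kappa(\ell_1)^\bullet \doteq \cL_{h'}^\bullet \doteq v_{\ell_1}\kappa(\ell_2)^\bullet$. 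This lets one iterate the base-step argument with $h$ replaced by $h'$ and $\ell_1$ replaced by $\ell_2$, essentially doubling the annihilation exponent. The freeness results (Proposition~\ref{prop_DI_Prop_3_21}, Corollary~\ref{cor_singular_projection_up_to_t}) ensure that the arithmetic of the relaxed Selmer groups is well-behaved throughout this exchange.

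\textbf{Main obstacles.} The principal technical difficulty is the compatibility of the entire $\sharp/\flat$-formalism under congruences between eigenforms on different Shimura curves; specifically, one must know that the signed Coleman maps, signed Selmer local conditions, and signed Heegner classes all transport cleanly across the isomorphism $T_{h,n}\simeq T_{h',n}$. This is precisely what Section~\ref{sec:congruences} (especially Corollary~\ref{cor_2022_09_27_1725} in the split case and Proposition~\ref{prop_nekovar_Mathscinet} in the inert case) is designed to provide, together with the congruence-preserving definition of $Q$-systems in Section~\ref{sec:coleman}. The second difficulty is verifying the $p$-local properties of the Heegner classes on the congruent Shimura curves, which in the inert case requires the BKO input (Theorem~\ref{thm:Q-inert}) and in the split case requires the Perrin-Riou/Bertolini--Darmon--Prasanna machinery (Propositions~\ref{prop:Omega-BDP}--\ref{prop:Heegner-factor}) to lift the problem to characteristic zero -- the latter being the source of the $p$-isolated hypothesis. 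With these in hand, the Bertolini--Darmon induction goes through essentially verbatim, yielding the claimed Fitting ideal divisibility.
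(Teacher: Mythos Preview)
Your overall strategy is correct and matches the paper's approach: it is the Bertolini--Darmon bipartite Euler system machine, run with the $\sharp/\flat$ ingredients assembled in \S\ref{sec_Heeg_classes_construction_reciprocity}--\S\ref{sec_Heegner_local_properties_at_p}. However, your description of the inductive structure is muddled in a way that matters, and as written it does not actually produce the exponent $2t$.

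The induction is not ``base step then one doubling step.'' It is an induction on $t_h:=\ord_\pi(\varphi(\cL_h^\bullet))$ (here $\pi$ is the uniformizer of $\cO$, not $\varpi$), with base case $t_h=0$: in that case the first reciprocity law shows $\partial_\ell\kappa_\varphi(\ell)^\bullet$ generates the singular local module, so by global duality the Selmer group vanishes outright. For the inductive step with $t_h>0$, one chooses a pair $\{\ell_1,\ell_2\}$ (rigid when $a_p(f)\neq 0$) and the congruent form $h'$ of Proposition~\ref{prop_congruence_with_quoternionic_form}. The second reciprocity law together with a minimality argument on $\ord_\pi(\kappa_\varphi(\ell)^\bullet)$ (your ``base step'' is really this piece, cf.\ Lemma~\ref{lemma_5_6_Darmon_Iovita}) yield $t_{h'}<t_h$ strictly. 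The factor of $2$ then arises additively, not by doubling: the two singular quotients at $\ell_1,\ell_2$ each contribute $\pi^{t_h-t_{h'}}$, bounding the cokernel $S_{\ell_1\ell_2}^h$ by $\pi^{2(t_h-t_{h'})}$; the induction hypothesis applied to $h'$ gives $\pi^{2t_{h'}}$ on ${\rm Sel}_{\ell_1\ell_2,\bullet}$ (which coincides for $h$ and $h'$); and Fitting ideal multiplicativity in the short exact sequence \eqref{eqn_2022_09_26_1559} yields $\pi^{2t_h}$. Your sketch also omits the crucial intermediate step of dividing $\kappa_\varphi(\ell)^\bullet$ by its exact $\pi$-order and projecting down from level $n+t_h$ to level $n$ to obtain a well-defined class $\kappa'_\varphi(\ell)^\bullet$ (cf.\ Lemma~\ref{lemma_DILemmas5354combined}); without this, the orders in the reciprocity identities do not line up.
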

Here $\Fitt^0$ denotes the zeroth Fitting ideal of an $\cO$-module.

In fact, by \cite[Proposition 3.1]{BertoliniDarmon2005},
Theorem~\ref{thm_div_def_main_conj} follows if 
\begin{equation}
\label{eqn_thm_div_def_main_conj_reduced_bis}
    \varphi(\cL_{f}^\bullet)^2\in {\rm Fitt}^0({\rm Sel}_{\bullet}(K_\infty,A_{f,n})^\vee\otimes _{\varphi}\cO)\,,\qquad \bullet\in\{\sharp,\flat\}\,,\quad \varphi \in {\rm Hom}(\Lambda,\cO)\,,
\end{equation}
which is  
a weaker version of \eqref{eqn_thm_div_def_main_conj_reduced}. However, the proof of Theorem~\ref{thm_div_def_main_conj_reduced} proceeds by induction, which requires the passage to eigenforms on 
suitably chosen quaternion algebras. 
So we consider the more general version of \eqref{eqn_thm_div_def_main_conj_reduced_bis} in Theorem~\ref{thm_div_def_main_conj_reduced}. 

We will prove Theorem~\ref{thm_div_def_main_conj_reduced} in \S\ref{subsec_proof_of_main_thm} below, adapting with minor modifications the arguments in \cite[\S5]{darmoniovita} (that were utilized checking the validity of (15) in op. cit.).

Before proceeding with the proof of Theorem~\ref{thm_div_def_main_conj_reduced}, we remark that, as the arguments in \S\ref{subsec_proof_of_main_thm} will show, 
the assumption in 
Theorem~\ref{thm_div_def_main_conj_reduced} that $T_{h,n}$ is isomorphic to $T_{f,n}$ can be dropped when $p$ is split in $K/\QQ$. 
Indeed, $p$-local constructions in \S\ref{sec:coleman} and \S\ref{sec_5_2022_09_23_1316} above do apply\footnote{For clarity, we further note that they \emph{do not apply} in the inert case (even when $a_p(f)=0$), since we currently do not have a construction of primitive $Q$-systems (recorded in \S\ref{subsec_4_2_2022_07_12}) in this level of generality. Once this construction becomes available, Theorem~\ref{thm_div_def_main_conj_general} can be proved also when $p$ is inert in $K/\QQ$, but still assuming $a_p(h)=0$ and that $h$ is $\ZZ$-valued.} for a general $p$ non-ordinary eigenform $h$ on quaternion algebras when $p$ splits. 

In light of this observation, one can prove the following generalization of \cite[Theorem 5.2]{darmoniovita} and \cite[Theorem 4.1]{pollack-weston11}:

\begin{theorem}
\label{thm_div_def_main_conj_general}

Let $h\in S_2(\mathcal{T}/\bGamma,\cO_L)$ be a $p$-isolated newform such that $a_p(h)$ has positive $p$-adic valuation.
Let $K$ be an imaginary quadratic field with $p$ split such that $(D_{K},N_{0})=1$ and 
that
 the hypotheses \eqref{item_cp}, \eqref{item_def}, \eqref{item_BI} and \eqref{item_Loc} hold.
We then have
$$\cL_{h}^\bullet\cL_{h}^{\bullet,\iota}\in {\rm char}({\rm Sel}_{\bullet}(K_\infty,A_{h,\infty})^\vee)\,,\qquad \bullet\in\{\sharp,\flat\}\,.$$
\end{theorem}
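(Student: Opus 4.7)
My proof plan is to follow the general Bertolini--Darmon strategy as adapted in \cite[\S5]{darmoniovita} and \cite[\S4.4]{pollack-weston11}, but applied directly to the eigenform $h$ rather than passing through a classical elliptic newform $f$. The key observation that makes this possible is that in the split case, all the $p$-local ingredients developed in \S\ref{sec:coleman}, \S\ref{sec_5_2022_09_23_1316}, and \S\ref{sec_Heegner_local_properties_at_p_split_case} (construction of primitive $Q$-systems, the comparison with the Perrin-Riou map, the vanishing of signed Heegner classes under Coleman maps) apply verbatim to any $p$-non-ordinary eigenform on a quaternionic Shimura set, without requiring any congruence to a modular form on $X_0(N_0)$.

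The first step is to reduce, via \cite[Proposition 3.1]{BertoliniDarmon2005}, to the Fitting-ideal statement
\[
\varphi(\cL_h^\bullet)^2 \,\in\, \Fitt^0\bigl(\Sel_\bullet(K_\infty, A_{h,n})^\vee \otimes_\varphi \cO\bigr)
\]
for every ring homomorphism $\varphi:\Lambda\to\cO$ into a DVR and every $n\ge 1$. Fix such $\varphi$ and $n$. The next step is the Euler system / Kolyvagin-style induction on the length of $\Sel_\bullet(K_\infty,A_{h,n})^\vee\otimes_\varphi \cO$, using the $\sharp/\flat$ bipartite Euler system $\{\kappa(\ell)^\sharp,\kappa(\ell)^\flat\}$ constructed in \S\ref{sec_Heeg_classes_construction_reciprocity} for $h$ (rather than $f$). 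The global input is Theorem~\ref{thm_main_sharpflat_Heegner_ES} together with the first reciprocity law (Proposition~\ref{prop_first_reciprocity_law}), which identifies the residue $\partial_\ell\kappa(\ell)^\bullet$ with $\cL_h^\bullet$ up to a unit, and the second reciprocity law (Proposition~\ref{prop_DI_prop_4_6}), which relates $v_{\ell_2}\kappa(\ell_1)^\bullet$ to $\cL_{h'}^\bullet$ for an eigenform $h'$ of level raised at a rigid pair $\{\ell_1,\ell_2\}$ (Lemma~\ref{lemma_BD_2005_Lemma_4_9} guarantees existence of such rigid pairs precisely because $h$ is $p$-isolated and the argument propagates: the form $h'$ produced by Proposition~\ref{prop_congruence_with_quoternionic_form} is again $p$-isolated). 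The critical $p$-local input that replaces the unramified/ordinary local conditions used in \cite{BertoliniDarmon2005} is Corollary~\ref{cor_2022_10_03_1604}: the signed Heegner classes $\kappa(\ell)^\bullet$ for $h$ lie in the $\sharp/\flat$ Selmer group away from $\ell$.

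Given these ingredients, the inductive Kolyvagin machine proceeds as follows. If $\Sel_\bullet(K_\infty, A_{h,n})^\vee \otimes_\varphi \cO$ has length $t$, pick an admissible prime $\ell_1$ which optimizes the valuation of $\varphi\bigl(v_{\ell_1}(\text{some class})\bigr)$ via Chebotarev, apply Poitou--Tate duality in the Selmer diagram with modified local conditions at $\ell_1$ (using Proposition~\ref{prop_DI_Prop_3_21} to ensure freeness of the auxiliary local cohomology), then use the second reciprocity law to descend to a congruent eigenform $h_1$ on $X_{N^+,N^-\ell_1\ell_2}$ for a rigid partner $\ell_2$ of $\ell_1$. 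The Fitting ideal of the Selmer group for $h_1$ has length $t-1$ after this operation, and by induction one obtains $\varphi(\cL_{h_1}^\bullet)^2 \in \Fitt^0(\Sel_\bullet(K_\infty,A_{h_1,n})^\vee\otimes_\varphi\cO)$. Combining with the two reciprocity laws (which give an equation between $\varphi(\cL_h^\bullet)\cdot\varphi(\cL_{h_1}^\bullet)$ and the product of Fitting-relevant local invariants) yields the desired divisibility for $h$.

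The main obstacle is the one already identified in the remarks preceding Theorem~\ref{thm_div_def_main_conj_general}: maintaining the compatibility of signed Coleman maps and signed local conditions under the congruences $T_{h_j,n} \simeq T_{h_{j+1},n}$ that appear repeatedly in the induction, given that each $h_j$ lives on a different Shimura curve. In the split case this is precisely what Corollary~\ref{cor_2022_09_27_1725} (plus its consequence on the preservation of $H^{1,\bullet}$ and $H^1_\bullet$ under congruences) provides. The fact that the $p$-isolated hypothesis propagates (because Proposition~\ref{prop_congruence_with_quoternionic_form} produces a $p$-isolated $h'$ when applied to a $p$-isolated $h$ at a rigid pair) ensures that the inductive hypothesis can be reapplied at each stage, closing the loop. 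The inert case is excluded here because the construction of primitive $Q$-systems in \S\ref{subsec_4_2_2022_07_12} currently requires $h$ to come from an elliptic curve over $\QQ$ with $a_p=0$.
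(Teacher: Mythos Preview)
Your proposal is essentially correct and follows the same approach as the paper, which simply remarks that in the split case the constructions of \S\ref{sec:coleman}, \S\ref{sec_5_2022_09_23_1316}, and \S\ref{sec_Heegner_local_properties_at_p_split_case} apply directly to $h$, so that the inductive argument of \S\ref{subsec_proof_of_main_thm} carries over verbatim. One small correction: the induction is not on the length of ${\rm Sel}_\bullet(K_\infty,A_{h,n})^\vee\otimes_\varphi\cO$ but on $t_h:=\ord_\pi(\varphi(\cL_h^\bullet))$; the key step is Lemma~\ref{lemma_5_6_Darmon_Iovita} showing $t_{h'}<t_h$ strictly, after which the Fitting ideals are combined multiplicatively as in \eqref{eqn_2022_09_26_2117}.
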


\subsection{Proof of Theorem~\ref{thm_div_def_main_conj_reduced}}
\label{subsec_proof_of_main_thm}
Let us 
fix $\cO$, $\varphi$, and the positive integer $n$, and write $\pi$ for a uniformizer of $\cO$. We enlarge $\cO$ if necessary to ensure that it contains an isomorphic copy of $\cO_L$ and will henceforth treat $\cO_L$ as a subring of $\cO$. 

Also fix $\bullet\in \{\sharp,\flat\}$ and put 
$$t_h:={\rm ord}_\pi\left( \varphi(\cL_{h}^\bullet)\right)\,.$$
We may assume without loss of generality that
\begin{itemize}
    \item[i)] $t_h<\infty$, since otherwise $\varphi(\cL_{h}^\bullet)=0$.
    \item[ii)] ${\rm Sel}_{\bullet}(K_\infty,A_{f,n})^\vee\otimes _{\varphi}\cO$ is non-trivial, as otherwise its initial Fitting ideal equals $\cO$.
\end{itemize}
We shall prove \eqref{eqn_thm_div_def_main_conj_reduced_bis} by induction on $t_h$. 

\subsubsection{} 
\label{subsubsec_10_1_1_2022_09_26_1457}
Let $\ell$ be any $(n+t_h)$-admissible prime for $f$, and let $S$ be an $(n+t_h)$-admissible set containing $\ell$. We explain how to use the classes $$\kappa(\ell)^\bullet \in \widehat{H}^1_{\{\ell\},\bullet}(K_\infty,T_{f,n+t_h})\subset \widehat{H}^1_{S,\bullet}(K_\infty,T_{f,n+t_h})$$
as in Definition~\ref{defn_sharpflat_classes_in_comppleted_cohom}, whose local properties were verified in \S\ref{sec_Heegner_local_properties_at_p}, to bound ${\rm Sel}_{\bullet}(K_\infty,A_{h,n})^\vee\otimes _{\varphi}\cO$. 

Let 
$\kappa_\varphi(\ell)^\bullet$ denote the image of 
$\kappa(\ell)^\bullet$ inside $$\mathcal{M}:=\widehat{H}^1_{S,\bullet}(K_\infty,T_{f,n+t_h})\otimes_\varphi \cO\,.$$
Note that $\mathcal{M}$ is free as an $\cO/(\varpi^{n+t_h})$-module by  Proposition~\ref{prop_DI_Prop_3_21}. Put 
$${\rm ord}_\pi(\kappa_\varphi(\ell)^\bullet):=\max\{d\in \mathbb{N}: \kappa_\varphi(\ell)^\bullet\in \pi^d\mathcal{M}\}\,.$$
Observe that 
$${\rm ord}_\pi(\kappa_\varphi(\ell)^\bullet)\leq {\rm ord}_\pi(\partial_\ell\kappa_\varphi(\ell)^\bullet)={\rm ord}_\pi(\varphi(\cL_{h}^\bullet))=t_h\,,$$
where the inequality is a consequence of the fact that $\partial_\ell$ is a homomorphism and the equality follows from Proposition~\ref{prop_first_reciprocity_law}. 
Hence, 
$$t:={\rm ord}_\pi(\kappa_\varphi(\ell)^\bullet)\leq t_h\,.$$
Since $\mathcal{M}$ is a free $\cO/(\varpi^{n+t_h})$-module, we may choose an element $\widetilde \kappa_\varphi(\ell)^\bullet\in \mathcal{M}$ so that 
$$
    \pi^t\widetilde\kappa_\varphi(\ell)^\bullet=\kappa_\varphi(\ell)^\bullet\,.
$$

Observe that $\widetilde\kappa_\varphi(\ell)^\bullet$ is well-defined modulo the $\pi^t$-torsion subgroup $\mathcal{M}[\pi^t]\subset \mathcal{M}$. Notice also that 
$$\mathcal{M}[\pi^t] \subset \ker\left(\widehat{H}^1_{S,\bullet}(K_\infty,T_{f,n+t_h})\otimes_\varphi \cO \xrightarrow{{\rm proj}_n} \widehat{H}^1_{S,\bullet}(K_\infty,T_{f,n})\otimes_\varphi \cO \right)$$
since $t\leq t_h$, and as a result, the element 
$$\kappa'_\varphi(\ell)^\bullet:={{\rm proj}_n}(\widetilde\kappa_\varphi(\ell)^\bullet)\in \widehat{H}^1_{S,\bullet}(K_\infty,T_{f,n})\otimes_\varphi \cO\simeq \widehat{H}^1_{S,\bullet}(K_\infty,T_{h,n})\otimes_\varphi \cO$$
is well-defined. The key properties of $\kappa'_\varphi(\ell)^\bullet$ that we will rely upon are recorded in Lemma~\ref{lemma_DILemmas5354combined} below, which 
one may compare to Lemmas~5.3 and 5.4 in \cite{darmoniovita}.

\begin{lemma}
\label{lemma_DILemmas5354combined}
We have $\kappa'_\varphi(\ell)^\bullet\in \widehat{H}^1_{\{\ell\},\bullet}(K_\infty,T_{h,n})\otimes_\varphi \cO$. Moreover:
\item[i)] $\ord_\pi(\kappa'_\varphi(\ell)^\bullet):=\max\{d\in \NN: \kappa'_\varphi(\ell)^\bullet\in \pi^d \widehat{H}^1_{\{\ell\},\bullet}(K_\infty,T_{h,n})\otimes_\varphi \cO\}=0$\,.
\item[ii)] $\ord_\pi(\partial_\ell \kappa'_\varphi(\ell)^\bullet)=t_h-t$\,.
\item[iii)] The element $\partial_\ell \kappa'_\varphi(\ell)^\bullet$ belongs to the kernel of the natural homomorphism
$$\eta_\ell:  \widehat H^1_{\rm sing}(K_{\infty,\ell},T_{f,n})\otimes_\varphi \cO\lra {\rm Sel}_{\bullet}(K_\infty,A_{h,n})^\vee\otimes _{\varphi}\cO$$
induced by global duality.
\end{lemma}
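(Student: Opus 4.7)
The plan is to verify the four assertions of the lemma in order, using as key tools (a) the freeness of the various local and global Iwasawa-theoretic modules, which lets the scaling $\kappa = \pi^t \widetilde\kappa$ be controlled cleanly, (b) the reciprocity laws of \S\ref{sec_Heeg_classes_construction_reciprocity} relating the divisibility of $\partial_\ell \kappa(\ell)^\bullet$ to $p$-adic $L$-values, and (c) Poitou--Tate global duality.

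For the ambient containment $\kappa'_\varphi(\ell)^\bullet \in \widehat{H}^1_{\{\ell\},\bullet}(K_\infty,T_{h,n})\otimes_\varphi \cO$, I will check at each prime $q \in S \setminus \{\ell\}$ that $\res_q \kappa'_\varphi(\ell)^\bullet$ lies in the prescribed local Selmer condition, the $p$-local and $\bullet$-signed conditions being already built into the $S$-Selmer condition defining $\mathcal{M}$. This follows from the local properties of $\kappa(\ell)^\bullet$ proved in \S\ref{sec_Heegner_local_properties_at_p} (Lemma~\ref{lemma_local_conditions_Heeg_inert_case} in the inert case, Corollary~\ref{cor_2022_10_03_1604} in the split case) together with the freeness of the singular quotient $\widehat{H}^1_\sing(K_{\infty,q},T_{f,n+t_h})$ (Corollary~\ref{cor_singular_projection_up_to_t}) and of the $\bullet$-local quotient (Lemma~\ref{lem:free-local-conds-T}): writing $\kappa_\varphi(\ell)^\bullet = \pi^t \widetilde\kappa_\varphi(\ell)^\bullet$ and projecting, the image of $\widetilde\kappa_\varphi(\ell)^\bullet$ lies in the $\pi^{n+t_h-t}$-torsion and hence vanishes after reduction modulo $\pi^n$, since $n+t_h-t \geq n$. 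Compatibility of the signed local conditions under the isomorphism $T_{f,n} \simeq T_{h,n}$ is provided by Corollary~\ref{cor_2022_09_27_1725} in the split case and by Proposition~\ref{prop_nekovar_Mathscinet} combined with the construction of \S\ref{subsec_4_2_2022_07_12} in the inert case.

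For (i), I argue by contradiction: if $\kappa'_\varphi(\ell)^\bullet \in \pi \cdot \widehat{H}^1_{\{\ell\},\bullet}(K_\infty,T_{h,n})\otimes_\varphi \cO$, then lifting the factor to $\mathcal{M}$, which is free by Proposition~\ref{prop_DI_Prop_3_21}, one obtains a lift $\widetilde{w} \in \mathcal{M}$ with $\widetilde\kappa_\varphi(\ell)^\bullet - \pi\widetilde{w} \in \pi^n \mathcal{M}$; since $n \geq 1$ this forces $\widetilde\kappa_\varphi(\ell)^\bullet \in \pi \mathcal{M}$, contradicting the minimality of $t$. For (ii), applying Proposition~\ref{prop_first_reciprocity_law} in the setting of Remark~\ref{remark_rec_laws_extends_shim_curves} yields $\partial_\ell \kappa_\varphi(\ell)^\bullet \,\dot{=}\, \varphi(\cL_h^\bullet)$ in $\widehat{H}^1_\sing(K_{\infty,\ell},T_{f,n+t_h})\otimes_\varphi \cO \simeq \cO/\pi^{n+t_h}$, of exact $\pi$-order $t_h$; dividing by $\pi^t$ and reducing modulo $\pi^n$ then preserves the order provided $t_h - t < n$, which holds in the inductive regime in which $n$ may be taken sufficiently large.

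For (iii), I invoke Poitou--Tate global duality: for every $s \in \Sel_\bullet(K_\infty,A_{h,n})\otimes_\varphi \cO$, the sum $\sum_v \langle \res_v \kappa'_\varphi(\ell)^\bullet, \res_v s \rangle_v$ vanishes. At every prime $v \neq \ell$, $\res_v \kappa'_\varphi(\ell)^\bullet$ lies in the local condition defining $\widehat{H}^1_{\{\ell\},\bullet}$ while $\res_v s$ lies in its orthogonal complement under local Tate duality, so these local terms vanish; only the pairing at $\ell$ survives, which by construction coincides with $\eta_\ell(\partial_\ell \kappa'_\varphi(\ell)^\bullet)(s)$. Since this holds for all $s$, we conclude $\eta_\ell(\partial_\ell \kappa'_\varphi(\ell)^\bullet) = 0$. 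The main obstacle will be the ambient containment, as it requires a careful interplay of the freeness results of \S\ref{sec:coleman}, the congruence compatibility of Coleman maps from \S\ref{sec:congruences}, and the Heegner local properties from \S\ref{sec_Heegner_local_properties_at_p}; everything else is then a fairly direct computation using the reciprocity law and global duality.
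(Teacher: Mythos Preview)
Your proof takes essentially the same approach as the paper and is structurally correct: the ambient containment via vanishing of the singular residue at each prime $q\in S\setminus\{\ell\}$ using the freeness of $\widehat{H}^1_{\rm sing}(K_{\infty,q},T_{f,n+t_h})$ together with compatibility of the signed $p$-local conditions under ${\rm proj}_n$; part (i) from indivisibility of $\widetilde\kappa_\varphi(\ell)^\bullet$ in the free module $\mathcal{M}$; part (ii) from the first reciprocity law; and part (iii) from Poitou--Tate global duality.

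There is one flaw in your justification of (ii): the claim that ``$n$ may be taken sufficiently large'' is incorrect, since $n$ is \emph{fixed} throughout the induction (which proceeds by decreasing $t_h$, not by varying $n$). The paper does not address this point either, simply citing Proposition~\ref{prop_first_reciprocity_law} as a direct consequence. In fact, the literal equality in (ii) requires $t_h-t$ to lie below the $\pi$-length of $\cO/\varpi^n\cO$, and when this fails the reduction $\partial_\ell\kappa'_\varphi(\ell)^\bullet$ vanishes. This does not affect the downstream application in \S\ref{subsubsec_1_1_4_1022_09_26}: one only needs the containment $\pi^{2(t_h-t)}\in{\rm Fitt}^0(S^h_{\ell_1\ell_2}\otimes_\varphi\cO)$, and in the degenerate case this follows automatically from the fact that $S^h_{\ell_1\ell_2}\otimes_\varphi\cO$ is a quotient of $(\cO/\varpi^n\cO)^{\oplus 2}$.
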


\begin{proof}
Put $S':=S\setminus \{\ell\}$ and define the map $\partial_{S'}:=\bigoplus_{q\in S'} \partial_q$. We first note that
 $$\partial_{S'}(\widetilde\kappa_\varphi(\ell)^\bullet)\in \bigoplus_{q\in S'} \widehat H^1_{\rm sing}(K_{\infty,q},T_{f,n+t_h} )\otimes \cO$$
 is annihilated by $\pi^t$, as  $\partial_{S'}(\kappa_\varphi(\ell)^\bullet)=0$ since 
 $\kappa_\varphi(\ell)^\bullet\in \widehat{H}^1_{\{\ell\},\bullet}(K_\infty,T_{f,n+t_h})\otimes_\varphi \cO$ (cf.~Lemma~\ref{lemma_local_conditions_Heeg_inert_case} and Corollary~\ref{cor_2022_10_03_1604}). This shows that 
 $$\partial_{S'}(\kappa'_\varphi(\ell)^\bullet)={\rm proj}_n\circ \partial_{S'}(\widetilde\kappa_\varphi(\ell)^\bullet)=0\,,$$ 
 since $t\leq t_h$ and the $\pi^t$-torsion submodule of $\bigoplus_{q\in S'} \widehat H^1_{\rm sing}(K_{\infty,q},T_{f,n+t_h} )\otimes \cO$ is contained in the kernel of 
 $$\bigoplus_{q\in S'} \widehat H^1_{\rm sing}(K_{\infty,q},T_{f,n+t_h} )\otimes \cO \xrightarrow{{\rm proj}_n} \bigoplus_{q\in S'} \widehat H^1_{\rm sing}(K_{\infty,q},T_{f,n} )\otimes \cO\,.$$
 The assertion that $\kappa'_\varphi(\ell)^\bullet\in \widehat{H}^1_{\{\ell\},\bullet}(K_\infty,T_{h,n})\otimes_\varphi \cO$ thus follows from the prior discussion and the fact that ${\rm proj}_n$ maps $\widehat H^1_{\bullet}(K_{\infty,\p},T_{f,n+t_h} )$ into  $\widehat H^1_{\bullet}(K_{\infty,\p},T_{f,n} )$ for any prime $\p$ of $K$ above $p$.
 
 Property i) follows from the construction of the element $\kappa'_\varphi(\ell)^\bullet$, whereas ii) is a direct consequence of Proposition~\ref{prop_first_reciprocity_law}. 
 Note that even though $\cL_h^\bullet$ is not defined for a general $h$ as in Theorem~\ref{thm_div_def_main_conj_reduced} (when $p$ is inert), we may still define $\cL_h^\bullet \mod \varpi^n$ 
 via the isomorphism $T_{h,n}\simeq T_{f,n}$ (cf. \S\ref{subsubsec_padic_L_mod_p}) and prove the desired equality in Proposition~\ref{prop_first_reciprocity_law}.
 
 The proof of the final property is the same as that of \cite[Lemma~4.6]{BertoliniDarmon2005}, where the argument does not rely on the $p$-local properties of the underlying Galois representations. Indeed, the asserted containment is an immediate consequence of the following commutative diagram (together with the fact that $\kappa'_\varphi(\ell)^\bullet\in \widehat{H}^1_{\{\ell\},\bullet}(K_\infty,T_{h,n})\otimes_\varphi \cO$ as  verified above), where the exactness of the first row is due to global reciprocity:
$$\xymatrix{
\widehat{H}^1_{\{\ell\},\bullet}(K_\infty,T_{h,n})\ar[r]\ar[d]_{c\,\mapsto\, c\otimes 1} &\widehat{H}^1_{\rm sing}(K_{\infty,\ell},T_{h,n})\ar[r]\ar[d]&{\rm Sel}_{\bullet}(K_\infty,A_{h,n})^\vee\ar[d]\\
\widehat{H}^1_{\{\ell\},\bullet}(K_\infty,T_{h,n})\otimes_\varphi \cO\ar[r]&\widehat{H}^1_{\rm sing}(K_{\infty,\ell},T_{h,n})\otimes_\varphi \cO\ar[r]&{\rm Sel}_{\bullet}(K_\infty,A_{h,n})^\vee\otimes_\varphi \cO\,.
}
$$
\end{proof}

Note that to construct $\kappa'_\varphi(\ell)^\bullet$ and to verify its key properties, we have relied on the isomorphism $T_{h,n}\simeq T_{f,n}$ in the general case. When $p$ is split in $K$, one may construct $\kappa'_\varphi(\ell)^\bullet$ and verify these properties directly 
(cf.~Remark~\ref{remark_rec_laws_extends_shim_curves}).

\subsubsection{} We shall prove the base case of the induction to prove Theorem~\ref{thm_div_def_main_conj_reduced}: it will be shown that \eqref{eqn_thm_div_def_main_conj_reduced} holds if $t_h=0$.  
\begin{proposition}
    \label{prop_induction_base_case} If $t_h=0$, then ${\rm Sel}_{\bullet}(K_\infty,A_{h,n})=\{0\}$.
\end{proposition}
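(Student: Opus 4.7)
The plan is to argue by contradiction, adapting the base step of the Euler system descent from \cite[\S4.2]{BertoliniDarmon2005} and \cite[\S5]{darmoniovita} to the present setting. The key ingredients are the Heegner-type classes $\kappa(\ell)^\bullet$ constructed in \S\ref{sec_Heeg_classes_construction_reciprocity}, the local properties verified in \S\ref{sec_Heegner_local_properties_at_p}, and the first $\sharp/\flat$ reciprocity law (Proposition~\ref{prop_first_reciprocity_law}).

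First I would translate the hypothesis $t_h=0$ into a concrete non-vanishing statement about the derived classes of \S\ref{subsubsec_10_1_1_2022_09_26_1457}. Since $0\le t\le t_h=0$, we have $t=0$, so the class $\kappa'_\varphi(\ell)^\bullet$ is simply the image modulo $\varpi^n$ of $\kappa_\varphi(\ell)^\bullet$. Combining Lemma~\ref{lemma_DILemmas5354combined}(i)-(ii) with Proposition~\ref{prop_first_reciprocity_law}, the element $\partial_\ell\,\kappa'_\varphi(\ell)^\bullet$ is a unit generator of the free rank-one $\cO/(\varpi^n)$-module $\widehat{H}^1_{\rm sing}(K_{\infty,\ell},T_{h,n})\otimes_\varphi\cO$ for every $n$-admissible prime $\ell$ (for $f$, and hence for $h$ via the isomorphism \eqref{eqn_thm_div_def_main_conj_reduced_bisbis}).

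Next, supposing for contradiction that the Pontryagin dual $\mathrm{Sel}_\bullet(K_\infty,A_{h,n})^\vee\otimes_\varphi\cO$ is non-zero, I would pick a non-zero element $s$ (one may choose $s$ whose image modulo $\pi$ is non-zero, by Nakayama). Then I would apply the Chebotarev density argument of \cite[Theorem~3.2]{BertoliniDarmon2005} (see also \cite{mr02}) to produce an $n$-admissible prime $\ell$ with the property that the global duality map
$$\eta_\ell\colon\widehat{H}^1_{\rm sing}(K_{\infty,\ell},T_{h,n})\otimes_\varphi\cO\lra \mathrm{Sel}_\bullet(K_\infty,A_{h,n})^\vee\otimes_\varphi\cO$$
is non-zero modulo $\pi$; concretely, one arranges $\mathrm{Fr}_\ell$ to act on $T_{h,n}$ and on a cocycle representative of $s$ in such a way that the local restriction of $s$ at $\ell$ pairs non-trivially with the unit generator $\partial_\ell\,\kappa'_\varphi(\ell)^\bullet$ under the local Tate pairing. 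Since the source of $\eta_\ell$ is generated by $\partial_\ell\,\kappa'_\varphi(\ell)^\bullet$, non-vanishing of $\eta_\ell\bmod\pi$ forces $\eta_\ell\bigl(\partial_\ell\,\kappa'_\varphi(\ell)^\bullet\bigr)\ne 0$, contradicting Lemma~\ref{lemma_DILemmas5354combined}(iii).

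The main obstacle is the Chebotarev step: one must verify that the hypotheses \eqref{item_BI} and \eqref{item_Loc} (which pass to $h$ via the congruence $T_{h,n}\simeq T_{f,n}$) provide a sufficiently large image of $\overline{\rho}_h$ and the required local conditions at primes dividing $N^\pm$ to guarantee the existence of an $n$-admissible prime $\ell$ tailored to the chosen dual Selmer class $s$. Once this input is granted, the remainder of the argument is formal and reduces to combining the reciprocity law, the structure of $\widehat{H}^1_{\rm sing}(K_{\infty,\ell},T_{h,n})\otimes_\varphi\cO$ as a free cyclic $\cO/(\varpi^n)$-module, and global reciprocity packaged in Lemma~\ref{lemma_DILemmas5354combined}(iii).
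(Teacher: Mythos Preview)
Your proposal is correct and follows essentially the same route as the paper's proof: both arguments observe that $t_h=0$ forces $t=0$, use the first reciprocity law to see that $\partial_\ell\kappa_\varphi(\ell)^\bullet$ generates $\widehat{H}^1_{\rm sing}(K_{\infty,\ell},T_{h,n})\otimes_\varphi\cO$, combine this with Lemma~\ref{lemma_DILemmas5354combined}(iii) to conclude that $\eta_\ell$ is the zero map for every $n$-admissible prime $\ell$, and then appeal to \cite[Theorem~3.2]{BertoliniDarmon2005} together with Nakayama's lemma to force the Selmer group to vanish. Your contradiction packaging (choose $s$, then choose $\ell$) is exactly the content of the ``Nakayama plus Chebotarev'' step the paper invokes, and the paper likewise remarks that \cite[Theorem~3.2]{BertoliniDarmon2005} is purely $\ell$-local and carries over unchanged, so the concern you flag about the Chebotarev step is already addressed.
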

\begin{proof}
This is proved in a manner identical to \cite[Proposition 4.7]{BertoliniDarmon2005}. As noted in \cite{darmoniovita}, the $p$-ordinary hypothesis in \cite{BertoliniDarmon2005} plays no role in op. cit. and moreover, $p$ is allowed to be inert in $K/\QQ$ in  \cite{BertoliniDarmon2005}. 

We briefly summarize the argument, following the proof of \cite[Proposition 4.7]{BertoliniDarmon2005}. Observe that when $t_h=0$, we have $t=0$ as well (as $0\leq t\leq t_h$) and $\kappa_\varphi'(\ell)=\kappa_\varphi(\ell)$. Note that the assumption $t_h=0$ 
is equivalent to $\cL_h^\bullet$ being a unit. In this case, it follows from the first reciprocity law (Proposition~\ref{prop_first_reciprocity_law}) that $\partial_\ell\kappa_\varphi(\ell)$ generates $\widehat H^1_{\rm sing}(K_{\infty,\ell},T_{f,n})\otimes_\varphi$, which is the source of the map $\eta_\ell$. Moreover, Lemma~\ref{lemma_DILemmas5354combined} iii) tells us that $\partial_\ell\kappa_\varphi(\ell)\in \ker(\eta_\ell)$, and 
so $\eta_\ell$ is the zero map. An argument relying on Nakayama's lemma and Theorem 3.2 of \cite{BertoliniDarmon2005} shows that this is enough to conclude ${\rm Sel}_{\bullet}(K_\infty,A_{h,n})=\{0\}$. Note that \cite[Theorem 3.2]{BertoliniDarmon2005} is a purely $\ell$-local statement and applies to our setting.
\end{proof}

\subsubsection{} 
Having verified the base case of inductive argument to prove Theorem~\ref{thm_div_def_main_conj_reduced}, we move on to establish the induction step. {Fix an integer $t_0>t_h$}.

\begin{defn}
Let $\Pi$ denote the set of rational primes $\ell$ with the following properties:
\item[1)] $\ell$ is $(n+t_0)$-admissible.
\item[2)] The quantity $\ord_\pi\left(\kappa_\varphi(\ell)\right)$ is minimal as $\ell$ varies among $(n+t_0)$-admissible primes.
\end{defn}
Note that the set $\Pi$ is non-empty by Proposition~\ref{prop_2022_07_04_1440}. Let $t$ denote the common value of $\ord_\pi\left(\kappa_\varphi(\ell)\right)$ for $\ell\in \Pi$. As noted in \S\ref{subsubsec_10_1_1_2022_09_26_1457}, we have $t\leq t_h$.

\begin{lemma}
\label{lemma_5_6_Darmon_Iovita}
{$t< t_h$.}
\end{lemma}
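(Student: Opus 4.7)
The plan is to argue by contradiction: assume $t = t_h$ and derive a contradiction with the running hypothesis (recorded at the outset of \S\ref{subsec_proof_of_main_thm}) that $\Sel_\bullet(K_\infty,A_{h,n})^\vee\otimes_\varphi\cO$ is non-trivial.

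The first step is to observe that, under the assumption $t=t_h$, every $(n+t_0)$-admissible prime lies in $\Pi$. Indeed, for any such $\ell$, Proposition~\ref{prop_first_reciprocity_law} applied with $T_{f,n+t_0}$, combined with the compatibility of the sharp/flat $p$-adic $L$-functions under congruences (\S\ref{subsubsec_padic_L_mod_p}, using $T_{f,n+t_0}\simeq T_{h,n+t_0}$), gives $\ord_\pi(\partial_\ell\kappa_\varphi(\ell)^\bullet)=t_h$ (after choosing $n$ large enough so that $t_h<n$, which is harmless). Since $\partial_\ell$ is an $\cO$-linear homomorphism, this forces $\ord_\pi(\kappa_\varphi(\ell)^\bullet)\le t_h=t$, and the minimality of $t$ yields equality; hence $\ell\in\Pi$. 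Consequently $\kappa'_\varphi(\ell)^\bullet$ is well defined for every $(n+t_0)$-admissible $\ell$, and Lemma~\ref{lemma_DILemmas5354combined}(ii) with $t=t_h$ delivers $\ord_\pi(\partial_\ell\kappa'_\varphi(\ell)^\bullet)=0$. Thus $\partial_\ell\kappa'_\varphi(\ell)^\bullet$ is a generator of the free rank-one $\cO/(\pi^n)$-module $\widehat H^1_{\rm sing}(K_{\infty,\ell},T_{h,n})\otimes_\varphi\cO$ (cf.~Corollary~\ref{cor_singular_projection_up_to_t}), and Lemma~\ref{lemma_DILemmas5354combined}(iii) then forces the global-reciprocity map $\eta_\ell$ to vanish identically.

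The second step is to convert this into vanishing of the Selmer group. By local Tate duality, the vanishing of $\eta_\ell$ is equivalent to the vanishing of the localization map
\[
\mathrm{loc}_\ell:\Sel_\bullet(K_\infty,A_{h,n})\otimes_\varphi\cO\lra \widehat H^1_{\rm f}(K_{\infty,\ell},A_{h,n})\otimes_\varphi\cO
\]
for every $(n+t_0)$-admissible prime $\ell$. I would then invoke the Chebotarev-type density statement of \cite[Theorem~3.2]{BertoliniDarmon2005}, already used in the proof of Proposition~\ref{prop_induction_base_case}: it produces, for any non-zero class in $\Sel_\bullet(K_\infty,A_{h,n})\otimes_\varphi\cO$, an $(n+t_0)$-admissible prime $\ell$ at which the localization is non-zero. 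Combined with the previous paragraph this forces $\Sel_\bullet(K_\infty,A_{h,n})\otimes_\varphi\cO=\{0\}$, contradicting the running hypothesis. Hence $t<t_h$, as required.

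The main technical point to verify is that \cite[Theorem~3.2]{BertoliniDarmon2005} indeed applies verbatim in our more general setting (arbitrary Hecke field, $p$ possibly inert, general signed local conditions at $p$). This is the case because the argument of op.~cit.~proceeds via Chebotarev applied to the residual representation $\bar\rho_f$ at primes away from $p$: the large-image hypothesis \eqref{item_BI} supplies the required density, and the argument is insensitive both to the $p$-local Selmer condition and to the splitting behaviour of $p$ in $K$. Apart from this, the remaining work is bookkeeping to ensure that all reductions modulo $\varpi^n$ and $\pi^n$ are compatible with the passage from $\kappa_\varphi(\ell)^\bullet$ to $\kappa'_\varphi(\ell)^\bullet$, which is already packaged in Lemma~\ref{lemma_DILemmas5354combined}.
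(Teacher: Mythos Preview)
Your proposal is correct and follows essentially the same route as the paper's proof, which simply cites \cite[Proposition~4.8]{BertoliniDarmon2005}: assume $t=t_h$, deduce that $\partial_\ell\kappa'_\varphi(\ell)^\bullet$ generates for every admissible $\ell$, so $\eta_\ell=0$ for all such $\ell$, and then invoke \cite[Theorem~3.2]{BertoliniDarmon2005} to contradict the non-triviality of the Selmer group. One small remark: your parenthetical ``after choosing $n$ large enough so that $t_h<n$'' is both unnecessary and not quite licit (since $n$ is fixed in Theorem~\ref{thm_div_def_main_conj_reduced}); the setup in \S\ref{subsubsec_10_1_1_2022_09_26_1457} already works at level $n+t_h$ (indeed $n+t_0$), which is precisely what guarantees that the relevant $\pi$-orders are well-defined.
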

\begin{proof}
The proof of this assertion is identical to that of \cite[Proposition~4.8]{BertoliniDarmon2005}, which dwells on a careful choice of an admissible prime relying on Theorem 3.2 in op. cit., in a manner similar to its use in the proof of Proposition~\ref{prop_induction_base_case}. 
As remarked in the said proof, this theorem in \cite{BertoliniDarmon2005} is an $\ell$-local statement and applies to our setting.
\end{proof}

We will choose a pair $\{\ell_1,\ell_2\}$ of $(n+t_0)$-admissible primes as follows, depending on whether $a_p(f)=0$ or not:
\begin{itemize}
    \item When $a_p(f)\neq 0$ (in which case we assume that $f$ is $p$-isolated): fix a rigid pair $\{\ell_1,\ell_2\}\subset \Pi$ (in the sense of Definition~\ref{defn_rigid_pair_of_primes}; Lemma~\ref{lemma_BD_2005_Lemma_4_9} guarantees the existence of such pairs). 
    \item When $a_p(f)=0$: fix $\ell_1\in \Pi$ and choose, using \cite[Theorem 3.2]{BertoliniDarmon2005} (see also \cite{pollack-weston11}, \S4.4), an $(n+t_0)$-admissible prime $\ell_2$ so that $v_{\ell_2}(s)\neq 0$, where $s\in H^1(K,T_{h,1})$ is the image of $\kappa_\varphi'(\ell)^\bullet$.
\end{itemize}  Let $h'\in S_2(\mathcal{T}/\bGamma',\cO_L)$ denote an eigenform which satisfies the conclusions of Proposition~\ref{prop_congruence_with_quoternionic_form} applied with $h$ in the role of $f$. Note that 
$h'$ is $p$-isolated if $f$ is. 

We then have
\begin{equation}
\label{eqn_2022_09_26_1540}
    t:=\ord_\pi(\kappa_\varphi(\ell_1)^\bullet)=\ord_\pi(\kappa_\varphi(\ell_2)^\bullet)=v_{\ell_1}(\kappa_\varphi(\ell_2)^\bullet)=v_{\ell_2}(\kappa_\varphi(\ell_1)^\bullet)=t_{h'}:=\ord_\pi\left(\varphi(\cL_{h'}^\bullet)\right),
\end{equation}
where the second and third equality, in the situation when $a_p(f)\neq 0$ (so that $\{\ell_1,\ell_2\}$ is a rigid pair), can be verified 
as in the proof of \cite[Lemma 4.9]{BertoliniDarmon2005} (see Equation (42) in op. cit., note that the proof of the equalities therein does not make any reference to $p$-local properties of the form $h$, which we use in the role of $f$ in op. cit.); and  in the scenario when $a_p(f)=0$ arguing as in \cite[\S4.4]{pollack-weston11}; whereas the forth and fifth equalities are Proposition~\ref{prop_first_reciprocity_law} applied with the eigenform $h'$ in place of $f$ (cf.~Remark~\ref{remark_rec_laws_extends_shim_curves}). In particular, when $a_p(f)=0$, we have $\ell_2\in \Pi$ as well.

\subsubsection{}
\label{subsubsec_1_1_4_1022_09_26}
For $\{\ell_1,\ell_2\}\subset \Pi$ as in the previous paragraph, let  $C_{\ell_1\ell_2}^h$ denote the cokernel of the inclusion 
$${\rm Sel}_{\ell_1\ell_2,\bullet}(K_\infty,A_{h,n})\subset {\rm Sel}_{\bullet}(K_\infty,A_{h,n})$$ of Selmer groups, where we recall that the Selmer group ${\rm Sel}_{\ell_1\ell_2,\bullet}(K_\infty,A_{h,n})$ consists of classes in ${\rm Sel}_{\bullet}(K_\infty,A_{h,n})$ that are locally trivial at primes dividing $\ell_1\ell_2$. Note that 
there is a natural injection 
\begin{equation}
\label{eqn_2022_09_26_1556}
    C_{\ell_1\ell_2}^h \hookrightarrow H^1_{\f}(K_{\infty,\ell_1},A_{h,n})\oplus H^1_{\f}(K_{\infty,\ell_2},A_{h,n})
\end{equation}
by definitions. On passing to Pontryagin duals and setting $S_{\ell_1\ell_2}^h:={\rm Hom}_{\Zp}(C_{\ell_1\ell_2}^h,\Qp/\Zp)$, we have a natural exact sequence
\begin{equation}
\label{eqn_2022_09_26_1559}
0\lra S_{\ell_1\ell_2}^h\lra {\rm Sel}_{\bullet}(K_\infty,A_{h,n})^\vee\lra  {\rm Sel}_{\ell_1\ell_2,\bullet}(K_\infty,A_{h,n})^\vee\lra 0
\end{equation}
of $\LL$-modules, as well as a surjection
\begin{equation}
\label{eqn_2022_09_26_1560}
\eta_h: \widehat{H}^1_{\rm sing}(K_{\infty,\ell_1},T_{h,n})\oplus \widehat{H}^1_{\rm sing}(K_{\infty,\ell_2},T_{h,n})\lra S_{\ell_1\ell_2}^h
\end{equation}
induced from \eqref{eqn_2022_09_26_1556} and local Tate duality. 

Note that the domain of $\eta_h$ is isomorphic to $(\LL/\varpi^n\LL)^{\oplus 2}$ by \eqref{eqn_partial_ell_defn}. We henceforth identify $\widehat{H}^1_{\rm sing}(K_{\infty,\ell_1},T_{h,n})\oplus \widehat{H}^1_{\rm sing}(K_{\infty,\ell_2},T_{h,n})$ with $(\LL/\varpi^n\LL)^{\oplus 2}$ via this isomorphism. Let $\eta_h^\varphi$ denote the map induced from $\eta_h$ on applying the functor $-\otimes_{\varphi}\cO$. The domain of $\eta_h^\varphi$ is isomorphic to $(\cO/\varpi^n\cO)^{\oplus 2}$. From Lemma~\ref{lemma_DILemmas5354combined} iii), it follows that the vectors 
$$(\partial_{\ell_1} \kappa'_\varphi(\ell_1)^\bullet,0)\,,\, (0,\partial_{\ell_2} \kappa'_\varphi(\ell_2)^\bullet)\in \left(\widehat{H}^1_{\rm sing}(K_{\infty,\ell_1},T_{h,n})\oplus \widehat{H}^1_{\rm sing}(K_{\infty,\ell_2},T_{h,n}) \right)\otimes_{\varphi}\cO \simeq (\cO/\varpi^n\cO)^{\oplus 2}$$
fall within $\ker(\eta_h^\varphi)$. Since 
$$t_h-t_{h'}=\ord_\pi\left(\partial_{\ell_1} \kappa'_\varphi(\ell_1)^\bullet\right)=\ord_\pi\left(\partial_{\ell_2} \kappa'_\varphi(\ell_2)^\bullet\right)$$
by Lemma~\ref{lemma_DILemmas5354combined} ii) and \eqref{eqn_2022_09_26_1540}, 
we have a surjection
$$\cO/(\varpi^n,\pi^{t_h-t_{h'}})\oplus \cO/(\varpi^n,\pi^{t_h-t_{h'}})\lra S_{\ell_1\ell_2}^h\otimes_{\varphi}\cO,$$
and hence 
\begin{equation}
    \label{eqn_2022_09_26_1628}
    \pi^{2(t_h-t_{h'})}\in {\rm Fitt}^0(S_{\ell_1\ell_2}^h\otimes_{\varphi}\cO)\,.
\end{equation}

\subsubsection{}
We shall apply arguments similar to those in \S\ref{subsubsec_1_1_4_1022_09_26} also with the form $h'$ in place of $h$.

Consider the long exact sequence
\begin{align*}
    0\lra \widehat{H}^1_{\bullet}(K_\infty,T_{h',n})\lra  \widehat{H}^1_{\{\ell_1\ell_2\},\bullet}(K_\infty,T_{h',n})
    \xrightarrow{\res_{\ell_1\ell_2}} &\,\widehat H^1_{\rm sing}(K_{\infty,\ell_1},T_{h',n})\oplus \widehat H^1_{\rm sing}(K_{\infty,\ell_2},T_{h',n})\\
    &\lra {\rm Sel}_{\bullet}(K_\infty,A_{h',n})^\vee\lra {\rm Sel}_{\ell_1\ell_2,\bullet}(K_\infty,A_{h',n})^\vee\lra 0
\end{align*}
induced from Poitou--Tate global duality. Let 
$S_{\ell_1\ell_2}^{h'}$ denote the image of $\res_{\ell_1\ell_2}$, so that we have an exact sequence
\begin{equation}
    \label{eqn_2022_09_26_1647}
    0\lra  S_{\ell_1\ell_2}^{h'} \lra {\rm Sel}_{\bullet}(K_\infty,A_{h',n})^\vee\lra {\rm Sel}_{\ell_1\ell_2,\bullet}(K_\infty,A_{h',n})^\vee\lra 0
\end{equation}
as well as an injection 
\begin{equation}
    \label{eqn_2022_09_26_1648}
   S_{\ell_1\ell_2}^{h'}\hookrightarrow \widehat H^1_{\rm sing}(K_{\infty,\ell_1},T_{h',n})\oplus \widehat H^1_{\rm sing}(K_{\infty,\ell_2},T_{h',n})\,.
\end{equation}
On passing to Pontryagin duals in \eqref{eqn_2022_09_26_1647} and \eqref{eqn_2022_09_26_1648}, and setting 
$$C_{\ell_1\ell_2}^{h'}:={\rm Hom}_{\cO_L}(S_{\ell_1\ell_2}^{h'},L/\cO_L)={\rm Hom}_{\cO_L}(S_{\ell_1\ell_2}^{h'},\cO_L/(\varpi^n)),$$ 
we have the exact sequence
\begin{equation}
    \label{eqn_2022_09_26_1647_bis}
  0\lra {\rm Sel}_{\ell_1\ell_2,\bullet}(K_\infty,A_{h',n}) \lra {\rm Sel}_{\bullet}(K_\infty,A_{h',n}) \lra  C_{\ell_1\ell_2}^{h'} \lra 0
\end{equation}
and a natural surjection 
\begin{equation}
    \label{eqn_2022_09_26_1648_bis}
   \left(H^1_{\f}(K_{\ell_1},A_{h',n}) \oplus  H^1_{\f}(K_{\ell_2},A_{h',n})\right)\otimes \LL_n^\iota\simeq \widehat H^1_{\rm sing}(K_{\infty,\ell_1},T_{h',n})^\vee\oplus \widehat H^1_{\rm sing}(K_{\infty,\ell_2},T_{h',n})^\vee\lra C_{\ell_1\ell_2}^{h'} \,,
\end{equation}
where the isomorphism in \eqref{eqn_2022_09_26_1648_bis} is the one given in \eqref{eqn_2022_09_26_1838_2} that one deduces from local Tate duality. Let $\eta_{h'}$ denote the map \eqref{eqn_2022_09_26_1648_bis} and  $\eta_{h'}^\varphi$ the map induced from $\eta_{h'}$ on applying the functor $-\otimes_{\varphi}\cO$\,. Recall that the source of $\eta_{h'}^\varphi$ is isomorphic to $\left(\cO/\varpi^n\cO\right)^{\oplus 2}$ by  \eqref{eqn_2022_09_26_1838_2} and the isomorphisms 
$$v_{\ell_i}\,:\,  H^1_{\f}(K_{\ell_i},A_{h',n})\lra \cO_L/(\varpi^n)\,,\qquad\qquad i\in \{1,2\}$$
which are determined by the choices of topological generators of the tame inertia subgroups $I_{\ell_1}^{\rm t}$ and $I_{\ell_2}^{\rm t}$, respectively. We shall henceforth identify the source of $\eta_{h'}^\varphi$ with $\left(\cO/\varpi^n\cO\right)^{\oplus 2}$ via these isomorphisms.

For each $i=1,2$, the element $v_{\ell_i}\kappa_\varphi'(\ell_i)\in \widehat H^1_{\f}(K_{\ell_i,\infty},T_{h',n})\otimes_{\varphi}\cO$ can be regarded, thanks to the proof of \cite[Lemma~2.5]{BertoliniDarmon2005} (which allows us to identify $\widehat H^1_{\f}(K_{\ell_i,\infty},T_{h',n})$ with $ H^1_{\f}(K_{\ell_i},T_{h',n})\otimes\LL$) and the self-duality isomorphism $T_{h',n}\simeq A_{h',n}$, as an element of the module $H^1_{\f}(K_{\ell_i},A_{h',n})\otimes_{\cO_L}\cO$. It follows from global duality that the vectors
\begin{align}
\label{eqn_2022_09_26_2027}
\begin{aligned}
    (v_{\ell_1} \kappa'_\varphi(\ell_2)^\bullet,0)\,,\, (0,v_{\ell_2} \kappa'_\varphi(\ell_1)^\bullet)&\in \left(\widehat{H}^1_{\rm f}(K_{\infty,\ell_1},T_{h,n})\oplus \widehat{H}^1_{\rm f}(K_{\infty,\ell_2},T_{h,n}) \right)\otimes_{\varphi}\cO \\
    &=\left(H^1_{\f}(K_{\ell_1},A_{h',n})\oplus H^1_{\f}(K_{\ell_2},A_{h',n})\right)\otimes_{\cO_L}\cO\simeq (\cO/\varpi^n\cO)^{\oplus 2} \\
    \hbox{belong to the kernel of $\eta_{h'}^\varphi$\,.}\qquad&
\end{aligned}
\end{align}
 Moreover, since
$$\ord_\pi\left(v_{\ell_1} \kappa'_\varphi(\ell_2)^\bullet\right)=\ord_\pi\left(v_{\ell_2} \kappa'_\varphi(\ell_1)^\bullet\right)=t_{h'}-t=0\,,$$
where the second and third equality holds by \eqref{eqn_2022_09_26_1540} and the definition of $\kappa'_\varphi(\ell_2)^\bullet$. In other words,
$$\{ (v_{\ell_1} \kappa'_\varphi(\ell_2)^\bullet,0)\,,\, (0,v_{\ell_2} \kappa'_\varphi(\ell_1)^\bullet)\}$$
spans the source of $\eta_{h'}^\varphi$. This fact together with \eqref{eqn_2022_09_26_2027} imply that $\eta_{h'}^\varphi$ is the zero map. By the definition of the surjection $\eta_{h'}^\varphi$, we infer that $C_{\ell_1\ell_2}^{h'}\otimes_\varphi \cO=0$, and in turn also that $S_{\ell_1\ell_2}^{h'}\otimes_\varphi \cO=0$. In view of the exact sequence \eqref{eqn_2022_09_26_1647}, we conclude that the natural surjection
\begin{equation}
\label{eqn_2022_09_26_2112}
    {\rm Sel}_{\bullet}(K_\infty,A_{h',n})^\vee\otimes_{\varphi}\cO\lra {\rm Sel}_{\ell_1\ell_2,\bullet}(K_\infty,A_{h',n})^\vee\otimes_{\varphi}\cO \hbox{ is an isomorphism.}
\end{equation}

\subsubsection{} Recall that 
$$t_{h'}<t_h$$
by \eqref{eqn_2022_09_26_1540} and Lemma~\ref{lemma_5_6_Darmon_Iovita}. Moreover, the eigenform $h'$ satisfies the hypotheses of Theorem~\ref{thm_div_def_main_conj_reduced}. By the induction hypothesis, we have
\begin{equation}
\label{eqn_2022_09_26_2116}
    \varphi(\cL_{h'}^\bullet)^2 \in {\rm Fitt}^0\left({\rm Sel}_{\bullet}(K_\infty,A_{h',n})^\vee\otimes_{\varphi}\cO\right)\,.
\end{equation}
It follows from the general properties of Fitting ideals that
\begin{align}
    \label{eqn_2022_09_26_2117}
    \begin{aligned}
        \pi^{2t_{h}}&\,= \pi^{2(t_{h}-t_{h'})}\pi^{2t_{h'}}\\
    &\,\in  {\rm Fitt}^0\left( S_{\ell_1\ell_2}^{h}\otimes_{\varphi}\cO\right) {\rm Fitt}^0\left({\rm Sel}_{\bullet}(K_\infty,A_{h',n})^\vee\otimes_{\varphi}\cO\right)\,\hbox{ by \eqref{eqn_2022_09_26_1628} and \eqref{eqn_2022_09_26_2116}}\\
    &\,={\rm Fitt}^0\left(S_{\ell_1\ell_2}^{h}\otimes_{\varphi}\cO\right){\rm Fitt}^0\left({\rm Sel}_{\ell_1\ell_2,\bullet}(K_\infty,A_{h',n})^\vee\otimes_{\varphi}\cO \right) \,\hbox{ by \eqref{eqn_2022_09_26_2112}}\\
    &={\rm Fitt}^0\left(S_{\ell_1\ell_2}^{h}\otimes_{\varphi}\cO\right){\rm Fitt}^0\left({\rm Sel}_{\ell_1\ell_2,\bullet}(K_\infty,A_{h,n})^\vee\otimes_{\varphi}\cO \right) \\
    &={\rm Fitt}^0\left({\rm Sel}_{\bullet}(K_\infty,A_{h,n})^\vee\otimes_{\varphi}\cO \right) \,\hbox{ by \eqref{eqn_2022_09_26_1559}}\,,
    \end{aligned}
\end{align}
where the penultimate equality holds because ${\rm Sel}_{\ell_1\ell_2,\bullet}(K_\infty,A_{h',n})={\rm Sel}_{\ell_1\ell_2,\bullet}(K_\infty,A_{h,n})$ by definition, based on the fact that the Galois modules $A_{h',n}$ and $A_{h,n}$ are isomorphic, and that the local conditions that determine the Selmer groups ${\rm Sel}_{\ell_1\ell_2,\bullet}(K_\infty,A_{h',n})$ and ${\rm Sel}_{\ell_1\ell_2,\bullet}(K_\infty,A_{h,n})$ coincide away from $\ell_1$ and $\ell_2$. 

We have now completed the proof of Theorem~\ref{thm_div_def_main_conj_reduced}. As noted just before the statement of Theorem~\ref{thm_div_def_main_conj_reduced}, the proof of Theorem~\ref{thm_div_def_main_conj} also follows from Theorem~\ref{thm_div_def_main_conj_reduced} (applied with $h=f$ and allowing $\varphi$ to vary).
\bibliographystyle{alpha}
\bibliography{references}

\end{document}